\documentclass[11pt]{amsart}

\usepackage[dvipsnames]{xcolor}
\usepackage{enumitem}
\usepackage{amssymb,graphicx,mathtools}
\usepackage{tikz}
\usetikzlibrary{calc,decorations.pathreplacing}
\usepackage[margin=1in]{geometry}
\usepackage{etoolbox}

\usepackage[square]{natbib}

\usepackage[citecolor=PineGreen,colorlinks=true,linkcolor=RoyalBlue,urlcolor=BrickRed]{hyperref}

\theoremstyle{plain}
\newtheorem{theorem}{Theorem}[section]
\newtheorem{proposition}[theorem]{Proposition}
\newtheorem{corollary}[theorem]{Corollary}
\newtheorem{lemma}[theorem]{Lemma}
\theoremstyle{definition}
\newtheorem{remark}[theorem]{Remark}
\newtheorem*{remark*}{Remark}
\newtheorem{problem}[theorem]{Problem}

\newcommand{\Z}{\mathbb{Z}}
\newcommand{\R}{\mathbb{R}}
\newcommand{\N}{\mathbb{N}}
\newcommand{\E}{\mathbb{E}}
\renewcommand{\P}{\mathbb{P}}
\DeclareMathOperator{\Var}{Var}
\DeclareMathOperator{\Cov}{Cov}
\newcommand{\one}{\mathbf{1}}

\makeatletter
\renewcommand{\l@section}{\@tocline{1}{0pt}{1.5em}{}{}}
\renewcommand{\l@subsection}{\@tocline{2}{0pt}{3.0em}{}{}}
\makeatother
\makeatletter
\patchcmd{\@settitle}{\uppercasenonmath\@title}{\Large}{}{}
\patchcmd{\@setauthors}{\MakeUppercase}{\large}{}{}
\makeatother

\title{Quantitative explosion and percolation of the divisible sandpile}
\author{Ahmed Bou-Rabee}
\author{Christoforos Panagiotis}
\date{\today}
\keywords{divisible sandpile, percolation, discrete membrane model, optimal stopping, random walk in random scenery}
\subjclass[2020]{60K35, 82B43, 60G60, 31C20}

\begin{document}

\begin{abstract}
The divisible sandpile on $\Z^d$ starts from i.i.d.\ masses at each site, and, in each discrete time step, a site with mass above one keeps one unit and sends the excess equally to its neighbors. \citet*{LMPU} showed that at mean one this process explodes, with every site emitting infinite mass. We show 
that the mass emitted from a site by time $t$ is of order $t^{(4-d)/4}$ for $d\leq3$, of order $\log t$ for $d=4$, and a tail-dependent, divergent rate for $d\geq5$. We further show that the mass emitted, after diffusive rescaling, converges to a Brownian optimal-stopping value for $d\leq3$ and to tail-dependent, weighted membrane fields for $d\geq5$, while at the critical dimension $d=4$, after superdiffusive rescaling, it converges to the membrane model.

Using these estimates, we prove that, for every $d\geq2$, the set of sites that topple contains an infinite component at some mean below one, hence it has a non-trivial percolation phase transition. This answers a variant of a question of \citet*{FMR}. The proof adapts ideas from the theory of level-set percolation of strongly correlated Gaussian fields.
\end{abstract}

\maketitle

{\small \tableofcontents}

\section{Introduction}\label{sec:intro}

We study the \textbf{divisible sandpile} introduced by \citet*{LevinePeres09}, a model of mass redistribution on $\Z^d$. The initial mass at a site $x \in \Z^d$ is denoted by $\sigma(x)$. A site is unstable when its mass exceeds one, and an unstable site \textbf{topples}, keeping one unit of mass and sharing the excess equally among its $2d$ neighbors. At each discrete step, every unstable site topples simultaneously. Write $u_t(x)$ for the mass that $x$ has sent to each of its neighbors during the first $t$ steps. Then $u_0\equiv0$, and as proved in Section~\ref{sec:rw-rep},
\begin{equation}\label{eq:intro-recursion}
    u_{t+1}(x)=\Bigl(\frac{\sigma(x)-1}{2d}+\frac{1}{2d}\sum_{y\sim x}u_t(y)\Bigr)_+\, .
\end{equation}
The function $u_t(x)$ increases to the \textbf{odometer} $u_\infty(x)\in[0,\infty]$.

This model was studied further in \citet*{LMPU}, who showed that for i.i.d.\ masses with mean $\rho=\E\sigma(0)$, either the odometer is finite at every site and the sandpile \textbf{stabilizes}, or the odometer is infinite at every site and the sandpile \textbf{explodes}. They also showed that stabilization holds when $\rho<1$, and explosion holds when $\rho=1$ and $\sigma(0)$ has positive finite variance. In this paper, we study the quantitative behavior of $u_t$ when $\rho=1$ and the geometry of the toppled set when $\rho<1$.

\begin{figure}[t]
    \centering
    \includegraphics[width=\textwidth]{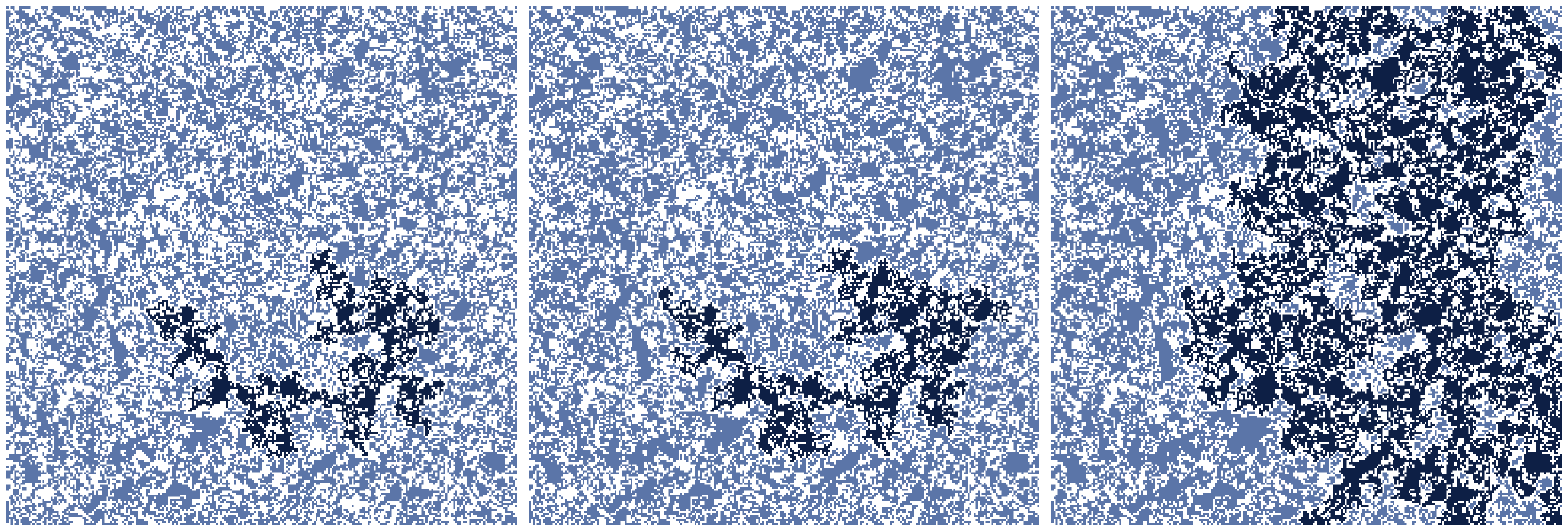}
    \caption{The toppled set of the Poisson sandpile on a $256\times256$ box with mean $\rho=0.72$, $\rho=0.75$, and $\rho=0.78$, in one coupled realization. The largest cluster is dark blue; other toppled sites are lighter blue.}
    \label{fig:phase}
\end{figure}

\subsection{Main results}\label{ssec:results}

At mean one every site topples, and hence the toppled set is all of $\Z^d$. A natural question is how much of this behavior persists below mean one: can the toppled sites contain an infinite cluster when $\rho<1$? See Figure~\ref{fig:phase} for an illustration for the Poisson sandpile. The corresponding question for the abelian sandpile, posed by \citet*[Section~5]{FMR}, remains open; see Section~\ref{ssec:abelian-percolation}. Our first result shows that, for the divisible sandpile, the set of toppled sites can indeed contain an infinite cluster below mean one.

For $\rho\in(0,1]$, let $\sigma^{(\rho)}$ be an i.i.d.\ mass field with mean $\rho$, and write $u_t^{(\rho)}$ and $u_\infty^{(\rho)}$ for its finite-time and limiting odometers. The associated \textbf{toppled set} is defined by
\[
    \mathcal T^{(\rho)}\coloneqq \{x\in\Z^d:u_\infty^{(\rho)}(x)>0\}\, .
\]

\begin{theorem}[Percolation of the toppled set below criticality]\label{thm:main-nontriviality}
Let $d\geq2$, let $(\mu_\rho)_{\rho\in(0,1]}$ be a family of laws with mean $\E_{\mu_\rho}\sigma(0)=\rho$, and let the masses be i.i.d.\ with law $\mu_\rho$. Suppose there are $\rho_0\in(0,1)$, $\nu_0>0$, $\theta_0>0$, and $K_0<\infty$ such that
\begin{equation}\label{eq:main-nontriv-hyp}
    \inf_{\rho_0\leq\rho<1}\Var_{\mu_\rho}(\sigma(0))\geq\nu_0^2
    \qquad\text{and}\qquad
    \sup_{\rho_0\leq\rho<1}\E_{\mu_\rho}e^{\theta_0|\sigma(0)-\rho|}\leq K_0\, .
\end{equation}
Then there is $\rho_+=\rho_+(d,(\mu_\rho))\in[\rho_0,1)$ such that $\mathcal T^{(\rho)}$ contains an infinite nearest-neighbor component almost surely for every $\rho\in(\rho_+,1)$.
\end{theorem}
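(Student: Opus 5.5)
Since the odometer is monotone in time, the toppled set is the union over $t$ of the sets $\{u_t^{(\rho)}>0\}$. It therefore suffices to find one finite time $T$, and a mean $\rho$ close to one, for which $\{u_T^{(\rho)}>0\}$ already percolates. Two features make this workable. First, $u_T(x)$ depends only on $\sigma$ in the ball of radius $T$ around $x$, so the event $\{u_T>0\}$ is a finite-range-dependent field. Second, comparison with a Liggett--Schonmann--Stacey type domination, or a direct Peierls argument, means we need a lower bound on site densities that is uniform in $\rho$ only after a coarse-graining. The estimates at mean one, namely that the emitted mass grows like $t^{(4-d)/4}$, $\log t$, or a divergent tail-dependent rate, give the needed \emph{quantitative} explosion. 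The plan is to transfer them to $\rho<1$ by writing $\sigma^{(\rho)}=\sigma^{(1)}-(1-\rho)$, or more generally to treat the mean shift as a deterministic perturbation. Since \eqref{eq:intro-recursion} is monotone and $1$-Lipschitz in $\sigma$ in sup norm scaled by $t/(2d)$, we get $u_t^{(\rho)}\ge u_t^{(1)}-(1-\rho)\,t/(2d)$ roughly, through the random-walk representation of Section~\ref{sec:rw-rep}. The hypotheses \eqref{eq:main-nontriv-hyp} supply the uniform variance and exponential moments needed to run the mean-one estimates uniformly over the family $(\mu_\rho)$. One can couple via centered variables $\xi=\sigma-\rho$, so that the mean-$\rho$ pile is the centered pile shifted by $-(1-\rho)$.

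The key steps, in order, are as follows.

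(1) Using the random-walk representation, establish a coarse-grained "good box" event. For a box $B$ of side $L$ with $L\approx \sqrt T$, say $B$ is good if every site in a fixed neighborhood of $B$, or at least a crossing path in $B$ with crossings glued across neighboring boxes, has $u_T^{(1)}\ge c\,m(T)$. Here $m(T)$ is the mean-one growth rate, which diverges in every dimension $d\ge2$.

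(2) Show that $\P(B\text{ good})\to1$ as $T\to\infty$, uniformly over the family. This uses the convergence to Brownian optimal stopping or membrane limits to control the lower tail of $\min_{B}u_T/m(T)$. In low dimensions the limit is a continuous, strictly positive field, and a small-ball or lower-tail estimate is needed.

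(3) Choose $1-\rho\le \epsilon\, m(T)/T$. Then the deterministic loss $(1-\rho)T/(2d)$ is below $c\,m(T)/2$, so goodness persists at mean $\rho$.

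(4) Good-box events are $O(1)$-dependent on the coarse lattice. Stochastic domination by a high-density Bernoulli field, with the gluing of crossings, then yields an infinite cluster in $\{u_T^{(\rho)}>0\}\subset\mathcal T^{(\rho)}$.

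Set $\rho_+=1-\epsilon\,m(T)/T$ for the chosen $T$. Monotonicity in $\rho$ is not needed, since the argument works for every $\rho\in(\rho_+,1)$ by the same choice of $T$.

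The main obstacle is step (2), a uniform lower bound on the minimum of $u_T$ over a mesoscopic box rather than at a single site. The odometer field is strongly correlated, with membrane or optimal-stopping limits, so one cannot simply union-bound single-site estimates. My first attempt would be to borrow from level-set percolation of correlated Gaussian fields: decompose $u_T$ into a smooth macroscopic part, which is positive on the box with high probability by the scaling limit, and a short-range fluctuation part. The fluctuation part would be controlled by sprinkling, namely by using slightly fewer toppling rounds or a small extra mass shift, so that rare local defects are absorbed. In $d=2$ the marginal $m(T)\sim\sqrt T$ still beats the Lipschitz loss if one uses the sharper representation in which the loss is weighted by the Green's function rather than by $T$, and that refinement would be needed there.
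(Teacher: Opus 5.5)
\paragraph{The reduction is fine; step (2) is the gap.}
Your outer reduction is exactly the paper's proof of this theorem. Raising $\sigma^{(\rho)}$ by $1-\rho$ gives a mean-one field in the same uniform class. The stopping representation gives $0\le\widehat u_t^{(\rho)}-u_t^{(\rho)}\le(1-\rho)t/(2d)$. So once a single time $t_0$ and level $\ell>0$ are found such that $\{\widehat u_{t_0}>\ell\}$ percolates for every law in the class, every $\rho>\max\{\rho_0,1-2d\ell/t_0\}$ works. (Nothing special is needed in $d=2$: $t_0$ is fixed before $\rho$, divergence of $m(T)$ plays no role, and the Green-weighted loss is $\sum_y g_T(x,y)=T$ anyway.)

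The gap is that all the difficulty sits in your step (2), which is Theorem~\ref{thm:main-critical-level-percolation}. The version you aim at, a lower tail for $\min_B u_T/m(T)$ over a block of side $\sqrt T$ with $\P(B\text{ good})\to1$, is false in every dimension:
\begin{itemize}
\item In $d\ge5$, take Gaussian masses, which satisfy the hypotheses. Proposition~\ref{prop:dgt4-contact-asymptotics} gives $\P(u_T(0)=0)\sim G(0,0)/T$, so a block of $T^{d/2}$ sites contains of order $T^{d/2-1}$ zeros in expectation. In general the pointwise bound $e^{-c\,\E u_T(0)}$ is at best $T^{-c'}$ with uncontrolled $c'$, too weak for a union bound over one dependence block; the paper notes exactly this before Lemma~\ref{lem:dgt4-localization}.
\item In $d=4$, $u_T-\E u_T(0)$ is the log-correlated membrane field up to lower-order errors. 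Its block minimum is of order $-\log T$, the same order as the mean, and no comparison of constants is available.
\item In $d=2,3$, the scaling limit gives $\limsup\P(B\text{ good})\le\P(\min_K\mathcal U(1,\cdot)\ge c')$. This is $<1$ for every $c'>0$: tilting the white noise by $-\lambda\mathbf 1_{B(0,A)}$ is an equivalent change of measure under which $\sup_K\mathcal U(1,\cdot)\to0$ as $\lambda,A\to\infty$. Letting $c\downarrow0$ would require almost-sure strict positivity of $\mathcal U$ simultaneously on $K$, which is not available. Theorem~\ref{thm:critical-toppling} gives only a polynomial lower tail with unspecified exponent.
\end{itemize}
The same obstruction defeats your ``smooth part positive on the whole box'' idea. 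Separately, step (4) fails as written: $u_T$ has dependence range $T$, not $O(\sqrt T)$, so you must pass to killed values $u_T^D\le u_T$ (Lemma~\ref{lem:localization-killing}).

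\paragraph{What is actually needed.}
What is true and needed is a \emph{crossing} statement with probability close to one, uniform over the class. Each regime requires an idea absent from your sketch.
\begin{itemize}
\item In $d=2,3$, the paper bounds the Brownian value below by the finite-range Gaussian fields $\mathcal X_s$ (stop at exit from a ball of radius $s$). It gets zero-level crossings from the RSW theorem of K\"ohler-Schindler and Tassion. It raises the level to order $1/R$ by an exploration--sprinkling argument whose Cameron--Martin cost is controlled by an arm bound (Proposition~\ref{prop:fixed-scale-crossings}). Positive probability becomes probability close to one via a Kolmogorov zero-one argument over arbitrarily small radii and extraction of finitely many deterministic radii (Lemma~\ref{lem:finite-scale-extraction}).
\item In $d=4$, it shows that the ball-killed Green field crosses above $-\varepsilon\log r$ (Theorem~\ref{thm:d4-ball-green-crossing}: near/far splitting, duality for the Gaussian analogue, Lindeberg replacement through a smoothed bottleneck functional). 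It then lifts these crossings with the exit-averaged term $Y_r$, which exceeds a multiple of $\log r$ on the whole block because exit averaging cuts each scenery influence to $O(r^{-2})$.
\item In $d\ge5$, it abandons blocks at the $\sqrt T$ scale. It excludes $\ast$-connected blocking surfaces by a multiscale cascade over scales $64^n$, using Green-tail localization and level-shift decoupling of separated annuli (Lemmas~\ref{lem:dgt4-level-shift-decoupling} and~\ref{lem:dgt4-cascade}). Your sprinkling remark points this way but supplies no mechanism.
\end{itemize}
Until step (2) is replaced by such a crossing argument, the proof does not close.
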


Hypotheses \eqref{eq:main-nontriv-hyp} hold for example for the Poisson sandpile $\mu_\rho=\mathrm{Pois}(\rho)$.

\begin{remark*}[Nonpercolation at low mean]
Write $I_\rho(1)\coloneqq\sup_{\lambda\geq0}\bigl(\lambda-\log\E_{\mu_\rho}e^{\lambda\sigma(0)}\bigr)$ for the rate function of $\sigma(0)$ at level one, and let $\lambda_d$ denote the growth rate of site lattice animals on $\Z^d$. If $I_\rho(1)>\log\lambda_d$, then $\mathcal T^{(\rho)}$ does not percolate, by the mass-counting Peierls argument of \citet[Theorem~4.2]{FMR}. For the Poisson family this gives a positive low-density interval; using the standard bound $\lambda_d\leq2de$, the condition holds, for example, for $\rho<0.035$ in $d=2$, for $\rho<0.023$ in $d=3$, and for $\rho<0.017$ in $d=4$.
\end{remark*}

The fact that every site topples at mean one does not by itself yield enough information to control the model below mean one. Theorem~\ref{thm:main-nontriviality} follows from a strengthening of critical explosion, namely a quantitative lower bound for the finite-time odometer on a set that percolates. Our next result provides such level sets at an explicit dimension-dependent level.

\begin{theorem}[Percolation of critical level sets]\label{thm:main-critical-level-percolation}
Let $d\geq2$ and let $\nu_{0}>0$, $\theta_{0}>0$, and $K_{0}<\infty$. There are $c=c(d,\nu_{0},\theta_{0},K_{0})>0$ and $t_0=t_0(d,\nu_{0},\theta_{0},K_{0})<\infty$ such that the following holds. Suppose $(\sigma(x))_{x\in\Z^d}$ is an i.i.d.\ field with mean $\E\sigma(0)=1$, $\Var(\sigma(0))\geq\nu_{0}^2$, and $\E e^{\theta_{0}|\sigma(0)-1|}\leq K_{0}$. Then, for every integer $t\geq t_0$, the level set
\[
    \{x\in\Z^d:u_t(x)>c \, h(t)\}\, ,
    \qquad\text{where}\quad
    h(t)=
    \begin{cases}
    t^{(4-d)/4} & \text{if } d\in\{2,3\}\, ,\\
    \log t & \text{if } d=4\, ,\\
    (\log t)^{2/d} & \text{if } d\geq5\, ,
    \end{cases}
\]
contains an infinite nearest-neighbor component almost surely. 
\end{theorem}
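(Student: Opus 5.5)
Write $p_s$ for the law at time $s$ of simple random walk on $\Z^d$ and set $\eta(x)=(\sigma(x)-1)/(2d)$. The plan is to compare $u_t$ with a Gaussian-like field whose level sets percolate, and then use sprinkling and renormalization arguments from level-set percolation of strongly correlated fields.

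\textbf{Step 1: lower bound by a stopping problem.} Unfolding the recursion \eqref{eq:intro-recursion} gives $u_t(x)\geq \sup_{\tau}\E_x\bigl[\sum_{s<\tau}\eta(X_s)\bigr]$. The supremum is over stopping times $\tau\leq t$ of simple random walk $X$ started from $x$ (the representation of Section~\ref{sec:rw-rep}). Taking $\tau\equiv t$, or $\tau=t$ cut off on a suitable event, gives a linear lower bound
\[
u_t(x)\geq \phi_t(x),\qquad \phi_t(x)\coloneqq\sum_{y}\eta(y)\,G_t(x,y),\qquad G_t(x,y)=\sum_{s<t}p_s(y-x).
\]
This bound has mean zero. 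Its variance $\sum_y G_t(x,y)^2$ has order $t^{(4-d)/2}$ for $d\leq3$ and $\log t$ for $d=4$, so $\phi_t$ is a weighted membrane-type field with standard deviation $\asymp h(t)$ when $d\leq 4$ (after taking the square root of the variance, or using a better stopping rule in $d=4$). For $d\geq5$ the variance is bounded. There I would instead choose $\tau$ adaptively: let $\tau$ stop only after the walk has spent time in a region of size $r\asymp(\log t)^{1/d}$ where $\sigma$ is atypically large. The exponential moment assumption makes such regions occur with density $t^{-1/2}$ or so, and the value gained is of order $r^2=(\log t)^{2/d}$.

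\textbf{Step 2: percolation of level sets of the linear field.} For $d\in\{2,3,4\}$, I would show that $\{\phi_t>c\,\mathrm{sd}(\phi_t)\}$ percolates for small $c$. The field $\phi_t$ has finite-range correlations at scale $L=\sqrt t$, since $G_t$ is concentrated on $|x-y|\lesssim \sqrt t\log t$. Inside a box of side $L$, $\phi_t$ is nearly constant up to fluctuations of lower order, because gradients of $G_t$ are smaller by a factor $L^{-1}$. So I would coarse-grain to boxes of side $L$ and call a box good if $\inf_{\text{box}}\phi_t>c\,h(t)$. Using the Gaussian approximation (central limit theorem with exponential moments, or KMT-type coupling), a box is good with probability bounded below by a constant close to $\tfrac12$ for small $c$. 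This alone is not enough for percolation. To fix it, I would redefine good boxes using the positive part: $u_t$ only improves if we use $\tau$ stopping early on regions where the local average is negative. This raises the probability of a box being good toward one, because the optimal-stopping value is almost surely positive in the limit (by the Brownian optimal-stopping limit). Then I would apply a Peierls/renormalization argument on the coarse lattice with finite-range dependence, via Liggett--Schonmann--Stacey domination, to get an infinite path of good boxes. Each good box is connected internally, so the path yields an infinite component of $\{u_t>c\,h(t)\}$.

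\textbf{Main obstacle.} The hard step is showing that a box is good with probability arbitrarily close to one, uniformly in $t\geq t_0$. This requires controlling the infimum over a box of the nonlinear odometer, not just its mean value, and it requires dependence decay strong enough for the Liggett--Schonmann--Stacey comparison. I would first try a truncation: replace $\sigma$ by its restriction to a box of radius $K\sqrt t\log t$ around the coarse box. The resulting monotone lower bound $u_t^{\mathrm{loc}}\leq u_t$ is exactly finite-range. I would then estimate $\P(\inf_{\text{box}} u_t^{\mathrm{loc}}\leq c\,h(t))$ using a chaining bound for the gradient, together with the positivity of the limiting stopping value. For $d\geq5$, the same scheme applies with boxes of side $(\log t)^{1/d}$ and the event "high-mass region reachable within time $t$", but the correlations are longer-ranged relative to the box size. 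There I expect sprinkling in the mass, as in Gaussian level-set percolation, to be needed.
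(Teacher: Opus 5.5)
\textbf{Genuine gap in $d\in\{2,3\}$.} Your plan needs boxes on which $\inf u_t>c\,h(t)$ with probability $1-\delta$, where $\delta$ must beat the Liggett--Schonmann--Stacey threshold for the dependence range. The only justification you give is that the Brownian stopping value is almost surely positive. That is a pointwise statement and says nothing about $\inf_{x\in\text{box}}\mathcal U(T,x)$.

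Your boxes also have side comparable to the correlation length. The rescaled linear field converges to the non-constant continuous field $Z$, so it is not nearly constant on boxes of side $\sqrt t$. Shrinking the boxes to side $\epsilon\sqrt t$ enlarges the dependence range in box units, which pushes the required goodness probability even closer to one.

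Neither your proposal nor the paper shows that $\mathcal U(T,\cdot)$ stays bounded away from zero on a whole box with high probability. On the discrete side, $\P(u_t(0)=0)$ is only known to be polynomially small with a small exponent (Corollary~\ref{cor:critical-mean-one}), which is far from enough to control the $\asymp t^{d/2}$ sites in a box.

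The paper deliberately avoids infimum estimates, since a bounded Cameron--Martin tilt can push the relevant lower-bound fields below any fixed level on a fixed region. It aims for crossings instead:
\begin{enumerate}[label=\textup{(\roman*)}]
\item bound $\mathcal U$ below by the finite-range ball fields $\mathcal X_s$;
\item get zero-level crossings from RSW;
\item raise the level to order $1/R$ by an exploration--sprinkling relative-entropy bound on the $o(R^2)$ revealed cells (Proposition~\ref{prop:fixed-scale-crossings});
\item use a zero--one law over scales to extract finitely many deterministic radii (Lemma~\ref{lem:finite-scale-extraction});
\item only then transfer the crossings to localized odometers (Remark~\ref{rem:dlt4-killed-scaling}).
\end{enumerate}

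\textbf{Missing mechanism in $d=4$.} Your proposal does not contain the mechanism that produces the level $h(t)$. Here $V_t$ has standard deviation $\sqrt{\log t}$, not $\log t$. So $\{V_t>c\log t\}$ has density at most $t^{-c'}$, and the linear bound is useless at this level. The field is also log-correlated, so it is far from constant on boxes.

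The extra $\log t$ comes from reflection, $\E u_t(0)\asymp\log t$ (Theorem~\ref{thm:critical-toppling-d4}, proved via block-restricted stopping and FKG). The paper uses it through $\mathcal B_{r,A_{\rm ex}r^2}+Y_r\leq u_{(A_{\rm ex}+1)r^2}$ (Lemma~\ref{lem:d4-finite-range-lower-bound}). The post-exit term $Y_r$ has mean $\gtrsim\log r$ and only $O(1)$ fluctuations after exit averaging (Lemma~\ref{lem:d4-exit-average-concentration}). The ball-killed Green field crosses above $-\varepsilon\log r$ (Theorem~\ref{thm:d4-ball-green-crossing}).

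\textbf{Wrong mechanism in $d\geq5$.} Your mechanism has the wrong sign. A stopping time cannot steer the walk, and a transient walk reaches a distant rare high-mass region with vanishing probability, so the gain at a typical site is negligible.

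The growth $\E u_t(0)\geq c(\log t)^{2/d}$ instead comes from the reflected increment of Lemma~\ref{lem:reflection-increment}. That increment is triggered by a ball of radius $\asymp\sqrt{\E u_t(0)}$ of \emph{low} mass (Theorem~\ref{thm:dgt4-height-lower}).

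Percolation then rests on $u_t-\E u_t(0)$ having $O(1)$ sub-exponential fluctuations. One excludes $\ast$-barriers of $\{u_t\leq\E u_t(0)/2\}$ via localization, level-shift decoupling, and a multiscale cascade (Lemmas~\ref{lem:dgt4-localization}--\ref{lem:dgt4-cascade}). Boxes of side $(\log t)^{1/d}$ cannot support a finite-range comparison, because correlations extend to distance $\sqrt t$.
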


\begin{proof}[Proof of Theorem~\ref{thm:main-critical-level-percolation}]
Write $\zeta\coloneqq(\sigma-1)/(2d)$. The field $\zeta$ is i.i.d.\ with mean zero,
$\Var(\zeta(0))\geq(\nu_{0}/(2d))^2$, and, since $2d\geq1$,
\[
    \E e^{\theta_{0}|\zeta(0)|}
    =\E e^{(\theta_{0}/(2d))|\sigma(0)-1|}
    \leq\E e^{\theta_{0}|\sigma(0)-1|}\leq K_{0}\, .
\]
The three regimes are proved separately: dimensions two and three in
Theorem~\ref{thm:d23-critical-level-percolation}, dimension four in
Theorem~\ref{thm:d4-critical-level-percolation}, and dimensions five and higher
in Theorem~\ref{thm:dgt4-nontriviality}. In dimensions three and four those
theorems place the infinite component inside a coordinate plane of $\Z^d$, which
is stronger than the assertion here.
\end{proof}

In dimensions $d\geq5$, the scale $(\log t)^{2/d}$ is universal; under further tail assumptions the percolating level can be raised to a distribution-dependent scale. Proposition~\ref{prop:dgt4-height-lower-stretched} gives the refined lower bound, while Subsection~\ref{ssec:d5-height-upper} gives the corresponding upper bounds.

For Bernoulli percolation, nontriviality of the phase transition follows from the classical Peierls argument. In our context, the strong spatial correlations of the odometer prevent a direct application of this method. We instead adopt a crossing-based approach, inspired by techniques developed for level-set percolation of correlated Gaussian fields. The analogy is only indirect because the sandpile odometer is neither Gaussian nor a linear functional of the initial configuration. In dimensions two through four, we prove crossings on a fixed two-dimensional coordinate plane using RSW estimates and sprinkling-type arguments. In dimensions five and higher, we instead use localization, level shifts, and a multiscale argument. Our strategy is discussed further below in Section~\ref{ssec:overview}.

Beyond percolation, we study the growth and scaling of $u_t$ in every dimension. At mean one the sandpile explodes, so the limiting odometer is infinite and the finite-time odometer is the meaningful critical quantity. We begin by isolating its linear part.

Removing the reflection $(\cdot)_+$ from \eqref{eq:intro-recursion} leaves a linear recursion:
\begin{equation}\label{eq:membrane-recursion}
V_{t+1}(x)=\frac{\sigma(x)-1}{2d}+\frac1{2d}\sum_{y\sim x}V_t(y)
\qquad\text{with}\qquad V_0\equiv0\, .
\end{equation}
We call the resulting linear field $V_t$ the \textbf{membrane field}. It is the part of the odometer obtained by averaging the initial mass fluctuations through the random-walk kernel.

The odometer recursion \eqref{eq:intro-recursion} is the dynamic-programming equation for a random-walk optimal stopping problem, a well-known representation \citep[Chapter~I]{PeskirShiryaev} first observed in the sandpile setting by \citet*[Theorem~3.2]{BPSH} and restated as Theorem~\ref{thm:RW} below:
\[
2d u_t(x)=\sup_{\tau\leq t}\mathbf E_x\sum_{k<\tau}\bigl(\sigma(X_k)-1\bigr)\, ,
\]
where $X$ is a simple random walk started at $x$ and the supremum is over stopping times of the walk. Similarly, the difference $u_t-V_t$ solves the stopping problem with reward $-V$, as proved in Lemma~\ref{lem:difference-representation}:
\[
u_t(x)-V_t(x)=\sup_{\tau\leq t}\mathbf E_x\bigl[-V_{t-\tau}(X_\tau)\bigr]\, .
\]
Thus $u_t$ is the membrane field plus the extra payoff obtained by choosing when to stop: the walk gains by stopping where the remaining-time membrane field is very negative. The size of this extra payoff depends on the dimension. On the discrete torus, by contrast, the analogue is explicit. Let $v$ be the mean-zero solution of the stationary linear recursion
\[
v(x)=\frac{\sigma(x)-1}{2d}+\frac1{2d}\sum_{y\sim x}v(y)\, .
\]
After recentering the masses so their total equals the number of vertices, the sandpile stabilizes to the all-one configuration and its odometer is $v-\min_y v(y)$ \citep[Lemma~7.1]{LMPU}. In this finite-volume setting, the walk waits until it hits a minimizer of $v$, so the extra term is the constant $-\min_y v(y)$. For Gaussian masses, \citet*[Proposition~1.3]{LMPU} identified~$v$ with the discrete membrane model on the torus, and they computed the order of the odometer~$u_{d,n}$ on $\Z_n^d$ \citep[Theorem~1.2]{LMPU}:
\[
\begin{array}{c|ccccc}
d & 1 & 2 & 3 & 4 & \geq5\\
\hline
\E u_{d,n}(x)\asymp & n^{3/2} & n & n^{1/2} & \log n & (\log n)^{1/2} \, .
\end{array}
\]
They suggested that, suitably rescaled, the discrete membrane model on the torus should converge as $n\to\infty$ to a continuum membrane model; \citet[Theorem~2]{CHR} proved this convergence.

In infinite volume there is no finite target: the walk can wait arbitrarily long to increase its gain. In dimensions $d\leq3$, this optimization changes only the constant, not the order of growth. The odometer has the same order as the membrane field's own fluctuations, $t^{(4-d)/4}$, and Theorem~\ref{thm:main-explosion} identifies the limit of the rescaled odometer as the value of a Brownian stopping problem. This continuum problem relies on a low-dimensional regularity property: in dimensions $d\leq3$, the convolution of white noise against the finite-time Green potential has continuous point values. Once this regularity is established, the proof of convergence is soft: it follows from the invariance principle and the stability of optimal stopping under weak convergence.

In dimension four, the same convolution no longer has point values, and the low-dimensional argument does not identify a diffusive scaling limit. We prove that the fluctuations are tight in every negative Sobolev space at diffusive times and, at polynomial superdiffusive times, converge, modulo constants, to the four-dimensional membrane model. Optimizing over when to stop also raises the odometer by more than the membrane field's own point fluctuations: the field fluctuates on the scale $\sqrt{\log t}$, while $\E u_t(0)$ grows like $\log t$.

In dimensions $d\geq5$, the fluctuations at a fixed site are tight, while the mean diverges at a rate governed by the lower tail of the mass distribution. At the diffusive spatial scale, the limiting field is determined by how the zero set is encountered by a random walk. This leads to a delicate dependence of the limit on the scenery, and we show below that an i.i.d.\ scenery with a smooth density and exponential moment may fail to converge.

In infinite volume, the corresponding comparison at time $t$ is:
\[
\begin{array}{c|ccc}
\text{quantity at time }t & d\in\{1,2,3\} & d=4 & d\geq5\\
\hline
\text{fluctuation of }V_t\text{ at one lattice point}
& t^{(4-d)/4} & \sqrt{\log t} & 1\\
\text{Gaussian membrane maximum}
& t^{(4-d)/4} & \log t & \sqrt{\log t}\\
\E u_t(0)
& t^{(4-d)/4} & \log t &
(\log t)^{1/\min\{\gamma,d/2\}} \, .
\end{array}
\]
The maximum in the second row is over a box of diffusive side length $\sqrt t$, and the last row in dimensions $d\geq5$ assumes $-\log\P(\zeta(0)\leq-s)\asymp s^\gamma$ for $\gamma\in[1,\infty)\setminus\{d/2\}$.

In the next theorem, we summarize the scaling results; the sections below state more precise versions. Except in part~\textup{(i)(b)}, $f^{(R)}$ denotes the piecewise-constant function $f^{(R)}(z)\coloneqq f(\lfloor Rz\rfloor)$ on $\R^d$.
The membrane limits $\mathcal G_{d,r}$ and $\mathcal G_4$ are Gaussian: roughly, $\mathcal G_{d,r}$ is the continuum analogue of the membrane field $V_r$, and $\mathcal G_4$ is its large-time limit in dimension four, defined modulo additive constants. The weighted limit $\mathcal H_{\kappa,T}\coloneqq\sqrt{\Var(\zeta(0))}\int_0^T(1-r/T)^\kappa e^{r\Delta/(2d)}\mathcal W\,dr$ is the membrane field with a power-law time weight. Subsection~\ref{ssec:continuum-membrane-fields} gives the precise definitions.

\begin{theorem}[Critical growth and spatial scaling]\phantomsection\label{thm:main-explosion}
Let $\sigma=1+2d\zeta$, where $(\zeta(x))_{x\in\Z^d}$ are i.i.d.\ with $\E\zeta(0)=0$ and $0<\Var(\zeta(0))<\infty$. In parts~\textup{(i)}, \textup{(ii)}, and~\textup{(iii)(a)--(b)}, assume additionally that
$\E e^{\theta_0|\zeta(0)|}<\infty$ for some $\theta_0>0$.

\begin{enumerate}[label=\textup{(\roman*)}]
\item \underline{\textup{Dimensions one, two, and three.}}
\begin{enumerate}[label=\textup{(\alph*)}]
\item The rescaled mean converges:
$\lim_{t\to\infty}t^{-(4-d)/4}\E u_t(0)$
exists and lies in $(0,\infty)$.
\item The parabolic scaling limit is a Brownian optimal-stopping value. Let
$\mathcal W$ be white noise on $\R^d$, let
\[
    Z(t,x)\coloneqq \sqrt{\Var(\zeta(0))}
    \int_{\R^d}g_t^{\rm BM}(x,y)\,\mathcal W(dy)\, ,
\]
with $g_t^{\rm BM}$ the finite-time Green kernel of Brownian motion (defined in Section~\ref{sec:rw-rep}), and let
\[
    \mathcal U(T,x)\coloneqq
    \sup_{\tau\leq T}\mathbf E_x^{\rm BM}
    \bigl[Z(T,x)-Z(T-\tau,B_\tau)\bigr]\, ,
\]
where the supremum is over stopping times for Brownian motion. Then, for every $T>0$,
\[
    R^{-(2-d/2)}\,u^{(R)}_{\lfloor TR^2\rfloor}
    \Longrightarrow
    \mathcal U(T,\cdot)
    \qquad\text{in }C_{\rm loc}(\R^d)\, ,
\]
where the field on the left denotes the multilinear interpolation from $R^{-1}\Z^d$ of the values
$x/R\mapsto R^{-(2-d/2)}u_{\lfloor TR^2\rfloor}(x)$.
\end{enumerate}

\item \underline{\textup{Dimension four.}}
\begin{enumerate}[label=\textup{(\alph*)}]
\item The mean odometer is of logarithmic order at each site: as $t\to\infty$
\[
    \E u_t(0)\asymp\log t\, ,
\]
and, for every fixed $x\in\Z^4$, $u_t(x)/\E u_t(0)\to1$ in $L^2$ and
almost surely.
\item The centered odometer $u_t(0)-\E u_t(0)$ has Gaussian fluctuations:
\[
    \frac{u_t(0)-\E u_t(0)}{\sqrt{\log t}}
    \Longrightarrow
    N\left(0,\frac{4\Var(\zeta(0))}{\pi^2}\right)\, ,
\]
and $\Var(u_t(0))/\log t\to4\Var(\zeta(0))/\pi^2$.
\item For every $T>0$ and $s>0$, the fields
$\bigl(u_{\lfloor TR^2\rfloor}-\E u_{\lfloor TR^2\rfloor}(0)\bigr)^{(R)}$ for
$R\geq1$ are tight in $H^{-s}_{\rm loc}(\R^4)$. Moreover, for every
$\alpha>2$, the fields $\bigl(u_{t_R}-\E u_{t_R}(0)\bigr)^{(R)}$ at the
superdiffusive times $t_R\coloneqq\lfloor R^\alpha\rfloor$ converge, modulo additive constants, to the four-dimensional membrane model $\mathcal G_4$ in $H^{-s}_{\rm loc}(\R^4)$.
\end{enumerate}

\item \underline{\textup{Dimensions five and higher.}}
\begin{enumerate}[label=\textup{(\alph*)}]
	\item The odometer is deterministic to first order at every fixed site. For
	each fixed $x\in\Z^d$, $u_t(x)/\E u_t(0)\to1$ in $L^2$ and almost surely.
	Moreover, the mean diverges: there is $c>0$ such that $\E u_t(0)\geq c(\log t)^{2/d}$ for all large $t$.
\item The order of the mean is governed by the lower tail. If, for some $\gamma\in[1,\infty)$ with $\gamma\ne d/2$, we have, as $s\to\infty$,
\[
    -\log\P(\zeta(0)\leq -s)\asymp s^\gamma,
\]
then
\[
    \E u_t(0)\asymp(\log t)^{1/\min\{\gamma,d/2\}}\, .
\]
\item For Gaussian scenery, for every $T>0$ and every $s>(d-4)/2$,
\[
    R^{(d-4)/2}
    \left( u_{\lfloor TR^2\rfloor}-\E u_{\lfloor TR^2\rfloor}(0) \right)^{(R)}
    \Longrightarrow \mathcal H_{1,T}
    \qquad\text{in }H^{-s}_{\rm loc}(\R^d)\, .
\]
If instead the scenery is atomless and bounded above, and there exists $\alpha>2$ such that for every $\lambda>0$, as $s\to\infty$ we have
\[
    \frac{\P(\zeta(0)<-\lambda s)}{\P(\zeta(0)<-s)}\longrightarrow\lambda^{-\alpha},
\]
then the limit is $\mathcal H_{1-1/\alpha,T}$.
\item There is an i.i.d.\ scenery $(\zeta(x))_{x\in\Z^d}$ whose one-site law has
mean zero, variance one, a strictly positive smooth density, and an exponential
moment such that, for every $T>0$ and $s>(d-4)/2$, the rescaled fluctuations
\[
    R^{(d-4)/2}\left(u_{\lfloor TR^2\rfloor}-\E u_{\lfloor TR^2\rfloor}(0)\right)^{(R)}
\]
have uncountably many distinct subsequential limits in $H^{-s}_{\rm loc}(\R^d)$ as
$R\to\infty$; in particular they do not converge.
\end{enumerate}
\end{enumerate}
\end{theorem}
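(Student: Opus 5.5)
The plan is to handle each of the three dimension regimes separately, using two tools throughout: the optimal-stopping representation $2du_t(x)=\sup_{\tau\le t}\mathbf E_x\sum_{k<\tau}(\sigma(X_k)-1)$ and the difference representation $u_t-V_t=\sup_{\tau\le t}\mathbf E_x[-V_{t-\tau}(X_\tau)]$. Everything rests on the linear membrane field $V_t=\sum_{x}G_t(\cdot,x)\zeta(x)$, where $G_t$ is the finite-time Green function of the lazy-free walk. Its variance at a point is $\Var(\zeta)\sum_x G_t(0,x)^2$. This sum grows like $t^{(4-d)/2}$ for $d\le3$, like $\log t$ for $d=4$, and stays bounded for $d\ge5$, which accounts for the first row of the comparison table. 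Since $u_t\ge V_t^+$ and $u_t\ge V_t$ (take $\tau=t$ or $\tau=0$), lower bounds on the mean come from the stopping rule. Upper bounds come from controlling $\sup_\tau\mathbf E[-V_{t-\tau}(X_\tau)]$ by the expected running minimum of $V$ along the walk path.

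\textbf{Dimensions $d\le3$.} First I would show that the rescaled fields $R^{-(2-d/2)}V_{\lfloor sR^2\rfloor}(\lfloor R\cdot\rfloor)$ converge jointly in $(s,x)$ to $Z(s,x)$ in $C_{\rm loc}$. This uses a Lindeberg/KMT-type coupling of the i.i.d. field with white noise, local CLT bounds on $G_t-g^{\rm BM}_t$, and Kolmogorov continuity. Continuity of $Z$ is where $d\le3$ enters: $\int|g_t(x,y)-g_t(x',y)|^2dy\lesssim|x-x'|^{\beta}$ holds only because $g_t\in L^2$ locally. With the continuity in hand, I apply the difference representation and use stability of optimal stopping under weak convergence of the reward process $(s,x)\mapsto Z(T,x)-Z(T-s,x)$ jointly with Donsker's invariance principle. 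An exponential-moment tail bound on $\sup_{|x|\le L\sqrt t,s\le t}|V_s|$ gives uniform integrability. Together these yield (i)(b), and (i)(a) then follows from $\E\mathcal U(T,0)\in(0,\infty)$ together with scaling ($\mathcal U(T,\cdot)\overset d=T^{(4-d)/4}\mathcal U(1,\cdot)$).

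\textbf{Dimension $d=4$.} For the lower bound $\E u_t(0)\gtrsim\log t$, the walk should run through dyadic time scales and stop on the first scale where the remaining-time field at its position falls below $-c\log t$. Because $V$ decomposes into nearly independent scale increments, each of variance of order one, the walk can collect an order-one gain at each of $\log t$ scales, much as in branching random walk maxima. For the upper bound, the reward $-V_{t-\tau}(X_\tau)$ is at most the maximum of $-V$ over a box of size $\sqrt t$, and Gaussian-type maximal bounds (via exponential moments and chaining) show this is $O(\log t)$. For (ii)(a)–(b), I would prove that $u_t(0)-V_t(0)$ concentrates: it is a Lipschitz functional of the $\zeta$'s with small per-coordinate influence, so an Efron–Stein bound gives variance $o(\log t)$. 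The CLT then transfers from $V_t(0)$, a linear statistic with variance $\sim\frac{4\Var\zeta}{\pi^2}\log t$ by the four-dimensional Green asymptotics. Part (ii)(c) follows similarly, with $H^{-s}$ tightness obtained from Fourier bounds on $\widehat{G_t}$, and the superdiffusive limit obtained by showing the stopping correction is asymptotically constant on scale $R$.

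\textbf{Dimensions $d\ge5$.} Here $V$ has bounded fluctuations, and the gain comes from the walk seeking out rare sites where $\zeta$ is very negative. Within time $t$ the walk visits about $t$ distinct sites, so the deepest value it sees is $\asymp(\log t)^{1/\gamma}$. Alternatively, stopping near a ball of radius $r$ where $\zeta\le-s$ pays $\asymp s r^2$ at probability $\exp(-r^d s^\gamma)$. Optimizing over $r$ and $s$ gives the exponent $1/\min\{\gamma,d/2\}$ and the universal bound $(\log t)^{2/d}$, since bounded-below variance forces at least a Bernoulli-type deficit. The fluctuation limits (iii)(c) come from expanding $u_t-\E u_t$ to first order in $\zeta$: each site is weighted by the probability that the walk is still running when it passes that site. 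For Gaussian scenery, and in the regularly varying case, this weight is computed explicitly as $(1-r/T)^\kappa$ through the law of the time of the first sufficiently deep site. For (iii)(d), I would build a lower tail that oscillates between regimes across scales, so that the weight exponent $\kappa$ varies along subsequences of $R$.

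The main obstacle is the $d\ge5$ fluctuation analysis. It requires linearizing a genuinely nonlinear stopping value and controlling how the stopping time depends on the extreme-value statistics of the scenery.
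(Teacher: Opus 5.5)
Part (i) follows the paper's route. Your upper bounds via the maximum of $-V_s(y)$ over the space--time box $\{s\le t,\ |y|\le t\}$ are sound: in $d=4$ this is Lemma~\ref{lem:d4-difference-tail}, and in $d\ge5$ it works once you prove the Green-average lower tail of Lemma~\ref{lem:dgt4-stretched-green-scenery-tail}. Your $d\ge5$ mean lower bounds, where the walk finds a deep site or a negative ball, are workable alternatives to the paper's increment-plus-FKG argument.

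\textbf{The genuine gap is in (ii)(b).} You claim that $u_t(0)-V_t(0)$ has small per-coordinate influences, but you give no reason. Its influence on $\zeta(z)$ is $\mathbf E_0\sum_{j<t}\one_{\{X_j=z\}}\one_{\{\tau_t^*\le j\}}$ (Lemma~\ref{lem:odometer-derivative}). A priori this is bounded only by $g_t(0,z)$, and $\sum_z g_t(0,z)^2\asymp\log t$. So Efron--Stein yields only $O(\log t)$, the same order as $\Var V_t(0)$, which is useless for the CLT. Getting $o(\log t)$ amounts to showing that the optimal walk rarely stops during its first $t^{1-o(1)}$ steps, and nothing in your plan gives this.

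The missing idea is that logarithmic growth plus concavity of the mean make early reflection negligible. With $r_s=(-\zeta-Pu_s)_+$ one has $u_{s+1}=\zeta+Pu_s+r_s$ and $\E r_s(0)=\E u_{s+1}(0)-\E u_s(0)\le\E u_s(0)/s$. Iterating back $n$ steps gives
\[
u_t-\E u_t(0)-V_t=P^n\bigl(u_{t-n}-\E u_{t-n}(0)-V_{t-n}\bigr)+\sum_{k<n}P^k\bigl(r_{t-1-k}-\E r_{t-1-k}(0)\bigr)\, .
\]
The reflected sum is nonnegative with mean at most $\frac{n}{t-n}\E u_{t-n}(0)=O(n\log t/t)$; this needs the upper bound $\E u_t(0)\le C\log t$ first. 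The smoothed term has influences $\sum_{k=n}^{t-1}p_k(x,z)$, of squared $\ell^2$-norm $O(1+\log(t/n))$. Choosing $n=t\,e^{-A(1+\log\log t+\lambda)}$ gives fluctuations of order $\sqrt{1+\log\log t}$.

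The same decomposition, with $R^2(1+\log\log t_R)\ll n_R\ll t_R/\log^2t_R$, is what makes your (ii)(c) claim that the correction is flat on scale $R$ true. It is also why only superdiffusive times are reached. Diffusive tightness needs the covariance bound \eqref{eq:odometer-covariance-bound}, not Fourier bounds on $G_t$, since $u_t$ is nonlinear.

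Your $d=4$ lower bound can work only if the rule tests at every time. Read literally, testing at the $\log t$ dyadic times fails, since each test succeeds with probability $t^{-c}$. Even then, the branching-random-walk heuristic has to become a truncated second-moment estimate for the membrane field along the random-walk trace. The paper avoids this: it restricts decisions to times $0,m,2m,\ldots$ with $m\asymp\sqrt t$, and uses the increment identity for $w_{n+1}=(V_m+P^mw_n)_+$, FKG, and the bound $\P(-V_m(0)>3\lambda\Var V_m(0))\ge t^{-1/4}$, summed over $\asymp\sqrt t$ blocks.

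\textbf{In $d\ge5$, your sketch of (iii)(c) omits what actually fixes $\kappa$}, namely $\P(u_n(0)=0)\sim G(0,0)\kappa/n$. The paper kills the walk at the origin, $w_n=u_n^{\Z^d\setminus\{0\}}$. Then $\{u_{n+1}(0)=0\}=\{-\zeta(0)>Pw_n(0)\}$ and $\E u_{n+1}(0)-\E u_n(0)=\E(-\zeta(0)-Pw_n(0))_+$, with $\E Pw_n(0)\sim\E u_n(0)/G(0,0)$, and this difference equation is integrated against the tail.

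For Gaussian scenery your single-site picture uses the wrong variable. $Pw_n(0)$ fluctuates on order one, the threshold is of order $\sqrt{\log n}$, and a Gaussian tail is not stable under order-one shifts. So one must threshold and condition on $V_\infty(0)=\sum_zG(0,z)\zeta(z)$ instead. Survival events along a path must then be decoupled by a normal comparison inequality that uses the correlation gap of $V_\infty$. Only last visits count, and $G(0,0)\sum_{i\le j}\one_{\{X_i\notin\{X_{i+1},\ldots,X_j\}\}}/(n-i)\approx-\log(1-j/n)$ is what produces $(1-j/n)^\kappa$. Passing from mean derivatives to the centred odometer further needs a convex Efron--Stein bound with vanishing derivative variances. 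That in turn needs a covariance bound for the survival indicators of two paths in terms of their shared sites.

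For (iii)(d), a tail alternating between regularly varying regimes would destroy the required exponential moment. Within that class, the variation of $\kappa$ must come from the local shape of the tail near the running threshold $\E u_n(0)/G(0,0)$. The paper does this with masses $\asymp e^{-A^k}$ on bands $(\ell A^k,A^k]$, for a fixed $\ell<1$, carrying local profiles $r^{\vartheta_k}$. This gives $\kappa_k=1+1/\vartheta_k\in[3/2,2]$, and $R_k$ is tuned so that the threshold stays inside the $k$th band for $n\in[\delta R_k^2,TR_k^2]$.
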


\begin{proof}[Proof of Theorem~\ref{thm:main-explosion}]
Part~\textup{(i)(a)} is Corollary~\ref{cor:dlt4-mean-asymptotic}, and
part~\textup{(i)(b)} is proved in Subsection~\ref{ssec:scaling-dlt4}.
Part~\textup{(ii)(a)} is Theorem~\ref{thm:critical-toppling-d4},
part~\textup{(ii)(b)} is Proposition~\ref{prop:d4-one-point-gaussian}, and
part~\textup{(ii)(c)} combines Propositions~\ref{prop:d4-diffusive-tightness}
and~\ref{prop:d4-superdiffusive-limit}. Part~\textup{(iii)(a)} is
Theorem~\ref{thm:dgt4-height-lower}, part~\textup{(iii)(b)} combines
Proposition~\ref{prop:dgt4-height-lower-stretched} with the upper bounds of
Subsection~\ref{ssec:d5-height-upper}, part~\textup{(iii)(c)} is
Theorem~\ref{thm:dgt4-diffusive-membrane}, and part~\textup{(iii)(d)} is
Theorem~\ref{thm:dgt4-many-limits}.
\end{proof}

\subsection{Overview of the proofs}\label{ssec:overview}

We first explain how Theorem~\ref{thm:main-nontriviality} follows from
Theorem~\ref{thm:main-critical-level-percolation}.

\begin{proof}[Proof of Theorem~\ref{thm:main-nontriviality} assuming Theorem~\ref{thm:main-critical-level-percolation}]
Given a mean-$\rho$ field, raise every mass by $1-\rho$:
\[
    \widehat\sigma^{(\rho)}(x)\coloneqq \sigma^{(\rho)}(x)+(1-\rho)\, .
\]
The shifted field has mean one and the same fluctuations about its mean, so
Theorem~\ref{thm:main-critical-level-percolation} applies to it with constants
uniform in $\rho\in[\rho_0,1)$. Let $\widehat u_t^{(\rho)}$ be its odometer and observe that
\[
    0\leq \widehat u_t^{(\rho)}(x)-u_t^{(\rho)}(x)
    \leq \frac{(1-\rho)t}{2d}\, .
\]
Consequently, for every $a>0$,
\[
    \{x:\widehat u_t^{(\rho)}(x)>a\}
    \subseteq
    \left\{x:u_t^{(\rho)}(x)>a-\frac{(1-\rho)t}{2d}\right\}\, .
\]
If $\rho>1-2dc\,h(t_0)/t_0$, then $t=t_0$ satisfies
$(1-\rho)t/(2d)<c\,h(t)$, and hence the percolating set
$\{x:\widehat u_t^{(\rho)}(x)>c\,h(t)\}$
is contained in $\{x:u_t^{(\rho)}(x)>0\}\subseteq\mathcal T^{(\rho)}$.
The theorem therefore holds with
$\rho_+\coloneqq\max\{\rho_0,\,1-2dc\,h(t_0)/t_0\}$, which lies in
$[\rho_0,1)$ because $h(t_0)>0$.
\end{proof}

In the rest of the paper we prove Theorem~\ref{thm:main-explosion} and then
Theorem~\ref{thm:main-critical-level-percolation}, dimension by dimension. The percolation proof draws on
the growth theorem in a different way in each regime. In dimensions two and
three, we pass through the Brownian optimal-stopping limit and prove crossings
for the Gaussian lower bound obtained by stopping the Brownian motion when it
exits a ball, before transferring them back to the discrete odometer. In dimension four, point evaluations have no continuous
Euclidean spatial scaling limit, so the crossings are built directly on the
lattice. In dimensions five and higher, $u_t-\E u_t(0)$ is tight, the mean
diverges, and Green-kernel localization gives the decoupling needed for a
Peierls estimate.

We now describe the proof of the growth theorem. The basic object is the
membrane field $V_t$ of \eqref{eq:membrane-recursion}: it has mean zero, and
its point fluctuations are of order $t^{(4-d)/4}$ in dimensions one, two, and
three, of order $\sqrt{\log t}$ in dimension four, and of order one in
dimensions five and higher. The odometer differs from $V_t$ because the
positive-part recursion clips the linear update whenever it would fall below
zero, as illustrated in Figure~\ref{fig:levelset}. Theorem~\ref{thm:main-explosion} measures
this difference: in dimensions one, two, and three it has the same order as the fluctuations of $V_t$, while in dimensions four and higher it produces a mean that diverges faster than they do.

\begin{figure}[t]
    \centering
    \includegraphics[width=0.7\textwidth]{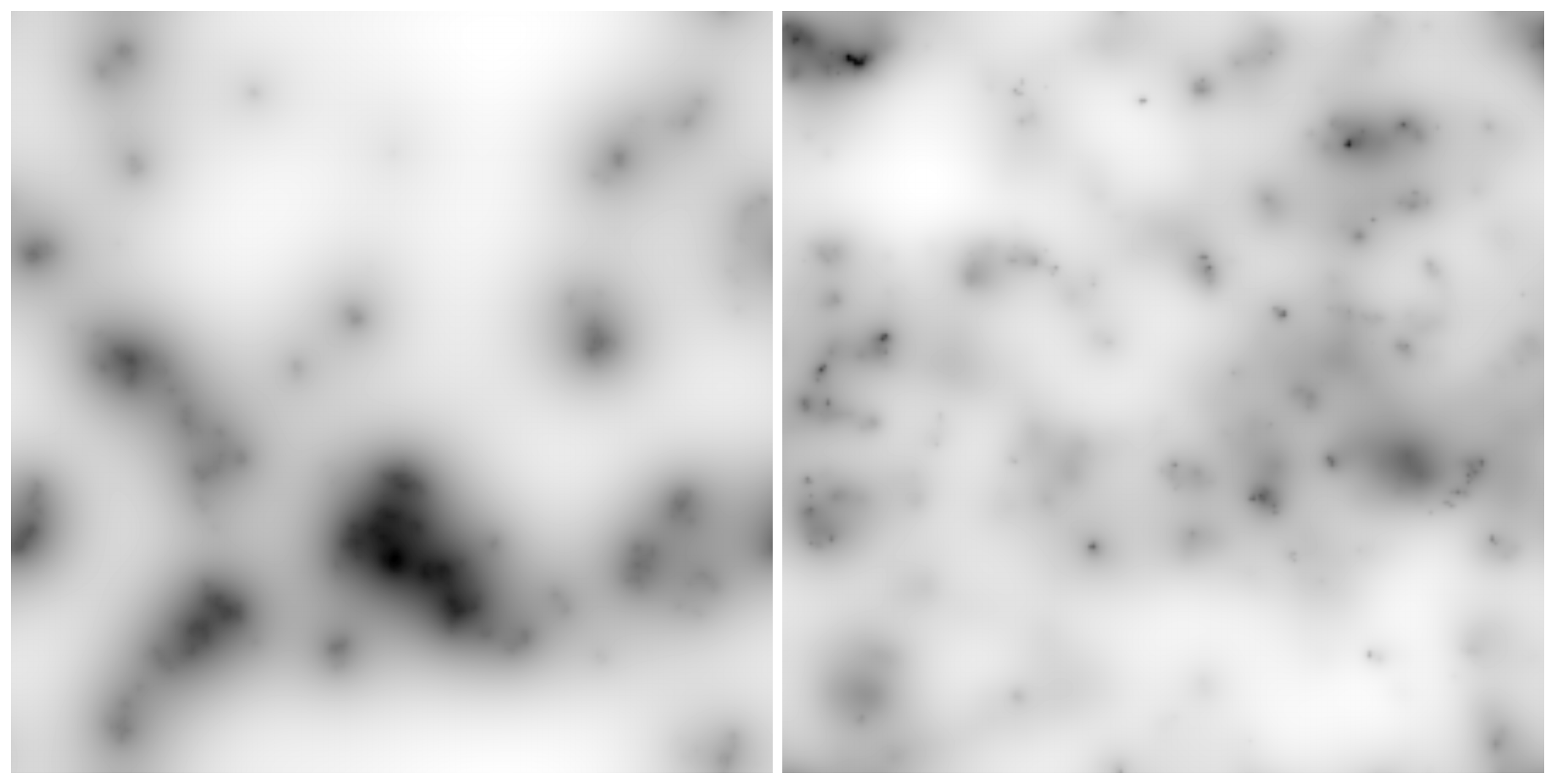}
\caption{The difference $u_t-V_t$ between the finite-time odometer and the
    membrane field in the Poisson$(1)$ case. The left panel shows~$d=2$; the right
    panel shows a two-dimensional slice of~$d=4$. In four dimensions the
    difference appears as an almost constant lift with smaller residual
    fluctuations. In each panel, the minimum displayed value is mapped to white
    and the maximum to black; in two dimensions the minimum is zero, while in
    four dimensions it is positive.}
    \label{fig:levelset}
\end{figure}

\smallskip
\noindent\underline{\textbf{\textup{Dimensions one, two, and three.}}}

In dimensions one, two, and three, the odometer grows and fluctuates on the
same scale as $V_t(0)$, namely $t^{(4-d)/4}$. By the optimal-stopping representation
in Theorem~\ref{thm:RW}, the payoff of each stopping rule is a weighted sum of
the scenery, with weights given by the expected occupation times of the stopped
walk and bounded by the truncated Green kernel. Since the membrane variance grows
like $t^{(4-d)/2}$,
Proposition~\ref{prop:finite-time-concentration-scale} gives fluctuations
of $u_t(0)$ of order at most $t^{(4-d)/4}$.

For the lower tail, we test $u_t(0)\geq V_n(0)$ at sparse geometric times. By the correlation bound, the corresponding Gaussian persistence probability is exponentially small, and a multivariate Berry--Esseen estimate transfers this bound to the scenery. This gives the polynomial bound of
Theorem~\ref{thm:critical-toppling} and
Corollary~\ref{cor:critical-mean-one}.

We upgrade this order of magnitude to an asymptotic using the scaling limit, a soft consequence of two standard facts. Under parabolic scaling, Green averages against the field converge to a Gaussian heat potential with continuous point values (Proposition~\ref{prop:dlt4-heat-potential-invariance}), and optimal-stopping values are stable under Donsker convergence. Passing the supremum to the limit gives the scaling limit in Theorem~\ref{thm:main-explosion}(i)(b), first jointly over finitely many space-time points and then locally uniformly. The uniform moment bounds proved in Corollary~\ref{cor:dlt4-mean-asymptotic} then give $\E u_t(0)\sim \E\mathcal U(1,0) \, t^{(4-d)/4}$, and they show that the variance of $u_t(0)$ is asymptotic to a positive
multiple of $t^{(4-d)/2}$.

We next use the structure of the limit~$\mathcal{U}$ to prove percolation in dimensions two and three. The value $\mathcal U$ optimizes over all stopping rules and is therefore not Gaussian, so we bound it from below by one concrete rule: stop the Brownian motion when it exits the ball of radius $s$ around its starting point. The paragraph following \eqref{eq:ball-green-lower-brownian-value} shows that, after division by $2d$, this payoff converges uniformly in probability on compact rectangles to $\mathcal X_s$ as the horizon tends to infinity. Each $\mathcal X_s$ is a stationary Gaussian field with positive correlations, finite range proportional to $s$, and exact scale invariance. Every radius gives an admissible rule, and we will extract finitely many deterministic radii such that the maximum of the corresponding ball fields has the required crossings with probability close to one.

One may be tempted to prove that this lower bound is positive throughout the rectangle. However, a bounded Cameron--Martin tilt can lower all the relevant ball fields below a fixed level on a fixed region. The right target is therefore a crossing.

Fix a planar rectangle~$\mathcal R$ and a crossing direction. The crossings are first built at one radius and then amplified across radii. By scale invariance, crossing $\mathcal R$ at radius $s$ is the same as crossing a rectangle of side length $R\asymp s^{-1}$ at radius one. At radius one, symmetry and planar duality give square crossings at level zero, and the general RSW theorem of \citet[Theorem~1]{KohlerSchindlerTassion} gives rectangle crossings at the same level. To raise the level to order $1/R$, we use an exploration-sprinkling step. The zero-level crossings yield an annular arm bound, and the arm bound shows that the exploration of the bottom cluster, which decides the dual bottom-top crossing, reveals only $o(R^2)$ unit cells on average. Raising the level is then a Cameron--Martin shift of the white noise on the revealed cubes by order $1/R$, whose cost is negligible; see Proposition~\ref{prop:fixed-scale-crossings}.
Rescaled to radius $s$, the level becomes
\[
    b(s)=
    \begin{cases}
        s^2, & d=2,\\
        s^{3/2}, & d=3,
    \end{cases}
\]
since the unit-scale level of order $1/R$ contributes one factor of $s$ and rescaling field values contributes another factor of $s$ in dimension two and $s^{1/2}$ in dimension three. Thus, for the fixed rectangle~$\mathcal R$,
\[
    \liminf_{s\downarrow0}
    \P\bigl(\{\mathcal X_s>b(s)\}\text{ crosses }\mathcal R\bigr)>0\, .
\]

The last step extracts deterministic radii from these fixed-radius estimates. To use a zero-one law, we strengthen the event to crossings above every fixed multiple of $b(s)$ at arbitrarily small radii. This event is
stable under changes of the noise on finitely many cells, because such a change moves $\mathcal X_s$ on $\mathcal R$ by at most a constant multiple of $b(s)$; it still has positive probability, so it has probability one. Applying this
to the finitely many rectangles in the block construction and then using a countability argument gives deterministic radii $s_1,\ldots,s_m$ and a deterministic margin, as proved in Lemma~\ref{lem:finite-scale-extraction}. Theorem~\ref{thm:limiting-odometer-crossing} then gives crossings of the localized Brownian stopping value with high probability. The corresponding localized odometers converge to this Brownian value and are lower bounds for $u_t$, so their crossings pass to the odometer. Thus, for some
$c>0$ and all sufficiently large $t$, $\{x:u_t(x)>c\,t^{(4-d)/4}\}$
almost surely percolates in a fixed coordinate plane; see Theorem~\ref{thm:d23-critical-level-percolation}.

\smallskip
\noindent\underline{\textbf{\textup{Dimension four.}}}

Dimension four is the critical dimension for the linear membrane field. The pointwise standard deviation of $V_t$ is of order $\sqrt{\log t}$, while $\E u_t(0)$ is of order $\log t$. The latter larger scale does not come from a single point fluctuation. It comes from the repeated reflection at zero in the odometer recursion: taking expectations in \eqref{eq:intro-recursion} gives
\begin{equation}\label{eq:intro-increment}
    \E u_{t+1}(0)-\E u_t(0)=\E\bigl(-\zeta(0)-Pu_t(0)\bigr)_+\, .
\end{equation}
Here $P$ is the transition probability of simple random walk, so that $Pf(x)$ is
the average of $f$ over the neighbors of $x$. The right side is the expected amount by which the next linear update would fall below zero.

The mean bounds come from estimating the right side of the increment identity. When $\E u_t(0)$ is much larger than $\log t$, the average $Pu_t(0)$ is usually too high for another reflected increment, and
concentration controls the remaining times. For the lower bound, we force the
stopping rule to make decisions only at times $0,m,2m,\ldots$, with $m$ of
order $\sqrt t$. The payoff collected over one such time interval is a discrete membrane average with variance of order $\log t$, and a lower-tail
estimate shows that this average is negative on the scale $\log t$ sufficiently often. Positive association then turns these block payoffs into positive expected increments over order $\sqrt t$ successive decisions. Together, these two estimates prove the logarithmic growth and the first-order law of large numbers in Theorem~\ref{thm:critical-toppling-d4}.

The odometer is deterministic to first order. The random part that
remains after subtracting $\E u_t(0)$ is smaller and is governed by the linear membrane field: Proposition~\ref{prop:d4-pointwise-linearization} shows that $u_t(x)-\E u_t(0)$ agrees with $V_t(x)$ with mean-square error $O(1+\log\log t)$, uniformly in $x$. This is negligible compared
with $\Var(V_t(0))\asymp\log t$, which gives the Gaussian fluctuation in Proposition~\ref{prop:d4-one-point-gaussian}.

At diffusive times, a logarithmic covariance bound gives tightness of
$u_t-\E u_t(0)$ in every negative Sobolev space. At times
$t=\lfloor R^\alpha\rfloor$ with $\alpha>2$, we run the recursion backward over an intermediate interval that is long compared with $R^2$ and short compared with $t$. The earlier contribution is then almost constant on sets of diameter $R$, while the contribution of the final reflected updates vanishes. After removing one spatial average, only the membrane field remains. Proposition~\ref{prop:d4-superdiffusive-limit} gives this convergence to the four-dimensional continuum membrane model, which is defined modulo additive constants.

The percolation proof follows the strategy of dimensions two and three: we bound the odometer below by a Green field killed on exiting a ball and
look for crossings of its level sets in a fixed coordinate plane. In dimensions two and three, the crossing input came from the continuum ball fields $\mathcal X_s$, whose scale already matched the odometer lower bound. In dimension four, we instead argue that, with high probability, the discrete ball-killed Green field crosses above
$-\varepsilon\log r$; see Theorem~\ref{thm:d4-ball-green-crossing}. The same finite-range quantity also contains a positive contribution of order
$\log r$ on the block. Choosing $\varepsilon$ small enough then gives crossings above a positive multiple of $\log r$, as proved in Lemma~\ref{lem:d4-finite-range-lower-bound} and
Theorem~\ref{thm:d4-critical-level-percolation}.

\smallskip
\noindent\underline{\textbf{\textup{Dimensions five and higher.}}}

In dimensions five and higher, the odometer has bounded pointwise fluctuations but a diverging mean. The fluctuations are bounded because the Green function is square summable: resampling one scenery value moves
$u_t(x)$ by at most a Green-function factor, so $u_t(x)$ concentrates on a
scale independent of $t$. The mean diverges by the same mechanism as in dimension four, the repeated reflection at zero, now at a rate set by the lower tail of the scenery.
Since the fluctuations stay bounded while the mean grows, $u_t(x)/\E u_t(0)\to1$ at every fixed site, and two questions remain: how fast
the mean grows, and what spatial field the fluctuations form.

The mean grows through the reflected increment \eqref{eq:intro-increment},
which is nonzero only when the next linear update falls below zero on the scale
of the current mean $\E u_t(0)$. Two events give lower bounds for this increment: an exceptionally negative value at the origin, at a cost governed by the lower tail; and a whole ball of radius $\sqrt{\E u_t(0)}$ below a fixed
negative level, at a cost exponential in its volume. The matching upper bound comes from the lower tail of the scenery averaged against a finite Green function. When the lower tail
decays like $e^{-s^\gamma}$, the mean is of order $(\log t)^{1/\min\{\gamma,d/2\}}$: the lower-tail exponent determines the order when $\gamma<d/2$, though the corresponding fluctuation need not occur at a single site,
and the negative ball determines it when $\gamma>d/2$; see Section~\ref{ssec:d5-height-upper}. At the
borderline $\gamma=d/2$ the two costs are of the same order, so neither strictly
wins, and we do not treat this case.

For the spatial diffusive limit, we return to the optimal-stopping picture. Recall
that the odometer at a site is the largest expected payoff of a walk started there. By Lemma~\ref{lem:odometer-derivative}, its derivative in $\zeta(z)$ is the expected number of visits to $z$ before the optimal stopping time, so the fluctuation of the odometer is linear in the scenery:
\[
	u_n(x)-\E u_n(x)
	\approx
	\sum_{j=0}^{n-1}
	\P(\text{the walk has not stopped by time }j)\,P^j\zeta(x)\, .
\]
With no stopping the sum would be the
membrane field $V_n=\sum_{j<n}P^j\zeta$.

The walk halts on first reaching a site that will not topple in the time it
has left. It is natural to expect that the probability of not having stopped after a fraction $r/T$ of the time falls from one toward zero, as a power $(1-r/T)^\kappa$ for some $\kappa > 0$. The resulting weighted membrane field is $\mathcal H_{\kappa,T}$. Proposition~\ref{prop:dgt4-linearization} makes this weighted linear approximation precise.

What makes a site fail to topple, and so how fast such sites proliferate, depends on the lower tail of the scenery. For scenery bounded above whose lower tail is regularly varying with exponent $-\alpha$, a site fails to topple
when its own scenery value is exceptionally small, and the tail gives $\kappa=1-1/\alpha$.
For Gaussian scenery no single value is decisive; a site fails to topple when the
accumulated linear field there, the membrane field, happens to sit far below
zero, and the Gaussian tail gives $\kappa=1$. This proves
Theorem~\ref{thm:dgt4-diffusive-membrane}. The existence of a scaling limit depends sensitively on the law. We give an explicit counterexample
of a law with exponential moment for which every field $\mathcal H_{\kappa,T}$ with
$3/2\leq\kappa\leq2$ is a subsequential limit of the rescaled odometer;
hence the rescaled odometer has no limit
(Theorem~\ref{thm:dgt4-many-limits}).

Unlike in~$d=2,3$, the percolation proof does not use these scaling limits. We
show directly that $\{u_t>\E u_t(0)/2\}$ has an infinite component once
$\E u_t(0)$ is large. This fails only if the sites $\{x: u_t(x) < \E u_t(0)/2\}$ form a barrier around the origin, so we rule such barriers out.
Pointwise concentration makes $u_t$ low at a given site with tiny probability,
but the odometer is correlated over distances of order $\sqrt t$, so a union
bound over the sites of a barrier is too weak. The remedy is that in dimension
five and higher the Green function is square-summable, so the odometer is nearly
independent across well-separated regions. A multiscale argument then rules out
barriers at every scale; see Lemmas~\ref{lem:dgt4-localization},
\ref{lem:dgt4-level-shift-decoupling}, and~\ref{lem:dgt4-cascade}. This proves
Theorem~\ref{thm:dgt4-nontriviality} and completes the proof of
Theorem~\ref{thm:main-critical-level-percolation}.

\subsection{Open questions}\label{ssec:open-questions}

The most prominent open question remains the abelian-sandpile problem discussed in Subsection~\ref{ssec:abelian-percolation}: can the toppled set contain an infinite component for a stabilizable i.i.d.\ law? We record two further questions raised by our results.

Theorem~\ref{thm:main-nontriviality} and the remark following it show that the Poisson sandpile has a critical density strictly between zero and one.
A natural question in percolation theory is sharpness of the phase transition. 

\begin{problem}[Sharpness of the phase transition]\label{prob:sharpness}
Let $d\geq2$, let the masses be i.i.d.\ with law $\mathrm{Pois}(\rho)$, and let $\rho_c=\rho_c(d)$ be the infimum of the means $\rho\in(0,1]$ for which the toppled set contains an infinite component almost surely. Show that, for every $\rho<\rho_c$, there is $c=c(\rho,d)>0$ such that the component of the origin in the toppled set lies in $[-R,R]^d$ with probability at least $1-e^{-cR}$ for every $R\geq1$.
\end{problem}

In dimension four, Proposition~\ref{prop:d4-diffusive-tightness} proves tightness at
diffusive times, and Proposition~\ref{prop:d4-superdiffusive-limit}
identifies the limit only at superdiffusive times.

\begin{problem}[Diffusive limit in dimension four]\label{prob:d4-diffusive}
Let $d=4$ and $\sigma=1+2d\zeta$,
where $(\zeta(x))_{x\in\Z^4}$ are i.i.d.\ mean-zero Gaussian variables with positive variance. Show that, for every $T>0$ and every $s>0$, the fields $\bigl(u_{\lfloor TR^2\rfloor}-\E u_{\lfloor TR^2\rfloor}(0)\bigr)^{(R)}$ converge in law in $H^{-s}_{\rm loc}(\R^4)$ as $R\to\infty$, and identify the limiting field.
\end{problem}

Establishing convergence is difficult because the odometer is a nonlinear
optimal-stopping functional of a log-correlated field. One possible approach
is to show that every subsequential limit satisfies properties that uniquely determine
its law, in the spirit of the axiomatic characterization of the Liouville
quantum gravity metric by \citet*{GwynneMillerLQGMetric}. It is plausible that the ideas in \citet*{LiLiuIntermediate}, who identified the scaling limit of
intermediate level sets of the four-dimensional membrane model as a subcritical Gaussian multiplicative chaos, may be useful here. 

\subsection{Related work}\label{ssec:related}

Let us briefly discuss the literature most closely related to our results.

\subsubsection{Divisible sandpiles}

The divisible sandpile was first studied in the single-source setting: place
mass $m$ at the origin of $\Z^d$, stabilize, and let $m\to\infty$.
\citet*{LevinePeres09,LevinePeres} proved that the occupied cluster, rescaled
by $m^{-1/d}$, converges to a Euclidean ball. The present paper starts instead
from i.i.d.\ masses on all of $\Z^d$, so the total mass is infinite. More
generally, for infinite-volume stationary ergodic mass fields, \citet*{LMPU}
proved stabilization below mean one and explosion above mean one; at mean
one, for i.i.d.\ masses with positive finite variance, they proved
explosion.  \citet*{CHRheavy}
treated critical heavy-tailed laws under symmetry assumptions.  \citet*{BPSH}
extended the theory to bounded-degree graphs, proved explosion at mean one
under either finite variance or symmetry, and introduced the optimal-stopping
representation on which we rely.

\subsubsection{Abelian sandpile percolation}\label{ssec:abelian-percolation}

The percolation problem studied in this paper was first posed by
\citet*{FMR} for the abelian sandpile. This is the integer-valued toppling
model of \citet*{BakTangWiesenfeld} and \citet*{Dhar90}: a site of $\Z^d$
with at least $2d$ grains sends one grain to each neighbor. For background
see
\citep{MeesterRedigZnamenski,Redig,HolroydLevineMeszarosPeresProppWilson,Jarai,LevinePropp}.
The single-source growth problem, where a large pile is added at the origin
and stabilized on a fixed background, was studied in
\citep{LiuKaplanGray,Ostojic,DharSadhuChandra,SadhuDharSinks,FeyLevinePeres,FeyRedigShapes,PegdenSmart,LevinePegdenSmartAbelian,LevinePegdenSmart,BouRabeeRandomASM,BouRabeeDimReduction,BouRabeeExploding,BouRabeeFLattice}.

\citet*{FMR} considered the infinite-volume abelian sandpile started from
i.i.d.\ heights. When stabilization is possible, let $\mathcal T$ be the set
of sites that topple. As discussed above, they asked
whether there is a stabilizable i.i.d.\ law for which $\mathcal T$ has an
infinite cluster \citep[Section~5]{FMR}. This remains open
for the abelian sandpile. Theorem~\ref{thm:main-nontriviality} proves the
divisible-sandpile analogue: for densities below one but sufficiently close to
one, the sandpile stabilizes and the toppled set percolates.  \citet*{FMR}
also proved that, at sufficiently small density,
toppled clusters have exponential tails. The proof is the mass-counting
Peierls estimate mentioned after
Theorem~\ref{thm:main-nontriviality}: a toppled cluster of $n$ sites must
initially contain at least $n$ grains.
\citet*{PanStauffer} later proved uniqueness of the infinite toppled cluster (assuming it exists)
for the abelian sandpile and related monotone models. Divisible sandpiles and
abelian sandpiles also fit within the broader abelian network framework of
\citet*{BondLevineI,ChanLevineIV}.

\subsubsection{Membrane model}

The discrete membrane model is the mean-zero Gaussian field whose covariance is the
Green kernel of the discrete bi-Laplacian \citep{Kurt09,CDH}. More
concretely, if $\Delta$ is the discrete Laplacian, then in finite volume its
covariance kernel is the inverse of $\Delta^2$, after fixing the boundary
condition or the additive constant. For this reason the membrane model is
also often called the bi-Laplacian Gaussian field.

Recall that the connection to divisible sandpiles was first observed by
\citet*{LMPU}. The corresponding torus convergence to the continuum
bi-Laplacian field was proved by \citet*[Theorem~2]{CHR}; the
infinite-volume, zero-mode-subtracted version used here is recorded in
Section~\ref{ssec:continuum-membrane-fields}.
Related divisible-sandpile scaling limits have been proved for long-range toppling rules
\citep{FroJar,CJR}. For the discrete
membrane model itself, scaling limits were proved in dimension one by
\citet*{CaravennaDeuschel} and in dimensions two and higher by
\citet*[Theorems~2.1 and~3.11]{CDH}.

The comparison table before Theorem~\ref{thm:main-explosion} uses maximum
estimates for the membrane model. In dimension four, \citet*{Kurt09} proved
logarithmic maximum bounds, and \citet*{Schweiger20} proved convergence of the
maximum after subtracting a deterministic shift. The odometer is not necessarily
a membrane model conditioned to be nonnegative, but entropic repulsion is
the closest membrane-model analogue of the upward correction created by a
one-sided constraint; see \citet*{Kurt,Kurt09}. For the percolation theorem,
the closest membrane-model result is the level-set phase transition in
dimensions $d\geq5$ proved by \citet*{ChiariniNitzschner}.

\subsubsection{Harness processes with a wall}

Harness processes, introduced by \citet*{HammersleyHarnesses}, are similar to the divisible sandpile 
update rule; see also
\citet*{ToomHarness,FerrariNiederhauserHarness}. In the serial wall harness
of \citet*{FerrariFontesNiederhauserVachkovskaiaWall}, when $x$ is selected
its new height is
\[
    W_{n+1}(x)=\Bigl(\sum_y p(x,y)W_n(y)+\varepsilon_{n+1}(x)\Bigr)_+\, ,
\]
while the other heights are unchanged. The positive part enforces the wall.
This is the same local one-sided averaging rule as
\eqref{eq:intro-recursion}, but the harness noise is resampled at each update,
whereas our scenery is frozen and reused throughout the dynamics. They prove
that the wall forces delocalization and obtain bounds on the height in terms
of the noise tail.

\subsubsection{Random walk in random scenery and optimal stopping}
Theorem~\ref{thm:RW} represents the odometer as the value of an
optimal-stopping problem for a random walk in random scenery. For the
fixed-time sum $\sum_{k<n}\zeta(X_k)$, classical results include the limit
theorems of \citet*{KesSpi,Bolt}, the extremes studied by
\citet*{FrankeSaigoExtremes}, and the annealed large-deviation estimates of
\citet*{GKS,GantertHofstadKonig}. The latter reflect competing contributions
from the walk's local times and the scenery. An analogous distinction appears
in our high-dimensional results: regularly varying lower tails produce a
one-site mechanism, whereas Gaussian scenery produces a collective
mechanism described by Green-function averages.

Our problem nevertheless differs from both a fixed-time scenery sum and its
pathwise maximum: the scenery is fixed, the walk is averaged, and its stopping
time is optimized. For background on optimal stopping, see \citet*{PeskirShiryaev}. Below, when proving convergence of
the discrete stopping values to the Brownian value in dimensions one, two, and three, we use the stability results of \citet*[Theorem~3 and Corollary~4]{CoquetToldo}.

\subsubsection{Level-set percolation of Gaussian fields}

Percolation of excursion sets for dependent random fields was studied by
\citet*{MolStepI}, while \citet*{BLM} treated the massless Gaussian field,
where the slow decay of correlations is a central difficulty. For the
lattice Gaussian free field, \citet*{RodSzn} established nontriviality of the
phase transition and decay at high levels. Using the interlacement and
loop-soup couplings of \citet*{Lupu}, \citet*{DPR} proved percolation of sign
clusters, and \citet*{DGRS} later proved sharpness of the phase transition.

Several methods from this literature enter our proof. In dimensions two through four, our planar crossing argument draws on the RSW
and sharp-transition theory for smooth planar Gaussian fields
\citep{BeffaraGayet,MuiVan}. The exploration--sprinkling step is related to enhancement percolation, where a local monotone modification is compared with the original process through an exploration. This approach goes back to
\citet*{AizenmanGrimmett}; \citet*{BalisterBollobasRiordan} later revisited
the essential-enhancement argument and partially corrected a combinatorial
step in the original proof. In dimensions five and higher, we instead use sprinkling, level shifts, and multiscale arguments developed for strongly correlated and finite-range-decomposable Gaussian fields
\citep{Severo,MuiSev,Mui2}. The discrete membrane model in dimensions
$d\geq5$ belongs to the latter class.

\subsection{Notation and conventions}\label{ssec:notation}

\begin{itemize}
    \item  For $x\in\R^d$, write $|x|$ for the Euclidean norm, and write
    \[
        Q(x,L)\coloneqq \{y\in\Z^d:\max_{1\leq i\leq d}|y_i-x_i|\leq L\}
    \]
    for the box of radius $L$ about $x$.
    For $A\subseteq\Z^d$, write
    \[
        \partial A\coloneqq\{y\notin A:\text{ there is }x\in A\text{ with }x\sim y\}
    \]
    for its outer vertex boundary.
    We write $x\sim y$ when $x,y\in\Z^d$ satisfy $|x-y|=1$. The averaging
    operator $P$ and the discrete Laplacian $\Delta$ are defined by
    \[
        Pf(x)\coloneqq\frac1{2d}\sum_{y\sim x}f(y),\qquad
        \Delta f(x)\coloneqq\sum_{y\sim x}(f(y)-f(x))\, .
    \]
    For $1\leq p\leq\infty$, the norm $\ell^p(\Z^d)$ is taken with counting measure.
	    Unless a range is shown, a sum over a lattice variable, such as $\sum_x$ or
	    $\sum_z$, runs over all of $\Z^d$.

    \item For open $U\subset\R^d$, $L^p(U)$ is taken with Lebesgue measure, and
    $C_c^\infty(U)$ denotes the smooth functions whose support is compact and
    contained in $U$. For Lebesgue measurable $U\subset\R^d$, $|U|$ denotes
    the Lebesgue measure. We use standard Sobolev-space
	    notation. For a bounded smooth
	    domain $D\subset\R^d$ and $s>0$, $H_0^s(D)$ is the closure of
	    $C_c^\infty(D)$ in $H^s(D)$, and
	    $H^{-s}(D)\coloneqq (H_0^s(D))^*$. For an open $U\subseteq\R^d$, a locally
	    integrable function $F$ on $U$, and $\varphi\in C_c^\infty(U)$, we write
	    \[
	        F(\varphi)\coloneqq\int_U F(x)\varphi(x)\,dx\, ,
	    \]
	    and use the same notation for the pairing of a random distribution on $U$
	    with $\varphi$.
	    The corresponding norm is defined by
    \[
        \|F\|_{H^{-s}(D)}
        \coloneqq 
        \sup_{\substack{\varphi\in C_c^\infty(D)\\ \|\varphi\|_{H^s(D)}\leq1}}
        |F(\varphi)|\, .
    \]
    If $U\subset\R^d$ is open, convergence in $H^{-s}_{\rm loc}(U)$ means
    convergence in $H^{-s}(D)$ for every bounded smooth domain $D$ whose
    closure is contained in $U$; tightness in $H^{-s}_{\rm loc}(U)$ means
    tightness in $H^{-s}(D)$ for every such $D$.
	    More generally, if $U$ is an open subset of a Euclidean space, we write
	    $C_{\rm loc}(U)$ for the continuous functions on $U$, with the topology of
	    uniform convergence on compact subsets.

	\item For $\varphi,\psi\in C_c^\infty(\R^d)$, the Fourier transform and
	Plancherel's identity are
	\[
		\widehat\varphi(\xi)\coloneqq\int_{\R^d}\varphi(x)e^{-i\xi\cdot x}\,dx,\qquad
		\int_{\R^d}\varphi\,\overline{\psi}\,dx=(2\pi)^{-d}\int_{\R^d}\widehat\varphi\,\overline{\widehat\psi}\,d\xi\, .
	\]
	The heat semigroup $e^{r\Delta/(2d)}$ on $\R^d$, with $\Delta$ the continuum
	Laplacian, acts by multiplication by $e^{-r|\xi|^2/(2d)}$.
	For summable $f,g:\Z^d\to\R$, the Fourier transform and discrete Parseval
	identity are
	\[
		\widehat f(\theta)\coloneqq\sum_{x\in\Z^d}f(x)e^{-i\theta\cdot x},\qquad
		\sum_{x\in\Z^d}f(x)\overline{g(x)}=(2\pi)^{-d}\int_{[-\pi,\pi]^d}\widehat f\,\overline{\widehat g}\,d\theta\, ,
	\]
	where $\theta\in[-\pi,\pi]^d$. The averaging operator $P$ acts by its symbol
	$\lambda(\theta)\coloneqq\tfrac1d\sum_{k=1}^d\cos\theta_k$, so that
	$\widehat{Pf}(\theta)=\lambda(\theta)\,\widehat f(\theta)$.

	\item For a real number $R\geq1$ and a function $f:\Z^d\to\R$, let
	$f^{(R)}:\R^d\to\R$ be the piecewise-constant embedding on the spatial mesh
	$R^{-1}$ given by
	\[
		f^{(R)}(z)\coloneqq f(\lfloor Rz\rfloor)\, ,
		\qquad z\in\R^d\, ,
	\]
	where the floor is taken coordinatewise.

    \item Probability and expectation for the initial masses are denoted by
    $\P$ and $\E$. Conditional on the initial masses, $\mathbf P_x$ and
    $\mathbf E_x$ denote probability and expectation for simple random walk
    $X=(X_n)_{n\geq0}$ started at $x$:
    \begin{equation}\label{eq:random-walk-notation}
        \mathbf P_x(X_0=x)=1,\qquad
        \mathbf P_x(X_{n+1}=y\mid X_n=z)=\tfrac1{2d}\one_{\{y\sim z\}}\, .
    \end{equation}
    We write
    \[
        p_k(x,y)\coloneqq\mathbf P_x(X_k=y),\qquad
        g_t(x,y)\coloneqq\sum_{k=0}^{t-1}p_k(x,y)\, ,
    \]
    and for $d\geq 3$,
    \[
    G(x,y)\coloneqq\sum_{k\geq0}p_k(x,y)\, .
    \]
    For $D\subseteq\Z^d$, write
    \begin{equation}\label{eq:killed-walk-notation}
    \begin{aligned}
        \tau_D&\coloneqq\inf\{k\geq0:X_k\notin D\},\\
        g_t^D(x,y)&\coloneqq \mathbf E_x\sum_{0\leq k<t\wedge\tau_D}
        \one_{\{X_k=y\}},\\
        g^D(x,y)&\coloneqq \mathbf E_x\sum_{0\leq k<\tau_D}
        \one_{\{X_k=y\}}\, .
    \end{aligned}
    \end{equation}
    \item Positive finite constants $c$ and $C$ may change from line to line. Dependencies
    are indicated by writing, for example, $C=C(d)$.
\end{itemize}

\section*{Acknowledgments}
We thank Antal A. J\'arai for helpful discussions. CP was supported by an EPSRC New Investigator Award (UKRI1019).

We acknowledge ChatGPT 5.6 for help with the proof of Proposition~\ref{prop:fixed-scale-crossings}.

\section{Preliminaries}\label{sec:rw-rep}

	In this section, we record the toppling recursion, its
	optimal-stopping form for random walk in random scenery, the continuum
	membrane fields that arise as scaling limits, and the Brownian stopping
	formulas.

		\subsection{Optimal stopping}
		Fix an initial configuration~$\sigma:\Z^d \to \R$ and write
		\begin{equation}\label{eq:mass-excess-notation}
			\zeta(x)\coloneqq\frac{\sigma(x)-1}{2d}
		\end{equation}
		for the normalized excess, which we call the scenery. In each step, every site topples in parallel.
		Specifically, let $\sigma_n$ be the mass field after $n$ steps of toppling, and
		let $u_n(x)$ be the mass sent from $x$ to each neighbor up to that time. Set
		$\sigma_0=\sigma$ and $u_0=0$. For $n\geq0$,
		\begin{align}
	u_{n+1}(x)&=u_n(x)+\frac{(\sigma_n(x)-1)_+}{2d}\, ,\label{eq:u-update}\\
	\sigma_{n+1}&=\sigma_n+\Delta\!\Bigl[\tfrac{(\sigma_n-1)_+}{2d}\Bigr]\, ,\label{eq:sigma-update}
	\end{align}
	where $a_+\coloneqq\max(a,0)$. The update~\eqref{eq:u-update}--\eqref{eq:sigma-update} gives the recursion
		\eqref{eq:intro-recursion}.

		\begin{lemma}\label{lem:recursion}
			For all $n \geq 0$ and $x\in\Z^d$,
			\begin{equation}\label{eq:odometer-recursion}
			u_{n+1}(x) = \biggl(\frac{1}{2d}\sum_{y \sim x} u_n(y) + \zeta(x)\biggr)_+\, .
			\end{equation}
		\end{lemma}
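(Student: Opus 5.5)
The plan is to show by induction on $n$ that $u_n(x)$ computed from the toppling updates \eqref{eq:u-update}--\eqref{eq:sigma-update} satisfies \eqref{eq:odometer-recursion}. The key identity to establish first is the conservation relation
\[
    \sigma_n(x)=\sigma(x)+\Delta u_n(x)=\sigma(x)+\sum_{y\sim x}u_n(y)-2d\,u_n(x)\, ,
\]
which says that the current mass at $x$ is its initial mass plus what its neighbors have sent to it, minus what it has sent out. This follows by summing \eqref{eq:sigma-update} over the first $n$ steps. Indeed, \eqref{eq:u-update} gives $u_{k+1}-u_k=(\sigma_k-1)_+/(2d)$, so $\sigma_{k+1}-\sigma_k=\Delta(u_{k+1}-u_k)$. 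Telescoping with $\sigma_0=\sigma$ and $u_0=0$ then yields the identity, using that $\Delta$ is linear.

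Next, substitute this into \eqref{eq:u-update}:
\[
    u_{n+1}(x)=u_n(x)+\frac1{2d}\Bigl(\sigma(x)-1+\sum_{y\sim x}u_n(y)-2d\,u_n(x)\Bigr)_+
    =u_n(x)+\bigl(\zeta(x)+Pu_n(x)-u_n(x)\bigr)_+\, .
\]
Using the elementary identity $a+(b-a)_+=\max(a,b)$, this becomes $u_{n+1}(x)=\max\bigl(u_n(x),\,\zeta(x)+Pu_n(x)\bigr)$.

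It remains to replace $\max(u_n(x),\cdot)$ by $(\cdot)_+$. The claim is that $u_n(x)=(\zeta(x)+Pu_{n-1}(x))_+$ for $n\geq1$ and that $u_n$ is nondecreasing in $n$. I will run the induction on the statement that \eqref{eq:odometer-recursion} holds at step $n$ and that $u_n\geq u_{n-1}$ pointwise. For the base case, $u_1=\max(0,\zeta)=\zeta_+$, which agrees with \eqref{eq:odometer-recursion} at $n=0$ since $u_0=0$. Monotonicity $u_{n+1}\geq u_n$ is immediate from the max form, since $(\cdot)_+\geq0$. For the inductive step, since $u_n\geq u_{n-1}$ and $P$ is monotone,
\[
    \zeta(x)+Pu_n(x)\geq\zeta(x)+Pu_{n-1}(x)\, .
\]
Hence $(\zeta(x)+Pu_n(x))_+\geq(\zeta(x)+Pu_{n-1}(x))_+=u_n(x)$. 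Therefore
\[
    \max\bigl(u_n(x),\zeta(x)+Pu_n(x)\bigr)=\max\bigl(u_n(x),(\zeta(x)+Pu_n(x))_+\bigr)=(\zeta(x)+Pu_n(x))_+\, ,
\]
where the first equality uses $u_n\geq0$. This is \eqref{eq:odometer-recursion}.

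The only delicate point is this last monotonicity step, which removes the $u_n(x)$ term from the maximum; the rest is bookkeeping. One technical check is that all sums are finite. Each $u_n(x)$ is a finite combination of finitely many masses, so this holds for arbitrary real $\sigma$, and no summability over $\Z^d$ is needed.
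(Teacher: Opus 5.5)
Your proof is correct. Its first half is exactly the paper's: you telescope \eqref{eq:sigma-update} to get $\sigma_n=\sigma+\Delta u_n$ and rewrite \eqref{eq:u-update} as $u_{n+1}=\max\{u_n,\zeta+Pu_n\}$. The difference is in how you remove $u_n$ from the maximum.

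The paper uses a physical invariant. Since $\sigma_{m+1}(x)\geq\sigma_m(x)-(\sigma_m(x)-1)_+=\min\{\sigma_m(x),1\}$, a site keeps mass at least one from its first toppling on. So $u_n(x)>0$ forces $\sigma_n(x)\geq1$, which is precisely the inequality $\zeta(x)+Pu_n(x)\geq u_n(x)$.

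You derive the same inequality, in the form $u_n\leq(\zeta+Pu_n)_+$, by induction. The ingredients are $u_n=(\zeta+Pu_{n-1})_+$, the monotonicity $u_{n-1}\leq u_n$, and the fact that $P$ preserves order.

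The two routes establish literally the same fact: on $\{u_n(x)>0\}$, the statement $\sigma_n(x)\geq1$ is equivalent to $u_n(x)\leq\zeta(x)+Pu_n(x)$. The paper's version needs only a one-step bound on the mass dynamics rather than carrying the recursion itself in the induction hypothesis. It also makes visible why the positive part is inactive at any site that has already toppled. Yours never returns to the mass variables after the telescoping step and uses nothing about the sandpile beyond positivity of the averaging operator. It therefore reads as a statement about the monotone iteration $u\mapsto(\zeta+Pu)_+$ started from zero.
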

		\begin{proof}
			Telescoping \eqref{eq:sigma-update} gives $\sigma_n=\sigma+\Delta u_n$,
			so \eqref{eq:u-update} reads
			$u_{n+1}=u_n+(\zeta+Pu_n-u_n)_+=\max\{u_n,\zeta+Pu_n\}$.
			If $u_n(x)=0$, the maximum is $(\zeta(x)+Pu_n(x))_+$. If $u_n(x)>0$,
			then $x$ has toppled, and a site keeps mass at least one from its
			first toppling on, since
			$\sigma_{m+1}(x)\geq\sigma_m(x)-(\sigma_m(x)-1)_+=\min\{\sigma_m(x),1\}$.
			Hence $\sigma_n(x)\geq1$, that is, $\zeta(x)+Pu_n(x)\geq u_n(x)>0$,
			and the maximum again equals $(\zeta(x)+Pu_n(x))_+$.
		\end{proof}
		Letting $n\to\infty$ gives the limiting fixed-point identity
		\[
			u_\infty(x) = \biggl(\frac{1}{2d}\sum_{y \sim x} u_\infty(y) + \zeta(x)\biggr)_+\, .
		\]
		If $0<u_\infty(x)<\infty$ and $u_\infty(y)<\infty$ for every neighbor
		$y\sim x$, then
		\[
			\zeta(x) = u_\infty(x) - \frac{1}{2d}\sum_{y \sim x} u_\infty(y)\, .
		\]
		This recursion is the dynamic-programming
		equation of an optimal-stopping problem, first observed in the sandpile
		setting by \citet*{BPSH}. For the random walk $X$ from
		\eqref{eq:random-walk-notation}, define
		\[
			S_n \coloneqq \sum_{k=0}^{n-1}\zeta(X_k),
			\qquad S_0\coloneqq0\, ,
		\]
		and set
		\[
			v_n(x) \coloneqq \sup_{\tau \leq n} \mathbf E_x[S_\tau]\, ,
		\]
		where the supremum is over stopping times of the natural filtration of the walk that are bounded by $n$.

	\begin{theorem}[Optimal-stopping representation; Theorem~3.2 in \citealp{BPSH}]\label{thm:RW}
		For all $n \geq 0$ and $x\in\Z^d$,
		\[
			u_n(x) = v_n(x) = \mathbf E_x[S_{\tau_n^*}]\, ,
		\]
			where $\tau_n^* \coloneqq \min\bigl\{0 \leq k \leq n : v_{n-k}(X_k) = 0\bigr\}$.
	\end{theorem}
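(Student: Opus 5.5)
We prove the identity $u_n=v_n$ by induction on $n$, by showing that $v_n$ satisfies the same recursion as $u_n$. The stopping-time formula then follows from the standard characterization of the optimal rule in finite-horizon optimal stopping. Both $u_n$ and $v_n$ are pointwise finite, since $|\mathbf E_x S_\tau|\leq\sum_{k<n}\mathbf E_x|\zeta(X_k)|$ is a finite sum of finitely many scenery values.

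\emph{Step 1 (dynamic programming for $v$).} Clearly $v_0\equiv0=u_0$. For $n\geq0$ we claim
\[
v_{n+1}(x)=\max\bigl\{0,\ \zeta(x)+Pv_n(x)\bigr\}.
\]
For the upper bound, take any stopping time $\tau\leq n+1$. On $\{\tau=0\}$, which is $\mathcal F_0$-measurable and hence deterministic given $X_0=x$, the payoff is $0$. Otherwise $\tau\geq1$, and $S_\tau=\zeta(x)+S'_{\tau-1}$, where $S'$ is the scenery sum along the shifted walk $(X_{1+k})_k$. Conditionally on $X_1=y$, the time $\tau-1$ is a stopping time for the shifted walk bounded by $n$, so its conditional payoff is at most $v_n(y)$. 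Averaging over $y$ gives $\mathbf E_xS_\tau\leq\zeta(x)+Pv_n(x)$.

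For the lower bound, take $\tau=0$, or take $\tau=1+\tau^{(y)}$, where for each neighbor $y$ the time $\tau^{(y)}$ is a (near-)optimal stopping time for $v_n(y)$ applied to the shifted walk. The horizon is finite and the walk takes finitely many values, so optima are attained and no $\varepsilon$-argument is needed. This gives the claimed recursion.

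\emph{Step 2 (induction).} By Lemma~\ref{lem:recursion}, $u_{n+1}=(\zeta+Pu_n)_+$, which is the same recursion. If $u_n=v_n$, then $u_{n+1}=v_{n+1}$, so $u_n=v_n$ for all $n$.

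\emph{Step 3 (optimal rule).} The claim to prove is that $M_k\coloneqq S_{k\wedge\tau_n^*}+v_{n-k\wedge\tau_n^*}(X_{k\wedge\tau_n^*})$ is a martingale for $0\leq k\leq n$. On $\{k<\tau_n^*\}$ we have $v_{n-k}(X_k)>0$, so by Step 1 $v_{n-k}(X_k)=\zeta(X_k)+Pv_{n-k-1}(X_k)$. This means the increment
\[
M_{k+1}-M_k=\zeta(X_k)+v_{n-k-1}(X_{k+1})-v_{n-k}(X_k)
\]
has conditional mean zero. The stopping time $\tau_n^*$ is at most $n$ because $v_0\equiv0$, so it is well defined. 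Since $v_{n-\tau_n^*}(X_{\tau_n^*})=0$, we get $v_n(x)=M_0=\mathbf E_xM_{\tau_n^*}=\mathbf E_xS_{\tau_n^*}$.

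\emph{Main obstacle.} The only delicate point is the measurability and conditioning in the upper bound of Step 1: one must decompose a general stopping time through the first step and check that, on each value of $X_1$, the shifted time is a stopping time of the shifted filtration. This is routine because the walk is a discrete Markov chain with finite horizon.
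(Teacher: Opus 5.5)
Your proof is correct and takes the route the paper relies on. The paper gives no proof of its own: it cites \citet{BPSH} and standard finite-horizon optimal stopping \citep{PeskirShiryaev}. Concretely, $v_n$ satisfies the Bellman equation $v_{n+1}=(\zeta+Pv_n)_+$, which by induction coincides with the recursion of Lemma~\ref{lem:recursion}. The first time the value process $S_k+v_{n-k}(X_k)$ meets the gain, i.e.\ the first $k$ with $v_{n-k}(X_k)=0$, is then optimal by exactly the stopped-martingale argument you wrote. Your remarks on finiteness, and on the supremum being attained because there are finitely many paths over a finite horizon, handle the only technical points.
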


	Increasing the scenery at a site changes the odometer at the rate given by the
	expected number of visits to that site before the optimal stopping time.

\begin{lemma}\label{lem:odometer-derivative}
	Suppose $(\zeta(x))_{x\in\Z^d}$ are independent and atomless. Then,
	$\P$-almost surely, for every $n\geq1$ and $x,z\in\Z^d$,
	\begin{equation}\label{eq:odometer-derivative}
		\partial_{\zeta(z)}u_n(x)
		=
		\mathbf E_x\sum_{j=0}^{n-1}
		\one_{\{X_j=z\}}
		\prod_{i=0}^j\one_{\{u_{n-i}(X_i)>0\}}
		=
		\mathbf E_x\sum_{j=0}^{n-1}
		\one_{\{X_j=z\}}\one_{\{\tau_n^*>j\}}\, .
	\end{equation}
\end{lemma}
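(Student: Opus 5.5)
We argue by induction on $n$, using the recursion of Lemma~\ref{lem:recursion}. Since $\zeta(z)$ enters only additively, each $u_n(x)$ is a finite composition of affine maps and positive parts of finitely many scenery values, all with nonnegative coefficients. Hence $u_n(x)$ is a piecewise-linear, convex, nondecreasing function of the finitely many variables $\zeta(y)$ with $|y-x|<n$. The derivative therefore exists wherever no positive-part argument vanishes.

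The first step is to identify the exceptional set. Let $\mathcal N_n$ be the event that $\zeta(x)+Pu_{m}(x)=0$ for some $m<n$ and some $x$ in the relevant finite box. We need $\P(\mathcal N_n)=0$. Condition on all scenery values except $\zeta(x)$; then $Pu_m(x)$ is a deterministic number, because it does not depend on $\zeta(x)$.

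This independence needs care. By induction, $u_m(y)$ for $y\sim x$ could depend on $\zeta(x)$ through paths that return to $x$, so the conditioning does not directly apply. Instead, note that $\zeta(x)+Pu_m(x)$ is a continuous, strictly increasing function of $\zeta(x)$ with slope at least one, since all coefficients are nonnegative. Its zero is therefore a single point $s^*$ determined by the other variables. Because $\zeta(x)$ is atomless and independent of the others, $\P(\zeta(x)=s^*)=0$. Taking countable unions over $m,n,x$ shows that almost surely no argument vanishes. All the piecewise-linear maps are then differentiable at the realized scenery, and the chain rule applies. Making this monotonicity and strictness argument rigorous is the main technical point.

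With the exceptional set removed, differentiate \eqref{eq:odometer-recursion}:
\[
\partial_{\zeta(z)}u_{n}(x)=\one_{\{u_n(x)>0\}}\Bigl(\one_{\{x=z\}}+P\,\partial_{\zeta(z)}u_{n-1}(x)\Bigr),
\]
where $\one_{\{u_n(x)>0\}}$ coincides with the indicator that the argument is positive. Unrolling this recursion $n$ times and writing $P^j$ as an expectation over the walk gives
\[
\mathbf E_x\sum_{j=0}^{n-1}\one_{\{X_j=z\}}\prod_{i=0}^j\one_{\{u_{n-i}(X_i)>0\}}.
\]
The unrolling terminates because $u_0\equiv0$ has zero derivative. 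This is the first expression in \eqref{eq:odometer-derivative}.

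For the second equality, Theorem~\ref{thm:RW} gives $v_{n-i}=u_{n-i}$, so $\tau_n^*=\min\{k:u_{n-k}(X_k)=0\}$. Hence $\prod_{i\le j}\one_{\{u_{n-i}(X_i)>0\}}=\one_{\{\tau_n^*>j\}}$, which is the second expression.
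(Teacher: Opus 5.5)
Your proposal is correct and follows the paper's proof. You discard the null set on which some positive-part argument $\zeta(y)+Pu_m(y)$ vanishes, differentiate the recursion of Lemma~\ref{lem:recursion}, unroll along the walk, and identify the product of indicators with $\one_{\{\tau_n^*>j\}}$ via Theorem~\ref{thm:RW}. The only difference is that you spell out the paper's one-line claim that ties have probability zero. Your opening assertion that $Pu_m(x)$ does not depend on $\zeta(x)$ is false. The replacement argument you give is the right justification: $s\mapsto s+Pu_m(x)$ is strictly increasing in $s=\zeta(x)$, so its unique zero is a measurable function of the other coordinates, and the independent atomless $\zeta(x)$ hits it with probability zero.
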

\begin{proof}
	By atomlessness and independence, ties occur with probability zero, so $u_n$
	is almost surely differentiable in each coordinate. Differentiating the recursion
	$u_n(x)=(\zeta(x)+Pu_{n-1}(x))_+$ of Lemma~\ref{lem:recursion} gives
	\[
		\partial_{\zeta(z)}u_n(x)
		=\one_{\{u_n(x)>0\}}
		\bigl(\one_{\{x=z\}}+P\,\partial_{\zeta(z)}u_{n-1}(x)\bigr)\, .
	\]
	Unrolling along the walk yields the first expression, and
	$\prod_{i=0}^j\one_{\{u_{n-i}(X_i)>0\}}=\one_{\{\tau_n^*>j\}}$ yields the
	second.
\end{proof}

	The mean odometer grows only through the reflection at zero, and each
	increment is the expected amount by which $\zeta(0)+Pu_t(0)$ falls below zero.

\begin{lemma}\label{lem:reflection-increment}
	Suppose $(\zeta(x))_{x\in\Z^d}$ are i.i.d.\ with mean zero. Then $\E u_t(0)$
	is nondecreasing and concave in $t$, and
	\begin{equation}\label{eq:reflection-increment}
		\E u_{t+1}(0)-\E u_t(0)=\E(-\zeta(0)-Pu_t(0))_+\, .
	\end{equation}
\end{lemma}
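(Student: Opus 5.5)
The plan is to derive the increment identity from the recursion of Lemma~\ref{lem:recursion}, together with the stationarity of the i.i.d.\ field. Monotonicity and concavity will then follow from the identity combined with a monotonicity property of $u_t$ in $t$.

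\textbf{The identity.} Start from $u_{t+1}(0)=(\zeta(0)+Pu_t(0))_+$ and use the elementary decomposition $a_+=a+(-a)_+$ with $a=\zeta(0)+Pu_t(0)$. This gives
\[
u_{t+1}(0)=\zeta(0)+Pu_t(0)+\bigl(-\zeta(0)-Pu_t(0)\bigr)_+ .
\]
Take expectations. The term $\E\zeta(0)$ vanishes by the mean-zero assumption. For the average term, translation invariance of the i.i.d.\ law means $u_t(y)\overset{d}{=}u_t(0)$ for every $y$, since $u_t(y)$ is the same deterministic function of the shifted scenery. Hence $\E Pu_t(0)=\E u_t(0)$, which yields \eqref{eq:reflection-increment}.

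The one point needing care is integrability. By induction, $u_t(0)\le\sum_{k<t}\sum_y p_k(0,y)|\zeta(y)|$, so $\E u_t(0)\le t\,\E|\zeta(0)|<\infty$. All terms are therefore finite, and the subtraction is legitimate.

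\textbf{Monotonicity.} This is immediate because the right side of \eqref{eq:reflection-increment} is nonnegative. Alternatively, $u_t$ is pointwise nondecreasing in $t$, as is clear from the stopping representation of Theorem~\ref{thm:RW}, since the supremum is taken over a larger set of stopping times.

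\textbf{Concavity.} I need the increments to be nonincreasing: $\E(-\zeta(0)-Pu_{t+1}(0))_+\le\E(-\zeta(0)-Pu_t(0))_+$. Because $u_{t+1}\ge u_t$ pointwise, we have $Pu_{t+1}(0)\ge Pu_t(0)$. The map $s\mapsto(-\zeta(0)-s)_+$ is nonincreasing, so the inequality holds pointwise, and taking expectations gives it in mean.

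The pointwise monotonicity $u_{t+1}\ge u_t$ can also be checked directly by induction from the recursion: $u_1=\zeta_+\ge0=u_0$, and the map $f\mapsto(\zeta+Pf)_+$ is monotone.

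I expect no real obstacle here. The only subtle point is justifying $\E Pu_t(0)=\E u_t(0)$, which rests on the stationarity and finite-mean argument above.
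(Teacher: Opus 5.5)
Your proof is correct and follows the paper's argument essentially step for step: you use the recursion $u_{t+1}(0)=(\zeta(0)+Pu_t(0))_+$ with stationarity and mean zero to get the identity, and pointwise monotonicity of $u_t$ in $t$ to make the integrand nonincreasing, which gives concavity. Your explicit integrability bound $\E u_t(0)\le t\,\E|\zeta(0)|$ and your inductive proof that $u_{t+1}\ge u_t$ are small additions that the paper leaves implicit.
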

\begin{proof}
	The recursion $u_{t+1}(0)=(\zeta(0)+Pu_t(0))_+$ of
	Lemma~\ref{lem:recursion}, together with $\E(\zeta(0)+Pu_t(0))=\E u_t(0)$ by
	stationarity and $\E\zeta(0)=0$, gives
	\[
		\E u_{t+1}(0)-\E u_t(0)
		=\E\bigl[(\zeta(0)+Pu_t(0))_+-(\zeta(0)+Pu_t(0))\bigr]
		=\E(-\zeta(0)-Pu_t(0))_+\, .
	\]
	Since $u_t$ increases to $u_\infty$, so does $Pu_t(0)$, so
	$(-\zeta(0)-Pu_t(0))_+$ is nonincreasing in $t$. Hence the increments
	\eqref{eq:reflection-increment} are nonincreasing, so $\E u_t(0)$ is concave,
	and it is nondecreasing since $u_t$ increases.
\end{proof}

	With $\zeta=(\sigma-1)/(2d)$ as in \eqref{eq:mass-excess-notation}, the
	linear membrane field from \eqref{eq:membrane-recursion} can be written as 
	\begin{equation}\label{eq:linear-green-field}
		V_n(x)=\sum_{k=0}^{n-1}\sum_{y\in\Z^d}p_k(x,y)\zeta(y)\, ,
		\qquad n\geq0,\ x\in\Z^d\, .
	\end{equation}
	The difference between the odometer and the membrane field also solves an optimal stopping problem.
	\begin{lemma}\label{lem:difference-representation}
		For every $x\in\Z^d$ and every integer $t\geq0$,
		\[
			u_t(x)-V_t(x)
			=
			\sup_{\tau\leq t}\mathbf E_x\bigl[-V_{t-\tau}(X_\tau) \big| \zeta\bigr]\, ,
		\]
		where the supremum is over stopping times of the walk.
	\end{lemma}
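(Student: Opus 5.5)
The plan is to combine the optimal-stopping representation of Theorem~\ref{thm:RW} with a decomposition of the scenery sum along the walk. The identity behind everything is
\[
	V_{m}(y)=\mathbf E_y\sum_{k=0}^{m-1}\zeta(X_k),
\]
which is \eqref{eq:linear-green-field} read as an expectation over the walk. Fix $x$, $t$ and a stopping time $\tau\leq t$. I will show
\begin{equation}\label{eq:plan-split}
	\mathbf E_x[S_\tau]=V_t(x)-\mathbf E_x\bigl[V_{t-\tau}(X_\tau)\bigr].
\end{equation}
Taking the supremum over $\tau\leq t$ and using $u_t(x)=\sup_{\tau\leq t}\mathbf E_x[S_\tau]$ from Theorem~\ref{thm:RW} then gives the lemma at once, because $V_t(x)$ does not depend on $\tau$.

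To prove \eqref{eq:plan-split}, I write $S_t=S_\tau+\sum_{k=\tau}^{t-1}\zeta(X_k)$ and take $\mathbf E_x$. The left side gives $\mathbf E_x S_t=V_t(x)$. For the tail sum, I condition on $\mathcal F_\tau$ and apply the strong Markov property at the bounded stopping time $\tau$. On $\{\tau=j,\ X_j=y\}$, the conditional expectation of $\sum_{k=j}^{t-1}\zeta(X_k)$ equals $\mathbf E_y\sum_{i=0}^{t-j-1}\zeta(X_i)=V_{t-j}(y)$. Hence the tail sum has expectation $\mathbf E_x[V_{t-\tau}(X_\tau)]$, and rearranging gives \eqref{eq:plan-split}. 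Here $\zeta$ is fixed throughout, since all expectations are quenched, conditional on the scenery.

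The only point needing care, which I expect to be the main (mild) obstacle, is integrability. This is what lets me split the expectation and apply the Markov property without $\infty-\infty$ issues. Since $\tau\leq t$, every sum involved has at most $t$ terms, each of the form $\zeta(X_k)$ with $k<t$. These are bounded by $\max_{|y-x|\leq t}|\zeta(y)|$, which is finite for each fixed $\zeta$ because the walk reaches only finitely many sites by time $t$. So all the expectations are finite sums of finite quantities and the manipulations are justified. It remains only to check the convention $V_0\equiv0$, which covers the case $\tau=t$.
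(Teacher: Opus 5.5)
Your proposal is correct and follows the paper's proof essentially verbatim. Both apply Theorem~\ref{thm:RW} and the Markov property at the bounded stopping time $\tau$ to get $\mathbf E_x[S_\tau+V_{t-\tau}(X_\tau)]=V_t(x)$, then rearrange and take the supremum over $\tau\leq t$; your integrability remark (the walk reaches only finitely many sites before time $t$) is a harmless detail the paper leaves implicit.
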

	\begin{proof}
		Theorem~\ref{thm:RW} gives
		\[
			u_t(x)=\sup_{\tau\leq t}\mathbf E_x\left[\sum_{k<\tau}\zeta(X_k)\mid\zeta\right]\, .
		\]
		For every stopping time $\tau\leq t$, the Markov property at time $\tau$
		gives
		\[
			\mathbf E_x\Bigl[\sum_{k<\tau}\zeta(X_k)+V_{t-\tau}(X_\tau) \Big| \zeta\Bigr]
			=
			\mathbf E_x\Bigl[\sum_{k<t}\zeta(X_k) \Big| \zeta\Bigr]
			=V_t(x)\, .
		\]
		Rearranging and taking the supremum over $\tau\leq t$ proves the
		identity.
	\end{proof}

	\subsection{Continuum membrane fields}\label{ssec:continuum-membrane-fields}

    We next define the continuum Gaussian fields that appear in the scaling limits.

	Let $B$ be Brownian motion on $\R^d$ with generator $(2d)^{-1}\Delta$, and
	let $\mathbf E_x^{\rm BM}$ denote expectation for $B_0=x$. Its heat kernel
	and truncated Green kernel are defined by
	\begin{equation}\label{eq:brownian-heat-green-kernels}
		p_t^{\rm BM}(x,y)\coloneqq(4\pi t/(2d))^{-d/2}\exp\{-d|x-y|^2/(2t)\}\, ,
		\qquad
		g_t^{\rm BM}(x,y)\coloneqq \int_0^t p_s^{\rm BM}(x,y) ds \, .
	\end{equation}
	Let $\mathcal W$ be white noise on $\R^d$, that is, the mean-zero Gaussian
	linear functional on $L^2(\R^d)$ with covariance
	\[
		\Cov(\mathcal W(f),\mathcal W(g))=\int_{\R^d}f(x)g(x)dx\, .
	\]
	We take $B$ independent of $\mathcal W$. In every Brownian stopping value
	below, the white noise is held fixed: the expectation $\mathbf E_x^{\rm BM}$ averages only over $B$, and, for each realization of the field, the supremum is over stopping times of the completed natural filtration of $B$. For $T\geq0$, define the finite-time continuum membrane field by
	\begin{equation}\label{eq:finite-time-continuum-membrane}
		\mathcal G_{d,T}\coloneqq
		\sqrt{\Var(\zeta(0))}\int_0^T e^{r\Delta/(2d)}\mathcal W\,dr\, .
	\end{equation}
	For $T>0$ and $0<\kappa<\infty$, define also
	\begin{equation}\label{eq:power-weighted-continuum-membrane}
		\mathcal H_{\kappa,T}\coloneqq
		\sqrt{\Var(\zeta(0))}
		\int_0^T\left(1-\frac rT\right)^\kappa
		e^{r\Delta/(2d)}\mathcal W\,dr\, .
	\end{equation}
	For every $T>0$ and every nonzero nonnegative test function $\varphi$,
	$\Var(\mathcal H_{\kappa,T}(\varphi))$ is strictly decreasing in $\kappa$.
	In particular, the fields $\mathcal H_{\kappa,T}$ have distinct laws for
	distinct values of $\kappa$.
	Fubini's theorem gives
	\begin{equation}\label{eq:time-averaged-continuum-membrane}
		\mathcal H_{1,T}=\frac1T\int_0^T\mathcal G_{d,r}\,dr\, .
	\end{equation}
	All these fields are coupled through the same white noise. The joint
	covariance of the finite-time membrane fields is
	\begin{equation}\label{eq:dgt4-finite-time-membrane-covariance}
		\Cov(\mathcal G_{d,T}(\varphi),\mathcal G_{d,S}(\psi))
		=
		\Var(\zeta(0))(2\pi)^{-d}
		\int_{\R^d}
		\widehat\varphi(\xi)\overline{\widehat\psi(\xi)}
		m_T(\xi)m_S(\xi)d\xi\, ,
	\end{equation}
	where
	\[
		m_T(\xi)\coloneqq
		\int_0^T e^{-r|\xi|^2/(2d)}dr
		=
		\frac{1-e^{-T|\xi|^2/(2d)}}{|\xi|^2/(2d)}\, .
	\]
	The finite-time membrane field has no low-frequency singularity. For fixed
	$T$, it belongs to $H^{-s}_{\rm loc}(\R^d)$ for every
	$s>0$ when $d\leq4$, and for every $s>(d-4)/2$ when $d\geq5$. It has point
	values exactly in dimensions $d=1,2,3$, because
	$g_t^{\rm BM}(x,\cdot)\in L^2(\R^d)$ exactly when $d<4$. In these
	dimensions this is the point field $Z$ from
	Theorem~\ref{thm:main-explosion}, namely for $t\geq0$,
	\begin{equation}\label{eq:dlt4-linear-gaussian-potential}
		Z(t,x)= \sqrt{\Var(\zeta(0))}\,
		\mathcal W\bigl(g_t^{\rm BM}(x,\cdot)\bigr)\, .
	\end{equation}
	This field has a locally continuous modification, and throughout $Z$ denotes
	that version.
	Its covariance is
	\[
		\Cov(Z(s,x),Z(t,y))
		=
		\Var(\zeta(0))\int_{\R^d}
		g_s^{\rm BM}(x,z)g_t^{\rm BM}(y,z) dz\, .
	\]
	Proposition~\ref{prop:dlt4-heat-potential-invariance} proves the corresponding
	invariance principle: in dimensions one, two, and three,
	\[
		(T,x)\mapsto
		R^{-(2-d/2)}
		V_{\lfloor R^2T\rfloor}(\lfloor Rx\rfloor)
		\Longrightarrow
		Z(T,x)
	\]
	locally uniformly after interpolation from the parabolic mesh.

	In dimension four, define the continuum membrane model modulo constants by
	\[
		\mathcal G_4(\varphi)
		\coloneqq
		\lim_{T\to\infty}\mathcal G_{4,T}(\varphi)
		\qquad\text{in $L^2$}
	\]
	for every $\varphi\in C_c^\infty(\R^4)$ with integral zero. This limit
	exists under the common-white-noise coupling: in
	\eqref{eq:dgt4-finite-time-membrane-covariance},
	$m_T(\xi)\to8/|\xi|^2$, while $\widehat\varphi(\xi)=O(|\xi|)$ at the
	origin. Thus $\mathcal G_4$ is a mean-zero Gaussian random distribution
	modulo additive constants. Up to its deterministic normalization, it is
	the order-two fractional Gaussian field $\mathrm{FGF}_2$ of
	\citet*{LSSW} in dimension four.
	For a bounded smooth domain $D\subset\R^4$ and
	$\omega\in C_c^\infty(D)$ satisfying $\omega\geq0$ and
	$\int_D\omega(x)dx=1$, the representative of a distribution $F$ modulo
	constants whose $\omega$-average is zero is defined by
	\begin{equation}\label{eq:d4-omega-representative}
		F^\omega(\varphi)
		\coloneqq
		F\left(\varphi-\omega\int_D\varphi(x)dx\right),
		\qquad \varphi\in C_c^\infty(D)\, .
	\end{equation}
	In particular, $\mathcal G_4^\omega$ denotes this representative of
	$\mathcal G_4$ on $D$. The same Fourier estimate shows that
	$\mathcal G_4^\omega\in H^{-s}(D)$ almost surely for every $s>0$.

	\subsection{Brownian optimal stopping in dimensions \texorpdfstring{$d=1,2,3$}{d=1,2,3}}\label{ssec:brownian-stopping}

	The Brownian stopping problem is the continuum analogue of
	Theorem~\ref{thm:RW}. We state it for a fixed deterministic field
	$h:[0,T]\times\R^d\to\R$ that is continuous and has polynomial growth. For
	$0\leq t\leq T$, set
	\[
		\mathcal D_h(t,x)\coloneqq
		\sup_{\tau\leq t}\mathbf E_x^{\rm BM}[-h(t-\tau,B_\tau)]\, ,
		\qquad
		\mathcal U_h(t,x)\coloneqq h(t,x)+\mathcal D_h(t,x)\, ,
	\]
	where the supremum is over Brownian stopping times.

	\begin{proposition}[Brownian optimal-stopping representation]\label{prop:brownian-os}
		Fix $T>0$ and $x\in\R^d$. The value process
		$s\mapsto \mathcal D_h(T-s,B_s)$ has a right-continuous modification. With
		this modification, the first time at which the value vanishes,
		\[
			\tau_h^*\coloneqq \inf\{0\leq s\leq T:\mathcal U_h(T-s,B_s)=0\}\leq T\, ,
		\]
		is optimal and is dominated by every optimal stopping time. In particular,
		\[
			\mathcal U_h(T,x)
			=
			\sup_{0\leq\tau\leq T}\bigl(h(T,x)-\mathbf E_x^{\rm BM} h(T-\tau,B_\tau)\bigr)
			=
			h(T,x)-\mathbf E_x^{\rm BM} h(T-\tau_h^*,B_{\tau_h^*})\, .
		\]
	\end{proposition}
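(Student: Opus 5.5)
This is a finite-horizon optimal stopping problem with continuous reward and Markov underlying process. I would reduce it to the general theory of \citet[Chapter~I]{PeskirShiryaev} applied to the space-time process $Y_s\coloneqq(T-s,B_s)$ on $[0,T]\times\R^d$. Set the gain $G(t,y)\coloneqq -h(t,y)$. Then $\mathcal D_h(T-s,B_s)$ is the value function of the problem of stopping $Y$ with gain $G$ before time $T$. Since $h$ is continuous with polynomial growth and Brownian motion has all moments of $\sup_{s\leq T}|B_s|$, we have $\mathbf E_x\sup_{s\leq T}|G(Y_s)|<\infty$. This is the integrability condition under which the Snell envelope exists and has a right-continuous (indeed continuous) modification.

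\textbf{Step 1: continuity of the value function.} I would first show that $(t,x)\mapsto\mathcal D_h(t,x)$ is continuous on $[0,T]\times\R^d$. Spatial continuity follows by shifting: $\mathcal D_h(t,x+z)-\mathcal D_h(t,x)$ is bounded by $\mathbf E\sup_{s\leq t}|h(t-s,x+z+B_s)-h(t-s,x+B_s)|$. This tends to zero by local uniform continuity of $h$ together with the polynomial-growth tail bound. For temporal continuity, compare horizons $t<t'$. Any $\tau\leq t$ is admissible for $t'$ after reindexing. Conversely, a $\tau\leq t'$ truncated at $t$ loses at most $\mathbf E\sup_{|r-r'|\leq t'-t,\,|s-s'|\leq t'-t}|h(r,B_s)-h(r',B_{s'})|$, which vanishes by uniform continuity on compacts and continuity of Brownian paths. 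Since the value function is then continuous, $s\mapsto\mathcal D_h(T-s,B_s)$ is continuous, which gives the right-continuous modification.

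\textbf{Step 2: Snell envelope and optimality of the first entry time.} With continuity in hand, the process $M_s\coloneqq\mathcal D_h(T-s,B_s)$ is the Snell envelope of $G(Y_s)$. It is the smallest supermartingale dominating the gain, by the Markov property and dynamic programming. The standard result \citep[Theorem~2.4 and Corollary~2.9]{PeskirShiryaev} then says three things:
\begin{itemize}
\item the first time $M_s=G(Y_s)$ is optimal;
\item $M$ is a martingale up to this time;
\item this time is the smallest optimal stopping time.
\end{itemize}
Its finiteness ($\leq T$) is automatic, since $M_T=\mathcal D_h(0,B_T)=-h(0,B_T)=G(Y_T)$ because only $\tau=0$ is admissible at horizon zero.

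The equality $M_s=G(Y_s)$ reads $\mathcal D_h(T-s,B_s)=-h(T-s,B_s)$, which is equivalent to $\mathcal U_h(T-s,B_s)=0$. So the first entry time is exactly $\tau_h^*$. The minimality claim (dominated by every optimal stopping time) follows because any optimal $\sigma$ must satisfy $M_\sigma=G(Y_\sigma)$ almost surely, hence $\sigma\geq\tau_h^*$.

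\textbf{Step 3: the displayed identities.} These are then immediate:
\[
\mathcal U_h(T,x)=h(T,x)+\sup_\tau\mathbf E_x^{\rm BM}[-h(T-\tau,B_\tau)]=h(T,x)-\mathbf E_x^{\rm BM}h(T-\tau_h^*,B_{\tau_h^*}).
\]
One technical point needs care: the stopping times range over the completed natural filtration, which is right-continuous for Brownian motion. The cited theorems require the usual conditions, so this is fine.

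The main obstacle is Step 1, specifically the temporal continuity combined with unbounded $h$. The polynomial growth must be controlled uniformly via Gaussian tails of $\sup_{s\leq T}|B_s|$, using dominated convergence on the event $\{\sup|B|\leq K\}$ and a moment bound on its complement. If joint continuity proved awkward, I would fall back on the general Snell-envelope theory, which yields a right-continuous modification directly under the $L^1$ condition. In that case optimality of $\tau_h^*$ requires lower semicontinuity of $\mathcal D_h$ and upper semicontinuity of $G$ (\citealp[Corollary~2.9]{PeskirShiryaev}). Lower semicontinuity of $\mathcal D_h$ is a supremum of continuous functions and hence easy.
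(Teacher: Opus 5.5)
Your proof is correct, but it does more work than the paper. The paper proves the proposition by a single citation: it applies the martingale-approach result \citet[Theorem~2.2]{PeskirShiryaev} directly to the gain process $G_s=-h(T-s,B_s)$, $0\leq s\leq T$. That theorem has two hypotheses. The gain process must be right-continuous and left-continuous over stopping times, which holds here because $h$ and the Brownian path are continuous. It must also satisfy $\mathbf E_x\sup_{s\leq T}|G_s|<\infty$, which is your polynomial-growth remark. The theorem then yields at once the right-continuous modification of the Snell envelope, optimality of the first time the envelope meets the gain, the martingale property up to that time, and minimality among optimal stopping times. No regularity of $\mathcal D_h$ is needed, so your Step~1 is dispensable for the statement as posed; your fallback is essentially the paper's route.

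What your Markovian route buys is a stronger and more transparent conclusion. Your shift and horizon-truncation bounds are uniform for $x$ in compacts, which is what joint continuity of $(t,x)\mapsto\mathcal D_h(t,x)$ requires. That continuity has three consequences:
\begin{itemize}
\item $s\mapsto\mathcal D_h(T-s,B_s)$ is itself continuous, so no modification is needed;
\item $\{\mathcal U_h=0\}$ is closed, so $\tau_h^*$ is visibly a stopping time;
\item the Markov-property identification $\operatorname{ess\,sup}_{s\leq\tau\leq T}\mathbf E_x^{\rm BM}[G_\tau\mid\mathcal F_s]=\mathcal D_h(T-s,B_s)$ a.s.\ becomes explicit.
\end{itemize}
The paper's one-line citation uses that last identification implicitly when it translates the abstract first entry time into $\tau_h^*$. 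The paper's route, in turn, buys brevity and avoids any continuity estimate for the value function.
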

	This is the optimal-stopping theorem applied to the gain $-h(T-s,B_s)$; see \citet[Theorem~2.2]{PeskirShiryaev}.

	In dimensions $d<4$, the continuum odometer value is the specialization of
	this stopping problem to $h=Z$:
	\begin{equation}\label{eq:continuum-membrane-stopping-value}
		\mathcal U_Z(T,x)
		=
		Z(T,x)+\sup_{\tau\leq T}\mathbf E_x^{\rm BM}[-Z(T-\tau,B_\tau)]\, .
	\end{equation}

\section{Basic estimates}\label{sec:estimates}

		This section collects the estimates used throughout: Green-kernel scales,
		concentration, localization, and a tightness criterion.

	\subsection{Random walk estimates}\label{ssec:green-estimates}

	The random-walk estimates in this subsection are standard consequences of
	the local central limit theorem, its gradient form, and the Green-function
	asymptotic; see \citet[Propositions~2.4.1, 2.4.4, and~2.4.6, and
	Theorem~4.3.1]{LawlerLimic}, together with Hoeffding's inequality for the
	off-diagonal Gaussian factor. 

	\subsubsection{Heat-kernel input}

	We use the following standard heat-kernel bounds for simple random walk. For
	all $n\geq1$ and $x,y\in\Z^d$,
	\begin{equation}\label{eq:rw-gaussian-upper}
		p_n(x,y)\leq Cn^{-d/2}\exp\{-c|x-y|^2/n\}\, ,
	\end{equation}
	and, if $x,w$ have the same parity, then
	\begin{equation}\label{eq:rw-tv-gradient}
		\sum_{y\in\Z^d}|p_n(x,y)-p_n(w,y)|
		\leq C|x-w| n^{-1/2}\, .
	\end{equation}
	We also use the following maximal-displacement estimate; it follows from
	Hoeffding's inequality for the coordinates of the walk together with the
	reflection principle, and a related bound is \citet[Lemma~1.5.1]{LawlerInt}: for $n,R\geq1$,
	\begin{equation}\label{eq:rw-max-displacement}
		\mathbf P_x\Bigl(\max_{0\leq k\leq n}|X_k-x|\geq R\Bigr)
		\leq C\exp\{-cR^2/n\}\, .
	\end{equation}
	In particular, $\sup_x|P^nf(x)|\leq Cn^{-d/2}\|f\|_1$ for every finitely
	supported $f$.

	We also use the following form of the local central limit theorem
	\citep[Theorem~2.1.3, Eq.~(2.8)]{LawlerLimic}: for all
	$0<\delta<T<\infty$ and $C_0<\infty$,
	\begin{equation}\label{eq:lclt-parity}
		\lim_{R\to\infty}R^d
		\sup_{\substack{
		\delta R^2\leq\ell\leq TR^2,\ |x-y|\leq C_0R\\
		p_\ell(x,y)>0}}
		\left|
		p_\ell(x,y)
		-2R^{-d}p^{\rm BM}_{\ell/R^2}(R^{-1}x,R^{-1}y)
		\right|=0\, ,
	\end{equation}
	where $p^{\rm BM}$ is the Brownian heat kernel from
	\eqref{eq:brownian-heat-green-kernels}. The factor $2$ accounts for
	parity: for fixed $x$ and $\ell$, the sites $y$ with $p_\ell(x,y)>0$ form
	one parity class, whose scaled counting measure has density $1/2$.

\begin{lemma}\label{lem:d4-double-heat-kernel}
	Uniformly for $t\geq2$ and $x,y\in\Z^4$ with $|x-y|^2\leq t$,
		\[
			\sum_{a,b=0}^{t-1}p_{a+b}(x,y) = \frac4{\pi^2}\log\frac{t}{1+|x-y|^2}+O(1)\, .
		\]
\end{lemma}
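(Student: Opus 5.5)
The plan is to reduce the double sum to a single sum over $n=a+b$ and then evaluate that sum with the local central limit theorem at an $O(1)$ error level. Write $r\coloneqq|x-y|$. For $0\le n\le 2t-2$, the number of pairs $(a,b)\in\{0,\dots,t-1\}^2$ with $a+b=n$ is $n+1$ when $n<t$ and $2t-1-n\le t$ when $t\le n\le 2t-2$. Hence
\[
\sum_{a,b=0}^{t-1}p_{a+b}(x,y)=\sum_{n=0}^{t-1}(n+1)p_n(x,y)+\sum_{n=t}^{2t-2}(2t-1-n)p_n(x,y)\, .
\]
By \eqref{eq:rw-gaussian-upper} with $d=4$, the second sum is at most $t\cdot t\cdot Ct^{-2}=O(1)$. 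It therefore remains to show that $\sum_{n<t}(n+1)p_n(x,y)=\frac4{\pi^2}\log\frac{t}{1+r^2}+O(1)$.

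I would split the remaining sum at $n\approx 1+r^2$. The small-$n$ part needs no asymptotics. The terms $n\le 1$ contribute $O(1)$, and \eqref{eq:rw-gaussian-upper} gives
\[
\sum_{2\le n\le 1+r^2}(n+1)p_n(x,y)\le C\sum_{n\le 1+r^2}n^{-1}e^{-cr^2/n}=O(1)\, ,
\]
since this is a Riemann sum for $\int_0^1 s^{-1}e^{-c/s}\,ds<\infty$ in the variable $s=n/r^2$, uniformly in $r$.

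For $1+r^2<n<t$, I would use the sharp local CLT with error term $O(n^{-(d+2)/2})=O(n^{-3})$ uniformly in the space variable (Lawler--Limic, Theorem~2.1.1). Rather than the qualitative form \eqref{eq:lclt-parity}, this gives, on the parity class $n\equiv\|x-y\|_1 \pmod 2$,
\[
p_n(x,y)=2p_n^{\rm BM}(x,y)+O(n^{-3})=\frac{8}{\pi^2n^2}e^{-2r^2/n}+O(n^{-3})\, ,
\]
and $p_n(x,y)=0$ off that class. Here $p_n^{\rm BM}=(\pi n/2)^{-2}e^{-2|x-y|^2/n}$ by \eqref{eq:brownian-heat-green-kernels}. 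Multiplying by $n+1$, the error terms sum to $\sum_n O(n^{-2})=O(1)$. Replacing $n+1$ by $n$ also costs only $O(1)$. The main contribution is
\[
\frac{8}{\pi^2}\sum_{\substack{1+r^2<n<t\\ n\equiv\|x-y\|_1}}\frac{e^{-2r^2/n}}{n}\, .
\]
Since $n\mapsto e^{-2r^2/n}/n$ has bounded variation on this range, with total variation $O(1/(1+r^2))$, restricting to one parity class halves the sum up to $O(1)$. On $n>1+r^2$ we have $e^{-2r^2/n}=1+O(r^2/n)$ and $\sum_{n>1+r^2}r^2/n^2=O(1)$. Therefore the main contribution equals $\frac{4}{\pi^2}\sum_{1+r^2<n<t}n^{-1}+O(1)=\frac4{\pi^2}\log\frac{t}{1+r^2}+O(1)$, where the hypothesis $r^2\le t$ is used to keep the logarithm nonnegative and the harmonic sum comparable to it.

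The main obstacle is securing a uniform local CLT error of order $n^{-3}$ rather than just $o(n^{-2})$. An $o(n^{-2})$ error would only give an $o(\log t)$ remainder after multiplication by $n$ and summation. The standard Lawler--Limic estimate for simple random walk supplies the $n^{-3}$ rate, with the parity handled by their factor-two convention. The rest of the argument is bookkeeping of the split at $r^2$.
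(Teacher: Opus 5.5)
Your proposal is correct and follows the paper's proof. Both reindex by $n=a+b$, use the Gaussian bound \eqref{eq:rw-gaussian-upper} to show that $n\geq t$ and $n\lesssim |x-y|^2$ contribute $O(1)$, apply the parity local CLT on the middle range, keep one parity class, and replace $e^{-2|x-y|^2/n}$ by one at $O(1)$ cost. Your use of the quantitative $O(n^{-3})$ error, from the bipartite form of the Lawler--Limic LCLT, is what actually justifies the paper's phrase ``with summable error''; the qualitative form \eqref{eq:lclt-parity} alone would only give an $o(\log t)$ remainder.
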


\begin{proof}
	Reindexing by $s=a+b$ gives multiplicity $s+1$ for $s<t$ and $2t-1-s$ for $t\leq s<2t$; by the Gaussian bound, the ranges $s\lesssim|x-y|^2$ and $s\geq t$ contribute $O(1)$. On $|x-y|^2\lesssim s<t$, the uniform estimate \eqref{eq:lclt-parity}, after multiplication by $s+1$, gives $8(\pi^2s)^{-1}e^{-2|x-y|^2/s}$ on the permitted parity, with summable error. Summing over one parity class, and replacing the exponential by one at a uniform $O(1)$ cost, gives the stated coefficient.
\end{proof}

For the next two subsections, assume that the scenery is i.i.d.\ and
$0<\Var(\zeta(0))<\infty$.

\subsubsection{Finite-time variance scale}

We first use the preceding heat-kernel estimates to identify the dimension-dependent variance scale of the membrane field. The field $V_t$ from \eqref{eq:membrane-recursion} satisfies
		\[
			\Var(V_t(0))
			=
			\Var(\zeta(0))\sum_{y\in\Z^d}g_t(0,y)^2\, .
		\]
		For $t\geq2$,
		\begin{equation}\label{eq:Qt-table}
			\Var(V_t(0))\asymp
			\begin{cases}
			t^{3/2}, & d=1\, ,\\
			t, & d=2\, ,\\
			t^{1/2}, & d=3\, ,\\
			\log t, & d=4\, ,\\
			1, & d\geq5\, .
		\end{cases}
	\end{equation}
	\subsubsection{Membrane correlations}
The lower-tail arguments also require control of the correlations between the membrane fields at different times. For $1\leq m\leq n$,
		\[
			\frac{\Cov(V_m(0),V_n(0))}{\Var(\zeta(0))}
			=
			\sum_{x\in\Z^d}g_m(0,x)g_n(0,x)
			=
			\sum_{a<m}\sum_{b<n}p_{a+b}(0,0)\, .
		\]
		Together with \eqref{eq:Qt-table}, we use the following correlation bound:
		\begin{equation}\label{eq:corr-bound}
			\frac{\Cov(V_m(0),V_n(0))}
			{\sqrt{\Var(V_m(0))\Var(V_n(0))}}
			\leq
			\begin{cases}
				C(\tfrac mn)^{1/4},& d=1\, ,\\[2pt]
				C(1{+}\log\tfrac nm)\sqrt{\tfrac mn}\, ,
				& d=2\, ,\\[2pt]
				C(\tfrac mn)^{1/4},& d=3\, ,\\[2pt]
				C\sqrt{(1+\log m)/(1+\log n)},& d=4\, .
			\end{cases}
		\end{equation}

\subsubsection{Window and tail bounds}
For the dimension-four linearization argument, we also need estimates for the contributions to the Green kernel from time windows and tails. For all integers $m$ and $n$ with $1\leq m<n$ and all
	$x\in\Z^4$,
	\begin{align}
		\sum_{z\in\Z^4}
		\left(\sum_{k=m}^{n-1}p_k(x,z)\right)^2
		&\leq
		C\left(1+\log\frac{n+2}{m+2}\right)\, ,
		\label{eq:d4-window-l2}\\
		\sup_{z\in\Z^4}\sum_{k=m}^{n-1}p_k(x,z)
		&\leq
		\frac{C}{m}\, .
		\label{eq:d4-window-linfty}
	\end{align}
	For every integer $n\geq2$ and every $x\in\Z^4$,
	\begin{equation}\label{eq:d4-full-window-bounds}
		\sum_{z\in\Z^4}
		\left(\sum_{k=0}^{n-1}p_k(x,z)\right)^2
		\leq C\log(n+2)\, ,
		\qquad
		\sup_{z\in\Z^4}\sum_{k=0}^{n-1}p_k(x,z)\leq C\, .
		\end{equation}

	We use the following standard ball-killed Green estimates. They follow from
	the Green-function asymptotic, killed-walk energy estimates, and annular
	summation; see \citet[Theorem~4.3.1 and Chapter~6]{LawlerLimic}. With the
	killed Green notation from \eqref{eq:killed-walk-notation}, there is
	$C<\infty$ such that, uniformly in $r\geq2$ and for $L\geq2$,
	\begin{align}
	    0\leq g^{Q(0,r)}(0,u)&\leq G(0,u)\leq C(1+|u|)^{-2}\, ,
	    \label{eq:d4ball-point}\\
	    \sum_{u\in\Z^4} g^{Q(0,r)}(0,u)^2&\leq C\log r\, ,
	    \label{eq:d4ball-square}\\
	    \sum_{|u|\leq2L}g^{Q(0,r)}(0,u)^2
	    &\leq C\log(2L+2)\, .
	    \label{eq:d4ball-near}
	\end{align}
	For $R\geq2$ and coordinate unit vectors $e$,
	\[
	    \sum_{R\leq |u|\leq2R}
	    \bigl(g^{Q(0,r)}(0,u+e)-g^{Q(0,r)}(0,u)\bigr)^2
	    \leq CR^{-2}\, .
	\]
	If $L\geq2$, if $\phi:[0,\infty)\to[0,1]$ is $2$-Lipschitz and satisfies $\phi=0$ on $[0,1]$
	and $\phi=1$ on $[2,\infty)$, and if
	\[
	    h(u)\coloneqq g^{Q(0,r)}(0,u)\phi(|u|/L)\, ,
	\]
	then
	\begin{equation}\label{eq:d4ball-far-cube}
	    \sup_u |h(u)|\leq CL^{-2}\, ,
	    \qquad
	    \sum_{u\in\Z^4} h(u)^3\leq CL^{-2}\, .
	\end{equation}
	For every $M\geq1$,
	\begin{equation}\label{eq:d4ball-shift}
	    \sum_{u\in\Z^4}\bigl(h(u)-h(u-w)\bigr)^2
	    \leq C(1+M)^4
	    \qquad\text{whenever }|w|\leq ML\, .
	\end{equation}
	The same killed-walk estimates also give the finite-time tail bound: for
	every $A\geq1$, if
	\[
	    q_{r,A}(u)\coloneqq
	    g^{Q(0,r)}(0,u)-g_{\lfloor Ar^2\rfloor}^{Q(0,r)}(0,u)\, ,
	\]
	then
	\begin{equation}\label{eq:d4ball-time-tail}
	    \max_{u\in\Z^4}|q_{r,A}(u)|\leq Cr^{-2}e^{-cA}\, ,
	    \qquad
	    \sum_{u\in\Z^4}q_{r,A}(u)^2\leq Ce^{-cA}\, .
	\end{equation}

	If $d\geq5$, then, for every $r\geq1$,
	\begin{equation}\label{eq:dgt4-green-tail}
		\sum_{|z|\geq r}G(0,z)^2\leq Cr^{4-d}\, ,
		\qquad
		\sup_{|z|\geq r}G(0,z)\leq Cr^{2-d}\, .
	\end{equation}
		In particular,
	\begin{equation}\label{eq:dgt4-green-l2}
		\sum_{z\in\Z^d}G(0,z)^2<\infty\, .
	\end{equation}
	For $m\geq1$,
		\begin{equation}\label{eq:dgt4-tail-kernel}
			\sup_{y\in\Z^d}\sum_{j\geq m}p_j(0,y)\leq C m^{(2-d)/2}\, ,
			\qquad
			\sum_{y\in\Z^d}\left(\sum_{j\geq m}p_j(0,y)\right)^2
			\leq C m^{(4-d)/2}\, .
		\end{equation}
		We also use two intersection estimates for independent walks. Let $Y$ be
		an independent copy of $X$, and let $d\geq5$. Then, for all $x$ and $y$
		in $\Z^d$,
		\begin{equation}\label{eq:dgt4-intersection-first-moment}
			\mathbf E_x\mathbf E_y
			\sum_{i,j\geq0}\one_{\{X_i=Y_j\}}
			= \sum_{z\in\Z^d}G(x,z)G(y,z)
			\leq C(1+|x-y|)^{4-d}\, ,
        \end{equation}    
        \begin{equation}\label{eq:dgt4-intersection-second-moment}
			\mathbf E_x\mathbf E_y
			\left(\sum_{i,j\geq0}\one_{\{X_i=Y_j\}}\right)^2
			\leq
			C(1+|x-y|)^{4-d}\, .
		\end{equation}
		Tonelli's theorem gives the first identity, and the heat-kernel estimates
		above give its bound. For the second estimate, split the four relative
		orderings of the two intersection times, as in
		\citet[proof of Theorem~3.3.2, pp.~95--97]{LawlerInt}. The two matching
		orderings are bounded by the first moment times the expected number of
		intersections for two walks started together. This last quantity is
		finite by \citet[Proposition~3.2.1, pp.~89--90]{LawlerInt}. Each crossed
		ordering gives
		\[
			\sum_{z,w\in\Z^d}
			G(x,z)G(z,w)^2G(y,w)
			\leq C(1+|x-y|)^{4-d}\, .
		\]
		The inequality is the Green-function asymptotic followed by annular
		summation, already used throughout this subsection.

\subsection{Concentration estimates}\label{ssec:concentration}

	We use the following standard concentration estimate.

	\begin{lemma}\label{lem:weighted-exp-conc}
		Let $\xi_1,\ldots,\xi_N$ be independent real-valued random variables, and let $F=F(\xi_1,\ldots,\xi_N)$ be real-valued and measurable. Let $\xi_i'$ be an independent copy of $\xi_i$, independent of all coordinates, and let $\xi^{(i)}$ be obtained from $\xi=(\xi_1,\ldots,\xi_N)$ by replacing only $\xi_i$ with $\xi_i'$. Assume that, for deterministic numbers $\ell_i\geq0$, not all zero, for every $1\leq i\leq N$,
		\[
			|F(\xi)-F(\xi^{(i)})|\leq \ell_i|\xi_i-\xi_i'|\, .
		\]
		Write
		\[
			\|\ell\|_{\ell^2}\coloneqq\left(\sum_{i=1}^N\ell_i^2\right)^{1/2}\, ,
			\qquad
			\|\ell\|_{\ell^\infty}\coloneqq\max_{1\leq i\leq N}\ell_i\, .
		\]
		\begin{enumerate}[label=\textup{(\alph*)}]
			\item If $p\geq2$ and $\E|\xi_i|^p<\infty$ for every $i$, then there is
			$C=C(p)$ such that
			\[
				\bigl(\E|F-\E F|^p\bigr)^{1/p}
				\leq
				C
				\left[
				\left(\sum_{i=1}^N \ell_i^2\bigl(\E|\xi_i-\xi_i'|^p\bigr)^{2/p}\right)^{1/2}
				+
				\left(\sum_{i=1}^N \ell_i^p \E|\xi_i-\xi_i'|^p\right)^{1/p}
				\right]\, .
			\]

			\item If the $\xi_i$ are i.i.d.\ and
			$\E e^{\theta_0|\xi_1|}\leq K_0$, then there are $c>0$ and $C<\infty$, depending
			only on $\theta_0$ and $K_0$, such that, for all $r\geq0$,
			\[
				\P(|F-\E F|\geq r)
				\leq
				C\exp\left\{
				-c\min\left(
				\frac{r^2}{\|\ell\|_{\ell^2}^2}\, , 
					\frac{r}{\|\ell\|_{\ell^\infty}}
				\right)
				\right\}\, .
			\]

				\item If the $\xi_i$ are i.i.d.\ with $\E\xi_1=0$ and
				$\E e^{\theta_0|\xi_1|}\leq K_0$, then there are $c>0$ and
				$C<\infty$, depending only on $\theta_0$ and $K_0$, such that,
				for every $\lambda\in\R$ with
				$|\lambda|\,\|\ell\|_{\ell^\infty}\leq c$,
				\[
					\log\E\exp\Bigl\{\lambda\sum_{i=1}^N\ell_i\xi_i\Bigr\}
					\leq
					C\lambda^2\|\ell\|_{\ell^2}^2\, .
				\]

			\item Under the assumptions of part~\textup{(b)}, if
				$|\lambda|\|\ell\|_{\ell^\infty}<\theta_0$, then
				\[
					\E e^{\lambda(F-\E F)}
					\leq
					\exp\{C\lambda^2\|\ell\|_{\ell^2}^2\}\, ,
				\]
				where $C$ depends only on $\theta_0$, $K_0$, and
				$\theta_0-|\lambda|\|\ell\|_{\ell^\infty}$.
		\end{enumerate}
	\end{lemma}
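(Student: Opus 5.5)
The four parts are standard concentration inequalities for functions of independent variables with Lipschitz-type bounded differences. I will prove them by martingale decomposition. Order the coordinates and let $\mathcal F_k=\sigma(\xi_1,\ldots,\xi_k)$, $D_k=\E[F\mid\mathcal F_k]-\E[F\mid\mathcal F_{k-1}]$, so that $F-\E F=\sum_k D_k$. Since $\E[F(\xi^{(k)})\mid\mathcal F_k]=\E[F\mid\mathcal F_{k-1}]$ (the copy $\xi_k'$ is independent of everything), Jensen's inequality for the conditional expectation over $\xi_{k+1},\ldots,\xi_N$ and $\xi_k'$ gives
\[
	|D_k|\leq \E\bigl[|F(\xi)-F(\xi^{(k)})|\,\big|\,\mathcal F_k\bigr]\leq \ell_k\,\E\bigl[|\xi_k-\xi_k'|\,\big|\,\xi_k\bigr]\, .
\]
The key point is that $D_k$ is bounded by $\ell_k$ times a function $\eta_k(\xi_k)$ of the single new coordinate, with $\eta_k\leq |\xi_k|+\E|\xi_k'|$. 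Consequently $\|\eta_k\|_{L^p}$ is controlled by $(\E|\xi_k-\xi_k'|^p)^{1/p}$, up to a constant, by Jensen's inequality.

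For part~(a) I will apply Burkholder's (or Rosenthal's martingale) inequality for $p\geq2$:
\[
	\|\textstyle\sum D_k\|_p\leq C(p)\Bigl(\bigl\|\sum_k\E[D_k^2\mid\mathcal F_{k-1}]\bigr\|_{p/2}^{1/2}+\bigl(\sum_k\E|D_k|^p\bigr)^{1/p}\Bigr)\, .
\]
Here $\E[D_k^2\mid\mathcal F_{k-1}]\leq \ell_k^2\E\eta_k^2\leq \ell_k^2(\E|\xi_k-\xi_k'|^p)^{2/p}$, which is deterministic, and $\E|D_k|^p\leq\ell_k^p\E|\xi_k-\xi_k'|^p$. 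This yields exactly the stated bound. Alternatively, the square-function form of Burkholder with the triangle inequality in $L^{p/2}$ gives the first term directly.

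For parts~(b) and~(d) I will bound the conditional moment generating function of each increment. Since $\E[D_k\mid\mathcal F_{k-1}]=0$ and $|D_k|\leq\ell_k\eta_k$, we have
\[
	\E[e^{\lambda D_k}\mid\mathcal F_{k-1}]\leq 1+\sum_{j\geq2}\frac{|\lambda|^j\ell_k^j\E\eta_k^j}{j!}\, ,
\]
and $\eta_k\leq|\xi_k|+\E|\xi_1|$ has an exponential moment of order $\theta_0$. Hence when $|\lambda|\ell_k\leq\theta_0-\delta$, the series is at most $C(\delta,\theta_0,K_0)\lambda^2\ell_k^2$, using $x^2e^{x}$-type bounds: split $e^{y}-1-y\leq y^2e^{|y|}$ and apply H\"older with the exponential moment. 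I have to handle the shift by $\E|\xi_1|$ carefully; I can instead bound $\eta_k\leq|\xi_k|+C$ and absorb $e^{|\lambda|\ell_k C}$ into the constant, since $|\lambda|\ell_k$ is bounded. Iterating the tower property gives $\E e^{\lambda(F-\E F)}\leq\exp\{C\lambda^2\|\ell\|_2^2\}$, which is part~(d). Part~(b) then follows from Chernoff's bound with $\lambda=\min(r/(2C\|\ell\|_2^2),\theta_0/(2\|\ell\|_\infty))$ applied to both signs, which gives the Bernstein-type minimum.

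Part~(c) is the special linear case $F=\sum\ell_i\xi_i$, where independence gives the factorization $\prod_i\E e^{\lambda\ell_i\xi_i}$. Because $\E\xi_1=0$ and $|\lambda\ell_i|\leq c<\theta_0$, the Taylor bound $\log\E e^{s\xi_1}\leq Cs^2$ holds for $|s|\leq c$; summing over $i$ finishes. The main obstacle is the careful constant bookkeeping in part~(d): the constant must depend only on the gap $\theta_0-|\lambda|\|\ell\|_\infty$, so the H\"older step has to be done with exponent chosen from that gap.
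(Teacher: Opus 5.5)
Your proof is correct. For parts (b)--(d) it is the paper's argument. The paper also writes $F-\E F$ as a sum of Doob martingale differences $D_j$. It uses that, conditionally on $\mathcal F_{j-1}$, $D_j$ is a mean-zero function of $\xi_j$ controlled by $\ell_j$, which is your domination $|D_k|\le\ell_k\E[|\xi_k-\xi_k'|\mid\xi_k]$. It then bounds the conditional Laplace transform of each increment, phrased via the Bernstein moment condition of Boucheron--Lugosi--Massart, which is your exponential-series bound. Finally it iterates by the tower property, finishes (b) with a Chernoff optimization, and proves (c) by the same product-plus-Taylor computation.

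The genuine difference is part (a). The paper cites Pinelis's Rosenthal-type inequality with $\rho_i(x,y)=\ell_i|x-y|$ and reference points $m_i=\E\xi_i$, then uses Jensen to replace $\E|\xi_i-m_i|^q$ by $\E|\xi_i-\xi_i'|^q$. You instead apply the Burkholder--Rosenthal inequality to the same martingale, using the same domination as in the other parts. Your route is self-contained and runs all four parts through one estimate. Your square-function variant (Burkholder plus Minkowski in $L^{p/2}$) yields only the first term, and that is enough: since $\bigl(\sum_i a_i^p\bigr)^{1/p}\le\bigl(\sum_i a_i^2\bigr)^{1/2}$ for $p\geq2$, the second term in the statement is redundant. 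The paper uses exactly this remark later.

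On the bookkeeping you flagged for (d), you can avoid the shift by $\E|\xi_1|$. Apply Jensen to get $e^{y\eta_k(\xi_k)}\le\E[e^{y|\xi_k-\xi_k'|}\mid\xi_k]$, so $\E e^{\theta_0\eta_k}\le K_0^2$. The dependence on $\delta=\theta_0-|\lambda|\|\ell\|_{\ell^\infty}$ then enters only through $\sup_{x\geq0}x^2e^{-\delta x}$.
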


	\begin{proof}
		Part~\textup{(a)} is \citet[Corollary~3.1]{PinelisRecentering}
		applied with $\rho_i(x,y)=\ell_i|x-y|$ and reference point
		$m_i=\E \xi_i$. Jensen's inequality gives
		$\E|\xi_i-m_i|^q\leq \E|\xi_i-\xi_i'|^q$ for $q=2,p$, which gives
		the displayed bound.

		For part~\textup{(b)}, let
		$\mathcal F_j=\sigma(\xi_1,\ldots,\xi_j)$ and write
		$D_j=\E[F\mid \mathcal F_j]-\E[F\mid \mathcal F_{j-1}]$.
		Then $F-\E F=\sum_{j=1}^ND_j$.
		The coordinate Lipschitz bound and the exponential moment assumption
		imply, for every integer $k\geq2$,
		\[
			\E\bigl(|D_j|^k\mid \mathcal F_{j-1}\bigr)
			\leq
			\frac{k!}{2}\,C\ell_j^2(C\|\ell\|_{\ell^\infty})^{k-2}\, .
		\]
		After $\mathcal F_{j-1}$ is fixed, this is exactly the Bernstein
		moment hypothesis used in
		\citet[Theorem~2.10]{BoucheronLugosiMassart}. The same
		moment-to-Laplace calculation gives, for
		$0\leq\lambda<(C\|\ell\|_{\ell^\infty})^{-1}$,
		\[
			\E\bigl(e^{\lambda D_j}\mid \mathcal F_{j-1}\bigr)
			\leq
			\exp\left\{
			\frac{C\lambda^2\ell_j^2}{1-C\lambda\|\ell\|_{\ell^\infty}}
			\right\}\, .
		\]
		Since the right side is deterministic, iterating over $j$ gives the same Laplace bound with $\sum_{j=1}^N\ell_j^2=\|\ell\|_{\ell^2}^2$. The Chernoff optimization in
		\citet[Corollary~2.11]{BoucheronLugosiMassart} gives the upper tail; applying the same argument to $-F$ gives part~\textup{(b)}.

		For part~\textup{(c)}, independence gives
		\[
			\log\E\exp\left\{\lambda\sum_{i=1}^N\ell_i\xi_i\right\} =\sum_{i=1}^N\log\E e^{\lambda\ell_i\xi_i}\, .
		\]
		Since $\E\xi_1=0$ and $\E e^{\theta_0|\xi_1|}\leq K_0$, a
		second-order Taylor expansion gives $\log\E e^{s\xi_1}\leq Cs^2$
		for $|s|\leq\theta_0/2$, and summing over $i$ with $s=\lambda\ell_i$
		proves the claim with $c=\theta_0/2$.

		For part~\textup{(d)}, use the martingale differences from
		part~\textup{(b)}. Conditionally on $\mathcal F_{j-1}$, $D_j$ is a
		mean-zero function of $\xi_j$ with Lipschitz constant $\ell_j$.
		A second-order Taylor expansion about the mean gives
		\[
			\E\bigl(e^{\lambda D_j}\mid\mathcal F_{j-1}\bigr)
			\leq e^{C\lambda^2\ell_j^2}
		\]
		whenever $|\lambda|\ell_j<\theta_0$. Iteration proves the claim.
	\end{proof}

	The stopping representation shows that $u_t(x)$ is a Lipschitz function of the scenery. The next proposition records the resulting concentration at the fluctuation scale $\sqrt{\Var(V_t(0))}$.

	\begin{proposition}[Concentration at the fluctuation scale]\label{prop:finite-time-concentration-scale}
		Assume that $(\zeta(z))_{z\in\Z^d}$ are i.i.d. For each $x$ and
		$t$, the map $\zeta\mapsto u_t(x)$ is convex and satisfies
		\[
			|u_t(x;\zeta)-u_t(x;\eta)|
			\leq
			\left(\sum_{z\in\Z^d}g_t(x,z)^2\right)^{1/2}
			\|\zeta-\eta\|_{\ell^2(\Z^d)}\, .
		\]
		For every $p\geq2$ with $\E|\zeta(0)|^p<\infty$ there is
		$C=C(p,\mathcal L(\zeta(0)))<\infty$ such that
		\[
			\E\bigl|u_t(x)-\E u_t(x)\bigr|^p
			\leq C \Var(V_t(0))^{p/2}\, .
		\]
		In particular, if $\Var(\zeta(0))<\infty$, then
		$\Var(u_t(x))\leq C\Var(V_t(0))$.
		If $0<\Var(\zeta(0))<\infty$, then, for every $m,n\geq0$ and
		$x,y\in\Z^d$,
		\begin{equation}\label{eq:odometer-covariance-bound}
			0\leq\Cov(u_n(x),u_m(y))
			\leq
			2\Var(\zeta(0))
			\sum_{z\in\Z^d}g_n(x,z)g_m(y,z)\, .
		\end{equation}
	\end{proposition}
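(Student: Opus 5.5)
We prove the four assertions of Proposition~\ref{prop:finite-time-concentration-scale} in order, all from the optimal-stopping representation of Theorem~\ref{thm:RW}.

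\emph{Convexity and the Lipschitz bound.} For a fixed stopping time $\tau\leq t$ of the walk, the payoff $\mathbf E_x\sum_{k<\tau}\zeta(X_k)=\sum_z w_\tau(z)\zeta(z)$ is linear in the scenery. Here $w_\tau(z)\coloneqq\mathbf E_x\sum_{k<\tau}\one_{\{X_k=z\}}$ is the expected number of visits to $z$ before $\tau$. The class of stopping times does not depend on the scenery, since they are stopping times of the walk filtration alone. Hence $u_t(x)$ is a supremum of linear functionals of $\zeta$, and so it is convex. Since $0\leq w_\tau(z)\leq g_t(x,z)$, Cauchy--Schwarz gives
\[
	\Bigl|\sum_z w_\tau(z)(\zeta(z)-\eta(z))\Bigr|\leq\Bigl(\sum_z g_t(x,z)^2\Bigr)^{1/2}\|\zeta-\eta\|_{\ell^2(\Z^d)}.
\]
A difference of suprema is bounded by the supremum of the differences, which yields the Lipschitz bound.

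\emph{Moment bound.} The same domination gives the coordinatewise bound $|u_t(x;\zeta)-u_t(x;\zeta^{(z)})|\leq g_t(x,z)|\zeta(z)-\zeta'(z)|$. The map depends only on the finitely many coordinates in $x+[-t,t]^d$, so Lemma~\ref{lem:weighted-exp-conc}(a) applies with $\ell_z=g_t(x,z)$. Its second term is at most the first, because $\|\ell\|_{\ell^p}\leq\|\ell\|_{\ell^2}$ for $p\geq2$. Therefore
\[
	\E|u_t(x)-\E u_t(x)|^p\leq C\Bigl(\sum_z g_t(x,z)^2\Bigr)^{p/2}=C\,\Var(V_t(0))^{p/2}/\Var(\zeta(0))^{p/2}.
\]
The first step uses $\E|\zeta-\zeta'|^p<\infty$, and the equality uses translation invariance. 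Taking $p=2$ gives the variance bound; alternatively one can use Efron--Stein directly, which needs only a finite second moment.

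\emph{Covariance lower bound.} Nonnegativity comes from Harris--FKG. Each $u_n(x)$ is nondecreasing in every coordinate of the scenery, since the payoffs $w_\tau\geq0$ are nondecreasing. A product measure is positively associated, and the truncation to finitely many coordinates makes the inequality apply.

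\emph{Covariance upper bound.} This is the main point. I would use the covariance identity for product measures
\[
	\Cov(F,G)=\sum_z\E\bigl[\E[D_zF\mid\mathcal F_z]\,\E[D_zG\mid\mathcal F_z]\bigr],
\]
or, more simply, the interpolation or Efron--Stein type bound
\[
	\Cov(F,G)\leq\sum_z\E\bigl[\Delta_zF\,\Delta_zG\bigr],
\]
with resampling differences $\Delta_zF\coloneqq F(\zeta)-F(\zeta^{(z)})$. A clean route is the formula $\Cov(F,G)=\tfrac12\sum_z\E[(\Delta_z F)(\Delta_z G')]$ along a Doob-martingale decomposition, together with Cauchy--Schwarz conditional on the other coordinates. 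In any such route, $|\Delta_zu_n(x)|\leq g_n(x,z)|\zeta(z)-\zeta'(z)|$ and $\E|\zeta(z)-\zeta'(z)|^2=2\Var(\zeta(0))$ give
\[
	\Cov(u_n(x),u_m(y))\leq\sum_z g_n(x,z)g_m(y,z)\,\E(\zeta(z)-\zeta'(z))^2=2\Var(\zeta(0))\sum_z g_n(x,z)g_m(y,z).
\]

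The obstacle I expect is justifying the covariance bound with nonnegative but nonsymmetric factors. The Doob-martingale version gives
\[
	\Cov=\sum_z\E[D_z(F)D_z(G)],
\]
where $D_z$ are the martingale increments. Each increment satisfies $|D_z(F)|\leq\E[|\Delta_zF|\mid\mathcal F_z]$, and $|\Delta_zF|\leq g_n(x,z)|\zeta(z)-\zeta'(z)|$. Conditional Cauchy--Schwarz then bounds each term by $g_n(x,z)g_m(y,z)\,\E|\zeta-\zeta'|^2$, giving the factor $2$. Infinite-volume issues disappear because $u_n(x)$ depends on finitely many sites.
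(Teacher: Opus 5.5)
Your proposal is correct and follows the paper's proof essentially step for step. The supremum over stopping times of linear functionals with occupation weights $0\leq w_\tau\leq g_t(x,\cdot)$ gives convexity, the $\ell^2$-Lipschitz bound and the resampling bound; Lemma~\ref{lem:weighted-exp-conc}(a) with $\ell_z=g_t(x,z)$ gives the moments; Harris--FKG gives nonnegativity; and orthogonality of Doob martingale differences combined with the resampling bound gives the covariance upper bound. Of the routes you list for that last step, keep only the Doob-martingale argument of your final paragraph, since the unqualified inequality $\Cov(F,G)\leq\sum_z\E[\Delta_zF\,\Delta_zG]$ is not valid in general (it fails for $G=-F$ linear).
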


	\begin{proof}
		For a stopping time $\tau\leq t$, set
		\[
			h_\tau(z)\coloneqq \mathbf E_x\sum_{k<\tau}\mathbf 1_{\{X_k=z\}}\, .
		\]
		Then $0\leq h_\tau(z)\leq g_t(x,z)$,
		$\|h_\tau\|_{\ell^2}^2\leq \sum_{z\in\Z^d}g_t(x,z)^2$, and the stopped payoff is the
		linear functional $\sum_{z\in\Z^d} h_\tau(z)\zeta(z)$. By
		Theorem~\ref{thm:RW}, $u_t(x)$ is the supremum of these linear
		functionals. This proves convexity and the $\ell^2$-Lipschitz bound.
		It also gives the resampling bound
		\begin{equation}\label{eq:odometer-resampling}
			|u_t(x;\zeta)-u_t(x;\zeta^{(z)})|
			\leq g_t(x,z)|\zeta(z)-\zeta'(z)|\, .
		\end{equation}
		Here $\zeta^{(z)}$ is obtained from $\zeta$ by replacing $\zeta(z)$
		with an independent copy $\zeta'(z)$.
		Lemma~\ref{lem:weighted-exp-conc}, applied with
		$\ell_z=g_t(x,z)$, gives
		\[
			\bigl(\E\bigl|u_t(x)-\E u_t(x)\bigr|^p\bigr)^{1/p}
			\leq
			C\left[
			\left(\sum_{z\in\Z^d} g_t(x,z)^2\right)^{1/2}
			+
			\left(\sum_{z\in\Z^d} g_t(x,z)^p\right)^{1/p}
			\right]\, .
		\]
		Since $p\geq2$, the second term is bounded by the first. Using
		$\Var(V_t(0))=\Var(\zeta(0))\sum_{z\in\Z^d}g_t(x,z)^2$ proves the moment bound,
		and the variance estimate is the case $p=2$.

		For \eqref{eq:odometer-covariance-bound}, the lower bound follows from
		the FKG inequality. For the upper bound, apply orthogonality of
		martingale differences in the scenery coordinates to the resampling
		bound above.
	\end{proof}

	\begin{remark}\label{rem:kernel-concentration}
		Let $K$ be a finite-range, translation-invariant transition kernel on
		$\Z^d$, and write $g_n^K(z)=\sum_{j<n}K^j(0,z)$. Let $v_0=0$ and
		$v_{n+1}=(\zeta+Kv_n)^+$, where the $\zeta(x)$ are i.i.d.\ and
		$\E e^{\theta|\zeta(0)|}<\infty$ for some $\theta>0$. Then there is
		$C<\infty$ such that, for every $n\geq1$ and $r\geq2$,
		\[
			\bigl(\E|v_n(0)-\E v_n(0)|^r\bigr)^{1/r}
			\leq C\bigl(\sqrt r\,\|g_n^K\|_2+r\|g_n^K\|_\infty\bigr)\,.
		\]
		The proof of Proposition~\ref{prop:finite-time-concentration-scale},
		with $P$ replaced by $K$, gives the coordinate Lipschitz constants
		$g_n^K(z)$. For each fixed $n$, finite implies that $g_n^K$ has finite support, so integrating the tail in Lemma~\ref{lem:weighted-exp-conc}\textup{(b)}
		gives the estimate.
	\end{remark}

	The next lemma bounds the error in replacing a coordinatewise convex
	function by the linear function determined by its mean derivatives.

	\begin{lemma}\label{lem:convex-linear-bound}
		Let $\xi_1,\ldots,\xi_N$ be i.i.d.\ mean-zero variables with finite
		variance. Let $F$ be convex in each
		coordinate and suppose that, $\P$-almost surely, for every $1\leq i\leq N$,
		\[
			0\leq\partial_i^+F\leq b_i\, ,
		\]
		where the $b_i$ are deterministic. Then there is a universal
		$C<\infty$ such that, for every $L>0$,
		\[
			\E\left[\left(
			F-\E F-\sum_{i=1}^N\E[\partial_i^+F]\xi_i
			\right)^2\right]
			\leq CL^2\sum_{i=1}^N\Var(\partial_i^+F)
			+C\eta(L)\sum_{i=1}^Nb_i^2\, ,
		\]
		where, for an independent copy $\xi_1'$ of $\xi_1$,
		\[
			\eta(L)\coloneqq
			\E\left[(\xi_1-\xi_1')^2
			\one_{\{|\xi_1-\xi_1'|>L\}}\right]\, .
		\]
	\end{lemma}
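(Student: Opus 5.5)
The plan is to compare $F$ with its linearization one martingale increment at a time, and to obtain the two error terms by splitting each increment according to whether the resampled coordinate moved by more than $L$. Let $\mathcal F_j=\sigma(\xi_1,\ldots,\xi_j)$ and $a_i\coloneqq\E[\partial_i^+F]$. We decompose $F-\E F=\sum_iD_i$ with $D_i=\E[F\mid\mathcal F_i]-\E[F\mid\mathcal F_{i-1}]$. Since the $\xi_i$ are independent and mean zero, each $a_i\xi_i$ is also an $\mathcal F_i$-measurable increment with $\E[a_i\xi_i\mid\mathcal F_{i-1}]=0$. Orthogonality of martingale differences then gives
\[
\E\Bigl[\Bigl(F-\E F-\sum_i a_i\xi_i\Bigr)^2\Bigr]=\sum_{i=1}^N\E\bigl[(D_i-a_i\xi_i)^2\bigr]\, ,
\]
so it suffices to bound each term by $CL^2\Var(\partial_i^+F)+C\eta(L)b_i^2$.

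For each $i$ I would use the resampling representation. Let $\xi^{(i)}$ replace $\xi_i$ by an independent copy $\xi_i'$. Then $D_i=\E[F(\xi)-F(\xi^{(i)})\mid\mathcal F_i]$. Also $a_i\xi_i=\E[a_i(\xi_i-\xi_i')\mid\mathcal F_i]$, because $\E\xi_i'=0$. This is where the mean-zero hypothesis is used. Coordinatewise convexity makes $s\mapsto F$ (with $s$ in the $i$-th slot) absolutely continuous, with derivative $\partial_i^+F$ almost everywhere. Hence
\[
D_i-a_i\xi_i=\E\Bigl[\int_{\xi_i'}^{\xi_i}\bigl(\partial_i^+F(\xi_1,\ldots,s,\ldots,\xi_N)-a_i\bigr)\,ds\ \Big|\ \mathcal F_i\Bigr]\, .
\]
By conditional Jensen, $\E(D_i-a_i\xi_i)^2$ is at most the unconditional second moment of the integral $I_i$ inside the conditional expectation.

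The key observation is that convexity makes $s\mapsto\partial_i^+F(\ldots,s,\ldots)$ nondecreasing. Therefore, for $s$ between $\xi_i'$ and $\xi_i$, the integrand satisfies
\[
\bigl|\partial_i^+F(\ldots,s,\ldots)-a_i\bigr|\leq\bigl|\partial_i^+F(\xi)-a_i\bigr|+\bigl|\partial_i^+F(\xi^{(i)})-a_i\bigr|\, .
\]
We then split according to whether $|\xi_i-\xi_i'|\leq L$.
\begin{itemize}
\item On $\{|\xi_i-\xi_i'|\leq L\}$, the display gives $I_i^2\leq 2L^2\bigl[(\partial_i^+F(\xi)-a_i)^2+(\partial_i^+F(\xi^{(i)})-a_i)^2\bigr]$. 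Since $\xi^{(i)}$ has the same law as $\xi$, the expectation of this is at most $4L^2\Var(\partial_i^+F)$.
\item On the complement, we use $0\leq\partial_i^+F\leq b_i$, so the integrand is at most $b_i$ in absolute value. This gives $I_i^2\leq b_i^2(\xi_i-\xi_i')^2\one_{\{|\xi_i-\xi_i'|>L\}}$, whose expectation is $b_i^2\eta(L)$ because the coordinates are i.i.d.
\end{itemize}
Since $I_i^2$ is bounded by the sum of these two pieces, adding them and summing over $i$ yields the claim with a universal $C$.

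The main obstacle I anticipate is the regularity needed in the step where the integrand is replaced by its values at the two endpoints. Working with $\partial_i^+F$ rather than a derivative gives this monotonicity everywhere, which is exactly what is needed. I also need the almost-sure derivative bounds to hold at the resampled point and along the whole segment. This should follow because $\xi^{(i)}$ has the same law as $\xi$, and because the monotone right derivative along the segment is sandwiched between its values at the endpoints.
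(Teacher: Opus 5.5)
Your proof is correct and is essentially the paper's argument: the paper applies the Efron--Stein inequality to $F-\sum_i a_i\xi_i$, and your martingale decomposition followed by conditional Jensen is precisely the proof of that inequality (without its factor $\tfrac12$, which only affects the universal constant). Your integral of $\partial_i^+F$ along the segment plays the role of the paper's difference quotient $(F-F^{(i)})/(\xi_i-\xi_i')$; convexity sandwiches either one between the right derivatives at the two endpoints, and both arguments then split at $|\xi_i-\xi_i'|\leq L$ in the same way.
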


	\begin{proof}
		Here $\partial_i^+F$ is the right derivative in the $i$th coordinate.
		Let $\xi_i'$ be an independent copy of $\xi_i$, let $F^{(i)}$ be the
		resampled value, and define
		\[
			D_i\coloneqq\frac{F-F^{(i)}}{\xi_i-\xi_i'}\, ,
		\]
		with $D_i\coloneqq\partial_i^+F$ on the event $\{\xi_i=\xi_i'\}$.   
		Convexity places $D_i$ between the right derivatives at the two
		endpoints. Hence, with $a_i\coloneqq\E[\partial_i^+F]$,
		\[
			|D_i-a_i|^2
			\leq
			2|\partial_i^+F-a_i|^2
			+2|\partial_i^+F^{(i)}-a_i|^2\, .
		\]
		Since $|D_i-a_i|\leq b_i$, splitting according to whether
		$|\xi_i-\xi_i'|\leq L$ gives
		\[
			\E\left[(\xi_i-\xi_i')^2(D_i-a_i)^2\right]
			\leq CL^2\Var(\partial_i^+F)+Cb_i^2\eta(L)\, .
		\]
		Applying the Efron--Stein inequality to $F-\sum_i a_i\xi_i$ proves the desired result.
	\end{proof}

	\subsection{Localization}\label{ssec:localization}
	The parallel toppling odometer $u_t(x)$ is determined
	by the scenery in a ball of radius $t$ around $x$. The random-walk
	representation suggests this is a worst-case dependence: a walk run
	for $t$ steps is typically confined to distance of order $\sqrt t$, and the
	error from killing the walk outside such a diffusive box is controlled by the
	corresponding exit probability.

	For $D\subseteq\Z^d$ and $x\in D$, define the localized value
	\begin{equation}\label{eq:localized-odometer}
		u_t^D(x)\coloneqq \sup_{\tau\leq t}\mathbf E_x\sum_{k<\tau\wedge\tau_D}\zeta(X_k)\, .
	\end{equation}
	For $x\notin D$, define $u_t^D(x)\coloneqq0$.
	It satisfies $u_t^D(x)\leq u_t(x)$ and depends only on the scenery in $D$. Moreover, exactly as in the proof of Proposition~\ref{prop:finite-time-concentration-scale}, the localized value is a supremum of linear functionals of the scenery with coefficients at most $g_t^D(x,\cdot)$, so resampling $\zeta(y)$ by an independent copy $\zeta'(y)$ changes $u_t^D(x)$ by at most $g_t^D(x,y)|\zeta(y)-\zeta'(y)|$.

	\begin{lemma}\label{lem:localization-killing}
		For every $D\subseteq\Z^d$, every $x\in D$, and every $t\geq0$,
		\[
			0\leq u_t(x)-u_t^D(x)\leq\mathbf E_x\bigl[\mathbf 1_{\{\tau_D\leq t\}} u_t(X_{\tau_D})\bigr]\, .
		\]
	\end{lemma}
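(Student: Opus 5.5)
The lower bound $u_t^D(x)\leq u_t(x)$ is immediate from the definitions. For any stopping time $\tau\leq t$, the stopping time $\tau\wedge\tau_D$ is also admissible for the unlocalized problem. Its payoff is exactly $\mathbf E_x\sum_{k<\tau\wedge\tau_D}\zeta(X_k)$. Taking suprema and invoking Theorem~\ref{thm:RW} gives $u_t^D(x)\leq u_t(x)$.

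For the upper bound, I would take the optimal stopping time $\tau^*=\tau_t^*$ from Theorem~\ref{thm:RW}, so that $u_t(x)=\mathbf E_x\sum_{k<\tau^*}\zeta(X_k)$. I would then split the sum at $\tau^*\wedge\tau_D$:
\[
	u_t(x)=\mathbf E_x\sum_{k<\tau^*\wedge\tau_D}\zeta(X_k)
	+\mathbf E_x\Bigl[\one_{\{\tau_D<\tau^*\}}\sum_{\tau_D\leq k<\tau^*}\zeta(X_k)\Bigr]\, .
\]
The first term is at most $u_t^D(x)$, because $\tau^*$ is admissible in \eqref{eq:localized-odometer}.

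For the second term, I would apply the strong Markov property at $\tau_D$. On $\{\tau_D<\tau^*\}$ we have $\tau_D<t$. After $\tau_D$, the walk runs from $X_{\tau_D}$ with $t-\tau_D$ steps remaining, and the residual time $\tau^*-\tau_D$ is a stopping time for the shifted walk bounded by $t-\tau_D$. So the conditional expected residual payoff is at most $v_{t-\tau_D}(X_{\tau_D})=u_{t-\tau_D}(X_{\tau_D})$. This is at most $u_t(X_{\tau_D})$, since $u_n$ is nondecreasing in $n$ (the supremum is over a larger class). All quantities involved are nonnegative, and the indicator satisfies $\one_{\{\tau_D<\tau^*\}}\leq\one_{\{\tau_D\leq t\}}$. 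This yields $\mathbf E_x[\one_{\{\tau_D\leq t\}}u_t(X_{\tau_D})]$.

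The only delicate point is that the residual stopping time is adapted to the post-$\tau_D$ filtration while also depending on the pre-$\tau_D$ path. I would handle this by conditioning on $\mathcal F_{\tau_D}$. For each fixed history, $\tau^*-\tau_D$ is a stopping time of the shifted walk, so its value is bounded by the supremum $u_{t-\tau_D}(X_{\tau_D})$. Alternatively, one can use that $\tau^*$ is a hitting time of the zero set of the space-time value, $\{(k,y):u_{t-k}(y)=0\}$. Then $\tau^*-\tau_D$ is exactly the optimal stopping time started from $(\tau_D,X_{\tau_D})$, and the residual expected payoff equals $u_{t-\tau_D}(X_{\tau_D})$ exactly.

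Integrability of the sums is not an issue: the walk has at most $t$ steps and visits only finitely many sites, whose $\zeta$ values are fixed finite numbers.
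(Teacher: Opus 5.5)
Your proof is correct and follows the paper's argument. You split the payoff at the first exit time $\tau_D$, bound the pre-exit part by $u_t^D(x)$, and use the strong Markov property at $\tau_D$ to bound the post-exit reward by $u_{t-\tau_D}(X_{\tau_D})\le u_t(X_{\tau_D})$. The only cosmetic difference is that you work with the optimal time $\tau_t^*$ from Theorem~\ref{thm:RW} rather than an arbitrary $\tau\le t$ followed by a supremum, and you make explicit the monotonicity $u_{t-\tau_D}\le u_t$ and the nonnegativity of $u_t$, which the paper leaves implicit.
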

	\begin{proof}
		Fix a stopping time $\tau\leq t$ and split $\mathbf E_x\sum_{k<\tau}\zeta(X_k)$ at the first exit $\tau_D$. The part before $\tau_D$ is at most $u_t^D(x)$. On $\{\tau_D<\tau\}$, the strong Markov property at $\tau_D$ leaves at most $t-\tau_D\leq t$ further steps, so the remaining conditional reward given $X_{\tau_D}$ is at most $u_t(X_{\tau_D})$. Taking the supremum over $\tau\leq t$ gives the claim.
	\end{proof}

	At the diffusive scale this gives an exponentially small error.

	\begin{corollary}[Mean localization]\label{cor:mean-localization}
		Suppose that the scenery is stationary and $\E\zeta(0)^+<\infty$.
		For every $0<T<\infty$ there are $C<\infty$ and $c>0$ such that, for all
		$A\geq1$, all $R\geq1$, all $t\leq TR^2$, and all $x\in\Z^d$,
		\[
			0\leq
			\E u_t(0)-\E u_t^{Q(x,AR)}(x)
			\leq
			Ce^{-cA^2/T}\E u_t(0)\, .
		\]
	\end{corollary}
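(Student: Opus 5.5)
The plan is to combine Lemma~\ref{lem:localization-killing} with the maximal-displacement estimate \eqref{eq:rw-max-displacement} and stationarity. The lower bound $0\leq\E u_t(0)-\E u_t^{Q(x,AR)}(x)$ is immediate from $u_t^D\leq u_t$ and from stationarity, which gives $\E u_t(x)=\E u_t(0)$. For the upper bound, I would take $D=Q(x,AR)$ in Lemma~\ref{lem:localization-killing} and then take $\P$-expectations:
\[
\E u_t(0)-\E u_t^{D}(x)\leq \E\,\mathbf E_x\bigl[\one_{\{\tau_D\leq t\}}u_t(X_{\tau_D})\bigr].
\]

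The difficulty is that the exit event and the exit location depend only on the walk, whereas $u_t$ depends on the scenery. Since $X$ is independent of $\zeta$ (the walk law $\mathbf P_x$ does not depend on $\zeta$), Tonelli lets me exchange the two expectations. This gives
\[
\E\,\mathbf E_x\bigl[\one_{\{\tau_D\leq t\}}u_t(X_{\tau_D})\bigr]
=\sum_{y}\mathbf P_x(\tau_D\leq t,\,X_{\tau_D}=y)\,\E u_t(y)
=\mathbf P_x(\tau_D\leq t)\,\E u_t(0),
\]
where the last step uses $u_t\geq0$ and stationarity. The hypothesis $\E\zeta(0)^+<\infty$ guarantees that $\E u_t(0)\leq t\,\E\zeta(0)^+<\infty$, because $u_t(0)\leq\mathbf E_0\sum_{k<t}\zeta(X_k)^+$. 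Hence all quantities are finite and the manipulation is legitimate.

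It remains to bound the exit probability. The event $\{\tau_D\leq t\}$ requires $\max_{k\leq t}|X_k-x|_\infty>AR$, which in turn forces the Euclidean displacement to be at least $AR$. By \eqref{eq:rw-max-displacement} with $n=t\leq TR^2$,
\[
\mathbf P_x(\tau_D\leq t)\leq C\exp\{-cA^2R^2/t\}\leq Ce^{-cA^2/T}.
\]
If $t=0$, both sides vanish. Combining the displays proves the corollary, with constants depending only on $d$; the dependence on $T$ enters through the exponent.

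The only delicate step is the decoupling of the exit position from the scenery. That step is handled by the independence of the walk and the scenery together with stationarity. It requires no control of $u_t$ at particular sites and no independence of the scenery; stationarity alone suffices.
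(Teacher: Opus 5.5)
Your proposal is correct and is the paper's argument: take expectations in Lemma~\ref{lem:localization-killing} with $D=Q(x,AR)$, use the independence of the walk from the scenery together with stationarity to factor out $\E u_t(0)$, and bound $\mathbf P_x(\tau_D\leq t)$ by \eqref{eq:rw-max-displacement}. The paper's one-line proof leaves three details implicit, and you supply them: the Tonelli exchange, finiteness of $\E u_t(0)$ from $\E\zeta(0)^+<\infty$, and the trivial case $t=0$.
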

	\begin{proof}
		Take expectations in Lemma~\ref{lem:localization-killing} with
		$D=Q(x,AR)$ and use stationarity, together with
		\eqref{eq:rw-max-displacement}.
	\end{proof}

	The same argument extends to the Brownian motion analogue of the odometer. For $A>0$ and
	$u\in\R^d$, let $\tau_{u,A}$ be the exit time of Brownian motion from the
	Euclidean ball of radius $A$ with center $u$. Let $\mathcal U_{Z,A}(T,u)$
	be the value obtained from \eqref{eq:continuum-membrane-stopping-value} by
	replacing each stopping time $\tau$ by $\tau\wedge\tau_{u,A}$. Then
	$0\leq\mathcal U_{Z,A}(T,u)\leq\mathcal U_Z(T,u)$.

	\begin{lemma}[Ball localization of the Brownian value]\label{lem:brownian-ball-localization}
	Assume $d<4$. For every $0<T<\infty$ there are $C=C(d)>0$ and $c=c(d)>0$
	such that, for every $A\geq1$ and every compact $K\subset\R^d$,
	\[
		\sup_{u\in K}
		\bigl(\mathcal U_Z(T,u)-\mathcal U_{Z,A}(T,u)\bigr)
		\leq
		Ce^{-cA^2/T}
		\sup_{\substack{z\in\R^d:\\ \inf_{y\in K}|z-y|\leq A}}
		\mathcal U_Z(T,z)\, .
	\]
	\end{lemma}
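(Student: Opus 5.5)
The argument is the continuum version of Lemma~\ref{lem:localization-killing} followed by the exit estimate used in Corollary~\ref{cor:mean-localization}. Fix $u\in K$ and a Brownian stopping time $\tau\leq T$. By \eqref{eq:continuum-membrane-stopping-value}, the payoff of $\tau$ is
\[
\mathbf E_u^{\rm BM}[Z(T,u)-Z(T-\tau,B_\tau)].
\]
I split it on the events $\{\tau\leq\tau_{u,A}\}$ and $\{\tau>\tau_{u,A}\}$. The truncated time $\tau\wedge\tau_{u,A}$ is admissible for $\mathcal U_{Z,A}(T,u)$. So the payoff of $\tau$ equals the payoff of $\tau\wedge\tau_{u,A}$ plus the correction
\[
\mathbf E_u^{\rm BM}\bigl[\one_{\{\tau_{u,A}<\tau\}}\bigl(Z(T-\tau_{u,A},B_{\tau_{u,A}})-Z(T-\tau,B_\tau)\bigr)\bigr].
\]
The paper's convention is that $Z$ is held fixed and continuous. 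Since $Z$ has polynomial growth, Proposition~\ref{prop:brownian-os} applies, and we may use the strong Markov property of $B$ at $\tau_{u,A}$.

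Condition on $\mathcal F_{\tau_{u,A}}$ on the event $\{\tau_{u,A}<\tau\}$. Put $s=T-\tau_{u,A}$ and $z=B_{\tau_{u,A}}$. The remaining time $\tau-\tau_{u,A}$ is a stopping time for the shifted motion, and it is at most $s$. Hence the conditional correction is at most
\[
\sup_{\sigma\leq s}\mathbf E_z^{\rm BM}[Z(s,z)-Z(s-\sigma,B_\sigma)]=\mathcal U_Z(s,z).
\]
I then need two facts. First, $\mathcal U_Z(s,z)\geq0$, since $\sigma=0$ is admissible. Second, $\mathcal U_Z(s,z)\leq\mathcal U_Z(T,z)$ for $s\leq T$. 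This monotonicity in the horizon is the one step that is not pure bookkeeping. In the discrete model it is $u_s\leq u_t$. Here I would prove it by a dynamic-programming argument: from $(T,z)$, the strategy "wait for time $T-s$ and then play optimally for horizon $s$" does not directly compare to $\mathcal U_Z(s,z)$, so that route is not immediate. Instead I would pass the discrete monotonicity $u_s\leq u_t$ to the limit via Theorem~\ref{thm:main-explosion}(i)(b) and Proposition~\ref{prop:dlt4-heat-potential-invariance}, which are stated earlier. If that is not allowed, I would replace the right side by $\sup_{s\leq T}\mathcal U_Z(s,z)$, which is harmless for later uses. I expect this monotonicity step to be the main obstacle.

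With that bound, the correction is at most $\mathbf P_u^{\rm BM}(\tau_{u,A}<T)\,\sup_{|z-u|= A}\mathcal U_Z(T,z)$. Here $z$ lies at distance exactly $A$ from $u\in K$, so it belongs to the set in the supremum on the right-hand side of the lemma. The Brownian motion has generator $(2d)^{-1}\Delta$, so each coordinate has variance $t/d$. The reflection principle for each coordinate, together with a union bound over the $d$ coordinates, gives
\[
\mathbf P_u^{\rm BM}\Bigl(\sup_{r\leq T}|B_r-u|\geq A\Bigr)\leq Ce^{-cA^2/T},
\]
with $C,c$ depending only on $d$. This is the continuum analogue of \eqref{eq:rw-max-displacement}.

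Taking the supremum over $\tau\leq T$ gives $\mathcal U_Z(T,u)-\mathcal U_{Z,A}(T,u)$ bounded by the stated quantity, uniformly in $u\in K$. The lower bound $\mathcal U_{Z,A}\leq\mathcal U_Z$ is already noted before the lemma. Measurability of the supremum is not an issue, because $Z$ is continuous.
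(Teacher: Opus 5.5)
Your decomposition at $\tau_{u,A}$, the strong Markov bound of the conditional correction by $\mathcal U_Z(T-\tau_{u,A},B_{\tau_{u,A}})\geq0$, and the Gaussian exit estimate are the continuum version of Lemma~\ref{lem:localization-killing} and Corollary~\ref{cor:mean-localization} that the paper has in mind. (A minor point: conditionally on $\mathcal F_{\tau_{u,A}}$, the remainder $\tau-\tau_{u,A}$ is only a randomized stopping time for the post-exit motion. This is harmless, because randomization does not raise the value; alternatively, use the supermartingale property of the value process from Proposition~\ref{prop:brownian-os}.)

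The genuine gap is the horizon monotonicity $\mathcal U_Z(s,z)\leq\mathcal U_Z(T,z)$ for $0\leq s\leq T$. You need it because the lemma has $\mathcal U_Z(T,\cdot)$ on the right, not $\mathcal U_Z(T-\tau_{u,A},\cdot)$. Theorem~\ref{thm:main-explosion}\textup{(i)(b)} is a convergence in law at one fixed horizon. Marginal limits at $s$ and at $T$ say nothing about the joint law of $\mathcal U(s,\cdot)$ and $\mathcal U(T,\cdot)$, so they cannot yield an almost-sure inequality between them. To use that route, you would have to extract from the \emph{proof} the coupled uniform convergence on $[T_-,T_+]\times K$, and then transfer an almost-sure property of the limit back to the given realization of $Z$. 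Your fallback with $\sup_{s\leq T}\mathcal U_Z(s,z)$ proves a different inequality.

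The missing idea is that the payoff of a fixed stopping rule does not depend on the horizon. This is the continuum form of the identity $V_t(x)-\mathbf E_xV_{t-\tau}(X_\tau)=\mathbf E_x\sum_{k<\tau}\zeta(X_k)$ from the proof of Lemma~\ref{lem:difference-representation}. So you should compare the same rule $\sigma\leq s$ under both horizons, not a delayed rule.

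For $0\leq s\leq T$ and $0\leq a\leq s$, put $w(a,y)\coloneqq Z(a+T-s,y)-Z(a,y)$.
\begin{itemize}
\item For $a>0$, $w(a,\cdot)=\sqrt{\Var(\zeta(0))}\int_a^{a+T-s}e^{r\Delta/(2d)}\mathcal W\,dr$ is smooth and solves $\partial_aw=(2d)^{-1}\Delta w$.
\item $w$ is continuous up to $a=0$ and has polynomial growth, because $Z$ does.
\item It\^o's formula (the generator is $(2d)^{-1}\Delta$) together with the growth bound makes $r\mapsto w(s-r,B_r)$ a martingale on $[0,s]$.
\end{itemize}
Hence, on one full-measure event of the noise, for all $s$, all $z$ and all stopping times $\sigma\leq s$,
\[
	Z(T,z)-\mathbf E_z^{\rm BM}Z(T-\sigma,B_\sigma)
	=Z(s,z)-\mathbf E_z^{\rm BM}Z(s-\sigma,B_\sigma)\, .
\]
Taking the supremum over $\sigma\leq s$ gives $\mathcal U_Z(s,z)\leq\mathcal U_Z(T,z)$. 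With this inserted, your argument proves the lemma as stated, with $C,c$ depending only on $d$.
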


	\subsection{Tightness in negative Sobolev spaces}\label{ssec:sobolev-tightness}
	We will use the following easy consequence of the tightness
	criterion of \citet{FurlanMourrat}.

	\begin{lemma}[Tightness from covariance decay]\label{lem:sobolev-tightness}
		Let $0<\beta<d$ and $K<\infty$. For each $R\geq1$, let
		$(F_R(x))_{x\in\Z^d}$ be a mean-zero random field satisfying
		\[
			\bigl|\Cov\bigl(F_R(x),F_R(y)\bigr)\bigr|
			\leq K(1+|x-y|)^{-\beta}
		\]
		for every $x,y\in\Z^d$. Then, for every $s>\beta/2$, the fields
		$\bigl(R^{\beta/2}F_R^{(R)}\bigr)_{R\geq1}$ are tight in
		$H^{-s}_{\rm loc}(\R^d)$.
	\end{lemma}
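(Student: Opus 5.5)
The plan is to reduce to a Kolmogorov-type moment bound in Fourier space and then appeal to the compact embedding between Sobolev spaces on a bounded domain, as in the criterion of \citet{FurlanMourrat}.

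Fix a bounded smooth domain $D$ and set $G_R\coloneqq R^{\beta/2}F_R^{(R)}$. Choose $s'$ with $\beta/2<s'<s$. Since $H_0^{s}(D)\hookrightarrow H_0^{s'}(D)$ compactly, the dual embedding $H^{-s'}(D)\hookrightarrow H^{-s}(D)$ is also compact. Closed balls of $H^{-s'}(D)$ are therefore compact in $H^{-s}(D)$, and Markov's inequality shows it suffices to prove
\[
\sup_{R\geq1}\E\|\chi G_R\|_{H^{-s'}(\R^d)}^2<\infty
\]
for a fixed cutoff $\chi\in C_c^\infty$ with $\chi=1$ on $D$. Restriction to $D$ is bounded from $H^{-s'}(\R^d)$ to $H^{-s'}(D)$, because extension by zero maps $H_0^{s'}(D)$ boundedly into $H^{s'}(\R^d)$.

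For the second moment, write
\[
\E\|\chi G_R\|_{H^{-s'}}^2=(2\pi)^{-d}\int(1+|\xi|^2)^{-s'}\,\E|\widehat{\chi G_R}(\xi)|^2\,d\xi ,
\]
and expand
\[
\E|\widehat{\chi G_R}(\xi)|^2=\iint \chi(z)\chi(w)e^{-i\xi\cdot(z-w)}R^{\beta}\Cov\bigl(F_R(\lfloor Rz\rfloor),F_R(\lfloor Rw\rfloor)\bigr)\,dz\,dw .
\]
Putting absolute values inside the integral and applying Fubini, the quantity is bounded by
\[
\iint \chi(z)\chi(w)\,R^\beta K\bigl(1+|\lfloor Rz\rfloor-\lfloor Rw\rfloor|\bigr)^{-\beta}\,k_{s'}(z-w)\,dz\,dw ,
\]
where $k_{s'}$ is the Bessel kernel $\mathcal F^{-1}[(1+|\xi|^2)^{-s'}]$. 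This kernel is nonnegative and integrable, with $k_{s'}(x)\leq C|x|^{2s'-d}$ near $0$ (with a logarithm if $2s'=d$, and bounded if $2s'>d$) and exponential decay at infinity.

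For the lattice factor, $|\lfloor Rz\rfloor-\lfloor Rw\rfloor|\geq R|z-w|-\sqrt d$, which gives
\[
R^\beta\bigl(1+|\lfloor Rz\rfloor-\lfloor Rw\rfloor|\bigr)^{-\beta}\leq C\min\bigl(R^\beta,|z-w|^{-\beta}\bigr)\leq C|z-w|^{-\beta}
\]
uniformly in $R\geq1$. The double integral is then dominated by
\[
\int_{\mathrm{supp}\chi}\int_{|x|\leq C}|x|^{-\beta}k_{s'}(x)\,dx\,dz .
\]
The singularity at the diagonal behaves like $|x|^{2s'-d-\beta}$, which is integrable in $\R^d$ exactly because $2s'>\beta$. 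This is where the hypothesis $s>\beta/2$ is used.

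The main care point is justifying the Fourier and Fubini manipulations with the Bessel kernel, since $\xi\mapsto(1+|\xi|^2)^{-s'}$ need not be integrable when $s'<d/2$. If the direct kernel bound is awkward, I would first regularize with $(1+|\xi|^2)^{-s'}e^{-\epsilon|\xi|^2}$, whose inverse Fourier transform is also nonnegative and bounded by $k_{s'}$, obtain a bound uniform in $\epsilon$, and then let $\epsilon\downarrow0$ by monotone convergence. The piecewise-constant structure of $F_R^{(R)}$ causes no difficulty, since only the pointwise covariance bound is used.
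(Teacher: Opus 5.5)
Your argument is correct, but it takes a different route from the paper. The paper proves the lemma in one line. It observes that the covariance bound verifies the moment hypotheses of the Besov-space tightness criterion of Furlan--Mourrat (Theorem~2.30 there), with $p=q=2$, regularity exponent $-\beta/2$ and target exponent $-s$. It then identifies $\mathcal B^{-s,\mathrm{loc}}_{2,2}(\R^d)$ with $H^{-s}_{\mathrm{loc}}(\R^d)$. That criterion is wavelet-based, so checking its hypotheses amounts to testing the field against rescaled compactly supported functions.

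You instead prove the needed special case by hand: a uniform bound on $\E\|\chi G_R\|_{H^{-s'}(\R^d)}^2$ through the Bessel kernel, then the compact dual embedding $H^{-s'}(D)\hookrightarrow H^{-s}(D)$ and Markov's inequality. Your route is self-contained and shows exactly where the hypotheses enter. Integrability of $|x|^{2s'-d-\beta}$ at the diagonal needs $s'>\beta/2$, and $\beta<d$ covers the case $2s'\geq d$. The citation is shorter, and the same criterion would also give tightness in $L^p$-based Besov spaces if higher moments were controlled.

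Two small points in your write-up:
\begin{itemize}
\item The Fubini concern disappears if you write $\|f\|_{H^{-s'}(\R^d)}^2=\iint f(z)f(w)k_{s'}(z-w)\,dz\,dw$ pathwise by Plancherel. This converges absolutely because $f=\chi G_R$ is bounded with compact support and $k_{s'}\in L^1$, so no regularization is needed. Also, your Gaussian-regularized kernel is dominated by $k_{s'}$ only up to a constant factor, not pointwise.
\item Extension by zero is bounded from $H_0^{s'}(D)$ into $H^{s'}(\R^d)$ only when $s'-\tfrac12\notin\Z$. Since $s'$ ranges over the open interval $(\beta/2,s)$, just choose it off the half-integers.
\end{itemize}
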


	Indeed, the covariance hypothesis verifies the moment conditions of
	\citet[Theorem~2.30]{FurlanMourrat} with $p=q=2$, regularity exponent
	$-\beta/2$, and $\alpha=-s$, and
	$\mathcal B^{-s,{\rm loc}}_{2,2}(\R^d)$ coincides with
	$H^{-s}_{\rm loc}(\R^d)$.

	\section{Dimensions one, two, and three}\label{sec:dim123}

In this section, $d\leq3$. Subsection~\ref{ssec:expl-d123} proves a
polynomial lower tail for $u_t(0)$ at the critical scale,
Subsection~\ref{ssec:scaling-dlt4} proves the parabolic scaling limit in
Theorem~\ref{thm:main-explosion}(i), and
Subsections~\ref{sec:continuum-version} and~\ref{sec:continuum-to-discrete}
prove the percolation theorem in dimensions two and three. The percolation
proof is the delicate part: crossings are produced first for finite-range
Gaussian ball fields at a fixed scale, then upgraded to the Brownian stopping
value by a zero-one argument, and finally transferred to the odometer through
the scaling limit and a block argument.

\subsection{Polynomial lower tails}\label{ssec:expl-d123}

	Throughout this subsection, $d\leq3$, the scenery is i.i.d.,
	$\E\zeta(0)=0$, $0<\Var(\zeta(0))<\infty$, and
	$\E|\zeta(0)|^3<\infty$.

\begin{theorem}[Lower tail at the critical scale]\label{thm:critical-toppling}
	Fix $\nu_0>0$ and $M<\infty$. For every $a\in(0,4/(4-d))$ there are $c>0$
	and $C<\infty$, depending only on $d$, $a$, $\nu_0$, and $M$, such that
	every i.i.d.\ mean-zero field with
	\[
		\Var(\zeta(0))\geq\nu_0^2,
		\qquad
		\E|\zeta(0)|^3\leq M\,\Var(\zeta(0))^{3/2}
	\]
	satisfies, for all $t\geq3$ and $L\geq2$ with $L^a\leq t/2$,
	\begin{equation}\label{eq:dlt4-green-lower-tail}
		\P\Bigl(u_t(0)\leq \frac{t^{(4-d)/4}}{L}\Bigr)
		\leq CL^{-c}+C
		\begin{cases}
			(\log t)^{3/4}t^{-1/4}L^{a/4},&d\in\{1,3\},\\
			(\log t)^{7/4}t^{-1/2}L^{a/2},&d=2.
		\end{cases}
	\end{equation}
\end{theorem}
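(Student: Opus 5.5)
The plan follows the overview: test the deterministic stopping times $\tau\equiv n$ at sparse geometric times, use near-independence of the corresponding Gaussian membrane values, and transfer to the scenery by a multivariate Berry--Esseen bound.

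\textbf{Reduction to a persistence event.} By Theorem~\ref{thm:RW}, every deterministic time $n\leq t$ is an admissible stopping time, so $u_t(0)\geq\mathbf E_0 S_n=V_n(0)$. Choose the geometric times $n_j\coloneqq\lfloor t\,L^{-a j/k}\rfloor$ for $j=0,\ldots,k-1$, with $k\asymp\log L$. The condition $L^a\leq t/2$ makes these times lie between $t/L^a$ and $t$, and their ratios are at least a fixed large constant $\lambda$. Then
\[
	\Bigl\{u_t(0)\leq t^{(4-d)/4}/L\Bigr\}\subseteq\bigcap_{j<k}\Bigl\{V_{n_j}(0)\leq t^{(4-d)/4}/L\Bigr\}.
\]
By \eqref{eq:Qt-table}, the standard deviation of $V_{n_j}(0)$ is at least $c\,n_j^{(4-d)/4}\geq c\,t^{(4-d)/4}L^{-a(4-d)/4}$. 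Since $a<4/(4-d)$, we have $a(4-d)/4<1$, so the level $t^{(4-d)/4}/L$ is $o(1)$ standard deviations uniformly in $j$. Each standardized variable $Y_j\coloneqq V_{n_j}(0)/\sqrt{\Var V_{n_j}(0)}$ must therefore lie below $\varepsilon_j\coloneqq L^{-1+a(4-d)/4}/c$.

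\textbf{Gaussian persistence.} Let $(G_j)$ be the centered Gaussian vector with the same covariance as $(Y_j)$. By \eqref{eq:corr-bound}, the correlations satisfy $|\rho_{ij}|\leq C\lambda^{-|i-j|/4}$, with a logarithmic factor in $d=2$. Choosing $\lambda$ large makes the covariance matrix a small perturbation of the identity in operator norm, uniformly in $k$. One way to bound $\P(\max_j G_j\leq\varepsilon)$ is to compare with the independent case via Slepian or the normal comparison lemma. A cleaner way, which I would try first, is to write the density ratio explicitly: the density ratio is bounded by $e^{C k\delta}$ on the relevant region. Alternatively, one can thin to every $m$-th index and argue that the conditional variances are at least $1/2$. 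In either case the probability is at most $(1/2+o(1))^{k}\leq L^{-c}$.

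\textbf{Berry--Esseen transfer.} The vector $(Y_j)_{j<k}$ is a sum over $y$ of independent vectors $\zeta(y)w(y)$, where $w_j(y)=g_{n_j}(0,y)/\sqrt{\Var V_{n_j}(0)}$. I would apply a multivariate Berry--Esseen bound for convex sets, such as Bentkus's, to the orthant $\{x_j\leq\varepsilon_j\}$. The covariance is well conditioned, so after whitening the error is at most
\[
	C\,k^{1/4}\sum_y\E|\zeta|^3\,|w(y)|^3.
\]
Bound $|w(y)|^3\leq k^{3/2}\max_j|w_j(y)|^3$ and estimate term by term. For a single $j$, one has $\sum_y g_n(0,y)^3\asymp n^{(6-d)/2}$ for $d=1,3$, and $\asymp n^2\log n$ for $d=2$. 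Dividing by $\Var(V_n(0))^{3/2}$ gives $n^{-1/4}$ for $d=1,3$ and $n^{-1/2}\log n$ for $d=2$. These are largest at the smallest time, $n\approx t/L^a$, which yields the terms $t^{-1/4}L^{a/4}$ and $t^{-1/2}L^{a/2}$. The polylogarithmic prefactors come from the $k\asymp\log L\leq\log t$ factors, with the extra logarithm in $d=2$ coming from the cube sum. Since the constraint on $\zeta$ is scale-free, the constants depend only on $\nu_0$ and $M$.

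\textbf{Main obstacle.} I expect the main obstacle to be making the Gaussian persistence bound uniform in $k\to\infty$ using only the summable correlation decay. The logarithm in $d=2$ is the delicate case: it forces $\lambda$ to be large, and $k$ must be adjusted accordingly.
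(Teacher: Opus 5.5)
Your route is the paper's: deterministic stopping at geometrically spaced times, a correlation matrix with spectrum in $[1-\delta,1+\delta]$ from \eqref{eq:corr-bound}, a Gaussian density comparison, and a whitened multivariate Berry--Esseen bound. You use Bentkus where the paper uses Rai\v{c}; both carry the factor $k^{1/4}$. Three steps, however, fail as written.

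\emph{The density comparison.} The ratio $f_\Sigma(x)/f_I(x)=(\det\Sigma)^{-1/2}\exp\{\tfrac12x^\top(I-\Sigma^{-1})x\}$ is not bounded by $e^{Ck\delta}$ on the orthant $\{x_j\le\varepsilon\}$. The correlations are nonnegative, so the top eigenvector $v$ of $\Sigma$ has nonnegative entries. Then $-sv$ lies in the orthant for every $s>0$, and the exponent grows quadratically in $s$. The fix is to compare with a wider isotropic Gaussian, $f_\Sigma\le((1+\delta)/(1-\delta))^{k/2}f_{(1+\delta)I}$. This is what the paper does, and it gives the bound $[\sqrt{(1+\delta)/(1-\delta)}\,\Phi(\eta/\sqrt{1+\delta})]^k$. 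Alternatively, restrict to $|x|^2\le C_0k$ and pay $e^{-ck}$. Your fallbacks do not help. For nonnegative correlations, Slepian's inequality gives $\P(\max_jG_j\le\varepsilon)\ge\Phi(\varepsilon)^k$, which is the wrong direction. The normal comparison lemma has an additive error of order $\sum_{i<j}|\rho_{ij}|$, at least $k$ times the nearest-neighbour correlation, which swamps $2^{-k}$.

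\emph{The cube sums.} Your values $\sum_yg_n(0,y)^3\asymp n^{(6-d)/2}$ (resp.\ $n^2\log n$) are wrong. Dividing them by $\Var(V_n(0))^{3/2}\asymp n^{3(4-d)/4}$ gives $n^{1/4}$, $n^{1/2}\log n$, $n^{3/4}$ in $d=1,2,3$, all growing, so they cannot produce your stated rates. Use instead $\sum_yg_n(0,y)^3\le\sup_yg_n(0,y)\cdot\sum_yg_n(0,y)^2$. This is at most of order $n^2$, $n\log n$, $n^{1/2}$, which does give $n^{-1/4}$, $n^{-1/2}\log n$, $n^{-1/4}$.

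\emph{The logarithmic exponents.} The bound $|w(y)|^3\le k^{3/2}\max_j|w_j(y)|^3$ loses a factor $k$. Combined with the $k^{1/4}$ from Berry--Esseen, it yields $(\log t)^{7/4}$ and $(\log t)^{11/4}$, not the exponents $3/4$ and $7/4$ in \eqref{eq:dlt4-green-lower-tail}. Replace it by the power-mean bound $|w(y)|^3=(\sum_jw_j(y)^2)^{3/2}\le k^{1/2}\sum_j|w_j(y)|^3$. Summing the geometric series over $j$ then gives a total error $Ck^{3/4}(tL^{-a})^{-1/4}$ in $d\in\{1,3\}$ and $Ck^{3/4}\log t\,(tL^{-a})^{-1/2}$ in $d=2$. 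With $k\le C\log t$, this is exactly the stated bound.
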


\begin{proof}
	Put $h\coloneqq t^{(4-d)/4}/L$. We test $V_n(0)\leq u_t(0)$ at
	geometric times, bound the corresponding Gaussian persistence probability,
	and transfer that bound to the scenery by a multivariate Berry--Esseen
	estimate.

	Fix an integer $q\geq2$, set $N\coloneqq\lfloor tL^{-a}\rfloor$ and
	$n_j\coloneqq Nq^j$, and let $m$ be maximal with $n_{m-1}\leq t$.
	Deterministic stopping gives
	\[
		\P(u_t(0)\leq h)
		\leq\P\bigl(V_{n_j}(0)\leq h\text{ for every }0\leq j<m\bigr),
		\qquad m\asymp\log L\, .
	\]

	Let $Y_j\coloneqq V_{n_j}(0)/\sqrt{\Var(V_{n_j}(0))}$, and let $\Sigma$
	be the covariance matrix of $(Y_j)_{j<m}$. By \eqref{eq:corr-bound}, if
	$q$ is sufficiently large, then
	$\operatorname{spec}(\Sigma)\subset[1-\delta,1+\delta]$ for some
	$\delta<3/5$. The normalized thresholds satisfy
	\[
		\frac{h}{\sqrt{\Var(V_{n_j}(0))}}
		\leq C L^{a(4-d)/4-1},
	\]
	which is smaller than a fixed $\eta>0$ once $L$ is large. If
	$G\sim N(0,\Sigma)$ and $\Phi$ denotes the standard normal distribution
	function, comparison of Gaussian densities gives
	\[
		\P(G_j\leq\eta\text{ for every }j<m)
		\leq\left[
		\sqrt{\frac{1+\delta}{1-\delta}}
		\Phi\left(\frac{\eta}{\sqrt{1+\delta}}\right)
		\right]^m
		\leq CL^{-c},
	\]
	where $\eta$ is chosen small enough that the bracket is less than one.

	It remains to compare $(Y_j)_{j<m}$ with $G$. For each $x$, let $a(x)\coloneqq(a_j(x))_{j<m}\in\R^m$ have components
	\[
		a_j(x)\coloneqq
		\frac{g_{n_j}(0,x)}{\sqrt{\Var(V_{n_j}(0))}}\, ,
	\]
	so that $Y_j=\sum_{x\in\Z^d}a_j(x)\zeta(x)$ for every $j<m$, and let $|a(x)|$ denote the Euclidean norm of $a(x)$.
	After multiplication by $\Sigma^{-1/2}$, the multivariate Berry--Esseen
	bound of \citet[Theorem~1.1]{Raic} yields
	\[
		\left|\P(Y_j\leq h_j\text{ for every }j<m)
		-\P(G_j\leq h_j\text{ for every }j<m)\right|
		\leq Cm^{1/4}\sum_{x\in\Z^d}|a(x)|^3,
	\]
	where $h_j\coloneqq h/\sqrt{\Var(V_{n_j}(0))}$.
	The constant absorbs $\E|\zeta(0)|^3$ and the uniform bound on
	$\|\Sigma^{-1/2}\|$.
	Since $|a(x)|^3\leq m^{1/2}\sum_{j<m}|a_j(x)|^3$, the heat-kernel bounds
	and \eqref{eq:Qt-table} give
	\[
		\sum_{x\in\Z^d}|a(x)|^3
		\leq Cm^{1/2}
		\begin{cases}
			N^{-1/4},&d\in\{1,3\},\\
			(\log N)N^{-1/2},&d=2.
		\end{cases}
	\]
	Using $m\leq C\log t$ and $N\geq ctL^{-a}$ proves
	\eqref{eq:dlt4-green-lower-tail}; bounded $L$ are absorbed by increasing
	$C$.
\end{proof}

Theorem~\ref{thm:critical-toppling} immediately implies the following bound.

\begin{corollary}\label{cor:critical-mean-one}
	For every $\gamma\in[0,(4-d)/4)$ there are $C<\infty$ and $\alpha>0$
	such that, for all $t\geq2$,
	\[
		\P\bigl(u_t(0)\leq t^\gamma\bigr)\leq Ct^{-\alpha}\, .
	\]
\end{corollary}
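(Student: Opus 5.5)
The plan is to apply Theorem~\ref{thm:critical-toppling} directly, with $L$ chosen as a small power of $t$ so that $t^{(4-d)/4}/L \geq t^\gamma$. The standing assumptions of this subsection ($\E\zeta(0)=0$, $0<\Var(\zeta(0))<\infty$, $\E|\zeta(0)|^3<\infty$) give admissible constants. Take $\nu_0^2=\Var(\zeta(0))$ and $M=\E|\zeta(0)|^3/\Var(\zeta(0))^{3/2}$. The constants $c,C$ of the theorem then depend only on the law of $\zeta(0)$, on $d$, and on the chosen $a$.

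Fix $\gamma\in[0,(4-d)/4)$ and set $\beta\coloneqq(4-d)/4-\gamma>0$. Choose $L\coloneqq t^{\beta}$, so that $t^{(4-d)/4}/L=t^\gamma$ and the event $\{u_t(0)\leq t^\gamma\}$ is exactly the event in \eqref{eq:dlt4-green-lower-tail}.

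Next choose the exponent $a$. We need three things:
\begin{itemize}
\item $a\in(0,4/(4-d))$;
\item $L^a=t^{a\beta}\leq t/2$ for large $t$, which holds if $a\beta<1$;
\item the error terms $t^{-1/4}L^{a/4}=t^{-(1-a\beta)/4}$ (for $d\in\{1,3\}$) and $t^{-1/2}L^{a/2}=t^{-(1-a\beta)/2}$ (for $d=2$) decay polynomially, with the $(\log t)^{3/4}$ or $(\log t)^{7/4}$ factors absorbed.
\end{itemize}
Any small fixed $a>0$ works, for instance $a=\min\{1,\,2/(4-d)\}/2$. Then $a\beta\leq a(4-d)/4<1$.

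With this choice the right side of \eqref{eq:dlt4-green-lower-tail} is at most
\[
C t^{-c\beta}+C(\log t)^{7/4}t^{-(1-a\beta)/4}\leq C t^{-\alpha}
\]
for any $0<\alpha<\min\{c\beta,(1-a\beta)/4\}$.

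Finally, the requirements $t\geq3$, $L\geq2$, and $L^a\leq t/2$ hold for all $t\geq t_1(\gamma)$. For $2\leq t<t_1$, the bound holds trivially once $C\geq t_1^{\alpha}$.

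There is no real obstacle in this argument. The only care needed is that the exponent $a$ satisfies $a\beta<1$ and $a<4/(4-d)$ simultaneously, and that the constants of Theorem~\ref{thm:critical-toppling} are fixed once the law of $\zeta(0)$ is fixed.
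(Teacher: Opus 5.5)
Your proposal is correct and is the paper's argument: apply Theorem~\ref{thm:critical-toppling} with $L=t^{(4-d)/4-\gamma}$ and a fixed $a<4/(4-d)$, which automatically gives $a\bigl((4-d)/4-\gamma\bigr)<1$, so both terms on the right of \eqref{eq:dlt4-green-lower-tail} are polynomially small. Bounded $t$ are absorbed into $C$.
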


\begin{proof}
	Set $\beta\coloneqq(4-d)/4$ and apply
	Theorem~\ref{thm:critical-toppling} with $L=t^{\beta-\gamma}$.
	Choose $a<4/(4-d)$; then $a(\beta-\gamma)<1$, so the first term and the
	Berry--Esseen remainder in \eqref{eq:dlt4-green-lower-tail} are both
	polynomially small. Increasing $C$ handles bounded $t$.
\end{proof}

\subsection{Scaling limit in dimensions one, two, and three}\label{ssec:scaling-dlt4}

	In this subsection we prove the parabolic scaling limit in
	Theorem~\ref{thm:main-explosion}(i). The representation of $u_t-V_t$
	turns the odometer into the linear field plus an optimal-stopping value.
	Thus the proof has two inputs: convergence of the linear field to the
	Gaussian heat potential, and stability of optimal-stopping values under the
	invariance principle. Both inputs are standard. 

	Assume $d\leq3$ and
	\[
		\E\zeta(0)=0 ,\qquad 0<\Var(\zeta(0))<\infty,\qquad \E e^{\theta_0|\zeta(0)|}<\infty\ \text{for some }\theta_0>0\, .
	\]
	For $T>0,\ x\in\R^d$, define the rescaled odometer by
	\[
		\mathcal U_R(T,x)\coloneqq R^{-(2-d/2)} u_{\lfloor R^2T\rfloor}\bigl(\lfloor Rx\rfloor\bigr)\, ,
	\]
	with the floor taken coordinatewise. The standard interpolation from the
	parabolic mesh has the same compact-uniform limit, because the limiting
	field is uniformly continuous on compact subsets of $(0,\infty)\times\R^d$.
	We use the Brownian kernels from \eqref{eq:brownian-heat-green-kernels}.
	In this subsection, write $\mathcal U$ for the value $\mathcal U_Z$ from
	\eqref{eq:continuum-membrane-stopping-value}.

	The rescaled linear field is defined by
	\[
		Z_R(r,w)
		\coloneqq
		R^{d/2-2}\sum_{z\in\Z^d} g_{\lfloor R^2r\rfloor}(\lfloor Rw\rfloor,z)\zeta(z)\, .
	\]
	Write $Z_R^{\rm lin}$ for the standard interpolation of $Z_R$ from the mesh
	$R^{-2}\Z_+\times R^{-1}\Z^d$.

	\begin{proposition}[Invariance of the heat potential]
	\label{prop:dlt4-heat-potential-invariance}
		Fix $0<T<\infty$. Then
		\[
			Z_R^{\rm lin}\Longrightarrow Z
		\]
		locally uniformly on $[0,T]\times\R^d$.
	\end{proposition}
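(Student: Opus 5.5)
The proof follows the standard route for a functional limit theorem: convergence of finite-dimensional distributions plus tightness in $C([0,T]\times K)$ for compact $K$. Since $Z_R^{\rm lin}$ is a linear functional of the i.i.d.\ scenery and $Z$ is Gaussian, each step reduces to kernel estimates.

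\emph{Finite-dimensional distributions.} Fix finitely many points $(r_i,w_i)$ and coefficients $\lambda_i$. Write $\sum_i\lambda_i Z_R(r_i,w_i)=\sum_z c_R(z)\zeta(z)$, where
\[
c_R(z)=R^{d/2-2}\sum_i\lambda_i g_{\lfloor R^2r_i\rfloor}(\lfloor Rw_i\rfloor,z).
\]
By the Lindeberg CLT for weighted sums of i.i.d.\ variables it suffices to show two things.
\begin{itemize}
\item[(i)] $\sum_z c_R(z)^2\to\int(\sum_i\lambda_i g^{\rm BM}_{r_i}(w_i,y))^2dy$, so that the limiting variance is that of $\sum_i\lambda_iZ(r_i,w_i)$.
\item[(ii)] $\max_z|c_R(z)|\to0$.
\end{itemize}
For (ii), the Gaussian bound \eqref{eq:rw-gaussian-upper} gives $g_t(x,z)\le C t^{1-d/2}$ for $d=1$, $C\log t$ for $d=2$, and $C$ for $d=3$. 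Multiplying by $R^{d/2-2}$ makes each of these $o(1)$.

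For (i), expand $\sum_z g_m(x,z)g_n(y,z)=\sum_{a<m,b<n}p_{a+b}(x,y)$. Apply the local CLT \eqref{eq:lclt-parity} on the range $a+b\ge\delta R^2$. The parity factor $2$ is compensated because only half of the pairs $(a,b)$ have the right parity, and this yields $R^{4-d}\int_0^{r}\int_0^{s}p^{\rm BM}_{u+v}(w,w')\,du\,dv$. That Riemann sum is exactly $\int g^{\rm BM}_rg^{\rm BM}_s$. The range $a+b<\delta R^2$ contributes at most $C\sum_{k<\delta R^2}(k+1)k^{-d/2}\lesssim(\delta R^2)^{2-d/2}$, which is small after rescaling as $\delta\to0$ precisely because $d<4$.

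\emph{Tightness.} I would use Kolmogorov--Chentsov on the lattice values and handle interpolation separately. Lemma~\ref{lem:weighted-exp-conc}(a) or (b) with the exponential moment gives $L^p$ bounds for any $p$ in terms of the $\ell^2$ norm of the kernel differences. The key estimates are
\[
\sum_z\bigl(g_n(x,z)-g_n(x',z)\bigr)^2\le C R^{4-d}\bigl(|x-x'|/R\bigr)^{\alpha}
\]
and
\[
\sum_z\bigl(g_n(x,z)-g_m(x,z)\bigr)^2\le CR^{4-d}\bigl(|n-m|/R^2\bigr)^{\alpha},
\]
for some $\alpha>0$ and $n,m\le TR^2$.

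The time increment equals $\sum_{m\le k<n}P^k\delta_x$. Its squared norm is $\sum_{a,b\in[m,n)}p_{a+b}(0,0)$, which is at most $C|n-m|^{2-d/2}$ when $d=3$. In $d=1,2$ one uses a similar Gaussian-sum bound (with a log in $d=2$), giving some positive Hölder exponent.

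The space increment is handled with \eqref{eq:rw-tv-gradient}. Split time at $k_0=|x-x'|^2$. Below $k_0$, bound crudely by the two squared norms, giving $k_0^{2-d/2}$. Above $k_0$, use the gradient bound $|p_k(x,\cdot)-p_k(x',\cdot)|$ combined with the sup bound, giving $\sum_{k,l\ge k_0}|x-x'|k^{-1/2}\,l^{-d/2}\cdots$, which again yields a power of $|x-x'|$. Parity issues are handled by comparing with a neighbor and absorbing one step into the time increment.

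Taking $p$ large then gives moment bounds of the form $\E|Z_R(\cdot)-Z_R(\cdot')|^p\le C\,\mathrm{dist}^{\alpha p/2}$ on the mesh. This gives tightness of the interpolations, since at mesh scale the increments are controlled by the same bound with $\mathrm{dist}\ge R^{-1}$.

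\emph{Main obstacle.} I expect the hard step to be the uniform Hölder increment bound for the truncated Green kernel near the diagonal in $d=3$, together with the parity bookkeeping. The divergence $k^{-d/2}$ at small times must be balanced against the gradient gain, and the local CLT convergence must be uniform down to $a+b\approx\delta R^2$. Once these kernel estimates are in hand, the fidi convergence and tightness combine to give local uniform convergence to $Z$, identified via its covariance formula.
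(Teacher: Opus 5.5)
Your architecture is the paper's. Covariance convergence comes from the local CLT \eqref{eq:lclt-parity} plus a Riemann sum, with the range $a+b<\delta R^2$ controlled because $d<4$. Fidi convergence then follows by Lindeberg--Feller since the largest coefficient vanishes, and tightness comes from Lemma~\ref{lem:weighted-exp-conc} applied to differences of truncated Green kernels. Your fidi argument and your time-increment bounds are fine as written.

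The one step that does not close as written is the spatial increment above $k_0=|x-x'|^2$ in $d=3$, which you yourself flag as the main obstacle. Write $D_k\coloneqq p_k(x,\cdot)-p_k(x',\cdot)$. Bounding every $|\langle D_k,D_l\rangle|$ by $\|D_k\|_1\|D_l\|_\infty\leq C|x-x'|k^{-1/2}l^{-3/2}$ over the full square $k,l\in[k_0,n)$ gives
\[
C|x-x'|\Bigl(\sum_{k<n}k^{-1/2}\Bigr)\Bigl(\sum_{l\geq k_0}l^{-3/2}\Bigr)\asymp|x-x'|\sqrt n\,k_0^{-1/2}=\sqrt n .
\]
After multiplying by $R^{d-4}=R^{-1}$ this is $O(1)$, not a positive power of $|x-x'|/R$: the sum of $k^{-1/2}$ up to $n\asymp R^2$ is never compensated. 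In $d=1,2$ the same crude bound does suffice, so this is specific to $d=3$.

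The repair uses only the two inputs you cite. By symmetry, sum only over $k\leq l$, and put the $\ell^1$-gradient bound \eqref{eq:rw-tv-gradient} on the earlier time and the sup bound \eqref{eq:rw-gaussian-upper} on the later one. This gives
\[
\sum_{k_0\leq k\leq l<n}|\langle D_k,D_l\rangle|\leq C|x-x'|\sum_{k\geq k_0}k^{-1/2}\cdot k^{-1/2}\leq C|x-x'|\log\bigl(n/|x-x'|^2\bigr).
\]
Equivalently, bound $\|D_k\|_2\leq(\|D_k\|_1\|D_k\|_\infty)^{1/2}\leq C|x-x'|^{1/2}k^{-1}$ and apply Minkowski over $k\in[k_0,n)$. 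Either way the rescaled squared increment is $O\bigl((|x-x'|/R)\log^2(R/|x-x'|)\bigr)$, which is Hölder of every exponent below one. Your Kolmogorov--Chentsov step and the parity and interpolation bookkeeping then go through as you describe.
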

	\begin{proof}
		Fix a compact set $K\subset[0,T]\times\R^d$. The local central limit theorem \eqref{eq:lclt-parity}, summed over time and then viewed as a
		Riemann sum, gives
		uniformly for $(r,w),(s,v)\in K$,
		\[
			\Cov\bigl(Z_R^{\rm lin}(r,w),Z_R^{\rm lin}(s,v)\bigr)
			\longrightarrow
			\Var(\zeta(0))
			\int_{\R^d}g_r^{\rm BM}(w,y)g_s^{\rm BM}(v,y)dy \, .
		\]
		For all fixed $(r_1,w_1),\ldots,(r_m,w_m)\in K$, the largest coefficient
		in every linear combination of
		$Z_R^{\rm lin}(r_i,w_i)$ tends to zero. Hence the Lindeberg--Feller  theorem gives
		\[
			\bigl(Z_R^{\rm lin}(r_i,w_i)\bigr)_{i=1}^m
			\Longrightarrow
			\bigl(Z(r_i,w_i)\bigr)_{i=1}^m \, .
		\]
		The standard Green-kernel estimates from
		Subsection~\ref{ssec:green-estimates}, applied also to differences of
		kernels, together with Lemma~\ref{lem:weighted-exp-conc}, give tightness
		of $Z_R^{\rm lin}$ on $K$. Since $K$ is arbitrary, this proves the
		locally uniform convergence.
	\end{proof}

	\begin{proof}[Proof of Theorem~\ref{thm:main-explosion}\textup{(i)(b)}]
		Fix $0<T_-<T_+<\infty$ and a compact set $K\subset\R^d$. It is enough
		to prove convergence uniformly on $[T_-,T_+]\times K$. By
		Proposition~\ref{prop:dlt4-heat-potential-invariance}, every
		subsequence has a further subsequence along which
		$Z_R^{\rm lin}\to Z$ locally uniformly on $[0,T_+]\times\R^d$, almost surely. For $T\in[T_-,T_+]$ and $x\in K$,
		writing
		$t_R=\lfloor R^2T\rfloor$ and $x_R=\lfloor Rx\rfloor$, the exact identity
		for $u_t-V_t$ gives
		\[
			\mathcal U_R(T,x)
			=
			Z_R^{\rm lin}(t_R/R^2,x_R/R)
			+
			\sup_{\tau\leq t_R}
			\mathbf E_{x_R}\left[
				-Z_R^{\rm lin}((t_R-\tau)/R^2,X_\tau/R)
			\right]\, .
		\]
		Fix a continuous cutoff $\chi_A:\R^d\to[0,1]$ equal to one for
		$|y|\leq A$ and zero for $|y|\geq2A$. For fixed $A$, as $R\to\infty$ the optimal-stopping
		value for the first of the two bounded rewards
		\[
			(s,y)\mapsto-\chi_A(y)Z_R^{\rm lin}((t_R/R^2-s)_+,y)
			\qquad\text{and}\qquad
			(s,y)\mapsto-\chi_A(y)Z(T-s,y)
		\]
		converges to the value for the second, uniformly for
		$(T,x)\in[T_-,T_+]\times K$. This is the
		standard stability of optimal-stopping values under uniform convergence
		of bounded rewards and the invariance principle for the stopped walk
		\citep[Theorem~3 and Corollary~4]{CoquetToldo}; the uniformity over
		$[T_-,T_+]\times K$ follows from the uniform continuity of the limiting
		reward on $[0,T_+]\times \overline{B(0,2A)}$.
		The cutoff error tends to zero uniformly over stopping times: on dyadic annuli the Green-kernel estimates and Lemma~\ref{lem:weighted-exp-conc} give at most a fixed power of the radius, with summable failure probabilities, while the walk and Brownian maximal estimates give Gaussian tails for reaching those annuli. More precisely, for every $\varepsilon>0$, with the inner supremum over stopping times,
		\[
		\lim_{A\to\infty}\limsup_{R\to\infty}
			\P\left(
			\sup_{(T,x)\in[T_-,T_+]\times K}\sup_{\tau\leq t_R}
			\mathbf E_{x_R}\left[
			(1-\chi_A(X_\tau/R))
			\left|Z_R^{\rm lin}\left(\frac{t_R-\tau}{R^2},\frac{X_\tau}{R}\right)\right|
			\right]>\varepsilon
			\right)=0\, .
		\]
		The analogous Brownian estimate holds.
		Letting $A\to\infty$ gives
		\[
			\sup_{(T,x)\in[T_-,T_+]\times K}
			|\mathcal U_R(T,x)-\mathcal U(T,x)|
			\longrightarrow0\, ,
		\]
		and this completes the proof.
		\end{proof}

		\begin{remark}\label{rem:dlt4-killed-scaling}
			The same proof applies, without change, to the values killed on exiting
			the lattice box $Q(\lfloor Ru\rfloor,R)$. Let
			$\mathcal U_{Z,\square}(T,u)$ be the Brownian stopping value from
			\eqref{eq:continuum-membrane-stopping-value}, with stopping rules killed
			on exiting the cube $u+[-1,1]^d$. For every compact
			$K\subset\R^d$ and every $T<\infty$, there are
		couplings of the localized values with $\mathcal U_{Z,\square}(T,\cdot)$
		under which
		\[
		    \sup_{u\in K}
		    \Bigl|
		    R^{-(2-d/2)}
		    u_{\lfloor R^2T\rfloor}^{Q(\lfloor Ru\rfloor,R)}(\lfloor Ru\rfloor)
		    -\mathcal U_{Z,\square}(T,u)
		    \Bigr|
		    \longrightarrow0
		    \qquad\text{in probability}\, .
		\]  Since the Euclidean unit ball is contained in this cube,
		\[
			\mathcal U_{Z,\square}(T,u)\geq \mathcal U_{Z,1}(T,u)\, .
		\]
		The same proof applies also to a sequence of mean-zero i.i.d.\ laws
		satisfying a common exponential-moment bound whose variances converge
		to $\nu^2>0$: every estimate depends on the law only through the
		moment bound, and the limit is then $\nu$ times the value for
		unit-variance scenery.
		\end{remark}
	
		We next use the self-similarity of the limiting value to identify the mean
		scale, giving the matching upper bound for $\E u_t(0)$.

	\begin{proposition}\label{prop:continuum-value-selfsimilar}
		For every $T>0$ and every $x\in\R^d$,
		\[
			\mathcal U(T,x)\stackrel d= T^{(4-d)/4}\mathcal U(1,0)\, .
		\]
			There is $\theta>0$ such that
			\[
				\sup_{R\geq1}\E e^{\theta\mathcal U_R(1,0)}<\infty\, ,
				\qquad
				\E e^{\theta\mathcal U(1,0)}<\infty \, .
			\]
		In particular, for every $p>0$,
		\[
			\E\mathcal U(T,x)^p
			=
			T^{p(4-d)/4}\E\mathcal U(1,0)^p\, ,
			\qquad
			0<\E\mathcal U(1,0)^p<\infty \, .
		\]
	\end{proposition}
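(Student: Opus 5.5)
We prove the three assertions of Proposition~\ref{prop:continuum-value-selfsimilar} in turn: exact scaling of the continuum value, a uniform exponential moment, and then the moment identities.

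\emph{Self-similarity.} We first use Brownian scaling of both the white noise and the Brownian motion. For $\lambda>0$, the field $\mathcal W_\lambda(f)\coloneqq\lambda^{-d/2}\mathcal W(f(\cdot/\lambda))$ is again white noise. The kernel satisfies $g^{\rm BM}_{\lambda^2t}(\lambda x,\lambda y)=\lambda^{2-d}g^{\rm BM}_t(x,y)$. Together these give the joint identity in law of continuous fields
\[
(t,x)\mapsto Z(\lambda^2t,\lambda x)\stackrel d=\lambda^{2-d/2}Z(t,x).
\]
The process $\lambda^{-1}B_{\lambda^2 s}$ is a Brownian motion, and stopping times rescale bijectively. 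Hence the functional $h\mapsto\mathcal U_h$ commutes with this space-time rescaling. Since $\mathcal U_h(T,x)$ is a measurable functional of the continuous field $h$, for instance via a supremum over a countable family of stopping rules after continuity, we obtain $\mathcal U(\lambda^2 T,\lambda x)\stackrel d=\lambda^{2-d/2}\mathcal U(T,x)$. The law of $Z$ is translation invariant in space, so $\mathcal U(T,x)\stackrel d=\mathcal U(T,0)$. Taking $\lambda=T^{1/2}$ then gives $\mathcal U(T,x)\stackrel d=T^{(4-d)/4}\mathcal U(1,0)$.

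\emph{Exponential moments.} For the discrete bound, write $u_t(0)=\E u_t(0)+(u_t(0)-\E u_t(0))$ with $t=R^2$. By Proposition~\ref{prop:finite-time-concentration-scale}, $u_t(0)$ has coordinate Lipschitz constants $g_t(0,z)$. The heat-kernel bounds give $\|g_t\|_{\ell^2}\asymp t^{(4-d)/4}$ by \eqref{eq:Qt-table} and $\|g_t\|_{\ell^\infty}\leq C t^{(2-d)/2}\vee C\log t$, which is at most $Ct^{(4-d)/4}$ for $d\leq3$. Lemma~\ref{lem:weighted-exp-conc}(d), with $\lambda=\theta R^{-(2-d/2)}$ and $\theta$ small, then gives $\E e^{\theta(\mathcal U_R(1,0)-\E\mathcal U_R(1,0))}\leq C$ uniformly in $R$.

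It remains to bound $\E u_t(0)\leq Ct^{(4-d)/4}$. Every stopping payoff is $\sum_z h_\tau(z)\zeta(z)$ with $0\leq h_\tau\leq g_t(0,\cdot)$, so $u_t(0)\leq\sup_{0\leq h\leq g_t}\sum h\zeta=\sum_z g_t(0,z)\zeta(z)_+$. This bound is too crude, since it is of order $t$. Instead we use Lemma~\ref{lem:difference-representation}:
\[
u_t(0)\leq V_t(0)+\mathbf E_0\max_{k\leq t}\,(-V_{t-k}(X_k))_+ .
\]
We then bound $\E\max$ over the walk's range by a chaining/union bound. The field $V_s(y)$ for $s\leq t$ and $|y|\leq A\sqrt t$ has subgaussian-type tails at scale $t^{(4-d)/4}$ by Lemma~\ref{lem:weighted-exp-conc}(b), with increment control from the kernel difference estimates \eqref{eq:rw-tv-gradient}. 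Combined with the maximal displacement estimate \eqref{eq:rw-max-displacement}, this gives $\E\sup_{s\leq t,|y|\leq A\sqrt t}|V_s(y)|\leq C(1+\log A)^{1/2}t^{(4-d)/4}$, and summing over dyadic $A$ gives $\E u_t(0)\leq Ct^{(4-d)/4}$.

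The continuum bound then follows by Fatou's lemma: Theorem~\ref{thm:main-explosion}(i)(b) gives $\mathcal U_R(1,0)\Rightarrow\mathcal U(1,0)$, so $\E e^{\theta\mathcal U(1,0)}\leq\liminf_R\E e^{\theta\mathcal U_R(1,0)}$.

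\emph{Moments and positivity.} The moment identity follows from the scaling, and finiteness follows from the exponential moment, since $\mathcal U\geq0$ (take $\tau=0$). For positivity of $\E\mathcal U(1,0)^p$ it suffices to have $\P(\mathcal U(1,0)>0)>0$. We use the deterministic rule $\tau=1$, which gives $\mathcal U(1,0)\geq Z(1,0)-Z(0,B_1)\cdot 0=Z(1,0)$, since $Z(0,\cdot)=0$. Because $Z(1,0)$ is a nondegenerate centered Gaussian, it is positive with probability $1/2$.

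The main obstacle is the uniform mean bound $\E u_t(0)\leq Ct^{(4-d)/4}$, specifically the supremum of the membrane field over the diffusive region and over all times $s\leq t$. We would handle it by a Kolmogorov-type chaining estimate on the parabolic grid, using moment bounds for increments of $V$ from the heat-kernel gradient bounds.
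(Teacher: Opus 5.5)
Your proposal is correct. The scaling and positivity parts coincide with the paper's argument: Brownian scaling of $Z$ together with the time change of stopping times, and the rules $\tau=0$ and $\tau=1$, which give $\mathcal U(1,0)\geq\max\{Z(1,0),0\}$. The real difference is how you get $\sup_R\E\mathcal U_R(1,0)<\infty$.

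The paper does this softly. Theorem~\ref{thm:main-explosion}(i)(b) already gives tightness of $\mathcal U_R(1,0)$, and Proposition~\ref{prop:finite-time-concentration-scale} gives $\sup_R\Var(\mathcal U_R(1,0))<\infty$. A tight nonnegative family with bounded variances must have bounded means, since otherwise Chebyshev's inequality would send a subsequence to infinity in probability. The concentration step then proceeds as in your second paragraph.

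You instead bound $\E u_t(0)$ directly. Via Lemma~\ref{lem:difference-representation}, you control it by the expected supremum of $|V_s(y)|$ over the parabolic region reached by the independent walk. This works, for three reasons. First, the increments of $V$ are H\"older in parabolic units relative to the scale $t^{(4-d)/4}$; this is the same input behind the tightness in Proposition~\ref{prop:dlt4-heat-potential-invariance}. Second, the subexponential part of the Bernstein tails costs only $O(\log t\,\sup_z g_t(0,z))=o(t^{(4-d)/4})$ once you cap $A\leq\sqrt t$. Third, any polynomial growth in $A$ is absorbed by \eqref{eq:rw-max-displacement}.

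Be aware that \eqref{eq:rw-tv-gradient} is only an $\ell^1$ bound, so it does not by itself give the $\ell^2$ increment bounds that chaining needs. You can get them by interpolating with the $\ell^\infty$ heat-kernel bound. The resulting logarithms are in $t/|y-y'|^2$, not in $t$, so they do not spoil the Dudley sum. Alternatively, compute them directly via Chapman--Kolmogorov. Either way, take care with parity.

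Your route yields an explicit, quantitative bound $\E u_t(0)\leq Ct^{(4-d)/4}$ that does not rely on the optimal-stopping stability argument. The same estimate applied to the Gaussian field $Z$ would also give the continuum exponential moment without passing through weak convergence. The paper's route needs no supremum estimate at all, because tightness comes for free once the scaling limit is known.
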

	\begin{proof}
		Stationarity of white noise gives
		$\mathcal U(T,x)\stackrel d=\mathcal U(T,0)$. Brownian scaling then gives
		\[
			\{Z(Ts,\sqrt T y):0\leq s\leq1,y\in\R^d\}
			\stackrel d=
			\{T^{1-d/4}Z(s,y):0\leq s\leq1,y\in\R^d\}\, .
		\]
		Rescaling time by $T$ identifies Brownian stopping
		times bounded by $T$ with Brownian stopping times bounded by $1$, and
		the stopping value scales by the same factor.

			It remains to prove the uniform exponential moment at $T=1$ and $x=0$. The
			scaling limit proved above gives
		$\mathcal U_R(1,0)\Longrightarrow\mathcal U(1,0)$, hence tightness.
		Proposition~\ref{prop:finite-time-concentration-scale} with
		$p=2$, together with \eqref{eq:Qt-table}, gives
		$\sup_R\Var(\mathcal U_R(1,0))<\infty$. Since $\mathcal U_R(1,0)\geq0$,
		tightness and the variance bound imply
		$\sup_R\E\mathcal U_R(1,0)<\infty$; otherwise Chebyshev's inequality
		would force a subsequence to diverge in probability.

		If only $\zeta(z)$ is changed, then $\mathcal U_R(1,0)$ changes by at most
		\[
			R^{-(2-d/2)}g_{\lfloor R^2\rfloor}(0,z)
		\]
		times the size of that coordinate change.
		The Green-kernel estimates give
		\[
			R^{-2(2-d/2)}\sum_{z\in\Z^d} g_{\lfloor R^2\rfloor}(0,z)^2\leq C\, ,
			\qquad
			R^{-(2-d/2)}\sup_z g_{\lfloor R^2\rfloor}(0,z)\leq C \, .
		\]
		Lemma~\ref{lem:weighted-exp-conc} therefore gives, uniformly in $R$, $s\geq0$,
		\[
			\P\left(
			|\mathcal U_R(1,0)-\E\mathcal U_R(1,0)|\geq s
			\right)
			\leq C\exp\{-c\min(s^2,s)\} \, .
		\]
		Together with the bounded means, this gives
		$\sup_R\E e^{\theta\mathcal U_R(1,0)}<\infty$ for a small
		$\theta>0$. Passing to the limit along bounded truncations of
		$e^{\theta x}$ gives $\E e^{\theta\mathcal U(1,0)}<\infty$.
		Positivity follows from the stopping times $\tau=0$ and $\tau=1$ in
		\eqref{eq:continuum-membrane-stopping-value}, which give $\mathcal U(1,0)\geq\max\{Z(1,0),0\}$.
		Since $Z(1,0)$ is nondegenerate, the right-hand side is unbounded, so $\mathcal U(1,0)$ is nonconstant.
	\end{proof}

		The
		uniform exponential moment in
		Proposition~\ref{prop:continuum-value-selfsimilar} immediately implies the following corollary. 

	\begin{corollary}\label{cor:dlt4-mean-asymptotic}
	For every $T>0$ and $x\in\R^d$,
		\[
			\E\mathcal U_R(T,x)
			\longrightarrow
			\E\mathcal U(T,x)=T^{(4-d)/4}\E\mathcal U(1,0)
			\qquad\text{and}\qquad
			\Var\mathcal U_R(T,x)
			\longrightarrow
			T^{(4-d)/2}\Var\mathcal U(1,0)\, ,
	\]
		and $\Var\mathcal U(1,0)>0$.
		Consequently, as $t\to\infty$,
		\[
			\E u_t(0)\sim \E\mathcal U(1,0)t^{(4-d)/4}\, ,
			\qquad
			\Var(u_t(0))\sim \Var\bigl(\mathcal U(1,0)\bigr) t^{(4-d)/2}\, .
		\]
	\end{corollary}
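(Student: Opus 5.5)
The plan is to deduce everything from the scaling limit (Theorem~\ref{thm:main-explosion}(i)(b)) together with the uniform exponential moment of Proposition~\ref{prop:continuum-value-selfsimilar}. The deduction is convergence in distribution plus uniform integrability, then rescaling.

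\textbf{Step 1: one-point convergence.} Fix $T>0$ and $x\in\R^d$. Convergence in $C_{\rm loc}(\R^d)$ of the interpolated field gives convergence at the point $x$. The interpolated value at $x$ differs from $\mathcal U_R(T,x)=R^{-(2-d/2)}u_{\lfloor R^2T\rfloor}(\lfloor Rx\rfloor)$ only by an average over the corners of the mesh cell. By local uniform convergence to a continuous limit, these corner values share the same limit. Hence $\mathcal U_R(T,x)\Rightarrow\mathcal U(T,x)$.

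\textbf{Step 2: uniform integrability.} By stationarity of the i.i.d.\ scenery, $\mathcal U_R(T,x)\stackrel d= R^{-(2-d/2)}u_{\lfloor R^2T\rfloor}(0)$. Writing $R'=\sqrt{R^2T}$, we have $\lfloor R^2T\rfloor=\lfloor R'^2\rfloor$, so this equals $T^{(4-d)/4}\,\mathcal U_{R'}(1,0)$. Proposition~\ref{prop:continuum-value-selfsimilar} bounds $\E e^{\theta\mathcal U_{R'}(1,0)}$ uniformly over $R'\ge1$. For $R'<1$, that is, for finitely many small $R$, $u_t(0)$ with $t$ bounded has all moments, so the bound persists after enlarging the constant. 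Since $\mathcal U_R\ge0$, all powers $\mathcal U_R(T,x)^p$ are uniformly integrable.

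\textbf{Step 3: limits of moments and positive variance.} Uniform integrability and convergence in distribution give $\E\mathcal U_R\to\E\mathcal U$ and $\E\mathcal U_R^2\to\E\mathcal U^2$. Hence the variances converge as well. The identities $\E\mathcal U(T,x)=T^{(4-d)/4}\E\mathcal U(1,0)$ and $\Var\mathcal U(T,x)=T^{(4-d)/2}\Var\mathcal U(1,0)$ come from the distributional identity in Proposition~\ref{prop:continuum-value-selfsimilar}. That proposition also shows $\mathcal U(1,0)\ge\max\{Z(1,0),0\}$ and that $\mathcal U(1,0)$ is nonconstant, which gives $\Var\mathcal U(1,0)>0$.

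\textbf{Step 4: the asymptotics.} Take $T=1$, $x=0$ and $R=\sqrt t$. For integer $t$ we have $\lfloor R^2\rfloor=t$ exactly, so $\mathcal U_{\sqrt t}(1,0)=t^{-(4-d)/4}u_t(0)$. Step 3 then gives $\E u_t(0)\sim\E\mathcal U(1,0)\,t^{(4-d)/4}$ and $\Var u_t(0)\sim\Var\mathcal U(1,0)\,t^{(4-d)/2}$. Both constants are positive and finite, so $\sim$ is meaningful.

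The only delicate point is the bookkeeping in Step 2: making the exponential-moment bound uniform in $T$ and $x$ via stationarity and the reparametrization $R'=R\sqrt T$. The rest is standard.
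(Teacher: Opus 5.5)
Your proposal is correct and takes the paper's approach: the paper also treats the corollary as an immediate consequence of the scaling limit together with the uniform exponential moment in Proposition~\ref{prop:continuum-value-selfsimilar}. That is, convergence in distribution plus uniform integrability gives convergence of the first two moments, self-similarity identifies the constants, and nonconstancy of $\mathcal U(1,0)$ gives positive variance. Your reduction $\mathcal U_R(T,x)\stackrel d=T^{(4-d)/4}\,\mathcal U_{R\sqrt T}(1,0)$, using stationarity and the reparametrization $R'=R\sqrt T$, is the bookkeeping needed to move the $T=1$, $x=0$ moment bound to general $(T,x)$, which the paper leaves implicit.
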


\subsection{Planar crossings of the scaling limit}\label{sec:continuum-version}

This subsection proves the continuum crossing input in dimensions two and
three. The lower bound comes from one explicit Brownian stopping rule: stop
when the Brownian motion exits a ball around its starting point. The payoff is
the Green-kernel average of the white noise in that ball. For each radius
$s$, this gives a finite-range planar Gaussian field $\mathcal X_s$.

We first record the basic properties of these ball fields. The fixed-scale
input is Proposition~\ref{prop:fixed-scale-crossings}: the RSW theorem of
\citet[Theorem~1]{KohlerSchindlerTassion} gives crossings at level zero, and
an exploration-sprinkling argument raises the level to order $1/R$. Scale
invariance and a zero-one argument then turn these fixed-scale crossings into
high-probability crossings for finitely many deterministic radii. Finally,
	we replace the infinite-horizon Green fields in balls by the finite-time
	payoffs of the rules that stop the Brownian motion when it exits a ball, which
	are lower bounds for the Brownian stopping value.

\subsubsection{The continuum fields}\label{ssec:admissible}

Fix $d=2$ or $d=3$. For $u\in\R^2$ and $0<s\leq1$, let
\[
    \mathcal X_s(u)\coloneqq
    \begin{cases}
    \displaystyle
    \int_{\R^2}
    \frac1{2\pi}\log\frac{s}{|u-z|}
    \mathbf 1_{\{|u-z|<s\}}\mathcal W(dz),& d=2\, ,\\[4mm]
    \displaystyle
    \int_{\R^3}
    \frac1{4\pi}\left(\frac1{|(u,0)-z|}-\frac1s\right)
    \mathbf 1_{\{|(u,0)-z|<s\}}\mathcal W(dz),& d=3\, .
    \end{cases}
\]
We also use the crossing scale $b(s)$ and the mass $\mathfrak m$ of the kernel of $\mathcal X_1$:
\begin{equation}\label{eq:continuum-crossing-scale}
    b(s)\coloneqq
    \begin{cases}
    s^2,& d=2\, ,\\
    s^{3/2},& d=3\, ,
    \end{cases}
    \qquad
    \mathfrak m\coloneqq
    \begin{cases}
    1/4,& d=2\, ,\\
    1/6,& d=3\, .
    \end{cases}
\end{equation}
The change of variables $z=sw$ and white-noise scaling give
\begin{equation}\label{eq:cont-field-scaling}
    \{\mathcal X_s(su):u\in\R^2\}\stackrel d=
    \begin{cases}
    \{s\mathcal X_1(u):u\in\R^2\},& d=2\, ,\\
    \{\sqrt{s}\mathcal X_1(u):u\in\R^2\},& d=3\, .
    \end{cases}
\end{equation}
The unit-scale field $\mathcal X_1$ is stationary, sign-symmetric, invariant under
rotations by $\pi/2$ and coordinate reflections, and has dependence range $2$. Finite collections are positively associated by Pitt's Gaussian FKG theorem \citep[Theorem, p.~496, Eq.~(1)]{Pitt}. We use continuous modifications of the fields $\mathcal X_s$, which follow from the standard $L^2$-translation estimates for Green kernels in balls.

\subsubsection{Fixed-scale crossing estimates}\label{ssec:fixed-scale-crossings}

For a rectangle $\mathcal R$ in the plane, write $H_{\mathcal R}(\ell)$ for the
event that $\{\mathcal X_1\geq\ell\}\cap\mathcal R$ contains a compact connected
left-right crossing of $\mathcal R$. Bottom-top crossings are defined
analogously. For open sets, ``crosses'' means that the set contains a compact
connected subset joining the two opposite sides.

\begin{proposition}\label{prop:fixed-scale-crossings}
For every $\theta>0$ there is $p>0$ such that, for every $L\geq0$,
\[
    \liminf_{R\to\infty}
    \P\bigl(H_{[-\theta R,\theta R]\times[0,2R]}(L/R)\bigr)
    \geq p\, .
\]
\end{proposition}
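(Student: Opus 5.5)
We argue in three steps, following the overview in Section~\ref{ssec:overview}: zero-level RSW crossings, an annular arm bound, and an exploration-sprinkling step that raises the level.

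\emph{Step 1: zero-level crossings.} The field $\mathcal X_1$ is a continuous, stationary, centered planar Gaussian field. It is positively associated by Pitt's theorem, has dependence range $2$, and is invariant under $\pi/2$ rotations, coordinate reflections and sign change. Planar duality for the excursion sets $\{\mathcal X_1\geq0\}$ and $\{\mathcal X_1<0\}$, together with sign symmetry and the rotation invariance, gives probability exactly $1/2$ for the left-right crossing of any square at level $0$. The general RSW theorem of \citet[Theorem~1]{KohlerSchindlerTassion} applies to stationary positively associated planar models with these symmetries and finite-range dependence. It yields, for every aspect ratio $\theta$, some $p_0=p_0(\theta)>0$ with
\[
\inf_R\P\bigl(H_{[-\theta R,\theta R]\times[0,2R]}(0)\bigr)\geq 2p_0 .
\]
The same RSW input gives a polynomial one-arm (annulus) bound at level $0$: the probability that $\{\mathcal X_1<0\}$ crosses $B(x,r_2)\setminus B(x,r_1)$ is at most $C(r_1/r_2)^{\epsilon}$.

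\emph{Step 2: exploration of the dual cluster.} Failure of $H(L/R)$ means that $\{\mathcal X_1<L/R\}$ contains a bottom-top crossing of the rectangle. We decide this by exploring the cluster of $\{\mathcal X_1<L/R\}$ attached to the bottom side. Discretize the plane into unit cells. The exploration reveals the field on cells adjacent to the explored cluster, and by range-$2$ dependence this determines the relevant white noise on a set $\mathcal C$ of cells within distance $2$ of the cluster. A cell at distance $r$ from the bottom side enters $\mathcal C$ only if a dual arm of length $\gtrsim r$ occurs, which has probability at most $Cr^{-\epsilon}$ by Step~1. Summing over the $O(R^2)$ cells gives $\E|\mathcal C|\leq CR^{2-\epsilon}=o(R^2)$. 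To make this rigorous at the sprinkled level rather than at level $0$, we plan to run the arm bound at level $0$ and control the difference with Step~3 by monotonicity.

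\emph{Step 3: sprinkling by a Cameron--Martin shift.} Let $f$ be the indicator of the union of the revealed cells, multiplied by a constant $\lambda=CL/R$. The field $\mathcal X_1$ is increasing in the white noise, and its kernel has mass $\mathfrak m$. Hence adding $\lambda$ to the noise on the $2$-neighbourhood of a set lifts $\mathcal X_1$ by at least $\lambda\mathfrak m\geq L/R$ at all points whose kernel support lies inside that set. Adding $\lambda$ on a fixed-width neighbourhood of the whole rectangle would cost $\lambda^2R^2\asymp L^2$ in Cameron--Martin norm, which is not small. We therefore shift only on $\mathcal C$. For the event that the exploration (run at level $0$) certifies a crossing, we compare the shifted and unshifted laws using the Cameron--Martin density on the explored region. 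The relative entropy is $\tfrac12\|f\|_2^2\leq C L^2|\mathcal C|/R^2$, whose expectation is $o(1)$ by Step~2. Pinsker's inequality (or Cauchy--Schwarz on the density) then gives
\[
\P(H(L/R))\geq\P(H(0))-o(1)\geq p_0
\]
for large $R$, with $p=p_0$.

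The main obstacle is Step~3. The shift must depend adaptively on the explored set, while the Cameron--Martin formula needs either a deterministic shift or a stopping-set (strong Markov) version for white noise. Moreover, raising the field on $\mathcal C$ alone must be shown to suffice to keep the dual exploration unchanged. We would first try a discrete-time exploration in which each step's shift is predictable with respect to the revealed sigma-field. We would then bound the likelihood ratio by a martingale with quadratic variation $\lambda^2|\mathcal C|$, and use the monotone coupling at level $0$ versus level $L/R$ to close the argument.
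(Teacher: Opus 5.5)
Your three steps are the paper's architecture: level-zero crossings from the RSW theorem of K\"ohler-Schindler and Tassion, a polynomial arm bound, a bottom-cluster exploration revealing $O(R^{2-\epsilon})$ cells in expectation, and a Cameron--Martin sprinkling closed by Pinsker. But the proposal stops where the argument has its content. Step~3 is only a plan, and you leave open both obstacles you name: the shift would depend on the explored set, and it is unclear whether shifting on $\mathcal C$ alone preserves the exploration. As written, the level is never raised.

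Step~2 is also ambiguous about the level. The arm bound is available only at level $0$ under the unshifted law. So the exploration must be the level-$0$ exploration of $\{\mathcal X_1<0\}$ under $\P$, and $L/R$ should enter only through the change of law. No ``monotonicity'' comparison of explorations at two levels is needed. (Minor: symmetry and duality give square-crossing probability at least $1/2$, not exactly $1/2$; the lower bound is all you use.)

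The missing idea is that no adaptive shift is needed. Let $U$ be the deterministic union of all unit cubes the rule could ever reveal, namely those meeting the unit neighbourhood of some unit square whose closure meets the rectangle. Let $\P_\ell$ be the law under which $\mathcal W$ is replaced by $\mathcal W-(\ell/\mathfrak m)\mathbf 1_U\,dz$. Every field value on the rectangle has kernel support in $U$ and is lowered by exactly $\ell$. Hence $\P_\ell\bigl(\{\mathcal X_1<0\}\text{ crosses bottom-top}\bigr)=\P\bigl(\{\mathcal X_1<\ell\}\text{ crosses bottom-top}\bigr)$, which disposes of your second concern.

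The full Cameron--Martin cost of this shift is of order $L^2$. But the crossing event is a function of the exploration transcript $(z_1,\mathcal W|_{\mathcal J_1};\ldots;z_M,\mathcal W|_{\mathcal J_M})$, so you only need to compare the two transcript laws. Make the rule reveal the white noise on whole cubes rather than field values. Then each new block $\mathcal J_i$ is determined by the earlier transcript and is disjoint from the earlier blocks. Its conditional law given the past is therefore the shifted or unshifted white-noise law on $\mathcal J_i$, with relative entropy $\ell^2 n_i/(2\mathfrak m^2)$ for $n_i$ cubes.

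The chain rule for relative entropy gives $D(\P^{\rm tr}\|\P_{L/R}^{\rm tr})\leq \frac{L^2}{2\mathfrak m^2R^2}\E|\mathcal C|$, with the expectation under the unshifted law. That is exactly where your level-zero arm bound gives $\E|\mathcal C|\leq CR^{2-\epsilon}$. Pinsker then bounds the loss from raising the level by $CLR^{-\epsilon/2}$. Your predictable-shift martingale would compute the same quantity, since shifting each block as it is revealed induces the same transcript law as the deterministic shift on $U$. No stopping-set version of Cameron--Martin is required.
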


\begin{figure}[t]
\centering
\begin{tikzpicture}[scale=0.46]
\draw[black, line width=0.9pt] (0,0) rectangle (18,18);
\begin{scope}
  \clip (0,0) rectangle (18,18);
  \fill[blue!18]
    plot[smooth] coordinates
      {(1.0,0) (0.8,2.0) (1.9,3.3) (3.4,3.7) (4.8,2.8) (6.3,3.3)
       (7.8,2.4) (9.2,1.5) (9.0,3.0) (10.0,4.3) (9.5,5.9)
       (10.6,7.5) (10.1,9.5) (10.7,11.9) (11.4,10.0) (11.2,7.2)
       (11.9,4.7) (13.2,3.5)
       (15.0,2.8) (16.8,1.5) (17.6,0)}
    -- cycle;
  \fill[blue!18]
    plot[smooth] coordinates
      {(1.1,15.2) (2.4,16.8) (4.3,16.3) (4.8,14.8) (3.4,13.7)
       (1.6,14.1)}
    -- cycle;
  \fill[blue!18]
    plot[smooth] coordinates
      {(15.2,15.3) (16.7,16.5) (17.8,15.2) (17.4,13.8) (15.9,13.6)}
    -- cycle;
  \fill[blue!18]
    plot[smooth] coordinates
      {(1.6,6.8) (3.2,8.1) (4.8,7.7) (5.6,6.2) (4.5,4.8)
       (2.6,4.9) (1.2,5.8)}
    -- cycle;
  \fill[blue!18]
    plot[smooth] coordinates
      {(8.9,0) (9.1,1.4) (9.45,3.0) (9.85,4.4) (10.05,5.8)
       (10.1,7.3) (10.15,9.1) (10.2,11.2) (10.35,12.1)
       (11.25,12.0) (11.35,10.0) (11.25,8.0) (11.15,6.1)
       (10.95,4.5) (10.55,2.8) (10.1,1.2) (9.75,0)}
    -- cycle;
  \foreach \x/\y in {
    0/0,1/0,2/0,3/0,4/0,5/0,6/0,7/0,8/0,9/0,10/0,11/0,12/0,13/0,14/0,15/0,16/0,17/0,
    1/1,2/1,3/1,4/1,5/1,6/1,7/1,8/1,9/1,10/1,11/1,12/1,13/1,14/1,15/1,16/1,
    1/2,2/2,3/2,4/2,5/2,6/2,8/2,9/2,10/2,11/2,12/2,14/2,15/2,
    2/3,3/3,4/3,5/3,6/3,8/3,9/3,10/3,11/3,12/3,
    8/4,9/4,10/4,11/4,12/4,
    8/5,9/5,10/5,11/5,12/5,
    9/6,10/6,11/6,
    9/7,10/7,11/7,
    9/8,10/8,11/8,
    9/9,10/9,11/9,
    9/10,10/10,11/10,
    10/11
  }{
    \fill[gray!55, opacity=0.35] (\x,\y) rectangle ++(1,1);
    \draw[gray!70, opacity=0.65, line width=0.35pt] (\x,\y) rectangle ++(1,1);
  }
\end{scope}
\coordinate (c) at (10.5,11.5);
\fill[red!18] (10,11) rectangle (11,12);
\draw[red!65!black, line width=1.1pt] (10,11) rectangle (11,12);
\draw[blue!50!black, dashed, line width=1.2pt]
  (10.55,11.05) .. controls (10.8,9.8) and (10.95,8.4) .. (10.85,7.3)
  .. controls (10.75,6.1) and (10.45,5.0) .. (10.15,4.0)
  .. controls (9.85,2.9) and (9.55,1.5) .. (9.2,0.2);
\draw[black, densely dotted, line width=0.95pt]
  (9.1,10.0) -- (9.1,13.0) -- (12.1,13.0) -- (12.1,10.0) -- cycle;
\draw[black, densely dotted, line width=0.95pt]
  (8.1,9.0) -- (8.1,14.0) -- (13.1,14.0) -- (13.1,9.0) -- cycle;
\draw[black, densely dotted, line width=0.95pt]
  (7.1,8.0) -- (7.1,15.0) -- (14.1,15.0) -- (14.1,8.0) -- cycle;
\draw[black, densely dotted, line width=0.95pt]
  (6.1,7.0) -- (6.1,16.0) -- (15.1,16.0) -- (15.1,7.0) -- cycle;
\draw[black, densely dotted, line width=0.95pt]
  (5.1,6.0) -- (5.1,17.0) -- (16.1,17.0) -- (16.1,6.0) -- cycle;
\draw[black, line width=0.9pt] (0,0) rectangle (18,18);
\end{tikzpicture}
\caption{The bottom-cluster exploration in the proof of
Proposition~\ref{prop:fixed-scale-crossings}. The pale blue regions represent
the positive set $\{\mathcal X_1>0\}$; only the gray cells are revealed by the
bottom-cluster exploration. The dotted black squares mark separated annuli
around the indicated point; a circuit of~$\{\mathcal X_1 \leq 0\}$ in one of these annuli
would block the dashed connection in $\{\mathcal X_1>0\}$ to the bottom side.}
\label{fig:subquadratic-exploration}
\end{figure}

\begin{proof}
We want a crossing estimate at the slightly positive level $L/R$. A
Cameron--Martin shift of the field on the whole rectangle costs relative
entropy of order $L^2$, which does not vanish. Instead we study the dual
bottom-top crossing probability, which changes when the level is raised from
$0$ to $L/R$, and decide the dual event by exploring the bottom-connected
component. We use the RSW theorem of
\citet[Theorem~1]{KohlerSchindlerTassion} to prove an arm bound. This arm bound shows that the
exploration reveals only $o(R^2)$ unit cubes in $\R^d$ on average
(Figure~\ref{fig:subquadratic-exploration}). We then raise the level by a
Cameron--Martin shift of the white noise on the revealed cubes; the relative
entropy of this
shift vanishes as $R\to\infty$.

\smallskip
\noindent\emph{Step 1.}  We prove the arm bound; here
$B(x,r)\coloneqq\{y\in\R^2:|y-x|\leq r\}$ denotes the closed Euclidean
ball. We show that there are $C<\infty$
and $\alpha>0$ such that, uniformly in $x\in\R^2$ and $1\leq r_1\leq r_2$,
\begin{equation}\label{eq:fixed-scale-arm}
    \P\bigl(\{\mathcal X_1>0\}\text{ connects }B(x,r_1)\text{ to }\partial B(x,r_2)\bigr)
    \leq C(r_1/r_2)^\alpha\, .
\end{equation}
By sign symmetry and rotation invariance, $\P\bigl(H_{[-R,R]^2}(0)\bigr)\geq1/2$ uniformly in~$R \geq 1$.
The continuum form of the RSW theorem~\citep[Theorem~1 and Comment~1]{KohlerSchindlerTassion}, applied to the positively
associated process $\{\mathcal X_1\geq0\}$, then gives, for each~$\theta > 0$, a~$c_0 > 0$ such that for every $R\geq 1$,
\begin{equation}\label{eq:fixed-scale-zero-crossing}
    \P\bigl(H_{[-\theta R,\theta R]\times[0,2R]}(0)\bigr)
    \geq c_0\, .
\end{equation}
This implies the arm bound by a routine argument: \eqref{eq:fixed-scale-zero-crossing} and the FKG inequality give a
uniformly positive probability of a $\{\mathcal X_1\geq0\}$ circuit in every annulus
$B(x,4\rho)\setminus B(x,\rho)$; choosing a logarithmic number of such annuli
separated by distance greater than the dependence range of $\mathcal X_1$ makes these circuit
events independent. A $\{\mathcal X_1<0\}$ arm from $B(x,r_1)$ to $\partial B(x,r_2)$ must
then avoid each selected circuit, and by sign symmetry a $\{\mathcal X_1>0\}$ arm
has the same probability as a $\{\mathcal X_1<0\}$ arm; this
implies~\eqref{eq:fixed-scale-arm}. 

\smallskip
\noindent\emph{Step 2.}  Tile $\R^d$ by unit cubes whose centers are in $\Z^d$. Let
\[
    E_R(\theta)\coloneqq\{\{\mathcal X_1>0\}\text{ crosses }
    [-\theta R,\theta R]\times[0,2R]\text{ bottom-top}\}\, .
\]
We construct a rule which reveals the white noise in unit cubes, one at a time,
choosing each new cube from the information already
revealed, and stops after determining whether $E_R(\theta)$ occurs,
so that if $\mathcal N$ is the number of cubes revealed by this rule, then 
\[
    \E\mathcal N\leq C R^{2-\alpha_1} \, , 
\]
for $C=C(\theta)<\infty$ and $\alpha_1=\alpha_1(\theta)>0$. See Figure~\ref{fig:subquadratic-exploration}. 

For the unit square with center $z\in\Z^2$, let $\mathcal A(z)$ be the finite
collection of unit cubes in $\R^d$ meeting the closed unit neighborhood of that
square, with the square embedded in $\R^2\times\{0\}$ when $d=3$.
``Processing'' this square means revealing the restrictions of $\mathcal W$ to
the cubes in $\mathcal A(z)$ which have not already been revealed. The rule
only processes unit squares whose closure meets
$[-\theta R,\theta R]\times[0,2R]$. First process all such squares meeting the
bottom side. Thereafter, process the first
unrevealed square, in a fixed lexicographic order, which shares an edge with
the currently discovered bottom-connected positive component. Stop when the
discovered component reaches the top side, or when there are no unrevealed
neighboring squares left. This rule determines $E_R(\theta)$.

Let $\mathcal P\subset\Z^2$ be the set of centers of processed unit squares and
observe that $\mathcal N\leq C|\mathcal P|$. If $z=(z_1,z_2)$ is processed, then $\{\mathcal X_1>0\}$ has an arm
from a fixed ball around $z$ down to the bottom side. The arm bound \eqref{eq:fixed-scale-arm} then gives
\[
    \P(z\in\mathcal P)\leq C(1+z_2)^{-\alpha}\, ,
\]
after increasing $C$ to cover the squares within bounded distance of the bottom
side. Consequently, summing over horizontal layers,
\[
    \E\mathcal N
    \leq C\sum_{k=0}^{\lceil 2R+C\rceil}
        \sum_{|j|\leq\lceil\theta R+C\rceil}
        \P((j,k)\in\mathcal P)
    \leq C\sum_{k=0}^{\lceil 2R+C\rceil}
        \sum_{|j|\leq\lceil\theta R+C\rceil}
        (1+k)^{-\alpha}
    \leq C R^{2-\alpha_1}\, ,
\]
after decreasing $\alpha_1$ if necessary.

\smallskip
\noindent\emph{Step 3.} We show that raising the level from $0$ to $L/R$
changes the crossing probability by at most $C L R^{-\alpha_1/2}$: for every $R\geq 1$,
\begin{equation}\label{eq:fixed-scale-level-loss}
    0\leq
    \P\bigl(H_{[-\theta R,\theta R]\times[0,2R]}(0)\bigr)
    -
    \P\bigl(H_{[-\theta R,\theta R]\times[0,2R]}(L/R)\bigr)
    \leq C L R^{-\alpha_1/2}\, .
\end{equation}
Together with \eqref{eq:fixed-scale-zero-crossing},
this gives the proposition. It remains to prove
\eqref{eq:fixed-scale-level-loss}.
Observe that by planar duality, 
\[
\begin{aligned}
&\P\bigl(H_{[-\theta R,\theta R]\times[0,2R]}(0)\bigr)
-
\P\bigl(H_{[-\theta R,\theta R]\times[0,2R]}(L/R)\bigr)\\
&\quad =
\P\left(
\begin{gathered}
\{\mathcal X_1<L/R\}\text{ crosses }[-\theta R,\theta R]\times[0,2R]\\
\text{bottom-top}
\end{gathered}
\right)
-
\P\left(
\begin{gathered}
\{\mathcal X_1<0\}\text{ crosses }[-\theta R,\theta R]\times[0,2R]\\
\text{bottom-top}
\end{gathered}
\right)\, .
\end{aligned}
\]

We use the processing rule from Step~2 to estimate these. Recall
from \eqref{eq:continuum-crossing-scale} that $\mathfrak m$ is the mass of the unit
kernel. Let $\P_\ell$ be the law obtained by adding the deterministic density
$(\ell/\mathfrak m)dz$ on every unit cube that the rule can reveal, that is, on
the union of the collections $\mathcal A(z)$ over all squares $z$ that the rule
can process. This union is deterministic, and it contains the support of every
unit kernel that the exploration evaluates. Since the unit kernel has mass
$\mathfrak m$, the shift raises every field value used by the exploration
by $\ell$. By sign symmetry, for $\ell\geq0$,
\[
    \P\left(\{\mathcal X_1<\ell\}\text{ crosses }
    [-\theta R,\theta R]\times[0,2R]\text{ bottom-top}\right)
    =
    \P_\ell(E_R(\theta))\, .
\]
Thus the right side of the previous display is
$\P_{L/R}(E_R(\theta))-\P_0(E_R(\theta))$.

Let $z_1,\ldots,z_M$ be the processed square centers, in the order in which
they are processed. For each $1 \leq i \leq M$, let $\mathcal J_i$ be the union of the unit
cubes revealed when processing $z_i$,  define~$\mathfrak T_0 = \emptyset$ and
\[
    \mathfrak T_i \coloneqq 
    \left(
    z_1,\mathcal W|_{\mathcal J_1};
    z_2,\mathcal W|_{\mathcal J_2};
    \ldots;
    z_i,\mathcal W|_{\mathcal J_i}
    \right) \, ,
\]
and let $\P_\ell^{\rm tr}$ be the law of $\mathfrak T_M$ under $\P_\ell$. For probability laws $\mu$ and $\nu$, write
$D(\mu\|\nu)\coloneqq\int\log(d\mu/d\nu)d\mu$ for their relative entropy. For Gaussian white
noise, the relative entropy between the unshifted law on one revealed unit cube and the law shifted by $(L/(\mathfrak m R))dz$ is $L^2/(2\mathfrak m^2R^2)$.

Given $\mathfrak T_{i-1}$, the rule determines $z_i$ and the newly revealed set $\mathcal J_i$. Thus the only relative entropy added at step $i$ comes from the white noise newly revealed at that step. Writing $n_i$ for the number of unit cubes in $\mathcal J_i$, the chain rule gives
\[
\begin{aligned}
    D\bigl(\P_0^{\rm tr}\|\P_{L/R}^{\rm tr}\bigr)
    &\leq
    \E_0\sum_{i=1}^M
    D\left(
    \P_0(\mathcal W|_{\mathcal J_i}\in\cdot\mid \mathfrak T_{i-1})
    \,\middle\|\,
    \P_{L/R}(\mathcal W|_{\mathcal J_i}\in\cdot\mid \mathfrak T_{i-1})
    \right)\\
    &\leq
    \E_0\sum_{i=1}^M
    \frac{L^2}{2\mathfrak m^2R^2}n_i
    = \frac{L^2}{2\mathfrak m^2R^2}\E_0\mathcal N\, .
\end{aligned}
\]
Since $\mathfrak T_M$ determines $E_R(\theta)$, Pinsker's inequality gives
\[
    \P_{L/R}(E_R(\theta))-\P_0(E_R(\theta))
    \leq
    \frac{L}{2\mathfrak m R}\sqrt{\E_0\mathcal N}
    \leq
    \frac{L}{2\mathfrak m R}\sqrt{C R^{2-\alpha_1}}
    \leq C L R^{-\alpha_1/2}\, .
\]
This proves \eqref{eq:fixed-scale-level-loss}.
\end{proof}

\subsubsection{Finite deterministic scales}

Recall that $H_{\mathcal R}(\ell)$ is the event that $\{\mathcal X_1\geq\ell\}$ crosses the rectangle $\mathcal R$ in the prescribed coordinate direction. For a planar
field $F$, we write $H_{\mathcal R}(\ell;F)$ for the same event with $F$ in place of $\mathcal X_1$; when $F=\mathcal X_1$ we omit the second argument.

Observe that Proposition~\ref{prop:fixed-scale-crossings} and
\eqref{eq:cont-field-scaling} imply that, for all $a>0$ and $h>0$, there is $p>0$ such that for every $L\geq1$, there is $s_0=s_0(a,h,L)\in(0,1]$ such that
\begin{equation}\label{eq:rescaled-crossing-estimate}
    \P\left(
    H_{[-a,a]\times[0,2h]}(Lb(s);\mathcal X_s)
    \right)\geq p
    \qquad\text{for every }0<s<s_0\, ,
\end{equation}
where $b(s)$ is the scale from \eqref{eq:continuum-crossing-scale}. Indeed,
if $R=h/s$, then the event in \eqref{eq:rescaled-crossing-estimate} has the
same probability as $H_{[-(a/h)R,(a/h)R]\times[0,2R]}(Lh/R)$. The next lemma uses this estimate to choose finitely many deterministic scales in such a way that the maximum of the corresponding fields crosses all prescribed rectangles with high probability.

\begin{lemma}[Finite-scale extraction]\label{lem:finite-scale-extraction}
Fix $N\geq1$ axis-parallel rectangles $\mathcal R_1,\ldots,\mathcal R_N$ in the
plane and a coordinate crossing direction for each rectangle. For every
$\varepsilon>0$, there are $c>0$ and rational scales
$s_1,\ldots,s_k\in(0,1)$ such that
\[
    \P\left(
    \bigcap_{j=1}^N H_{\mathcal R_j}(4c;\max_{1\leq i\leq k}\mathcal X_{s_i})
    \right)\geq1-\varepsilon\, .
\]
\end{lemma}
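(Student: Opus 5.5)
The plan follows the overview in the introduction: first upgrade the fixed-scale estimate \eqref{eq:rescaled-crossing-estimate} to an almost-sure statement by a zero-one law, then extract finitely many deterministic scales by continuity of measure.

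\emph{Almost-sure crossings at small radii.} Fix the rectangles $\mathcal R_1,\ldots,\mathcal R_N$ and their directions. For $L\geq1$, let
\[
A_j(L)\coloneqq\Bigl\{\text{for every }s_0>0\text{ there is }s\in(0,s_0)\cap\mathbb Q\text{ with } H_{\mathcal R_j}(Lb(s);\mathcal X_s)\Bigr\}\, .
\]
Restricting to rational radii needs continuity of $s\mapsto\mathcal X_s$ near each fixed $s$, which follows from the $L^2$ continuity of the kernels together with the continuous modifications. By \eqref{eq:rescaled-crossing-estimate}, applied to the bounding rectangle of $\mathcal R_j$ after a translation, reverse Fatou gives $\P(A_j(L))\geq p$ for each $L$. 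The events $A_j(L)$ decrease in $L$, so
\[
A_j\coloneqq\bigcap_{L\in\N}A_j(L)
\]
has $\P(A_j)\geq p>0$.

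\emph{Zero-one law.} I would show that $A_j$ is a tail event for the white noise on unit cells, so it has probability one. Changing $\mathcal W$ on finitely many unit cubes by an $L^2$ perturbation $f$ changes $\mathcal X_s(u)$ by $\int k_s(u-z)f(z)\,dz$, where $k_s$ is the ball kernel. This is bounded by $\|k_s\|_{L^2}\|f\|_{L^2}$, and $\|k_s\|_{L^2}\asymp s^{(4-d)/2}=s^{2-d/2}$, which is $s$ in $d=2$ and $s^{1/2}$ in $d=3$. The claimed bound of order $b(s)$ comes from the sharper estimate $\|k_s\|_{L^1}\|f\|_\infty\asymp s^2\|f\|_\infty$. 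So I would work in the white noise basis on cells and define tail events via the i.i.d.\ coefficients of an orthonormal basis in each cell. Changing finitely many coefficients, by bounded amounts on a bounded set of basis functions, moves every $\mathcal X_s(u)$ by at most $C(\omega)s^2\leq C(\omega)b(s)$, since $s\leq1$. Because $A_j$ asks for crossings above every multiple of $b(s)$, it is invariant under such changes. Kolmogorov's zero-one law (or Hewitt--Savage) then gives $\P(A_j)=1$.

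I expect this invariance to be the main obstacle. A general $L^2$ change of noise on a cell perturbs $\mathcal X_s$ only by order $s^{2-d/2}$, which is larger than $b(s)$ in $d=2$. The fix is to apply the zero-one law to the countable family of basis coordinates: changing finitely many of them adds a bounded smooth density, and the perturbation then scales like $s^2$.

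\emph{Extraction of deterministic scales.} Intersecting over $j$ and using $L=1$, almost surely each $\mathcal R_j$ is crossed by $\{\mathcal X_s\geq b(s)\}$ for some rational $s$. Enumerate the rationals in $(0,1)$ as $q_1,q_2,\ldots$. The event that for each $j$ some $i\leq k$ gives a crossing at level $b(q_i)/2$ by $\mathcal X_{q_i}$ increases in $k$ to an almost-sure event. Choose $k$ with probability at least $1-\varepsilon$. Set $4c\coloneqq\min_{i\leq k}b(q_i)/2>0$. On this event each $\mathcal R_j$ is crossed by $\{\mathcal X_{q_i}\geq 4c\}$, which is contained in $\{\max_i\mathcal X_{s_i}\geq4c\}$, where $s_i\coloneqq q_i$. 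This gives the lemma.
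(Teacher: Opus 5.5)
Your proposal is correct and follows the paper's proof essentially step for step. You obtain a limsup-over-rational-radii event with probability at least $p$, upgrade it to probability one by Kolmogorov's zero-one law using finitely many changes of coordinates in a bounded orthonormal basis (so the perturbation is at most $\|f\|_\infty\|k_s\|_{L^1}\leq C(f)s^2\leq C(f)b(s)$), and then extract finitely many rational scales by continuity of measure. Two small points: continuity of $s\mapsto\mathcal X_s$ is not needed, since \eqref{eq:rescaled-crossing-estimate} holds for every $s<s_0$ and you may simply pick $s$ rational; and the crude $L^2$ bound $s^{2-d/2}$ exceeds $b(s)$ in $d=3$ as well as $d=2$, which your basis-coordinate argument already handles.
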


\begin{proof}
We first use \eqref{eq:rescaled-crossing-estimate} and a 0-1 argument to get almost-sure crossings at arbitrarily small scales for a fixed rectangle. We then extract a high-probability statement at finitely many deterministic scales.

\emph{Step 1.}  We prove that for one rectangle $\mathcal R$ and one crossing
direction,
\[
    \P\left(
    \bigcap_{L\geq1}\bigcap_{j\geq1}
    \bigcup_{s\in(0,1/j)\cap\mathbb Q}
    H_{\mathcal R}(Lb(s);\mathcal X_s)
    \right)=1\, .
\]
For every $j,L\geq1$,
\eqref{eq:rescaled-crossing-estimate} gives
\[
    \P\left(
    \bigcup_{s\in(0,1/j)\cap\mathbb Q}
    H_{\mathcal R}(Lb(s);\mathcal X_s)
    \right)\geq p\, ,
\]
with $p>0$ independent of $j$ and $L$: choose a rational $s<\min\{1/j,s_0\}$. Since these events decrease when either $j$ or $L$
increases, the event
\[
    \mathcal E\coloneqq
    \bigcap_{L\geq1}\bigcap_{j\geq1}
    \bigcup_{s\in(0,1/j)\cap\mathbb Q}
    H_{\mathcal R}(Lb(s);\mathcal X_s)
\]
has probability at least $p$.

We now show that $\P(\mathcal E)=1$. Let $U$ be a finite union of unit cubes
whose centers are lattice points and which contains the closed unit neighborhood of
$\mathcal R$ if $d=2$, and of $\mathcal R\times\{0\}$ if $d=3$. Choose a bounded orthonormal basis $(e_i)_{i\geq1}$ of $L^2(U)$. The coordinates $\mathcal W(e_i)$ are
independent standard Gaussians and determine $\mathcal W|_U$. We claim that
$\mathcal E$ belongs to their tail sigma-field
\[
    \bigcap_{M\geq1}\sigma\bigl(\mathcal W(e_i):i>M\bigr)\, .
\]
Indeed, two coordinate sequences which agree from some point onward differ on $U$ by a finite sum $f=\sum_{i=1}^q a_i e_i$, hence by a bounded deterministic function. Therefore, there exists $C(f)<\infty$ such that for every $0<s<1$,
\[
    \sup_{u\in\mathcal R}|\mathcal X_s^{\rm new}(u)-\mathcal X_s^{\rm old}(u)|
    \leq C(f)b(s)\, .
\]
Fix $L$ and $j$, and choose an integer $K>L+C(f)$. If the old coordinate sequence lies in $\mathcal E$, then there is $s<1/j$ such that $H_{\mathcal R}(Kb(s);\mathcal X_s^{\rm old})$ occurs. The same crossing is still
above level $Lb(s)$ for the new coordinate sequence. Hence membership in
$\mathcal E$ depends only on the tail coordinates. Kolmogorov's zero-one law,
together with $\P(\mathcal E)\geq p$, gives $\P(\mathcal E)=1$.

\emph{Step 2.}  We pass from one rectangle to the prescribed finite list and
choose deterministic scales. Applying Step~1 to
$\mathcal R_1,\ldots,\mathcal R_N$, we see that, almost surely, there are
(random) rational scales $s_1,\ldots,s_N\in(0,1)$ such that
$H_{\mathcal R_j}(b(s_j);\mathcal X_{s_j})$ occurs for every $j$. If $S=\{s_1,\ldots,s_N\}$, then $\max_{s\in S}\mathcal X_s\geq \mathcal X_{s_j}$ on $\mathcal R_j$. Choosing $n$ so large that $4/n\leq\min_j b(s_j)$, we get
\[
    \P\left(\bigcup_{\substack{n\geq1\\ S\subset\mathbb Q\cap(0,1)\textup{ finite}}}
    \bigcap_{j=1}^N H_{\mathcal R_j}
    \left(\frac4n;\max_{s\in S}\mathcal X_s\right)
    \right)=1\, .
\]
By continuity from below, there are finitely many pairs $(n_1,S_1),\ldots,(n_m,S_m)$ such that
\[
    \P\left(\bigcup_{i=1}^m
    \bigcap_{j=1}^N H_{\mathcal R_j}
    \left(\frac4{n_i};\max_{s\in S_i}\mathcal X_s\right)
    \right)\geq1-\varepsilon\, .
\]
Taking $n=\max_i n_i$, $S=\bigcup_iS_i$, and $c=1/n$ gives the claim.
\end{proof}

\subsubsection{From finite scales to the Brownian stopping value}
\label{ssec:limiting-odometer-crossings}

Recall from Subsection~\ref{ssec:localization} that $\mathcal U_{Z,1}$ is the
Brownian stopping value from \eqref{eq:continuum-membrane-stopping-value}, with
stopping rules killed on exiting the unit ball around the starting point. In
$d=3$ we identify $u\in\R^2$ with $(u,0)$. For $0<s<1$ and $T<\infty$, let
$\mathcal X_{s,T}(u)$ be $(2d)^{-1}$ times the white-noise average against the expected
occupation density of Brownian motion, started at $u$, stopped at time $T$ or
when it exits the ball of radius $s$ around $u$. This rule is
admissible for $\mathcal U_{Z,1}$, so for every $0<s<1$ and $T>0$,
\begin{equation}\label{eq:ball-green-lower-brownian-value}
    2d \mathcal X_{s,T}(u)\leq \mathcal U_{Z,1}(T,u)\, .
\end{equation}
As $T\to\infty$, the fields $\mathcal X_{s,T}$ converge to $\mathcal X_s$ uniformly in
probability on compact rectangles, for each fixed finite set of scales. The
factor $2d$ comes from the Brownian generator $(2d)^{-1}\Delta$.

\begin{theorem}[Localized Brownian crossings]\label{thm:limiting-odometer-crossing}
Fix axis-parallel rectangles $\mathcal R_1,\ldots,\mathcal R_N$ in the plane
and a coordinate crossing direction for each rectangle. For every
$\varepsilon>0$, there are $T<\infty$ and $H>0$ such that
\[
    \P\left(
    \begin{array}{c}
    \{u:\mathcal U_{Z,1}(T,u)>H\}
    \textup{ crosses every prescribed rectangle}\\
    \textup{in its prescribed direction}
    \end{array}
    \right)\geq1-\varepsilon\, .
\]
Here $Z$ is the field in \eqref{eq:dlt4-linear-gaussian-potential} defined using
scenery with variance one.
\end{theorem}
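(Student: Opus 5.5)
We combine Lemma~\ref{lem:finite-scale-extraction} with the lower bound \eqref{eq:ball-green-lower-brownian-value} and the convergence $\mathcal X_{s,T}\to\mathcal X_s$ as $T\to\infty$. First apply Lemma~\ref{lem:finite-scale-extraction} to the prescribed rectangles and directions with $\varepsilon/2$ in place of $\varepsilon$. This yields $c>0$ and rational scales $s_1,\ldots,s_k\in(0,1)$ such that, with probability at least $1-\varepsilon/2$, the set $\{\max_i\mathcal X_{s_i}\geq4c\}$ crosses every $\mathcal R_j$ in its prescribed direction. We take $H\coloneqq 2d\cdot c$, say. The factor $4$ leaves room to absorb approximation errors.

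Next we transfer from $\mathcal X_{s_i}$ to the finite-horizon fields. Since there are only finitely many scales and the rectangles lie in a fixed compact set $K$, the stated uniform-in-probability convergence of $\mathcal X_{s,T}$ to $\mathcal X_s$ on compact rectangles gives a $T<\infty$ with
\[
\P\Bigl(\max_{1\leq i\leq k}\sup_{u\in K}|\mathcal X_{s_i,T}(u)-\mathcal X_{s_i}(u)|>c\Bigr)\leq\varepsilon/2\, .
\]
On the intersection of the two good events, every point $u$ of a crossing path satisfies $\max_i\mathcal X_{s_i}(u)\geq4c$, hence $\mathcal X_{s_{i(u)},T}(u)\geq3c$ for some index $i(u)$. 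By \eqref{eq:ball-green-lower-brownian-value}, each rule is admissible for $\mathcal U_{Z,1}(T,\cdot)$ because $s_i<1$. Therefore
\[
\mathcal U_{Z,1}(T,u)\geq 2d\max_i\mathcal X_{s_i,T}(u)\geq 6dc>H\, .
\]
So the same compact connected crossing lies in $\{\mathcal U_{Z,1}(T,\cdot)>H\}$, and the total failure probability is at most $\varepsilon$.

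The only point needing care is the convergence $\mathcal X_{s,T}\to\mathcal X_s$ with $T$ fixed against the variance-one $Z$. We need this uniformly on $K$, not just pointwise. I expect it to be the main obstacle, though it is routine. The difference $\mathcal X_s(u)-\mathcal X_{s,T}(u)$ is the white-noise integral of $(2d)^{-1}$ times the occupation density after time $T$ before exit from $B(u,s)$. Its $L^2$ norm decays like $e^{-cT/s^2}$ by exponential exit-time tails, mirroring \eqref{eq:d4ball-time-tail}. The translation-in-$u$ increments obey the same $L^2$ Hölder bounds used to build continuous modifications of $\mathcal X_s$. Gaussian chaining (Kolmogorov--Dudley) on $K$ then turns the pointwise $L^2$ decay into uniform convergence in probability. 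One also checks that the payoff of the ball-exit rule stopped at $T$ equals the expression defining $\mathcal X_{s,T}$ in terms of $Z$. This follows from the Markov-property identity used in Lemma~\ref{lem:difference-representation}, in its Brownian form: $Z(T,u)-\mathbf E_u^{\rm BM}Z(T-\tau,B_\tau)$ equals the white-noise average against the occupation density up to $\tau$.
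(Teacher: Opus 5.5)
Your proposal is correct and is essentially the paper's proof. You apply Lemma~\ref{lem:finite-scale-extraction} with error $\varepsilon/2$, choose $T$ so that the finitely many fields $\mathcal X_{s_i,T}$ are uniformly within $c$ of $\mathcal X_{s_i}$ on the rectangles, and use \eqref{eq:ball-green-lower-brownian-value} to get $\mathcal U_{Z,1}(T,\cdot)\geq 6dc$ along the crossings; the paper takes $H=5dc$, and your $H=2dc$ works equally well. The paper simply asserts, just before the theorem, the convergence $\mathcal X_{s,T}\to\mathcal X_s$ and the payoff identity behind \eqref{eq:ball-green-lower-brownian-value}, so your sketched justification of them is extra detail rather than a different route.
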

\begin{proof}
Apply Lemma~\ref{lem:finite-scale-extraction} with error
$\varepsilon/2$. This gives $c>0$ and rational scales
$s_1,\ldots,s_k\in(0,1)$ such that
\[
    \P\left(
    \bigcap_{j=1}^N H_{\mathcal R_j}(4c;\max_{1\leq i\leq k}\mathcal X_{s_i})
    \right)\geq1-\varepsilon/2\, .
\]
Choose $T$ so large that
\[
    \P\left(
	    \max_{1\leq i\leq k}\sup_{u\in\bigcup_{j=1}^N\mathcal R_j}
    |\mathcal X_{s_i,T}(u)-\mathcal X_{s_i}(u)|>c
    \right)\leq\varepsilon/2\, .
\]
On the intersection of these two events, \eqref{eq:ball-green-lower-brownian-value}
implies that
\[
    \{u:\mathcal U_{Z,1}(T,u)>5dc\}
\]
crosses every prescribed rectangle in its prescribed direction. This proves
the theorem with $H=5dc$.
\end{proof}

\subsection{Percolation of critical level sets in dimensions two and three}\label{sec:continuum-to-discrete}
We transfer the Brownian crossing statement to localized odometers and then use a finite-range dependent block argument to complete the proof of percolation.

\begin{theorem}[Critical level-set percolation]
\label{thm:d23-critical-level-percolation}
Fix $d\in\{2,3\}$, $\nu_{0}>0$, $\theta_{0}>0$, and $K_{0}<\infty$. There are
$c=c(d,\nu_{0},\theta_{0},K_{0})>0$ and
$t_0=t_0(d,\nu_{0},\theta_{0},K_{0})<\infty$ such that, for every mean-zero i.i.d.\
field $(\zeta(x))_{x\in\Z^d}$ satisfying
\[
    \Var(\zeta(0))\geq\nu_{0}^2\, ,\qquad \E e^{\theta_{0}|\zeta(0)|}\leq K_{0}\, ,
\]
and every $t\geq t_0$, the planar set
\[
    \{x:u_t(x)>c t^{(4-d)/4}\}
\]
contains an infinite nearest-neighbor component almost surely, where $x$ ranges
over $\Z^2$ if $d=2$, and over $\Z^2\times\{0\}$ if $d=3$.
\end{theorem}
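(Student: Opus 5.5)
We plan to run a standard finite-range dependent block (renormalization) argument on the plane, with Theorem~\ref{thm:limiting-odometer-crossing} as the crossing input, transferred to the lattice through Remark~\ref{rem:dlt4-killed-scaling}.

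\emph{Blocks.} Tile the plane by squares of side $R$ indexed by $k\in\Z^2$. Declare block $k$ \emph{good} if two events hold. First, the set $\{x:u^{Q(x,R)}_t(x)>c\,t^{(4-d)/4}\}$ restricted to the plane must cross each of four fixed rectangles in the long direction. These are the rectangles of size $2R\times R$ and $R\times2R$ covering block $k$ together with its right and top neighbours, in the usual ``cross'' pattern. Second, it must do the same in the short direction for the squares, so that crossings of overlapping rectangles are forced to intersect. If every block on a nearest-neighbour path of good blocks is good, the long crossings glue into one connected set. Since $u^{Q(x,R)}_t\leq u_t$, infinite good-block paths then yield an infinite component of $\{u_t>c\,t^{(4-d)/4}\}$.

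Each localized value $u^{Q(x,R)}_t(x)$ depends only on the scenery in $Q(x,R)$. The good event therefore depends only on the scenery in an $O(R)$ neighbourhood of the block, so goodness is a $K$-dependent field with $K$ independent of $R$ and $t$. By Liggett--Schonmann--Stacey, there is $p_*<1$, depending only on $K$, such that if each block is good with probability at least $p_*$, the good blocks stochastically dominate supercritical Bernoulli site percolation and percolate almost surely.

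\emph{Choosing the parameters.} Apply Theorem~\ref{thm:limiting-odometer-crossing} to the finitely many rescaled rectangles, which after dividing by $R$ are fixed rectangles in a bounded region. Use $\varepsilon=(1-p_*)/2$. This yields $T$ and $H$ such that $\{\mathcal U_{Z,1}(T,\cdot)>H\}$ crosses all of them with probability at least $1-\varepsilon$, with unit-variance scenery. Since $\mathcal U_{Z,\square}\geq\mathcal U_{Z,1}$, the same holds for $\mathcal U_{Z,\square}$. Take $R\coloneqq\sqrt{t/T}$, so that $t=TR^2$ up to rounding. The localized odometers $R^{-(2-d/2)}u^{Q(\lfloor Ru\rfloor,R)}_t(\lfloor Ru\rfloor)$ then converge uniformly on the compact region, in probability, to $\nu\,\mathcal U_{Z,\square}(T,\cdot)$. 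Here $\nu^2=\Var\zeta(0)$, and the extension to varying laws is supplied by Remark~\ref{rem:dlt4-killed-scaling}.

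The scale $R^{2-d/2}$ equals $(t/T)^{(4-d)/4}$. Set $c\coloneqq\nu_0 H T^{-(4-d)/4}/2$. A uniform approximation error below $\nu_0H/2$ then preserves the crossings, because crossings of the open superlevel set of the continuum field pass to lattice nearest-neighbour crossings at small enough mesh.

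\emph{Main obstacle.} The hardest step is obtaining $t_0$ uniformly over the class of laws. Convergence in Remark~\ref{rem:dlt4-killed-scaling} is stated for a sequence of laws. We will argue by contradiction: if no uniform $t_0$ existed, there would be laws $\mu_n$ and times $t_n\to\infty$ at which the block probability falls below $1-2\varepsilon$. The exponential moment bound gives tightness of the $\mu_n$. Passing to a subsequence whose variances converge to some $\nu^2\geq\nu_0^2$ and applying the remark gives a contradiction.

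Two points need care here. The continuum crossing must be at level $H\nu$ with $\nu\geq\nu_0$, which is handled by monotonicity in the level. The continuum-to-lattice crossing transfer must be genuinely robust. For this we will intersect the crossings with small open margins, using the fact that the crossing event for $\{\mathcal U>H\}$ is open in the uniform topology, so the portmanteau theorem applies.
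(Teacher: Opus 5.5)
Your proposal is correct and follows the paper's proof essentially step for step. It uses the same ingredients: a finite-range block event for the localized level set, Liggett--Schonmann--Stacey domination, Theorem~\ref{thm:limiting-odometer-crossing} together with $\mathcal U_{Z,\square}\geq\mathcal U_{Z,1}$ and the coupling of Remark~\ref{rem:dlt4-killed-scaling}, and a contradiction argument over laws whose variances converge to some $\nu\geq\nu_0$. The only difference is cosmetic: you take $R=\sqrt{t/T}$ directly, whereas the paper takes $R$ integer at time $\lfloor R^2T\rfloor$ and then uses monotonicity of $u_t$ in $t$. The paper's choice avoids the sub-lattice offsets between blocks that your real $R$ introduces. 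Those offsets are harmless, because LSS needs only a uniform lower bound on the block marginals and an offset of less than one lattice unit is absorbed by the boundary adjustment in the continuum-to-lattice transfer.
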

\begin{proof}
All lattice crossings below take place in $\Z^2$ if $d=2$, and in
$\Z^2\times\{0\}$ if $d=3$. Fix rectangles
$\mathcal R_1,\ldots,\mathcal R_N\subset\R^2$, with prescribed coordinate
crossing directions, such that the following deterministic implication holds:
if a planar set has all these crossings in $2z+\mathcal R_j$, for every $j$
and every $z$ in an infinite nearest-neighbor subset of $\Z^2$, then it has an
unbounded connected component. For a planar set $A$ and $R>0$, let
$\mathcal E_R(A)$ be the event that $A$ contains, for every $1\leq j\leq N$, a
nearest-neighbor crossing of $R\mathcal R_j$ in the direction assigned to
$\mathcal R_j$. Let $k<\infty$ be the dependence range for this finite block
event and choose $\delta>0$ so small that every stationary $k$-dependent site
process on $\Z^2$ with one-site marginal at least $1-\delta$ dominates
supercritical Bernoulli site percolation by \citet[Corollary~1.4]{LSS}.

Applying Theorem~\ref{thm:limiting-odometer-crossing} to
$\mathcal R_1,\ldots,\mathcal R_N$, with error $\delta/4$, gives a time
$T<\infty$ and a level $H>0$. Set $c\coloneqq\nu_{0}H/2$.
For each integer $R$, define the localized level set in this plane by
\[
    \mathcal O_R\coloneqq
    \{x:u_{\lfloor R^2T\rfloor}^{Q(x,R)}(x)>cR^{2-d/2}\}\, ,
\]
where $x$ ranges over the plane just specified.
Set $E_R\coloneqq\mathcal E_R(\mathcal O_R)$.

By this domination statement, the deterministic implication above, and
monotonicity of the odometer, it suffices to prove that
\begin{equation}\label{eq:d23-block-crossing-estimate}
    \P(E_R)\geq1-\delta
\end{equation}
for all large $R$, uniformly over the laws in the theorem statement. Indeed,
\eqref{eq:d23-block-crossing-estimate} gives percolation of $\mathcal O_R$ for
all large $R$, while
$u_{\lfloor R^2T\rfloor}^{Q(x,R)}(x)\leq u_{\lfloor R^2T\rfloor}(x)$ and
$R^{2-d/2}$ is comparable to
$(\lfloor R^2T\rfloor)^{(4-d)/4}$. Monotonicity in time then gives the theorem
after decreasing $c$ by a constant factor.

We argue by contradiction. If \eqref{eq:d23-block-crossing-estimate} failed,
then there would be $R_n\to\infty$ and mean-zero i.i.d.\ fields $\zeta^{(n)}$
satisfying the same variance and exponential-moment bounds such that the
corresponding events $E_{R_n}$ have probability at most $1-\delta$. The
exponential-moment bound gives a uniform upper bound on
$\Var(\zeta^{(n)}(0))$, so along a further subsequence
\[
    \Var(\zeta^{(n)}(0))\to\nu^2\, ,
    \qquad
    \nu\geq\nu_{0}\, .
\]
The field $Z$ in Theorem~\ref{thm:limiting-odometer-crossing} is defined using
scenery with variance one. For limiting standard deviation $\nu$, the Brownian value is
$\nu\mathcal U_{Z,1}(T,\cdot)$. Since $2c=\nu_{0}H$ and
$\nu\geq\nu_{0}$,
\[
    \P\left(
    \bigcap_{j=1}^N
    H_{\mathcal R_j}(2c;\nu\mathcal U_{Z,1}(T,\cdot))
    \right)\geq1-\delta/4\, .
\]
By the domain monotonicity in Remark~\ref{rem:dlt4-killed-scaling}, the same
crossing event holds with $\nu\mathcal U_{Z,\square}(T,\cdot)$ in place of
$\nu\mathcal U_{Z,1}(T,\cdot)$.
Let $K$ be a compact rectangle containing $\bigcup_j\mathcal R_j$. By Remark~\ref{rem:dlt4-killed-scaling}, the localized odometers built from
$\zeta^{(n)}$,
\[
    v\mapsto R_n^{-(2-d/2)} u_{\lfloor R_n^2T\rfloor}^{Q(\lfloor R_nv\rfloor,R_n)}
    (\lfloor R_nv\rfloor)\, ,
\]
admit couplings with $\nu\mathcal U_{Z,\square}(T,\cdot)$ under which the
uniform distance on $K$ tends to zero in probability.
Suppose that
$\{\nu\mathcal U_{Z,\square}(T,\cdot)\geq2c\}$ has the prescribed crossing
in every $\mathcal R_j$, and choose one such compact connected crossing in each
rectangle. Since the field is continuous on $K$, with probability tending to
one its values at any two points of $K$ within distance $2/R_n$ differ by less
than $c/2$. It is then at least $3c/2$ at every point of the grid in the plane
that lies inside the corresponding rectangle and within distance $2/R_n$ of
its crossing. These
grid points contain the required nearest-neighbor crossings: take the corners
of the grid squares met by the continuum crossing and, at the boundary, use
the first grid row or column inside the rectangle. If the two coupled fields
differ by less than $c/2$ on $K$, all corresponding lattice sites belong to
$\mathcal O_{R_n}$. The crossing event has probability at least
$1-\delta/4$, while the continuity and coupling bounds each hold with
probability tending to one. Hence
$\P(E_{R_n})\geq1-\delta/4-o(1)>1-\delta$ for all large $n$, a contradiction.
\end{proof}

\section{Dimension four}\label{sec:dim4-regime}

Dimension four is the critical dimension for the membrane model. The discrete
membrane field $V_t$ from \eqref{eq:membrane-recursion} has pointwise
fluctuations of order $(\log t)^{1/2}$, while reflection at zero makes
$\E u_t(0)$ of order $\log t$. This separation drives the
whole section: the logarithmic growth of $\E u_t(0)$ makes the odometer deterministic to first
order, the membrane field determines the Gaussian limit at each point, and
spatial smoothing gives the membrane limit at polynomial
superdiffusive times. The same logarithmic scale supplies the positive margin
in the percolation proof.
Unless explicitly stated otherwise, throughout this section the scenery is
mean-zero i.i.d., $0<\Var(\zeta(0))<\infty$, and
$\E e^{\theta|\zeta(0)|}<\infty$ for some $\theta>0$.

\subsection{Logarithmic mean growth}\label{ssec:expl-d4}
We begin the dimension-four analysis by proving logarithmic upper and lower bounds for $\E u_t(0)$, together with a pointwise concentration estimate for the odometer.

		\begin{theorem}[Logarithmic critical growth in dimension four]\label{thm:critical-toppling-d4}
			Fix $\nu_{0}>0$, $\theta_{0}>0$, and $K_{0}<\infty$. There are
			constants $c=c(\nu_{0},\theta_{0},K_{0})>0$,
			$C=C(\nu_{0},\theta_{0},K_{0})<\infty$, and
			$t_0=t_0(\nu_{0},\theta_{0},K_{0})<\infty$ such that, for every mean-zero i.i.d.\ field
			$(\zeta(x))_{x\in\Z^4}$ satisfying
			\[
				\Var(\zeta(0))\geq\nu_{0}^2\, ,
				\qquad
				\E e^{\theta_{0}|\zeta(0)|}\leq K_{0}\, ,
			\]
			and every $t\geq t_0$,
			\begin{equation}\label{eq:d4-logarithmic-mean-bounds}
				c\log t\leq \E u_t(0)\leq C\log t\, .
			\end{equation}
			Moreover, for every $x\in\Z^4$, $t\geq2$, and $s\geq0$,
			\begin{equation}\label{eq:d4-odometer-concentration}
				\P\left(|u_t(x)-\E u_t(0)|>s\right)
				\leq
				C\exp\left\{
					-c\min\left(\frac{s^2}{\log t},s\right)
				\right\}\, .
			\end{equation}
			Consequently,
			\[
				\frac{u_t(x)}{\E u_t(0)}\longrightarrow1
				\qquad\textup{in $L^2$ and almost surely}
			\]
			for every fixed $x$.
		\end{theorem}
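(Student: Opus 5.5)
The concentration estimate \eqref{eq:d4-odometer-concentration} comes first, since both mean bounds use it. By Proposition~\ref{prop:finite-time-concentration-scale}, resampling $\zeta(z)$ changes $u_t(x)$ by at most $g_t(x,z)|\zeta(z)-\zeta'(z)|$. In $\Z^4$, \eqref{eq:d4-full-window-bounds} gives $\sum_z g_t(x,z)^2\leq C\log t$ and $\sup_z g_t(x,z)\leq C$. Lemma~\ref{lem:weighted-exp-conc}(b), with $\ell_z=g_t(x,z)$, then yields the stated tail, with constants depending only on $(\theta_0,K_0)$. Stationarity gives $\E u_t(x)=\E u_t(0)$. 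The almost-sure and $L^2$ convergence of $u_t(x)/\E u_t(0)$ will follow from this tail together with the lower bound $\E u_t(0)\geq c\log t$: $L^2$ convergence is immediate since $\Var(u_t(x))\leq C\log t=o((\log t)^2)$. For the almost-sure statement I will use Borel--Cantelli along $t_k=2^k$, with deviation $s=\varepsilon\log t_k$, which gives summable probabilities $e^{-c\varepsilon^2 k}$. Gaps are filled by monotonicity of $u_t$ in $t$ and the slow variation of $\log t$.

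\emph{Upper bound.} I use the increment identity of Lemma~\ref{lem:reflection-increment}, $\E u_{t+1}(0)-\E u_t(0)=\E(-\zeta(0)-Pu_t(0))_+$. Write $m_t=\E u_t(0)$. The variable $Pu_t(0)$ is an average of neighbors, so it has mean $m_t$ and satisfies the same concentration bound. Hence, by Cauchy--Schwarz and the exponential tail of $\zeta(0)$,
\[
\E(-\zeta(0)-Pu_t(0))_+\leq \E\bigl[(|\zeta(0)|+|Pu_t(0)-m_t|)\one_{\{|\zeta(0)|+|Pu_t(0)-m_t|\geq m_t\}}\bigr]\leq C\sqrt{\log t}\,e^{-c\,m_t^2/\log t}
\]
for $m_t\leq \log t$, and $\leq Ce^{-cm_t}$ beyond that. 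Suppose $m_t\geq A\log t$ with $A$ large. Then the increment is at most $t^{-2}$. Concavity of $m_t$ (Lemma~\ref{lem:reflection-increment}) lets me run a comparison. Define $T_A=\inf\{t:m_t\geq A\log t\}$. After $T_A$, the increments are summable, so $m_t$ stays bounded by $A\log T_A+C$. Before $T_A$ the bound holds by definition. This gives $m_t\leq (A+1)\log t$. A cleaner alternative is an ODE comparison: $m'\lesssim \sqrt{\log t}\,e^{-cm^2/\log t}$ forces $m\lesssim\log t$ at any point where $m$ exceeds $A\log t$.

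\emph{Lower bound (the main obstacle).} I will follow the block strategy sketched in the overview. Set $m=\lfloor\sqrt t\rfloor$. I consider stopping rules that decide only at multiples of $m$, each time comparing the block payoff $V_m(X_{km})$, i.e.\ continuing versus stopping. By Lemma~\ref{lem:difference-representation}, $u_t(0)-V_t(0)$ dominates the payoff of any such rule. Under the scenery law, $V_m(y)$ has variance $\asymp\log m\asymp\log t$, by \eqref{eq:Qt-table} and Lemma~\ref{lem:d4-double-heat-kernel}. The walk positions $X_{km}$ at successive decision times are separated by order $\sqrt m=t^{1/4}$. By the correlation bound \eqref{eq:corr-bound} and Lemma~\ref{lem:d4-double-heat-kernel}, the fields $V_m$ at such points have correlation bounded away from one. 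The planned rule is a greedy one. At each decision time, the walk gains the positive part of the block payoff, in expectation against the walk. Each block contributes at least $c\sqrt{\log t}$ in expectation, but summing naively over $\sqrt t/\dots$ blocks overcounts. The real target is to show that the increments in Lemma~\ref{lem:reflection-increment} stay $\geq c/t$ as long as $m_t\leq c\log t$. Equivalently, I must show $\P(\zeta(0)+Pu_t(0)<0)$ is not too small, and then integrate $dm/dt\geq c/t$. For this I will lower-bound the probability that $Pu_t(0)\leq 0$-ish via the Gaussian-like lower tail of $u_t(0)-m_t$ at scale $\sqrt{\log t}\cdot\sqrt{\log t}$. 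The lower tail at level $m_t\leq c\log t$ needs probability at least $t^{-1+\epsilon}$-type. This is where positive association (FKG, as in \eqref{eq:odometer-covariance-bound}) and a Berry--Esseen comparison with the Gaussian membrane field, as in Theorem~\ref{thm:critical-toppling}, enter: they establish $\P(V_t(0)\leq -m_t)\geq e^{-Cm_t^2/\log t}$. I expect this lower-tail transfer to be the hardest part, specifically showing $u_t(0)$ is near $m_t+V_t(0)$ on the relevant event. If that linearization is unavailable at this stage, I will fall back to the direct block-stopping payoff argument instead.
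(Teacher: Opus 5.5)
\paragraph{Main gap: the lower bound.} Your concentration bound, the $L^2$ convergence, and the increment estimate behind the upper bound agree with the paper. The lower bound $\E u_t(0)\geq c\log t$, however, is left as two alternatives, neither carried out, and the one concrete tool you name cannot deliver it.

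The estimate you want, $\P(V_t(0)\leq -m_t)\geq e^{-Cm_t^2/\log t}$ in the regime $m_t\leq b\log t$, concerns probabilities of order $t^{-Cb^2}$. Berry--Esseen only controls such probabilities up to an additive error of order $\sum_y g_t(0,y)^3/(\log t)^{3/2}\asymp(\log t)^{-3/2}$ in $\Z^4$, and that error swamps every negative power of $t$. The paper instead proves a moderate-deviation lower bound by exponential tilting:
\begin{itemize}
\item a second-order expansion gives $\E e^{-\theta V_m(0)}\geq e^{c\theta^2\Var(V_m(0))}$;
\item Lemma~\ref{lem:weighted-exp-conc}(c) gives $\E e^{-2\theta V_m(0)}\leq e^{C\theta^2\Var(V_m(0))}$;
\item Paley--Zygmund then yields $\P(-V_m(0)>3\lambda\Var(V_m(0)))\geq e^{-C\lambda^2\Var(V_m(0))}$.
\end{itemize}

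Your other missing ingredient, the transfer from the linear field to the odometer, is avoided altogether. Restrict stopping to multiples of $m=\lfloor t/\lfloor\sqrt t\rfloor\rfloor$. This gives $w_{n+1}=(V_m+P^mw_n)_+$ with $w_n\leq u_{nm}$ and $\E w_{n+1}(0)-\E w_n(0)=\E(-V_m(0)-P^mw_n(0))_+$. Markov's inequality gives $\P(P^mw_n(0)\leq2\E w_n(0))\geq\frac12$. Since $V_m(0)$ and $P^mw_n(0)$ are both increasing in the scenery, FKG bounds the increment below by $\frac12\E(-V_m(0)-2\E w_n(0))_+$, a purely linear quantity. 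If $\E w_n(0)<\lambda\Var(V_m(0))$ for all $n<\sqrt t$, then with $\lambda$ small each increment is at least $c\log t\cdot t^{-1/4}$, and $\sqrt t$ blocks overshoot. No decorrelation of $V_m$ along the walk is needed.

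Your one-step route through $\E(-\zeta(0)-Pu_t(0))_+$ is not hopeless, because Proposition~\ref{prop:d4-pointwise-linearization} uses only the upper bound. It would still need the tilting estimate in place of Berry--Esseen, and as written it is not an argument.

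\paragraph{Two smaller errors.} First, the almost-sure convergence along $t_k=2^k$. Filling the gaps requires $\E u_{t_{k+1}}(0)/\E u_{t_k}(0)\to1$. Slow variation of $\log t$ does not give this: you only know $c\log t\leq\E u_t(0)\leq C\log t$, and concavity with $\E u_0(0)=0$ yields only $\E u_{2t}(0)\leq2\E u_t(0)$. The paper uses $t_k=\lfloor e^{\sqrt k}\rfloor$ instead. Then $t_{k+1}/t_k\to1$, concavity gives $\E u_{t_{k+1}}(0)/\E u_{t_k}(0)\leq t_{k+1}/t_k\to1$, and the deviation probabilities $e^{-c\varepsilon^2\sqrt k}$ remain summable.

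Second, in the upper bound, the claim that after $T_A$ the sequence $m_t$ stays below $A\log T_A+C$ is false; it would make $m_t$ bounded. After $T_A$, $m_t$ may fall back below the moving threshold $A\log t$, where your small-increment bound no longer applies. The conclusion $m_t\leq A\log t+C\sqrt{\log t}+C$ does hold with either fix:
\begin{itemize}
\item take the last time $s\leq t$ with $m_s<A\log s$ and sum the increments on $(s,t)$;
\item or, as in the paper, fix the horizon $t$ and use the constant threshold $A\log(t+2)$ for all $n\leq t$.
\end{itemize}
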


		\begin{proof}
			The concentration estimate~\eqref{eq:d4-odometer-concentration} is immediate from the general concentration
			lemma. Indeed, apply
			Lemma~\ref{lem:weighted-exp-conc} with the coordinate constants
			$\ell_y=g_t(x,y)$, as supplied by the stopping representation in
			Proposition~\ref{prop:finite-time-concentration-scale}. In
			dimension four, \eqref{eq:d4-full-window-bounds} implies
			\[
				\sum_{y\in\Z^4} g_t(x,y)^2\leq C\log(t+2)\, ,
				\qquad
				\sup_y g_t(x,y)\leq C\, ,
			\]
			and this gives~\eqref{eq:d4-odometer-concentration} after adjusting constants. 

			Assuming the lower bound in \eqref{eq:d4-logarithmic-mean-bounds}, which
			we prove below, \eqref{eq:d4-odometer-concentration} immediately gives
			convergence in $L^2$. For almost-sure convergence, define
			$t_k\coloneqq\lfloor e^{\sqrt{k}}\rfloor$. The same two estimates and the
			Borel--Cantelli lemma give
			\[
				\frac{u_{t_k}(x)}{\E u_{t_k}(0)}\longrightarrow1
				\qquad\textup{almost surely}\, .
			\]
			Moreover, $u_t(x)$ is
			nondecreasing in $t$, and by Lemma~\ref{lem:reflection-increment} the
			increments $\E u_{t+1}(0)-\E u_t(0)$ are nonincreasing. Since $\E u_0(0)=0$, it follows
			that
			\[
				1\leq
				\frac{\E u_{t_{k+1}}(0)}{\E u_{t_k}(0)}
				\leq\frac{t_{k+1}}{t_k}
				\longrightarrow1 \, .
			\]
			Thus, whenever $t_k\leq t\leq t_{k+1}$,
			\[
				\frac{u_{t_k}(x)}{\E u_{t_k}(0)}
				\frac{\E u_{t_k}(0)}{\E u_{t_{k+1}}(0)}
				\leq
				\frac{u_t(x)}{\E u_t(0)}
				\leq
				\frac{u_{t_{k+1}}(x)}{\E u_{t_{k+1}}(0)}
				\frac{\E u_{t_{k+1}}(0)}{\E u_{t_k}(0)} \, .
			\]
			Both bounds converge to one, which proves the almost-sure convergence.

			\medskip

			It remains to estimate $\E u_t(0)$. We split the proof into three steps.

			\emph{Step 1.}
			We prove the upper bound for the mean in~\eqref{eq:d4-logarithmic-mean-bounds}.
			By Lemma~\ref{lem:reflection-increment},
			\[
				\E u_{t+1}(0)-\E u_t(0)
				=
				\E\bigl(-\zeta(0)-Pu_t(0)\bigr)_+\, .
			\]
			If $\zeta(y)$ is replaced by $\zeta(y)+h$, then the first term on the right above changes by
			$\one_{\{y=0\}}|h|$, while $Pu_t(0)$ changes by at most
			\[
				\frac1{2d}\sum_{e\sim0}g_t(e,y)|h| \, .
			\]
			Therefore Lemma~\ref{lem:weighted-exp-conc}, applied with
			coordinate constants $g_{t+1}(0,y)$, gives, for $r\geq0$,
			\[
				\P\bigl(\zeta(0)+Pu_t(0)\leq-r\bigr)
				\leq
				C\exp\left\{
					-c\min\left(
						\frac{(\E u_t(0)+r)^2}{\log(t+2)},\E u_t(0)+r
					\right)
				\right\}\, .
			\]
			Integrating this tail gives
			\[
				\E u_{t+1}(0)-\E u_t(0)
				\leq
				C\sqrt{\log(t+2)}
				\exp\left\{
					-c\min\left(
						\frac{(\E u_t(0))^2}{\log(t+2)},\E u_t(0)
					\right)
				\right\}\, .
			\]
			Let $m_n\coloneqq\E u_n(0)$. Fix $t\geq2$, let $\ell\coloneqq\log(t+2)$, and choose
			$A>1$ large, to be determined below. Since $m_n$ is nondecreasing in
			$n$, for every $n<t$,
			\[
				m_{n+1}-m_n\leq C\sqrt \ell \, .
			\]
			Moreover, if $m_n\geq A\ell$, then $\log(n+2)\leq \ell$ and
			\[
				\min\left\{\frac{m_n^2}{\log(n+2)},m_n\right\}\geq A\ell\, ,
			\]
			so
			\[
				m_{n+1}-m_n\leq C\sqrt \ell e^{-cA\ell}\, .
			\]
			Let $\tau=\inf\{n:m_n\geq A\ell\}$. If $\tau>t$, then $m_t<A\ell$. If
			$\tau\leq t$, then the crude increment bound gives
			$m_\tau\leq A\ell+C\sqrt \ell$, and the refined bound applies for every
			$n=\tau,\ldots,t-1$. Hence
			\[
				m_t-m_\tau
				\leq Ct\sqrt \ell e^{-cA\ell}\, .
			\]
			Hence, in all cases,
			\[
				m_t\leq A\ell+C\sqrt \ell+Ct\sqrt \ell e^{-cA\ell}
				\leq A\ell+C\sqrt \ell+C\sqrt \ell(t+2)^{1-cA} \, , 
			\]
			and choosing~$A$ large enough gives~$\E u_t(0)\leq C\log t$.

			\emph{Step 2.}
			It remains to prove the lower bound. To that end, we first show
			that there exists $\lambda>0$ so that, for all~$m$ large enough,
			\begin{equation}\label{eq:d4-block-tail}
				\E\bigl(-V_m(0)-2\lambda \Var(V_m(0))\bigr)_+
				\geq
				c\Var(V_m(0)) e^{-C\lambda^2\Var(V_m(0))} \, . 
			\end{equation}
			Independence gives
			\[
				\log \E e^{-\theta V_m(0)}
				=
				\sum_{y\in\Z^4} \log \E e^{-\theta g_m(0,y)\zeta(y)}\, .
			\]
			By \eqref{eq:d4-full-window-bounds}, $\sup_y g_m(0,y)\leq C$. For
			every sufficiently small fixed $\theta>0$, the second-order Taylor
			expansion at $0$ of
			$s\mapsto\log\E e^{-s\zeta(0)}$, evaluated at
			$s=\theta g_m(0,y)$, is uniform in $y$ and gives
			\[
				\log \E e^{-\theta g_m(0,y)\zeta(y)}
				\geq c\theta^2 g_m(0,y)^2\Var(\zeta(0))\, .
			\]
			Summing over $y$ and using
			\[
				\Var(V_m(0))=\Var(\zeta(0))\sum_{y\in\Z^4} g_m(0,y)^2\, ,
			\]
			gives
			\[
				\E e^{-\theta V_m(0)}
				\geq e^{c\theta^2\Var(V_m(0))}\, .
			\]
			For the upper bound, applying Lemma~\ref{lem:weighted-exp-conc}\textup{(c)} with weights
			$g_m(0,y)$ gives
			\[
				\E e^{-2\theta V_m(0)}
				\leq e^{C\theta^2\Var(V_m(0))}\, .
			\]
			The Paley--Zygmund inequality applied to $e^{-\theta V_m(0)}$ gives
			\[
				\P\left(
				e^{-\theta V_m(0)}
				\geq
				\frac12 \E e^{-\theta V_m(0)}
				\right)
				\geq e^{-C\theta^2\Var(V_m(0))}\, .
			\]
			Take $\theta$ to be a sufficiently large fixed multiple of
			$\lambda$, and then choose $\lambda$ small enough. Then
			\[
				\P\bigl(-V_m(0)>3\lambda \Var(V_m(0))\bigr)
				\geq e^{-C\lambda^2\Var(V_m(0))}\, .
			\]
			On this event the positive part in \eqref{eq:d4-block-tail} is at
			least $\lambda\Var(V_m(0))$, which proves
			\eqref{eq:d4-block-tail}.

			\emph{Step 3.}
			We use~\eqref{eq:d4-block-tail} to prove the lower bound. 
			Fix $t$ large and set
			\[
				N\coloneqq\lfloor t^{1/2}\rfloor,\qquad
				m\coloneqq\left\lfloor\frac{t}{N}\right\rfloor \, .
			\]
			Then $Nm\leq t$. Define the block-restricted values
			\[
				w_0\equiv0,\qquad
				w_{n+1}(x)\coloneqq\left(V_m(x)+P^mw_n(x)\right)_+ \, .
			\]
			This is the optimal-stopping recursion restricted to stopping times
			in $\{0,m,2m,\ldots,nm\}$. Hence $u_{nm}(x)\geq w_n(x)$, and
			$w_n$ is nondecreasing in $n$. Since
			$\E P^mw_n(0)=\E w_n(0)$, we have
			\[
				\E w_{n+1}(0)-\E w_n(0)
				=
				\E\bigl(-V_m(0)-P^mw_n(0)\bigr)_+ \, .
			\]
			Markov's inequality gives
			\[
				\P\bigl(P^mw_n(0)\leq2\E w_n(0)\bigr)\geq\frac12 \, .
			\]
			The functions $V_m(0)$ and $P^mw_n(0)$ are coordinatewise increasing
			in the scenery. Hence
			\[
				(-V_m(0)-2\E w_n(0))_+\, ,
				\qquad
				\mathbf 1_{\{P^mw_n(0)\leq2\E w_n(0)\}}
			\]
			are coordinatewise decreasing. By the FKG inequality,
			\[
			\begin{aligned}
				\E w_{n+1}(0)-\E w_n(0)
				&\geq
				\E\left[
				(-V_m(0)-2\E w_n(0))_+
				\mathbf 1_{\{P^mw_n(0)\leq2\E w_n(0)\}}
				\right]  \\
				&\geq
				\E(-V_m(0)-2\E w_n(0))_+
				\P\bigl(P^mw_n(0)\leq2\E w_n(0)\bigr)\\
				&\geq
				\frac12\E\bigl(-V_m(0)-2\E w_n(0)\bigr)_+ \, .
			\end{aligned}
			\]

			By \eqref{eq:Qt-table},
			$\Var(V_m(0))\asymp\log m\asymp\log t$. Decrease the value of
			$\lambda$ from Step~2, if necessary, so that
			\[
				e^{-C\lambda^2\Var(V_m(0))}\geq t^{-1/4}
			\]
			for all large $t$. Either $\E w_n(0)\geq\lambda\Var(V_m(0))$ for some
			$n<N$, in which case monotonicity gives
			$\E w_N(0)\geq\lambda\Var(V_m(0))$, or
			$\E w_n(0)<\lambda\Var(V_m(0))$ for every $n<N$. In the second case,
			the preceding inequality gives, for every $0\leq n<N$,
			\[
				\E w_{n+1}(0)-\E w_n(0)
				\geq \frac12\E\bigl(-V_m(0)-2\lambda\Var(V_m(0))\bigr)_+ \geq c\Var(V_m(0))t^{-1/4}\, ,
			\]
			where the last inequality is \eqref{eq:d4-block-tail}. Summing over
			$N\asymp t^{1/2}$ blocks also gives
			$\E w_N(0)\geq\lambda\Var(V_m(0))$ for all large $t$. Thus in both cases
			\[
				\E u_t(0)\geq \E w_N(0)\geq c\log t\, .
			\]
			Combining this with Step~1 proves
			\eqref{eq:d4-logarithmic-mean-bounds}.
		\end{proof}

The exponential-moment assumption can be removed from the lower bound alone
by replacing the scenery with a bounded conditional expectation and using
convexity.

\begin{corollary}\label{cor:d4-logarithmic-mean-lower}
	Let $(\zeta(x))_{x\in\Z^4}$ be i.i.d., mean zero, integrable, and
	nondegenerate. There are $c>0$ and $t_0<\infty$, depending on the
	one-site law, such that for every $t\geq t_0$,
	\[
		\E u_t(0)\geq c\log t.
	\]
\end{corollary}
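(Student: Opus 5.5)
We will reduce to a bounded scenery and then invoke Theorem~\ref{thm:critical-toppling-d4}. Fix a large $M$ and let $\mathcal G_M$ be the sigma-field generated by the truncations $\zeta(x)\one_{\{|\zeta(x)|\leq M\}}$ together with the indicators $\one_{\{|\zeta(x)|\leq M\}}$. Alternatively, use the coarser sigma-field generated by which of the three cells $(-\infty,-M)$, $[-M,M]$, $(M,\infty)$ contains each $\zeta(x)$, together with the exact value inside $[-M,M]$. Set $\widetilde\zeta(x)\coloneqq\E[\zeta(x)\mid\mathcal G_M]$. The $\widetilde\zeta(x)$ are i.i.d. and mean zero, and on the middle cell $\widetilde\zeta(x)=\zeta(x)$. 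On the outer cells $\widetilde\zeta(x)$ is the constant $\E[\zeta(0)\mid\zeta(0)>M]$ or $\E[\zeta(0)\mid\zeta(0)<-M]$; both are finite by integrability. Hence $\widetilde\zeta$ is bounded, so every exponential moment is finite. Its variance is at least $\Var(\zeta(0)\one_{\{|\zeta(0)|\leq M\}})$ minus small corrections, which is positive for $M$ large by nondegeneracy. If the outer cells have probability zero we simply take $\widetilde\zeta=\zeta$.

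The key comparison is $\E u_t(0;\zeta)\geq\E u_t(0;\widetilde\zeta)$. By Proposition~\ref{prop:finite-time-concentration-scale} (or directly from Theorem~\ref{thm:RW}), $\zeta\mapsto u_t(0;\zeta)$ is convex, being a supremum of linear functionals $\sum_z h_\tau(z)\zeta(z)$. It depends on only finitely many coordinates, those within distance $t$, and is finite, so conditional Jensen applies:
\[
\E[u_t(0;\zeta)\mid\mathcal G_M]\geq u_t(0;\E[\zeta\mid\mathcal G_M])=u_t(0;\widetilde\zeta).
\]
Here the conditional expectation of the vector is taken coordinatewise; this is legitimate because the coordinates are independent and $\mathcal G_M$ is a product sigma-field. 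Integrability of $u_t(0;\zeta)$ follows from $0\leq u_t(0)\leq\sum_z g_t(0,z)|\zeta(z)|$, which is a finite sum of integrable terms. Taking expectations gives the comparison.

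Finally, apply Theorem~\ref{thm:critical-toppling-d4} to $\widetilde\zeta$ with $\nu_0^2=\Var(\widetilde\zeta(0))>0$, any $\theta_0$, and $K_0=\E e^{\theta_0|\widetilde\zeta(0)|}<\infty$. This yields $\E u_t(0;\widetilde\zeta)\geq c\log t$ for $t\geq t_0$, and the constants depend only on the one-site law via $M$. The only step needing care is the Jensen step: one must check that $\mathcal G_M$ makes the $\widetilde\zeta(x)$ i.i.d. and that conditioning acts coordinatewise. Both hold because $\mathcal G_M$ is generated by independent per-site sigma-fields with identical structure.
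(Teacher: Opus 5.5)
Your proposal is correct and is essentially the paper's argument. You replace $\zeta$ by a per-site conditional expectation that is bounded, mean zero, i.i.d.\ and nondegenerate; use convexity of $\zeta\mapsto u_t(0;\zeta)$ with conditional Jensen to get $\E u_t(0;\zeta)\geq\E u_t(0;\widetilde\zeta)$; and then apply Theorem~\ref{thm:critical-toppling-d4}. The only difference is that the paper conditions on the sign of $\zeta(x)$ rather than truncating at level $M$, which makes nondegeneracy immediate. In your version the ``small corrections'' are also unnecessary, since mean zero gives $\Var(\widetilde\zeta(0))\geq\E[\zeta(0)^2\one_{\{|\zeta(0)|\leq M\}}]$ directly.
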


\begin{proof}
	For each $x$, let $\mathcal A_x$ be the sigma-field generated by the sign of
	$\zeta(x)$, and set $\xi(x)\coloneqq\E[\zeta(x)\mid\mathcal A_x]$. The
	variables $\xi(x)$ are i.i.d., bounded, mean zero, and nondegenerate.
	Convexity from Proposition~\ref{prop:finite-time-concentration-scale} and
	conditional Jensen's inequality give
	\[
		\E u_t(0;\zeta)\geq\E u_t(0;\xi)\, .
	\]
	The conclusion now follows from the lower bound in
	\eqref{eq:d4-logarithmic-mean-bounds}, applied to the scenery $\xi$.
\end{proof}

	\subsection{Nonlinear replacement}\label{ssec:d4-linearization}
		We obtain a pointwise comparison between $u_t-\E u_t(0)$ and the membrane field $V_t$ from
		\eqref{eq:membrane-recursion}. First,
		Lemma~\ref{lem:d4-difference-tail} gives an exponential upper-tail
		bound for $u_t-V_t$. Then
		Proposition~\ref{prop:d4-pointwise-linearization} proves an exponential bound for
		$u_t(x)-\E u_t(0)-V_t(x)$ on the scale $\sqrt{1+\log\log t}$.  

		\begin{lemma}[Upper tail of the difference]
		\label{lem:d4-difference-tail}
			There are constants $A_0,c,C\in(0,\infty)$ such that, for all
			$t\geq2$ and all $x\in\Z^4$,
			\begin{equation}\label{eq:d4-difference-tail}
				\E\left[
				\exp\left\{
				c\bigl(u_t(x)-V_t(x)-A_0\log(t+2)\bigr)_+
				\right\}
				-1
				\right]
				\leq C(t+2)^{-2}\, .
			\end{equation}
		\end{lemma}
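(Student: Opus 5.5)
We bound $D_t(x)\coloneqq u_t(x)-V_t(x)$ using the representation in Lemma~\ref{lem:difference-representation}:
\[
	D_t(x)=\sup_{\tau\leq t}\mathbf E_x\bigl[-V_{t-\tau}(X_\tau)\bigr]\leq\mathbf E_x\Bigl[\max_{0\leq s\leq t}\bigl(-V_s(X_\tau)\bigr)_+\Bigr]\, .
\]
We cannot simply move the supremum inside the expectation, since the walk may wander. Instead we localize: with $M\coloneqq\max_{|y-x|\leq t,\,0\leq s\leq t}(-V_s(y))_+$, we have $D_t(x)\leq M$ deterministically, because the walk stays within distance $t$ of $x$ up to time $t$. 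The set of pairs $(s,y)$ has cardinality at most $C t^{5}$. For each pair, $-V_s(y)=\sum_z g_s(y,z)(-\zeta(z))$ is a linear functional of the scenery. By \eqref{eq:d4-full-window-bounds} its $\ell^2$ weight is at most $C\log(t+2)$ and its $\ell^\infty$ weight is at most $C$. Lemma~\ref{lem:weighted-exp-conc}\textup{(b)}, or equivalently \textup{(c)} with Chernoff, then gives
\[
	\P\bigl(-V_s(y)>r\bigr)\leq C\exp\Bigl\{-c\min\Bigl(\frac{r^2}{\log(t+2)},\,r\Bigr)\Bigr\}\, .
\]

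Take $r=A_0\log(t+2)+q$ with $q\geq0$. When $A_0\geq1$ the minimum equals $r$, up to the constant, once $r\geq\log(t+2)$, so the bound becomes $C\exp\{-c A_0\log(t+2)-cq\}$. A union bound over the $Ct^5$ pairs gives
\[
	\P\bigl(M-A_0\log(t+2)>q\bigr)\leq C(t+2)^{5-cA_0}e^{-cq}\, .
\]
Now integrate: $\E[e^{c'(M-A_0\log(t+2))_+}-1]=\int_0^\infty c'e^{c'q}\,\P(\cdot>q)\,dq$. Taking $c'=c/2$ makes this at most $C(t+2)^{5-cA_0}$. Choosing $A_0$ with $cA_0\geq7$ gives the bound $C(t+2)^{-2}$, and since $D_t(x)\leq M$ the claim \eqref{eq:d4-difference-tail} follows.

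The only delicate point is making the crude bound $D_t\leq M$ legitimate: the supremum over stopping times must be dominated pathwise by a maximum over a deterministic finite space-time set. Parallel toppling makes this automatic, since the walk moves at most one step per unit time. If the polynomial count $t^5$ caused trouble, one could restrict to the diffusive box of radius $\sqrt{t}\log t$ and absorb the exit contribution using \eqref{eq:rw-max-displacement} together with cruder moment bounds. The polynomial factor is harmless here, however, because the tail in Lemma~\ref{lem:weighted-exp-conc} is exponential in $r$ at the scale $r\gtrsim\log t$, and $A_0$ is free to be chosen large.
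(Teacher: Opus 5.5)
Your proof is correct and is essentially the paper's argument. You dominate $u_t(x)-V_t(x)$, via Lemma~\ref{lem:difference-representation}, by the maximum of $(-V_s(y))_+$ over the $O(t^5)$ space-time pairs with $s\leq t$ and $|y-x|\leq t$; you then apply Lemma~\ref{lem:weighted-exp-conc} with \eqref{eq:d4-full-window-bounds}, take a union bound at level $A_0\log(t+2)+q$, and integrate the tail. (Your first displayed inequality, with $X_\tau$ still inside the expectation on the right, is not well defined as written, but it is superseded by the deterministic bound $D_t(x)\leq M$, which is what you actually use.)
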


		\begin{proof}
			By Lemma~\ref{lem:difference-representation},
			\[
				0\leq u_t(x)-V_t(x)\leq \Xi_t(x)\, ,
			\]
			where
			\[
				\Xi_t(x)\coloneqq
				\max_{0\leq r\leq t}
				\max_{\substack{y\in\Z^4\\ |y-x|\leq t}}
				(-V_r(y))_+\, .
			\]
			Fix $0\leq r\leq t$ and $y\in\Z^4$. Since
			\[
				V_r(y)=\sum_{z\in\Z^4}g_r(y,z)\zeta(z)
			\]
			is a weighted sum of independent mean-zero variables,
			\eqref{eq:d4-full-window-bounds} and
			Lemma~\ref{lem:weighted-exp-conc} give that for every $\lambda\geq0$,
			\[
				\P\left(-V_r(y)>\lambda\right)
				\leq
				C\exp\left\{
				-c\min\left(\frac{\lambda^2}{\log(t+2)},\lambda\right)
				\right\}\, .
			\]
			The definition of $\Xi_t(x)$ involves at most $C(t+2)^5$ pairs $(r,y)$.
			Taking $\lambda=A_0\log(t+2)+v$, with $A_0$ large, and applying the
			union bound gives the same tail estimate for $\Xi_t(x)$. Since
			$u_t(x)-V_t(x)\leq \Xi_t(x)$,
			\[
				\P\left(
				u_t(x)-V_t(x)>A_0\log(t+2)+v
				\right)
				\leq C(t+2)^{-2}e^{-cv}\, .
			\]
			Integrating this tail gives
			\eqref{eq:d4-difference-tail}.
		\end{proof}

		We now combine Lemma~\ref{lem:d4-difference-tail} with the recursion to get the following pointwise comparison between
		$u_t-\E u_t(0)$ and $V_t$.

		\begin{proposition}[Exponential concentration of the difference]\label{prop:d4-pointwise-linearization}
			There are constants $c,C\in(0,\infty)$ such that, for all integers
			$t\geq3$ and all $\lambda\geq0$,
			\[
				\sup_{x\in\Z^4}
				\P\left(
				\left|u_t(x)-\E u_t(0)-V_t(x)\right|>\lambda
				\right)
				\leq
				C\exp\left\{
				-c\min\left(
				\frac{\lambda^2}{1+\log\log t},\lambda
				\right)
				\right\}\, .
			\]
		\end{proposition}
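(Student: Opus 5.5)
The plan is to run the recursion backward over a short final window and compare the two fields there. Write $D_t(x)\coloneqq u_t(x)-V_t(x)$. Fix an intermediate time $s=t-m$ with $m\coloneqq\lfloor t/(\log t)^{K}\rfloor$ for a large constant $K$. Iterating Lemma~\ref{lem:recursion} and \eqref{eq:membrane-recursion} over the last $m$ steps gives
\[
u_t(x)\geq V_m(x)+P^m u_s(x),\qquad u_t(x)=V_m(x)+P^mu_s(x)+\mathcal D(x),
\]
where $\mathcal D\geq0$ is the extra payoff from stopping during the final window. By Lemma~\ref{lem:difference-representation} applied from time $s$ on, $\mathcal D(x)$ is bounded by the value of stopping where the remaining-time field, minus $P^{\cdot}u_s$, is very negative. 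Since $V_t=V_m+P^mV_s$, we have
\[
u_t(x)-\E u_t(0)-V_t(x)=P^m\bigl(u_s-V_s-\E u_s(0)\bigr)(x)+\bigl(\mathcal D(x)-\E\mathcal D(0)\bigr)+\bigl(\E u_s(0)+\E\mathcal D(0)-\E u_t(0)\bigr).
\]
The last bracket is zero by taking expectations. So it suffices to give exponential tails on the scale $\sqrt{1+\log\log t}$ for the two random terms.

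\emph{First term.} $P^m(u_s-V_s)(x)$ is a Lipschitz function of the scenery. Its coordinate constant at $z$ is at most
\[
\sum_y p_m(x,y)\,\bigl(g_s(y,z)+g_s(y,z)\bigr)\leq 2\sum_{k=m}^{m+s-1}p_k(x,z),
\]
since both $u_s$ and $V_s$ have constants $g_s(y,\cdot)$ (Proposition~\ref{prop:finite-time-concentration-scale}). The window bounds \eqref{eq:d4-window-l2}--\eqref{eq:d4-window-linfty} then give $\ell^2$-norm squared at most $C(1+\log(t/m))=C(1+\log\log t)$ and $\ell^\infty$-norm at most $C/m$. Lemma~\ref{lem:weighted-exp-conc}(b) yields exactly the claimed tail for this term around its mean.

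\emph{Second term.} This is the main obstacle. By Theorem~\ref{thm:critical-toppling-d4} and the first-term concentration, $P^mu_s(y)\geq \tfrac12\E u_s(0)\geq c\log t$ with overwhelming probability, uniformly over $y$ in a box of radius $t$. Meanwhile the final-window membrane fields $V_r(y)$, $r\leq m$, have variance $\asymp\log m$. On this good event, stopping inside the window pays only when some $-V_r(y)$ with $r\leq m$ exceeds $P^{m-r}u_s(y)$, which is of size $c\log t$. Rather than controlling $\mathcal D$ directly, I would treat $F\coloneqq u_t(x)-P^mu_s(x)=V_m(x)+\mathcal D(x)$ as a stopping value. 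It is a supremum of linear functionals of the scenery with coefficients bounded by $g_m(x,\cdot)$, plus the nonrandom-looking lift from $P^mu_s$. Its derivative in $\zeta(z)$ is at most $g_t(x,z)$, and the dependence on scenery through $u_s$ has already been accounted for. To get the right scale, I would apply the Lipschitz concentration to $u_t(x)-V_t(x)$ directly, using Lemma~\ref{lem:odometer-derivative}: the derivative of $u_t-V_t$ in $\zeta(z)$ equals $-\mathbf E_x\sum_{j<t}\one_{\{X_j=z,\tau_t^*\leq j\}}$.

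I would show that $\tau_t^*\geq m$ except on an event of probability $e^{-c\log t}$-small. This holds because $u_{t-i}(X_i)>0$ for $i<m$ whenever all $u_r(y)\geq c\log t$ for $r\geq s$, which the pointwise concentration and a union bound give with polynomially small failure. On that event the derivative is bounded by $\sum_{k\geq m}p_k(x,z)$, whose $\ell^2$-norm squared is $C(1+\log\log t)$. The failure event is handled by a truncation argument: modify the functional outside the good event, using the tail in Lemma~\ref{lem:d4-difference-tail} to bound the contribution of the bad set in the exponential moment, and then use a martingale-difference (Bernstein-type) version of Lemma~\ref{lem:weighted-exp-conc} with random but controlled Lipschitz constants. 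If that truncation is too delicate, the fallback is to bound $\mathcal D$ by $\Xi$-type maxima over the window together with the event $\{\min P^{\cdot}u_s\geq c\log t\}$, and to absorb the failure probability polynomially, since it is much smaller than the target tail for $\lambda\lesssim\log t$. For $\lambda\gtrsim\log t$, the crude tails of Lemma~\ref{lem:d4-difference-tail} and \eqref{eq:d4-odometer-concentration} already suffice.
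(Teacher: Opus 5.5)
Your decomposition matches the paper's: with window length $n=m$, your $\mathcal D$ is exactly $\sum_{k<m}P^kr_{t-1-k}$ with $r_j=(-\zeta-Pu_j)_+$. Your first-term estimate is correct, and so is your treatment of $\lambda\gtrsim\log t$ via \eqref{eq:d4-odometer-concentration} and the analogous bound for $V_t$.

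\textbf{The gap is the second term.} You claim that $\tau_t^*\geq m$ for every walk path, outside a scenery event of probability $t^{-c}$, because all relevant $u_r(y)\geq c\log t$ by pointwise concentration and a union bound. This does not hold up.
\begin{itemize}
\item At level $\tfrac12\E u_r(0)\asymp\log t$, \eqref{eq:d4-odometer-concentration} gives only a per-site failure probability $Ct^{-c'}$. The exponent $c'$ is small and uncontrolled, since it inherits the constant in $\E u_t(0)\geq c\log t$.
\item The space-time region a walk can reach in $m$ steps has of order $m^5$ points, and still of order $m^3$ in the diffusive range. A union bound against $t^{-c'}$ therefore gives nothing.
\item Nothing in the available estimates makes zeros of $u_r$ that rare. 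Zeros occur exactly where reflection occurs, and the mean reflected increments $\E r_j(0)$ add up to $\E u_t(0)\asymp\log t$ over $t$ steps.
\end{itemize}
So $\tau_t^*\geq m$ holds for most walk paths, but not pathwise on a high-probability scenery event. The Lipschitz bound $\sum_{k\geq m}p_k(x,\cdot)$ for $u_t-V_t$ is therefore unavailable. The concentration inequality with random Lipschitz constants that you would then need is also not supplied.

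Your uniform lower bound is correct for the smoothed field $P^mu_s$. However, the stopping decisions inside the window compare against $P^ru_s$ for every $1\leq r\leq m$, and your argument for $\tau_t^*\geq m$ uses the unsmoothed values $u_{t-i}(X_i)$. The fallback fails for the same reason: the maximum of $-V_r(y)$ over polynomially many space-time points is itself of order $\log t$, with a constant you cannot compare with $c$.

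\textbf{The missing idea} is that you need only smallness of $\mathcal D$, not its concentration. Since $\mathcal D\geq0$ and the increments $\E r_j(0)$ are nonincreasing (Lemma~\ref{lem:reflection-increment}),
\[
\E\mathcal D(0)=\sum_{j=t-m}^{t-1}\E r_j(0)\leq\frac{m}{t-m}\,\E u_{t-m}(0)\leq C\,\frac{m\log t}{t-m}\, ,
\]
so Markov's inequality suffices. With your fixed $m=t/(\log t)^K$, Markov gives only $(\log t)^{1-K}/\lambda$, which is too weak once $\lambda\gg K\log\log t$.

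The paper therefore lets the window depend on the deviation. For $1\leq\lambda\leq\eta\log t$ it takes $m=\lfloor t\exp\{-A(1+\log\log t+\lambda)\}\rfloor$. Markov then gives $Ce^{-c\lambda}$. By \eqref{eq:d4-window-l2}, the first term's variance proxy grows only to $C(1+\log\log t+\lambda)$, so its tail $\exp\{-c\lambda^2/(1+\log\log t+\lambda)\}$ is still of the required form.
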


		\begin{proof}
			For Steps~1--3, let $n$ be an integer with $2\leq t-n<t$, and fix
			$x\in\Z^4$. In Step~4 we choose $n$ as a function of the deviation
			size.
			Write
			\[
				r_s\coloneqq(-\zeta-Pu_s)_+\, .
			\]

			\emph{Step 1.}
			We observe the decomposition 
			\begin{equation}\label{eq:d4-proof-two-terms}
			\begin{aligned}
				u_t(x)-\E u_t(0)-V_t(x)
				&=
				P^n\bigl(u_{t-n}-\E u_{t-n}(0)-V_{t-n}\bigr)(x)\\
				&\quad+
				\sum_{k=0}^{n-1}P^k
				\left(r_{t-1-k}-\E r_{t-1-k}(0)\right)(x)\, .
			\end{aligned}
			\end{equation}
			Indeed, since $a_+=a+(-a)_+$, Lemma~\ref{lem:recursion} gives
			$u_{s+1}=\zeta+Pu_s+r_s$. Iterating this recursion backwards,
			taking expectations, and using
			$V_t=\sum_{k=0}^{n-1}P^k\zeta+P^nV_{t-n}$ gives
			\eqref{eq:d4-proof-two-terms}.

			\emph{Step 2.}
			We prove that for every $u\geq0$,
			\begin{equation}\label{eq:d4-smoothed-difference-tail}
				\P\left(
				\left|
				P^n\bigl(u_{t-n}-\E u_{t-n}(0)-V_{t-n}\bigr)(x)
				\right|>u
				\right)
				\leq
				C\exp\left\{
				-c\min\left(
				\frac{u^2}{1+\log((t+2)/(n+2))},un
				\right)
				\right\}\, .
			\end{equation}
			If one coordinate $\zeta(z)$ is changed by $h$, then
			Lemma~\ref{lem:difference-representation} shows that
			$u_{t-n}(y)-V_{t-n}(y)$ changes by at most
			\[
				|h|\sum_{j=0}^{t-n-1}p_j(y,z)\, .
			\]
			After averaging with $p_n(x,y)$, the coordinate Lipschitz coefficient
			is
			\[
				\sum_{y\in\Z^4}p_n(x,y)\sum_{j=0}^{t-n-1}p_j(y,z)
				=
				\sum_{k=n}^{t-1}p_k(x,z)\, .
			\]
			This averaged term has mean zero. Lemma~\ref{lem:weighted-exp-conc}
			and
			\eqref{eq:d4-window-l2}--\eqref{eq:d4-window-linfty} give
			\eqref{eq:d4-smoothed-difference-tail}, uniformly in $x$.

			\emph{Step 3.}
			We prove that, for every $u>0$,
			\begin{equation}\label{eq:d4-reflection-window-centered-tail}
			\begin{split}
				\P\biggl(\biggl|\sum_{k=0}^{n-1}P^k
				\bigl(r_{t-1-k}-\E r_{t-1-k}(0)\bigr)(x)\biggr|>u\biggr)
				&\leq
				C\min\Bigg\{
				\frac{n\log(t+2)}{(t-n)u}\, ,\\
				&\qquad
				\exp\left[-c\left(
				u-C\log(t+2)\left(1+\frac{n}{t-n}\right)
				\right)_+\right]
				\Bigg\}\, .
			\end{split}
			\end{equation}
			Stationarity and $P^k1=1$ give
			\[
				\E\sum_{k=0}^{n-1}P^kr_{t-1-k}(x)
				=
				\sum_{j=t-n}^{t-1}\E r_j(0)\, .
			\]
			Taking expectations at the origin in $u_{m+1}=\zeta+Pu_m+r_m$, and
			using $\E\zeta(0)=0$ and stationarity, gives
			$\E r_m(0)=\E u_{m+1}(0)-\E u_m(0)$. Hence the sum in the above
			display is $\E u_t(0)-\E u_{t-n}(0)$.
			Since $u_j$ is nondecreasing in $j$, the sequence $\E r_j(0)$ is
			nonincreasing. Hence
			\begin{equation}\label{eq:d4-reflection-window-mean-bound}
				\E\sum_{k=0}^{n-1}P^kr_{t-1-k}(x)
				=
				\sum_{j=t-n}^{t-1}\E r_j(0)
				\leq
				\frac{n}{t-n}\sum_{j=0}^{t-n-1}\E r_j(0)
				=
				\frac{n}{t-n}\E u_{t-n}(0)
				\leq
				C \frac{n\log(t+2)}{t-n} \, .
			\end{equation}
			Markov's inequality and the above display give the first inequality in
			\eqref{eq:d4-reflection-window-centered-tail}.
			Turning to the second inequality, since
			\[
				u_t-V_t=P^n(u_{t-n}-V_{t-n})+\sum_{k=0}^{n-1}P^kr_{t-1-k}
			\]
			and $u_{t-n}-V_{t-n}\geq0$, we have
			\[
				0\leq
				\sum_{k=0}^{n-1}P^kr_{t-1-k}(x)
				\leq u_t(x)-V_t(x)\, .
			\]
			Therefore, using \eqref{eq:d4-reflection-window-mean-bound}, for every $u\geq C\log(t+2)(1+n/(t-n))$, 
			\[
				\left|
				\sum_{k=0}^{n-1}P^k
				\bigl(r_{t-1-k}-\E r_{t-1-k}(0)\bigr)(x)
				\right|>u
				\quad\Longrightarrow\quad
				\sum_{k=0}^{n-1}P^kr_{t-1-k}(x)>\frac u2 \, . 
			\]
			By \eqref{eq:d4-difference-tail}, this gives the second inequality in
			\eqref{eq:d4-reflection-window-centered-tail}.

			\emph{Step 4.}
			The bounds above hold for every admissible $n$. We now choose $n$
			as a function of $\lambda$. Let
			\[
				\ell\coloneqq1+\log\log t\, .
			\]
			Increasing $C$ handles $0\leq\lambda<1$ and bounded $t$, so assume
			$\lambda\geq1$ and $t$ is large. Choose $A$ large and then choose
			$\eta>0$ so small that $A\eta<1/2$.

			If $1\leq\lambda\leq \eta\log t$, set
			\[
				n\coloneqq\left\lfloor t\exp\{-A(\ell+\lambda)\}\right\rfloor\, .
			\]
			Then $2\leq n<t/2$ and
			\[
				1+\log\frac{t+2}{n+2}\leq C(\ell+\lambda)\, .
			\]
			By \eqref{eq:d4-smoothed-difference-tail}, the first summand in
			\eqref{eq:d4-proof-two-terms} is larger than $\lambda/2$ with
			probability at most
			\[
				C\exp\left\{
				-c\frac{\lambda^2}{\ell+\lambda}
				\right\}\, .
			\]
			Since $n/(t-n)\leq C\exp\{-A(\ell+\lambda)\}$, the probability that the second summand in \eqref{eq:d4-proof-two-terms} exceeds $\lambda/2$ is at most $Ce^{-c\lambda}$ by the Markov bound in
			\eqref{eq:d4-reflection-window-centered-tail}. Since
			$\lambda^2/(\ell+\lambda)$ is comparable to
			$\min\{\lambda^2/\ell,\lambda\}$, \eqref{eq:d4-proof-two-terms} proves the claim in this range.
 
			If $\lambda>\eta\log t$, set $n=1$. Then
			\eqref{eq:d4-smoothed-difference-tail} gives $Ce^{-c\lambda}$ for the
			first summand in \eqref{eq:d4-proof-two-terms}. For the second
			summand, apply \eqref{eq:d4-reflection-window-centered-tail} with $n=1$ and $u=\lambda/2$. When $\lambda\leq B\log t$, the first term in the minimum is at most $Ce^{-c\lambda}$, and when $\lambda>B\log t$, the second term has the same bound, provided $B$ is large enough. This proves
			the claimed bound for all $\lambda$.
		\end{proof}

		At one lattice point, Proposition~\ref{prop:d4-pointwise-linearization}
		makes $u_t(0)-\E u_t(0)-V_t(0)$ negligible on the scale
		$\sqrt{\log t}$, so it remains only to compute the variance and limit
		law of $V_t(0)$.

		\begin{proposition}[Gaussian fluctuation at a point]\label{prop:d4-one-point-gaussian}
			As $t\to\infty$,
			\[
				\frac{u_t(0)-\E u_t(0)}{\sqrt{\log t}}
				\Longrightarrow
				N\left(0,\frac{4\Var(\zeta(0))}{\pi^2}\right)\, ,
			\]
			and
			\[
				\frac{\Var(u_t(0))}{\log t}
				\longrightarrow
				\frac{4\Var(\zeta(0))}{\pi^2}\, .
			\]
		\end{proposition}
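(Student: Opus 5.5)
We will write $u_t(0)-\E u_t(0)=V_t(0)+D_t$, where $D_t\coloneqq u_t(0)-\E u_t(0)-V_t(0)$. The plan is to show that $V_t(0)/\sqrt{\log t}$ is asymptotically normal with the stated variance, and that $D_t/\sqrt{\log t}\to0$ in $L^2$. Slutsky's lemma then gives the distributional limit. The variance limit follows from the $L^2$ bound together with the Cauchy--Schwarz inequality applied to the cross term.

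\emph{Variance of the membrane field.} We have $\Var(V_t(0))=\Var(\zeta(0))\sum_y g_t(0,y)^2$. Expanding the square and using Chapman--Kolmogorov and symmetry gives
\[
\sum_y g_t(0,y)^2=\sum_{a,b=0}^{t-1}p_{a+b}(0,0).
\]
Lemma~\ref{lem:d4-double-heat-kernel} with $x=y=0$ evaluates this sum as $\frac4{\pi^2}\log t+O(1)$. Hence $\Var(V_t(0))/\log t\to 4\Var(\zeta(0))/\pi^2$.

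\emph{Central limit theorem for $V_t(0)$.} Write $V_t(0)=\sum_y g_t(0,y)\zeta(y)$, a weighted sum of i.i.d.\ mean-zero variables. By \eqref{eq:d4-full-window-bounds} the weights are bounded, $\sup_y g_t(0,y)\leq C$, while their squared $\ell^2$ norm is of order $\log t\to\infty$. The maximal normalized weight is therefore at most $C/\sqrt{\log t}\to0$. The Lindeberg condition then reduces to uniform integrability of $\zeta(0)^2$, since $\E[\zeta^2\one_{\{|\zeta|>\varepsilon\sqrt{\log t}/C\}}]\to0$. So the Lindeberg--Feller theorem gives $V_t(0)/\sqrt{\Var V_t(0)}\Rightarrow N(0,1)$.

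\emph{The remainder $D_t$.} This is the only place where the nonlinearity enters, and it is handled entirely by Proposition~\ref{prop:d4-pointwise-linearization}. Integrating its tail bound gives
\[
\E D_t^2\leq C\int_0^\infty \lambda\, e^{-c\min(\lambda^2/(1+\log\log t),\lambda)}\,d\lambda\leq C(1+\log\log t).
\]
This is $o(\log t)$, which proves $D_t/\sqrt{\log t}\to0$ in $L^2$.

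\emph{Assembling the variance.} Expand $\Var(u_t(0))=\Var(V_t(0))+2\Cov(V_t(0),D_t)+\Var(D_t)$. By Cauchy--Schwarz the cross term is $O(\sqrt{\log t\,\log\log t})=o(\log t)$, and $\Var(D_t)=O(\log\log t)$. Hence $\Var(u_t(0))/\log t\to4\Var(\zeta(0))/\pi^2$.

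The main obstacle is already absorbed by Proposition~\ref{prop:d4-pointwise-linearization}. The only remaining care points are applying the heat-kernel lemma correctly at $x=y=0$ and checking that Lindeberg holds under just finite variance, which it does because the weights are bounded.
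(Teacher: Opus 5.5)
Your proposal is correct and follows the paper's proof: the variance of $V_t(0)$ via Chapman--Kolmogorov and the local CLT (equivalently Lemma~\ref{lem:d4-double-heat-kernel} at $x=y=0$), Lindeberg--Feller using the bounded weights $g_t(0,\cdot)$, and Proposition~\ref{prop:d4-pointwise-linearization} to make $u_t(0)-\E u_t(0)-V_t(0)$ negligible. You additionally write out the steps the paper leaves implicit: the $L^2$ integration of the tail bound giving $O(1+\log\log t)$, Slutsky's lemma, and the Cauchy--Schwarz bound on the cross term.
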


		\begin{proof}
			By the Chapman--Kolmogorov identity and the local central limit theorem
			\citep[Theorem~2.1.3]{LawlerLimic},
			\[
				\Var(V_t(0))
				=
				\Var(\zeta(0))\sum_{a<t}\sum_{b<t}p_{a+b}(0,0)
				=
				\left(\frac{4\Var(\zeta(0))}{\pi^2}+o(1)\right)\log t\, .
			\]
			Since $\max_{y\in\Z^4} g_t(0,y)=O(1)$ while
			$\sum_{y\in\Z^4} g_t(0,y)^2\asymp\log t$, the Lindeberg--Feller theorem gives
			\[
				\frac{V_t(0)}{\sqrt{\log t}}
				\Longrightarrow
				N\left(0,\frac{4\Var(\zeta(0))}{\pi^2}\right)\, .
			\]
			Proposition~\ref{prop:d4-pointwise-linearization} then implies the two claims.
		\end{proof}

	\subsection{Scaling limit in dimension four}
	\label{ssec:d4-spatial-scaling}

		In dimension four, the macroscopic fluctuations of the odometer are those
		of the membrane field: on large spatial scales $u_t-\E u_t(0)$ is the
		membrane field $V_t$ plus a lower-order correction from the reflection at
		zero, and identifying the scaling limit means removing this correction.
		To show that this correction vanishes, we introduce an intermediate time scale
        that is much larger than $R^2$ but much smaller than $t$. Such a scale is
        unavailable when $t\asymp R^2$; in that regime we prove tightness in every
        negative Sobolev space but do not identify the limit. Instead we prove a weaker, superdiffusive scaling limit: at times $t=\lfloor R^\alpha\rfloor$ with $\alpha>2$ the intermediate scale is available, and after subtracting a smooth spatial average, $u_t-\E u_t(0)$ converges to the four-dimensional continuum membrane model $\mathcal G_4$, modulo constants. The diffusive limit remains open (Problem~\ref{prob:d4-diffusive}).

		\begin{proposition}[Diffusive tightness in dimension four]
		\label{prop:d4-diffusive-tightness}
			Suppose that $(\zeta(x))_{x\in\Z^4}$ are i.i.d.\ mean-zero variables
			with variance $\nu^2\in(0,\infty)$. For every $T>0$ and $s>0$, the
			fields
			$\bigl(u_{\lfloor TR^2\rfloor}-\E u_{\lfloor TR^2\rfloor}(0)\bigr)^{(R)}$
			for $R\geq1$ are tight in $H^{-s}_{\rm loc}(\R^4)$.
		\end{proposition}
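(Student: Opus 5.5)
The plan is to apply Lemma~\ref{lem:sobolev-tightness} with $\beta=0^+$. That lemma requires $0<\beta<d$ and gives tightness in $H^{-s}_{\rm loc}$ for $s>\beta/2$. The covariance of the centered odometer is only logarithmic, so it will not satisfy a pure power bound with $\beta=0$ directly. I will therefore trade the logarithm for an arbitrarily small power. Fix $s>0$ and choose $\beta\in(0,\min\{2s,4\})$. Set $t\coloneqq\lfloor TR^2\rfloor$ and
\[
F_R(x)\coloneqq R^{-\beta/2}\bigl(u_t(x)-\E u_t(0)\bigr).
\]
Then $R^{\beta/2}F_R^{(R)}$ is exactly the field in the statement. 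By stationarity $F_R$ has mean zero, so it remains to verify $|\Cov(F_R(x),F_R(y))|\leq K(1+|x-y|)^{-\beta}$ uniformly in $R\geq1$.

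For the covariance I will use \eqref{eq:odometer-covariance-bound} from Proposition~\ref{prop:finite-time-concentration-scale}, which needs only finite variance, matching the hypotheses here. It gives
\[
0\leq\Cov(u_t(x),u_t(y))\leq 2\nu^2\sum_z g_t(x,z)g_t(y,z)=2\nu^2\sum_{a,b<t}p_{a+b}(x,y).
\]
When $|x-y|^2\leq t$, Lemma~\ref{lem:d4-double-heat-kernel} bounds this by $C\log\bigl(t/(1+|x-y|^2)\bigr)+C$. When $|x-y|^2>t$, the Gaussian bound \eqref{eq:rw-gaussian-upper} makes the double sum $O(1)$: the summands with $a+b\lesssim|x-y|^2$ are Gaussian-suppressed, and the multiplicity-weighted sum $\sum_s (s+1)s^{-2}e^{-c|x-y|^2/s}$ over $s<2t$ is bounded. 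So in all cases
\[
\Cov\leq C\bigl(1+\log_+(TR^2/(1+|x-y|^2))\bigr).
\]

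It then remains to absorb the logarithm. Write $r=1+|x-y|$. If $r\geq R$, the bound is at most $C(1+\log_+ T)$, and since $R^{-\beta}\leq r^{-\beta}$ we get $\Cov(F_R(x),F_R(y))\leq C_T r^{-\beta}$. If $r<R$, the bound is at most $C_T(1+\log(R/r))$. Using $1+\log y\leq C_\beta y^{\beta}$ for $y\geq1$, this gives
\[
R^{-\beta}\bigl(1+\log(R/r)\bigr)\leq C_\beta R^{-\beta}(R/r)^{\beta}=C_\beta r^{-\beta}.
\]
This verifies the hypothesis of Lemma~\ref{lem:sobolev-tightness} with a constant $K=K(T,\beta,\nu)$, and since $s>\beta/2$ tightness in $H^{-s}_{\rm loc}(\R^4)$ follows.

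The only delicate point is the off-diagonal regime $|x-y|^2>t$. Lemma~\ref{lem:d4-double-heat-kernel} is stated only for $|x-y|^2\leq t$, so I will need to check that the Gaussian tail gives a uniform $O(1)$ there; this is the step I expect to require the most care, although it is routine. Apart from that, the argument uses only the covariance bound, so no exponential moments and no linearization via Proposition~\ref{prop:d4-pointwise-linearization} are needed.
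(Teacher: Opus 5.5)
Your strategy is the paper's: bound the covariance through \eqref{eq:odometer-covariance-bound} by $2\nu^2\sum_{a,b<t}p_{a+b}(x,y)$, trade the logarithm for a small power, and apply Lemma~\ref{lem:sobolev-tightness} to $R^{-\beta/2}(u_t-\E u_t(0))$. However, the far-off-diagonal case fails as written. When $r=1+|x-y|\geq R$ one has $R^{-\beta}\geq r^{-\beta}$, not $\leq$. So your bound $R^{-\beta}\,C(1+\log_+T)$ does not give $C_Tr^{-\beta}$.

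This is not a slip that a constant can absorb. Your intermediate estimate $\Cov(u_t(x),u_t(y))\leq C\bigl(1+\log_+(TR^2/(1+|x-y|^2))\bigr)$ does not decay in $|x-y|$ at all: take $R=1$ and let $|x-y|\to\infty$. It therefore cannot verify a hypothesis that demands $(1+|x-y|)^{-\beta}$ decay for all $x,y$, uniformly in $R$. The loss happens exactly at the step you flagged. In the regime $|x-y|^2>t$ you reduced the double sum to $O(1)$, which discards the Gaussian factor you need there.

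The repair is to keep that factor. For $|x-y|^2\geq t\geq1$, write $e^{-c|x-y|^2/s}=e^{-c|x-y|^2/(2s)}\,e^{-c|x-y|^2/(2s)}$ and use $s<2t$. This gives $\sum_{a,b<t}p_{a+b}(x,y)\leq Ce^{-c|x-y|^2/t}$. Then $e^{-q}\leq C_\beta q^{-\beta/2}$ yields $R^{-\beta}\Cov(u_t(x),u_t(y))\leq C_{T,\beta}(1+|x-y|)^{-\beta}$ in that regime. In the complementary regime $|x-y|^2<t$, one has $r\leq(1+\sqrt T)R$, so your logarithmic argument goes through.

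The paper avoids the case split and Lemma~\ref{lem:d4-double-heat-kernel} entirely. It inserts $e^{-q}\leq C(\varepsilon)q^{-\varepsilon}$ directly into the Gaussian bound \eqref{eq:rw-gaussian-upper}, which gives $\sum_{a,b<t}p_{a+b}(x,y)\leq C(\varepsilon)\bigl(t/(1+|x-y|^2)\bigr)^{\varepsilon}$ for all $x,y$. After multiplying by $R^{-2\varepsilon}$, this is exactly the hypothesis of Lemma~\ref{lem:sobolev-tightness} with $\beta=2\varepsilon$.

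Alternatively, only $|x-y|\leq C_DR$ matters for tightness on a bounded domain $D$. You could therefore apply the lemma to the field truncated to a neighbourhood of $RD$, where your logarithmic bound does suffice. That truncation step is also missing from your argument, though.
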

		\begin{proof}
			Equation \eqref{eq:odometer-covariance-bound} and the
			Chapman--Kolmogorov identity give
			\begin{equation}\label{eq:d4-finite-covariance-bound}
				0\leq\Cov(u_t(x),u_t(y))
				\leq2\nu^2\sum_{z\in\Z^4}g_t(x,z)g_t(y,z)
				=2\nu^2\sum_{a,b=0}^{t-1}p_{a+b}(x,y)\, .
			\end{equation}
			By \eqref{eq:rw-gaussian-upper} and the bound
			$e^{-q}\leq C(\varepsilon) q^{-\varepsilon}$ for $q>0$, for every
			$0<\varepsilon\leq1$,
			\begin{equation}\label{eq:d4-covariance-decay}
				\sum_{a,b=0}^{t-1}p_{a+b}(x,y)
				\leq C(\varepsilon)
				\Bigl(\frac t{1+|x-y|^2}\Bigr)^{\varepsilon}\, .
			\end{equation}
			Fix $s>0$ and $0<\varepsilon<\min\{s,1\}$. With
			$t=\lfloor TR^2\rfloor$, equations
			\eqref{eq:d4-finite-covariance-bound} and
			\eqref{eq:d4-covariance-decay} give
			\[
				\bigl|\Cov\bigl(R^{-\varepsilon}u_t(x),
				R^{-\varepsilon}u_t(y)\bigr)\bigr|
				\leq C(\varepsilon)T^{\varepsilon}
				(1+|x-y|)^{-2\varepsilon}\, ,
			\]
			so Lemma~\ref{lem:sobolev-tightness}, applied with
			$\beta=2\varepsilon$ to the fields
			$R^{-\varepsilon}\bigl(u_t-\E u_t(0)\bigr)$, proves the claim.
		\end{proof}

		\begin{proposition}[Superdiffusive membrane limit in dimension four]
		\label{prop:d4-superdiffusive-limit}
			Fix a bounded smooth domain $D\subset\R^4$, a density $\omega\in C_c^\infty(D)$ satisfying $\omega\geq0$ and
			$\int_D\omega(x)dx=1$, and $\alpha>2$. Then, for every $s>0$, as $R\to\infty$,
			\[
				\left[\bigl(u_{\lfloor R^\alpha\rfloor}-\E u_{\lfloor R^\alpha\rfloor}(0)\bigr)^{(R)}\right]^\omega\Longrightarrow\mathcal G_4^\omega\qquad\text{in }H^{-s}(D)\, .
			\]
			Here $\mathcal G_4$ is the four-dimensional continuum membrane model of Subsection~\ref{ssec:continuum-membrane-fields} and the superscript $\omega$ is defined in \eqref{eq:d4-omega-representative}.
		\end{proposition}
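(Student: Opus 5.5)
The idea is to run the recursion backward over an intermediate window, as in the decomposition \eqref{eq:d4-proof-two-terms} from the proof of Proposition~\ref{prop:d4-pointwise-linearization}. Set $t=\lfloor R^\alpha\rfloor$ and choose an intermediate time $n=\lfloor R^\beta\rfloor$ with $2<\beta<\alpha$, so that $R^2\ll n\ll t$. By \eqref{eq:d4-proof-two-terms},
\[
	u_t-\E u_t(0)=V_n+P^n\bigl(u_{t-n}-\E u_{t-n}(0)\bigr)+\sum_{k=0}^{n-1}P^k\bigl(r_{t-1-k}-\E r_{t-1-k}(0)\bigr)\, ,
\]
where I have used $V_t=V_n+P^nV_{t-n}$ to regroup the terms. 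I will show three things after the $\omega$-recentring and rescaling: the first term converges to $\mathcal G_4^\omega$; the second is almost constant on $RD$, so its $\omega$-representative vanishes; and the third vanishes. Tightness in $H^{-s}(D)$ comes from Proposition~\ref{prop:d4-diffusive-tightness}-type covariance bounds. The argument in Lemma~\ref{lem:sobolev-tightness} applies verbatim at time $t$ as well, since \eqref{eq:d4-covariance-decay} gives correlations $(t/|x-y|^2)^\varepsilon$, which rescale to $R^{(\alpha-2)\varepsilon}$. Because the $\omega$-recentring kills constants, I would instead bound covariances of increments $u(x)-u(y)$. So it suffices to prove convergence of $F^\omega(\varphi)$ for each fixed test function $\varphi$, which reduces to test functions $\psi=\varphi-\omega\int\varphi$ with integral zero.

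\emph{Membrane term.} $V_n^{(R)}(\psi)=\sum_z\bigl(\sum_x R^{-4}\psi(x/R)g_n(x,z)\bigr)\zeta(z)$ is linear in the scenery. Its variance, by Fourier analysis with symbol $\sum_{j<n}\lambda(\theta)^j=(1-\lambda^n)/(1-\lambda)$ and $1-\lambda(\theta)\approx|\theta|^2/8$, converges to $\Var(\zeta(0))(2\pi)^{-4}\int|\widehat\psi|^2(8/|\xi|^2)^2d\xi$. This uses $n/R^2\to\infty$ and $\widehat\psi(\xi)=O(|\xi|)$, which makes the low-frequency region integrable. The maximal coefficient tends to zero, so Lindeberg--Feller together with Cram\'er--Wold gives convergence to $\mathcal G_4^\omega(\varphi)$, jointly over finitely many test functions.

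\emph{Early term.} Lipschitz coefficients of $P^n(u_{t-n}-V_{t-n})$ in $\zeta(z)$ are bounded by $\sum_{k=n}^{t-1}p_k(x,z)$, as in Step~2 of Proposition~\ref{prop:d4-pointwise-linearization}. For the difference at $x$ and $y$ with $|x-y|\leq CR$, the gradient bound \eqref{eq:rw-tv-gradient} bounds the coefficients by $\sum_{k\geq n}|p_k(x,z)-p_k(y,z)|$. Its squared $\ell^2$-norm is $O(R^2/n)$, which tends to zero because $\beta>2$. The term $P^nV_{t-n}$ is handled the same way, and so is $P^n u_{t-n}$ via Efron--Stein. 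Hence $\E$ of the squared increment over $RD$ is $o(1)$. Pairing with $\psi$ of integral zero then gives $L^2$-convergence to zero.

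\emph{Reflection term.} This is the step I expect to be the main obstacle. The term is nonnegative, and by \eqref{eq:d4-reflection-window-mean-bound} its mean is at most $Cn\log t/(t-n)\to0$. I would apply Lemma~\ref{lem:d4-difference-tail} and use this mean bound in $L^1$: $\E\bigl|\sum_k P^k(r-\E r)(x)\bigr|\leq 2Cn\log t/t$, uniformly in $x$. Then $\E|(\cdot)^{(R)}(\psi)|\leq\|\psi\|_1\cdot o(1)$, which suffices for convergence in probability of the pairings. To upgrade this to convergence in $H^{-s}(D)$, I would combine pairing convergence with the tightness above. The delicate point is that tightness of the reflection term itself needs more than an $L^1$ bound. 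I would avoid it by establishing tightness of the full left-hand side directly, and of the membrane and early terms separately. The reflection term is then the difference of tight sequences, and its pairings tend to zero; since $C_c^\infty$ is dense in $H^s_0(D)$, it therefore tends to zero in $H^{-s}(D)$.
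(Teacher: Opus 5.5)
Your backward decomposition over a window $R^2\ll n\ll t$ is the one the paper uses. The paper takes $n_R=\lfloor R\sqrt{t_R}\rfloor$ and keeps all of $V_t$ in the membrane part, which is equivalent, and your treatment of the linear term is fine.

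\textbf{Main gap: the reflection term.} Set $S\coloneqq\sum_{k<n}P^kr_{t-1-k}\geq0$. Your $L^1$ bound controls pairings. By itself it gives convergence in $H^{-s}(D)$ only for $s>2$, where $H^s_0(D)\hookrightarrow C(\overline D)$. For $0<s\leq2$ you fall back on tightness of the full left-hand side modulo constants, and that is not established.
\begin{itemize}
\item At $t=\lfloor R^\alpha\rfloor$, \eqref{eq:d4-covariance-decay} rescales to a bound growing like $R^{(\alpha-2)\varepsilon}$. Lemma~\ref{lem:sobolev-tightness} then only yields tightness after dividing by a diverging power of $R$.
\item Nothing in the paper controls covariances of increments of the nonlinear odometer. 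The bound \eqref{eq:odometer-covariance-bound} is one-sided, so it cannot control $\sum_{x,y}\psi_R(x)\psi_R(y)\Cov(u_t(x),u_t(y))$ for a signed $\psi$ of integral zero.
\item Efron--Stein applied to $u_t(x)-u_t(y)$ only gives coefficients $g_t(x,z)+g_t(y,z)$, with no cancellation, hence a bound of order $\log t$.
\end{itemize}
Any genuine variance bound for $u_t^{(R)}(\psi)$ would go through the same decomposition and need a second-moment bound on $S$, so the detour is circular.

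The missing idea is that $S$ is small in $L^2$, not just in $L^1$. Since $0\leq S\leq u_t-V_t$, split at $A_0\log(t+2)+1$ and use Lemma~\ref{lem:d4-difference-tail}:
\[
	\E S(0)^2\leq\bigl(A_0\log(t+2)+1\bigr)\E S(0)+C(t+2)^{-2}\log^2(t+2)\leq C\frac{n\log^2(t+2)}{t-n}+o(1)\longrightarrow0
\]
for $n=\lfloor R^\beta\rfloor$ with $\beta<\alpha$. Stationarity then gives $\E\|(S-\E S(0))^{(R)}\|_{L^2(D)}^2\to0$. The embedding $L^2(D)\hookrightarrow H^{-s}(D)$ disposes of this term for every $s>0$ with no tightness argument. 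The early term is likewise killed in $L^2(D)$ modulo constants, so only the linear term needs tightness, and there your Fourier bounds suffice.

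\textbf{Smaller issue: parity in the early term.} The gradient bound \eqref{eq:rw-tv-gradient} holds only for same-parity pairs. For $y=x+e_1$ the laws $p_k(x,\cdot)$ and $p_k(y,\cdot)$ have disjoint supports. So the squared $\ell^2$-norm of your coefficient $\sum_{k\geq n}|p_k(x,z)-p_k(y,z)|$ is of order $\log(t/n)$, not $O(R^2/n)$. Your claim that the squared increment is $o(1)$ on all of $RD$ is therefore unjustified.

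Separately, for the nonlinear $P^nu_{t-n}$ the Lipschitz coefficient of an increment is $\sum_w|p_n(x,w)-p_n(y,w)|\,g_{t-n}(w,z)$, not your expression. For same-parity pairs its squared $\ell^2$-norm is $O(|x-y|^2\log t/n)$, which still suffices.

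The paper's remedy works here:
\begin{itemize}
\item Compare $F\coloneqq P^n(u_{t-n}-\E u_{t-n}(0))$ with the two-valued field equal to $F(0)$ on the parity class of $0$ and to $F(e_1)$ on the other.
\item The within-class remainder is $O(R^2\log t/n)$ in mean square.
\item The alternating part, once the constant is removed by the $\omega$-recentring, has $H^{-s}(D)$ norm of order $|F(0)-F(e_1)|\,R^{-\min\{s,1\}}$.
\item By \eqref{eq:d4-window-l2}, $\E|F(0)-F(e_1)|^2\leq C(1+\log(t/n))=O(\log R)$, so this part vanishes too.
\end{itemize}
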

		\begin{proof}
			Write $t_R\coloneqq\lfloor R^\alpha\rfloor$ and
			$E_t\coloneqq u_t-\E u_t(0)-V_t$, so that $u_{t_R}-\E u_{t_R}(0)=V_{t_R}+E_{t_R}$. Step~1 identifies the limit of the membrane part $V_{t_R}$; Steps 2 and 3 show that the correction $E_{t_R}$ vanishes after its $\omega$-average is removed. For the latter, fix the intermediate scale $n_R\coloneqq\lfloor R\sqrt{t_R}\rfloor$. Since $\alpha>2$, this scale satisfies
			\begin{equation}\label{eq:d4-superdiffusive-scale-separation}
				\frac{R^2(1+\log\log t_R)}{n_R}\longrightarrow0,\qquad\frac{n_R\log^2(t_R+2)}{t_R-n_R}\longrightarrow0\, .
			\end{equation}
			Since $2\leq t_R-n_R<t_R$ for large $R$, the identities \eqref{eq:d4-proof-two-terms} and \eqref{eq:d4-reflection-window-mean-bound} from Proposition~\ref{prop:d4-pointwise-linearization} apply at $(t_R,n_R)$; the first splits
			\begin{equation}\label{eq:d4-superdiffusive-decomposition}
				E_{t_R}=P^{n_R}E_{t_R-n_R}+\bigl(S_R-\E S_R(0)\bigr)\, ,
			\end{equation}
			where $S_R\coloneqq\sum_{k=0}^{n_R-1}P^kr_{t_R-1-k}$ with $r_j\coloneqq(-\zeta-Pu_j)_+$. Step 2 shows the first term vanishes, using $n_R\gg R^2$ to smooth it flat; Step 3 shows the second, using $n_R\ll t_R$ to make it small.

			\smallskip
			\noindent\emph{Step 1.} We identify the superdiffusive limit of $V_{t_R}$. Convergence of the discrete membrane field to the continuum membrane field is standard; see for example \citet*[Theorem~2]{CHR} and \citet*[Theorem~3.11]{CDH}. We indicate the argument, the one new point being that the time truncation in $V_{t_R}$ washes out at superdiffusive times.

			For $\varphi\in C_c^\infty(D)$, let $\widetilde\varphi\coloneqq\varphi-\omega\int_D\varphi(x)dx$, so that $\int_D\widetilde\varphi=0$. The Fourier multiplier of $V_t$ is $\kappa_t(\lambda)\coloneqq\sum_{j=0}^{t-1}\lambda^j=(1-\lambda^t)/(1-\lambda)$, and $\lambda(\theta)$ is the symbol of $P$. Because $\alpha>2$, for every $\xi\neq0$,
			\[
				R^{-2}\kappa_{t_R}(\lambda(\xi/R))\longrightarrow\frac8{|\xi|^2}\, ,
			\]
			since $R^2\bigl(1-\lambda(\xi/R)\bigr)\to|\xi|^2/8$ and $\lambda(\xi/R)^{t_R}\to0$; this is the multiplier of $\mathcal G_4$.
			Uniformly for $\xi\in[-\pi R,\pi R]^4\setminus\{0\}$,
			\[
				R^{-2}\left|\kappa_{t_R}(\lambda(\xi/R))\right|\leq C|\xi|^{-2}\, .
			\]
			For the cell averages
			$a_R(x)\coloneqq\int_{R^{-1}(x+[0,1)^4)}\widetilde\varphi(z)\,dz$ and their discrete Fourier transform
			$\widehat a_R(\theta)\coloneqq\sum_xa_R(x)e^{-ix\cdot\theta}$, the identity $\sum_xa_R(x)=0$ and discrete summation by parts give, for every $N\geq1$,
			\[
				\left|\widehat a_R(\xi/R)\right|\leq
				\begin{cases}
					C|\xi|,&|\xi|\leq1,\\
					C_N(1+|\xi|)^{-N},&|\xi|\geq1.
				\end{cases}
			\]
			Thus discrete Parseval and dominated convergence, followed by polarization, give convergence of the covariances of $\bigl((V_{t_R})^{(R)}\bigr)^\omega$ to those of $\mathcal G_4$.
			The logarithmic covariance bound gives tightness in $H^{-s}(D)$. Then the Lindeberg--Feller theorem and the Cram\'er--Wold device identify the Gaussian limit. Hence
			\begin{equation}\label{eq:d4-superdiffusive-linear-limit}
				\bigl((V_{t_R})^{(R)}\bigr)^\omega\Longrightarrow\mathcal G_4^\omega\qquad\text{in }H^{-s}(D)\, .
			\end{equation}

			\smallskip
			\noindent\emph{Step 2.} We prove that
			\[
				\E\bigl\|\bigl((P^{n_R}E_{t_R-n_R})^{(R)}\bigr)^\omega\bigr\|_{H^{-s}(D)}^2\longrightarrow0\, .
			\]
			Proposition~\ref{prop:d4-pointwise-linearization} gives $\sup_{x\in\Z^4}\E E_{t_R-n_R}(x)^2\leq C(1+\log\log t_R)$.
			Put $F_R\coloneqq P^{n_R}E_{t_R-n_R}$, and let $C_R$ agree with $F_R(0)$ on the parity class of $0$ and with $F_R(e_1)$ on the other parity class. The gradient bound \eqref{eq:rw-tv-gradient} and cancellation between the two parity classes give
			\begin{align*}
				\E\|(F_R-C_R)^{(R)}\|_{L^2(D)}^2
				&\leq C_D(1+\log\log t_R)\frac{R^2}{n_R}\longrightarrow0,\\
				\E\|(C_R^{(R)})^\omega\|_{H^{-s}(D)}^2
				&\leq C_{D,s}(1+\log\log t_R)R^{-2\min\{s,1\}}\longrightarrow0.
			\end{align*}
			These estimates prove the claim.

			\smallskip
			\noindent\emph{Step 3.} We prove that
			\[
				\E\bigl\|\bigl((S_R-\E S_R(0))^{(R)}\bigr)^\omega\bigr\|_{H^{-s}(D)}^2\longrightarrow0\, .
			\]
			Since $n_R\ll t_R$, the mean of $S_R$ is small: \eqref{eq:d4-reflection-window-mean-bound} gives
			\[
				\E S_R(0)\leq C\frac{n_R\log(t_R+2)}{t_R-n_R}\, .
			\]
			For the second moment, $0\leq S_R\leq u_{t_R}-V_{t_R}$, so splitting at $A_0\log(t_R+2)+1$, where $A_0$ is the constant of Lemma~\ref{lem:d4-difference-tail}, gives
			\[
				\E S_R(0)^2
				\leq
				\bigl(A_0\log(t_R+2)+1\bigr)\E S_R(0)
				+\E\left[
				(u_{t_R}(0)-V_{t_R}(0))^2
				\one_{\{u_{t_R}(0)-V_{t_R}(0)>A_0\log(t_R+2)+1\}}
				\right]\, .
			\]
			The tail term is bounded by Lemma~\ref{lem:d4-difference-tail} and the first by the mean bound, so
			\begin{align*}
				\E S_R(0)^2
				&\leq
				C\log(t_R+2)\E S_R(0)
				+C(t_R+2)^{-2}\log^2(t_R+2)\\
				&\leq
				C\frac{n_R\log^2(t_R+2)}{t_R-n_R}
				+C(t_R+2)^{-2}\log^2(t_R+2)
				\longrightarrow0
			\end{align*}
			by \eqref{eq:d4-superdiffusive-scale-separation}. Stationarity and the embedding $L^2(D)\hookrightarrow H^{-s}(D)$ then give the claim.

			With Step~1 and the decomposition \eqref{eq:d4-superdiffusive-decomposition}, this proves the proposition.
		\end{proof}
\subsection{Planar crossings for Green fields in a ball}
\label{ssec:d4-ball-green-crossings}

In dimensions two and three, the percolation proof used a ball event that forces a crossing of a rectangle. Theorem~\ref{thm:d4-ball-green-crossing} is the dimension-four replacement: on a fixed coordinate plane, the Green function in a ball, convolved with the scenery, crosses a planar rectangle of shorter side $r$ above the level $-\varepsilon\log r$, for every $\varepsilon>0$, with polynomially high probability. Using the killed-walk notation from
\eqref{eq:killed-walk-notation}, this field is defined for $z\in\Z^4$ by
\begin{equation}\label{eq:d4-ball-green-field}
	\mathcal B_r(z)\coloneqq
	\sum_{u\in\Z^4}g^{Q(0,r)}(0,u)\zeta(z+u)  \, . 
\end{equation}

The proof splits the Green kernel into a near part and a far part. The near part can create bad sites, but only in small clusters. For the far part, we first prove the crossing statement for Gaussian scenery and then transfer it to the original scenery. To do that transfer, we smooth the crossing event so that Lindeberg's replacement argument can be applied.
Lemma~\ref{lem:d4-soft-bottleneck} gives a smooth quantity that controls when a left-right crossing first appears, with derivative bounds that do not require us to sum over all crossing paths.

We work in the coordinate plane
\begin{equation}\label{eq:d4-coordinate-plane}
\Pi\coloneqq\Z^2\times\{0\}^2\subset\Z^4\, .
\end{equation}
Inside every translate $x+\Pi$ of this plane, $\ast$-connectivity means connectivity in the graph with vertex set $x+\Pi$, in which two distinct vertices $z,w\in x+\Pi$ are $\ast$-adjacent if $|z-w|_\infty=1$. For $\vartheta\geq1$, write
\[
	R_{\vartheta,r}(x)\coloneqq
	\{x+(i,j,0,0):0\leq i\leq\lfloor\vartheta r\rfloor,\ 0\leq j\leq r,\
	i,j\in\Z\}\, .
\]

\begin{lemma}\label{lem:d4-soft-bottleneck}
	Let $Q\subset\Z^2$ be a finite axis-parallel lattice rectangle, and let
	$N=|Q|\geq2$. For $F=(F_z)_{z\in Q}$, set
	\[
		L_Q(F)\coloneqq
		\max_{\Gamma}
		\min_{z\in\Gamma}F_z\, ,
	\]
	where the maximum is over simple paths $\Gamma$ in
	$Q$ from the left to the right. For every
	$\beta\geq1$ there is a $C^\infty$ function $L_{Q,\beta}$ of the
	coordinates $(F_z)_{z\in Q}$ such that, with an absolute constant $C<\infty$, uniformly over~$F$, 
	\begin{equation}\label{eq:d4-soft-bottleneck-approx}
		|L_{Q,\beta}(F)-L_Q(F)|
		\leq C\frac{(\log N)^2}{\beta} \, , 
	\end{equation}
	and, for $k=1,2,3$,
	\begin{equation}\label{eq:d4-soft-bottleneck-derivative}
		\sum_{z_1,\ldots,z_k\in Q}
		|\partial_{z_1}\cdots\partial_{z_k}L_{Q,\beta}(F)|
		\leq C\beta^{k-1}(\log N)^{k-1} \, . 
	\end{equation}
\end{lemma}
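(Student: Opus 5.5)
The plan is to smooth the bottleneck value $L_Q(F)$ through a dynamic-programming representation that has only logarithmic depth. Naive soft-max/soft-min over all paths would give derivative bounds involving $\log(\#\text{paths})\asymp N$, which is too large. Instead I would use the fact that the bottleneck value is a threshold for connectivity:
\[
L_Q(F)=\sup\{\ell:\ \{z:F_z\geq\ell\}\text{ crosses }Q\}.
\]
Connectivity across $Q$ can be computed by repeated squaring of the adjacency relation. Concretely, let $A^{(0)}_{zw}=\min(F_z,F_w)$ for $z=w$ or $z\sim w$, and $-\infty$ otherwise. Define the max-min matrix product
\[
(A\star B)_{zw}=\max_y\min(A_{zy},B_{yw}),
\]
and set $A^{(j+1)}=A^{(j)}\star A^{(j)}$. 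Every simple path has at most $N$ vertices, so after $m=\lceil\log_2N\rceil$ squarings, $A^{(m)}_{zw}$ is the max over paths from $z$ to $w$ of the minimum of $F$ along the path. Then
\[
L_Q=\max_{z\in\text{left},\,w\in\text{right}}A^{(m)}_{zw}.
\]

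Next I would replace each max by $\beta^{-1}\log\sum e^{\beta\,\cdot}$ and each min by $-\beta^{-1}\log\sum e^{-\beta\,\cdot}$. Each smoothing error is at most $\beta^{-1}\log(\text{number of terms})\leq C\beta^{-1}\log N$. Soft-max and soft-min are $1$-Lipschitz in $\ell^\infty$ and monotone, so errors add over the $m\asymp\log N$ levels. This gives total error $C(\log N)^2/\beta$, which is \eqref{eq:d4-soft-bottleneck-approx}. The resulting function is a composition of smooth maps, hence $C^\infty$. To avoid the $-\infty$ entries, I would use a large finite negative constant, or simply omit those terms.

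For the derivative bounds, the key structural fact is that each soft-max/soft-min has gradient equal to a probability vector (nonnegative weights summing to one). Therefore the composed map has gradient given by a product of stochastic "routing" weights. The first derivative $\partial_zL_{Q,\beta}$ is nonnegative, and the derivatives sum to $1$ over $z$: adding a constant to all $F$ shifts $L_{Q,\beta}$ by that constant exactly. This is $k=1$.

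For $k=2,3$, I would differentiate through the computation tree. Each soft operation $\phi$ has Hessian $\beta(\mathrm{diag}(p)-pp^T)$, whose $\ell^1\to\ell^1$ operator norm is at most $2\beta$. Third derivatives are bounded similarly, by $C\beta^2$ in the corresponding multilinear $\ell^1$ sense. By the chain rule (Faà di Bruno), a second derivative of the composition is a sum over which of the $O(\log N)$ levels contributes the Hessian, with stochastic first-order factors elsewhere. Summing absolute values over the input coordinates $z_1,z_2$ should then give $C\beta\log N$. A third derivative involves either one third-order vertex, or two second-order vertices, chosen among $O(\log N)$ levels. The two-vertex case gives $(\beta\log N)^2$, which matches \eqref{eq:d4-soft-bottleneck-derivative}.

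The main obstacle is bookkeeping this $\ell^1$ propagation correctly through the matrix squaring. Each intermediate entry $A^{(j)}_{zw}$ is reused in many entries of $A^{(j+1)}$, so the "mass" of the derivative could potentially multiply. I would handle this by proving, by induction on $j$, that for every fixed output entry the total $\ell^1$ mass of its $k$-th derivative with respect to the previous-level matrix entries is at most $C\beta^{k-1}j^{k-1}$. The step uses that the first-order sensitivities of a single output entry form a probability distribution over previous entries, and multilinear derivative tensors compose with $\ell^1$ norms submultiplicatively. Applied at the final level, where the input $F$ enters $A^{(0)}$ linearly through a soft-min of two coordinates, this yields the stated bounds with an absolute constant.
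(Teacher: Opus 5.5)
Your proposal is correct and follows essentially the same route as the paper: the same doubling recursion $A_{m+1}(a,b)=\max_c\min\{A_m(a,c),A_m(c,b)\}$ for bottlenecks of paths with at most $2^m$ edges, smoothed by soft max and soft min with $O(\log N/\beta)$ error per level over $O(\log N)$ levels, and a per-entry chain-rule induction in which $D_1\le1$ is preserved and $D_2$ grows by $C\beta$ and $D_3$ by $C\beta D_2+C\beta^2$ per level (your inductive bound should be stated for derivatives with respect to $F$, not with respect to the previous-level entries). One correction: of your two ways of handling non-adjacent entries, only omitting them works uniformly in $F$; this means restricting $c$ by $\operatorname{dist}(a,c),\operatorname{dist}(c,b)\le2^m$, as the paper does, whereas a fixed finite negative constant would dominate the maximum as soon as every $F_z$ lies below it.
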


\begin{proof}
	Write $\operatorname{dist}(a,b)$ for graph distance in $Q$. We use the soft maximum and
	minimum, 
	\[
		\operatorname{smax}_\beta(x_1,\ldots,x_q)
		\coloneqq
		\frac1\beta\log\sum_{i=1}^q e^{\beta x_i},
		\qquad
		\operatorname{smin}_\beta(x,y)
		\coloneqq
		-\operatorname{smax}_\beta(-x,-y) \, , 
	\]
	to approximate $\max_{1\leq i\leq q}x_i$ and $\min\{x,y\}$ with errors at
	most $\beta^{-1}\log q$ and $\beta^{-1}\log2$, respectively.

	For $\operatorname{dist}(a,b)\leq2^m$, let $A_m(a,b)$ be the largest value of
	$\min_{z\in\Gamma}F_z$ among simple paths from $a$ to $b$ with at most
	$2^m$ edges. Observe that 
	\[
		A_{m+1}(a,b)=
		\max_{\substack{c\in Q:\\ \operatorname{dist}(a,c)\leq2^m,\ \operatorname{dist}(c,b)\leq2^m}}
		\min\{A_m(a,c),A_m(c,b)\} \, . 
	\]
	Define the smooth approximation~$A_{m,\beta}$ by setting
	$A_{0,\beta}(a,a)\coloneqq F_a$,
	$A_{0,\beta}(a,b)\coloneqq\operatorname{smin}_\beta(F_a,F_b)$ when $a\sim b$, and
	\[
		A_{m+1,\beta}(a,b)\coloneqq
		\operatorname{smax}_\beta\left(
		\operatorname{smin}_\beta(A_{m,\beta}(a,c),A_{m,\beta}(c,b)):
		\operatorname{dist}(a,c)\leq2^m,\ \operatorname{dist}(c,b)\leq2^m\right) \, . 
	\]
	Let $M=\lceil\log_2N\rceil$. Since every simple path has at most $N-1$
	edges,
	\[
		L_Q(F)=
		\max_{\substack{a\text{ on the left side}\\ b\text{ on the right side}}}
		A_M(a,b) \, . 
	\]
	Define
	\[
		L_{Q,\beta}(F)\coloneqq
		\operatorname{smax}_\beta\left(
		A_{M,\beta}(a,b):
		a\text{ on the left side},\ b\text{ on the right side}\right) \, .
	\]
	The soft-minimum and soft-maximum error bounds give, by induction,
	\[
		\sup_F
		\max_{\substack{a,b\in Q\\ \operatorname{dist}(a,b)\leq2^m}}
		|A_{m,\beta}(a,b)-A_m(a,b)|
		\leq C(m+1)\frac{\log N}{\beta}\, .
	\]
	Since $M\leq C\log N$, this proves \eqref{eq:d4-soft-bottleneck-approx}.

	For a scalar function $\Phi$ of the field values, write
	\[
		D_k(\Phi)\coloneqq
		\sup_F
		\sum_{z_1,\ldots,z_k\in Q}
		|\partial_{z_1}\cdots\partial_{z_k}\Phi(F)|\, .
	\]
	The soft maximum and soft minimum have input derivative norms bounded by $1$, $C\beta$, and $C\beta^2$ through orders one, two, and three, uniformly in the number of inputs. By the chain rule, each application of $\operatorname{smin}_\beta$ or $\operatorname{smax}_\beta$ preserves $D_1\leq1$, increases $D_2$ by at most
	$C\beta$, and increases $D_3$ by at most $C\beta D_2+C\beta^2$. The construction uses $O(\log N)$ such applications along every branch, including the final boundary maximum, so \eqref{eq:d4-soft-bottleneck-derivative} follows.
\end{proof}

We now apply this smoothing device to compare the crossing event for a ball-killed Green field with its Gaussian counterpart.

\begin{theorem}[Uniform ball-killed Green crossing estimate]
\label{thm:d4-ball-green-crossing}
Fix $\nu_{0}>0$, $\theta_{0}>0$, $K_{0}<\infty$, and $\vartheta\geq1$. For every
$\varepsilon>0$ there are $\gamma=\gamma(\varepsilon,\vartheta,\nu_{0},\theta_{0},K_{0})>0$, $C=C(\varepsilon,\vartheta,\nu_{0},\theta_{0},K_{0})<\infty$, and $r_0=r_0(\varepsilon,\vartheta,\nu_{0},\theta_{0},K_{0})<\infty$ such that, for every
mean-zero i.i.d.\ law satisfying
\[
    \Var(\zeta(0))\geq\nu_{0}^2\, ,
    \qquad
    \E e^{\theta_{0}|\zeta(0)|}\leq K_{0}\, ,
\]
and every integer $r\geq r_0$,
\[
\sup_{x\in\Z^4}
\P\left(
\begin{array}{c}
\{z\in R_{\vartheta,r}(x):\mathcal B_r(z)\leq-\varepsilon\log r\}\\
\textup{contains a $\ast$-connected top-bottom crossing of }R_{\vartheta,r}(x)
\end{array}
\right)
\leq C(\log r)^3r^{-\gamma}\, .
\]
\end{theorem}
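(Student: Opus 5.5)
Following the outline given before the statement, we split the kernel with a cutoff $L=r^{\delta}$, where $\delta>0$ is small and will be chosen in terms of $\varepsilon$. Let $\phi$ be the $2$-Lipschitz cutoff from \eqref{eq:d4ball-far-cube}. Set $h(u)\coloneqq g^{Q(0,r)}(0,u)\phi(|u|/L)$ and write $\mathcal B_r=\mathcal B^{\rm near}_r+\mathcal B^{\rm far}_r$, where $\mathcal B^{\rm far}_r(z)\coloneqq\sum_u h(u)\zeta(z+u)$. A $\ast$-connected top-bottom crossing of $\{\mathcal B_r\leq-\varepsilon\log r\}$ forces one of two things. Either it meets the set $\{\mathcal B^{\rm near}_r\leq-\tfrac\varepsilon2\log r\}$ in a large $\ast$-cluster, or the set $\{\mathcal B^{\rm far}_r\leq-\tfrac\varepsilon2\log r+\text{(near errors)}\}$ contains the crossing once small bad clusters are bypassed. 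More precisely, we call $z$ near-bad if $|\mathcal B^{\rm near}_r(z)|>\tfrac\varepsilon4\log r$. Off near-bad sites a crossing of the original set is a crossing of $\{\mathcal B^{\rm far}_r\leq-\tfrac{3\varepsilon}4\log r\}$. By planar duality, if that far-field crossing fails, there is a left-right $\ast$-path of $\{\mathcal B^{\rm far}_r>-\tfrac{3\varepsilon}4\log r\}$. We then patch this dual path around near-bad clusters using the fact that these clusters are small.

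\emph{Near part.} By \eqref{eq:d4ball-near} the near kernel has $\ell^2$ norm squared at most $C\log L=C\delta\log r$ and bounded sup norm by \eqref{eq:d4ball-point}. Lemma~\ref{lem:weighted-exp-conc}(b) then gives $\P(z\text{ near-bad})\leq C\exp(-c\varepsilon^2/\delta)\wedge r^{-c\varepsilon/\delta}$, which is at most $r^{-K}$ for any prescribed $K$ once $\delta$ is small. The near field has range $2L$. Hence a near-bad $\ast$-cluster of diameter at least $L'=r^{2\delta}$ contains of order $L'/L$ near-bad sites that are pairwise $2L$-separated, and therefore independent. A union bound over $O(r^2)$ starting points and lattice animals then shows that, with probability $1-r^{-\gamma}$, all near-bad clusters have diameter below $r^{2\delta}$. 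We intend to handle these small clusters by running the far-field crossing argument in the slightly thinner rectangle, requiring the dual left-right path of high far-field values to be robust. Concretely, we apply the far-field statement at level $-\tfrac\varepsilon2\log r$ and use a Lipschitz/gradient bound showing that $\mathcal B^{\rm far}_r$ varies by $o(\log r)$ over distances $r^{2\delta}$. This bound follows from \eqref{eq:d4ball-shift} with $M=r^{\delta}$, which gives variance $O(r^{4\delta})\cdot$(shift energy), which is too large. So instead we simply use that a near-bad cluster blocks at most $r^{2\delta}$ sites, and that the far field of the one-site-thickened dual path stays above $-\tfrac{3\varepsilon}4\log r$ on its $r^{2\delta}$-neighbourhood. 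This last step is the fiddly part, and we expect to handle it by a union bound over local oscillations of the far field, controlled by the $\ell^2$ shift estimate at scale $r^{2\delta}\ll L$ is false, so we will take near range $L$ larger than cluster size, with parameters $\delta_1<\delta_2$.

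\emph{Far part, Gaussian case.} For Gaussian scenery, $\mathcal B^{\rm far}_r$ restricted to $\Pi$ is a stationary, positively correlated Gaussian field with variance at most $C\log r$. Its covariance decomposes into scale bands $[2^jL,2^{j+1}L]$, and each band is a finite-range field smooth on its own scale by \eqref{eq:d4ball-shift}. Each band has variance $O(1)$, and there are $O(\log r)$ bands. Here we follow the RSW/sprinkling argument of Proposition~\ref{prop:fixed-scale-crossings}. By sign symmetry and duality, crossings at level $0$ have probability at least $1/2$. We lift them to level $-\varepsilon\log r$ via a Cameron--Martin shift of size $\varepsilon\log r$ spread over the bands. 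The resulting gain, in the form of an exponent of the failure probability, yields $r^{-\gamma}$; the multiscale iteration over the $\log r$ bands supplies the polynomial decay.

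\emph{Transfer.} We replace the crossing indicator by $\one\{L_{Q,\beta}(-\mathcal B^{\rm far}_r)\geq\tfrac\varepsilon2\log r\}$, smoothed through a $C^3$ ramp of width $\varepsilon\log r/8$, using Lemma~\ref{lem:d4-soft-bottleneck} with $\beta=(\log r)^2$. We then run Lindeberg's swapping one scenery coordinate at a time. The third-order error per coordinate is bounded by $\sup|h|^3$ times the derivative bound \eqref{eq:d4-soft-bottleneck-derivative} with $k=3$, and the sum over coordinates uses $\sum h^3\leq CL^{-2}$ from \eqref{eq:d4ball-far-cube}. The total is $C\beta^2(\log N)^2L^{-2}(\log r)^{-3}\cdot N$-free, hence $\leq C(\log r)^3r^{-2\delta}$; this is the origin of the $(\log r)^3$ factor in the statement. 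The exponential moment assumption controls the third moments uniformly. We expect the Gaussian far-field crossing bound to be the main obstacle, since it is a genuinely log-correlated percolation estimate.
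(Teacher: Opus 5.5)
Your architecture (near/far split at $L=r^{\delta}$, a Peierls bound for the near part, a smoothed bottleneck functional, Lindeberg swapping against a Gaussian far field) is the paper's. However, the step you call the main obstacle is not proved, and the mechanism you sketch for it cannot work. RSW and the exploration--sprinkling of Proposition~\ref{prop:fixed-scale-crossings} only give crossing probabilities bounded below. A Cameron--Martin shift only compares an event with its translate, so from $\P(L\leq0)\leq\tfrac12$ it can only give an upper bound for $\P(L\leq-a\log r)$ that exceeds $\tfrac12$. Moreover, the far field is log-correlated, so its average over an $r\times r$ square has variance $O(1)$; raising it by $a\log r$ there costs Cameron--Martin norm at least of order $a\log r$. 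The ``multiscale iteration over bands'' is never specified.

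The missing idea is that the level $-a\log r$ lies far below the fluctuation scale $\sqrt{\log r}$, so no percolation input is needed, only concentration of the crossing value. Take a planar square $\Lambda$ of side comparable to $r$. Then $L_\Lambda(\mathcal H)=\max_\Gamma\min_{z\in\Gamma}\mathcal H_z$ is $C\sqrt{\log r}$-Lipschitz in the Gaussian coordinates by \eqref{eq:d4ball-square}. Duality \eqref{eq:d4ball-duality} gives $L_\Lambda(\mathcal H)+L^{\rm TB}_\Lambda(-\mathcal H)\geq-\Omega_\Lambda(\mathcal H)$, where $L^{\rm TB}_\Lambda$ is the top-bottom crossing value and $\Omega_\Lambda(\mathcal H)\coloneqq\max_{x\sim y}|\mathcal H_x-\mathcal H_y|$ bounds the loss from converting $\ast$-paths into nearest-neighbour paths. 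Sign symmetry and quarter-turn invariance then give $\E L_\Lambda(\mathcal H)\geq-\tfrac12\E\Omega_\Lambda(\mathcal H)\geq-C\sqrt{\log r}$, since the increments have bounded variance by \eqref{eq:d4ball-shift}. Gaussian concentration yields $\P(L_\Lambda(\mathcal H)\leq-a\log r)\leq Cr^{-ca^2}$. Rectangles of aspect ratio $\vartheta$ follow by gluing boundedly many square crossings with a union bound; no RSW input is needed.

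The near-cluster step is also left unresolved, and both scales you try are wrong. Clusters of diameter $r^{2\delta}=L^2$ force $M=L$ in \eqref{eq:d4ball-shift}, as you noticed. Clusters smaller than the near range cannot be obtained from finite-range independence at all. The right calibration is diameter at most $CL$, a constant multiple of the dependence range. A Peierls bound on boxes of side $L$ gives this with probability $1-r^{-c}$. Then \eqref{eq:d4ball-shift} with $M=C$ gives far-field differences of bounded variance at distance $CL$, so a union bound over $O(r^2L^2)$ pairs makes an oscillation above $\eta\log r$ super-polynomially unlikely.

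For that Peierls bound, Lemma~\ref{lem:weighted-exp-conc}\textup{(b)} gives a near-bad probability of only $r^{-c\min\{\varepsilon^2/\delta,\varepsilon\}}$, not $r^{-K}$ for every $K$. The $\ell^\infty$ term is of order one, since the near field contains $g^{Q(0,r)}(0,0)\zeta(z)$ itself. So $\delta$ must be small compared with $\varepsilon$ to beat the $L^2$ sites of a box.

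It is also simpler to argue in the primal than to reroute a dual path. Take a top-bottom $\ast$-crossing of $\{\text{near}\leq-\eta\log r\}\cup\{\text{far}\leq-(\varepsilon-\eta)\log r\}$. Each maximal near-bad stretch has diameter $O(L)=o(r)$, so it is adjacent to a far-low site of the crossing. The oscillation bound then puts the whole crossing in $\{\text{far}\leq-(\varepsilon-2\eta)\log r\}$.

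Finally, with $\beta=(\log r)^2$ your third-derivative bound is of order $\varepsilon^{-1}(\log r)^5$, not $(\log r)^3$. Taking $\beta$ of order $\eta^{-1}\log r$ gives the stated power, though any polylogarithmic factor could be absorbed into $\gamma$.
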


\begin{proof}
By translation invariance, it is enough to take $x=0$. Write $R_r\coloneqq R_{\vartheta,r}(0)$ and identify it with $\{0,\ldots,\lfloor\vartheta r\rfloor\}\times\{0,\ldots,r\}$. All constants in
the proof are uniform over the law class.

The proof has three ingredients. First, planar duality turns the desired
top-bottom low crossing into a lower-tail estimate for the level at which a
left-right crossing appears. We then split the Green field into a near part
and a far part: the near part is confined to small clusters with high
probability, while the far part is smooth enough to compare with its Gaussian
analogue by a Lindeberg replacement argument.

Recall the crossing-value notation $L_Q$ and the smooth approximation
$L_{Q,\beta}$ from Lemma~\ref{lem:d4-soft-bottleneck}.
We use $\mathcal B_r$ as defined in \eqref{eq:d4-ball-green-field}.

\smallskip
\noindent\emph{Step 1.} We first convert the top-bottom crossing event into a lower tail for the level
at which a left-right crossing appears. We write $L_r\coloneqq L_{R_r}$. By duality, for every deterministic field $F$ on $R_r$ and
every $\ell\in\R$,
\begin{equation}\label{eq:d4ball-duality}
    \{z\in R_r:F_z\leq\ell\}\text{ has a $\ast$-top-bottom crossing}
    \quad\Longleftrightarrow\quad
    L_r(F)\leq\ell\, .
\end{equation}
Thus it is enough to prove
\[
    \P\left(L_r(\mathcal B_r)\leq -\varepsilon\log r\right)
    \leq C(\log r)^3r^{-\gamma}\, .
\]
	The exponential moment bound implies that, for every finitely supported
	$(a_y)_{y\in\Z^4}$,
	\begin{equation}\label{eq:d4ball-linear-exp}
	    \P\left(\left|\sum_{y\in\Z^4} a_y\zeta(y)\right|\geq s\right)
	    \leq
	    C\exp\left\{
	    -c\min\left(
	    \frac{s^2}{\sum_{y\in\Z^4}a_y^2},
	    \frac{s}{\max_{y\in\Z^4}|a_y|}
	    \right)
	    \right\}\, .
	\end{equation}

\smallskip
	\noindent\emph{Step 2.} We split the Green kernel into near and far parts, and use this to split
	the low-crossing event into a near-part error and a crossing for the far part. Choose
	$0<\eta<\varepsilon/6$, and $\alpha\in(0,1)$ small, to be determined below.
	Fix a $2$-Lipschitz radial cutoff
	$\phi:[0,\infty)\to[0,1]$ with $\phi=0$ on $[0,1]$ and $\phi=1$ on
	$[2,\infty)$. Let
	\[
	    h(u)\coloneqq g^{Q(0,r)}(0,u)\phi(|u|/\lfloor r^\alpha\rfloor)\, ,
	    \qquad
	    q(u)\coloneqq g^{Q(0,r)}(0,u)-h(u)\, ,
	\]
	and
	\[
	    H_z\coloneqq\sum_{u\in\Z^4} h(u)\zeta(z+u)\, ,
	    \qquad
	    Q_z\coloneqq\sum_{u\in\Z^4} q(u)\zeta(z+u)\, .
	\]
Then $\mathcal B_r(z)=H_z+Q_z$, the kernel $q$ is supported in
$\{|u|<2\lfloor r^\alpha\rfloor\}$, and the kernel $h$ satisfies
\eqref{eq:d4ball-far-cube}.

	Define
	\[
	\begin{aligned}
		S&\coloneqq\{z\in R_r:Q_z\leq-\eta\log r\},\\
		T&\coloneqq\{z\in R_r:H_z\leq-(\varepsilon-\eta)\log r\},\\
		T'&\coloneqq\{z\in R_r:H_z\leq-(\varepsilon-2\eta)\log r\}\, .
	\end{aligned}
	\]
	Then $\{\mathcal B_r\leq-\varepsilon\log r\}\subseteq S\cup T$. By
	\eqref{eq:d4ball-linear-exp} and \eqref{eq:d4ball-near},
	\[
	    \P(Q_z\leq-\eta\log r)
	    \leq Cr^{-c}
	\]
	for every $z$ and all large $r$.
	Partition $R_r$ into planar boxes of side length $\lfloor r^\alpha\rfloor$,
	and call a box \textbf{bad} if it intersects $S$. The preceding bound and a union
	bound over the sites in one box give
	\[
		\P(\text{a fixed box is bad})\leq Cr^{-c}
	\]
		for some $c>0$. Define
		\[
			\mathcal E_{\rm cl}\coloneqq
			\left\{
			\begin{array}{c}
			\text{some $\ast$-component $\mathcal D$ of $S$ satisfies}\\
			\displaystyle
			\max_{z,w\in \mathcal D}\max_{i=1,2}|z_i-w_i|
			> C\lfloor r^\alpha\rfloor
			\end{array}
			\right\}\, .
		\]
		Since the bad events have bounded-range dependence and a fixed box is
		bad with probability at most $Cr^{-c}$, a finite-range Peierls bound
		gives, after increasing the constant in the definition of
		$\mathcal E_{\rm cl}$,
		\begin{equation}\label{eq:d4ball-local-cluster}
		    \P(\mathcal E_{\rm cl})\leq Cr^{-c}\, .
		\end{equation}
		Finally define
		\begin{equation}\label{eq:d4ball-osc-event}
		    \mathcal E_{\rm osc}\coloneqq
		    \left\{
		    \max_{\substack{z,w\in R_r\\
		    \max_{i=1,2}|z_i-w_i|\leq C\lfloor r^\alpha\rfloor}}
		    |H_z-H_w|>\eta\log r
		    \right\}\, .
		\end{equation}
	We claim that
	\begin{equation}\label{eq:d4ball-deterministic-reduction}
	    \{L_r(\mathcal B_r)\leq-\varepsilon\log r\}
	    \subseteq
	    \mathcal E_{\rm cl}\cup\mathcal E_{\rm osc}\cup
	    \left\{L_r(H)\leq-(\varepsilon-2\eta)\log r\right\}\, .
	\end{equation}
		Indeed, outside $\mathcal E_{\rm cl}\cup\mathcal E_{\rm osc}$, take a
		self-avoiding $\ast$-crossing in $S\cup T$. Along each maximal subpath in
		$S$, the first two coordinates vary by at most $Cr^\alpha=o(r)$, so the
		subpath cannot cross top-bottom by itself and must touch $T$. The definition
		of $\mathcal E_{\rm osc}$ then puts that subpath in $T'$. Since
	$T\subseteq T'$, the whole crossing lies in $T'$, and
	\eqref{eq:d4ball-duality} gives the last event in
	\eqref{eq:d4ball-deterministic-reduction}.

\smallskip
\noindent\emph{Step 3.} We show that $\mathcal E_{\rm osc}$ is super-polynomially unlikely:
\begin{equation}\label{eq:d4ball-osc}
    \P(\mathcal E_{\rm osc})
    \leq
    Cr^2\lfloor r^\alpha\rfloor^2
    \exp\left\{-c\eta^2(\log r)^2\right\}\, .
\end{equation}
For a pair $(z,w)$ appearing in the maximum in
\eqref{eq:d4ball-osc-event}, the coefficient of $\zeta(y)$ in $H_z-H_w$ is
$h(y-z)-h(y-w)$. By
\eqref{eq:d4ball-shift},
\[
    \sum_{y\in\Z^4}|h(y-z)-h(y-w)|^2\leq C\, ,
\]
while
\eqref{eq:d4ball-far-cube} gives the pointwise bound $Cr^{-2\alpha}$.
Applying \eqref{eq:d4ball-linear-exp} gives
\[
    \P(|H_z-H_w|>\eta\log r)
    \leq
    C\exp\left\{-c\eta^2(\log r)^2\right\}\, .
\]
There are at most $Cr^2\lfloor r^\alpha\rfloor^2$ such pairs, so
\eqref{eq:d4ball-osc} follows.

\smallskip
\noindent\emph{Step 4.} Let $(\xi(y))_{y\in\Z^4}$ be i.i.d.\ mean-zero Gaussian variables with variance
	$\Var(\zeta(0))$, and for $z\in R_r$ define
	\[
	    \mathcal H_z\coloneqq\sum_{u\in\Z^4} h(u)\xi(z+u)\, .
	\]
We next prove the required low-crossing bound for this Gaussian field: for
each fixed $a>0$ and all large $r$,
\begin{equation}\label{eq:d4ball-gaussian-crossing}
    \P(L_r(\mathcal H)\leq-a\log r)\leq Cr^{-ca^2}\, .
\end{equation}
We first prove the same bound for a planar square $\Lambda$ whose side length is
between $cr$ and $Cr$. Let $L_\Lambda$ be its left-right crossing value, and let
$\Omega_\Lambda(F)$ denote the largest absolute nearest-neighbor increment of a field
$F$ on $\Lambda$:
\[
    \Omega_\Lambda(F)\coloneqq
    \max_{\substack{x,y\in\Lambda\\ x\sim y}} |F_x-F_y|\, .
\]
The
value $L_\Lambda(\mathcal H)$ is $C\sqrt{\log r}$-Lipschitz as a function of the
underlying Gaussian coordinates, by \eqref{eq:d4ball-square}. Write
$L_{\Lambda,\ast}^{\rm TB}$ and $L_\Lambda^{\rm TB}$ for the
$\ast$-connected and nearest-neighbor top-bottom crossing values. Planar
duality, as in \eqref{eq:d4ball-duality}, gives
$L_\Lambda(\mathcal H)+L_{\Lambda,\ast}^{\rm TB}(-\mathcal H)\geq0$,
and replacing each diagonal step of a $\ast$-path by two nearest-neighbor
steps loses at most $\Omega_\Lambda(\mathcal H)$ at each inserted site, so
$L_\Lambda^{\rm TB}(-\mathcal H)\geq
L_{\Lambda,\ast}^{\rm TB}(-\mathcal H)-\Omega_\Lambda(\mathcal H)$.
Hence
\[
    L_\Lambda(\mathcal H)+L_\Lambda^{\rm TB}(-\mathcal H)\geq-\Omega_\Lambda(\mathcal H)\, .
\]
Since $\mathcal H$ is sign-symmetric and invariant under
quarter-turns of planar squares,
\[
    \E L_\Lambda(\mathcal H)\geq-\frac12\E\Omega_\Lambda(\mathcal H)\, .
\]
By \eqref{eq:d4ball-shift}, $\E\Omega_\Lambda(\mathcal H)\leq C\sqrt{\log r}$, and Gaussian
concentration then gives \eqref{eq:d4ball-gaussian-crossing} for squares. 

Since a fixed number of square crossings, depending only on $\vartheta$, can be
glued to form a left-right crossing of $R_r$, a union bound gives
\eqref{eq:d4ball-gaussian-crossing}.

\smallskip
\noindent\emph{Step 5.} In this step we smooth the crossing event and replace the scenery variables one at a time, comparing with the Gaussian field,
to show that 
\begin{equation}\label{eq:d4ball-far-crossing}
    \P\left(L_r(H)\leq-(\varepsilon-2\eta)\log r\right)
    \leq
    C\eta^{-3}(\log r)^3r^{-2\alpha}
    +
    Cr^{-c(\varepsilon-5\eta)^2}\, .
\end{equation}
Let $L_{r,\beta}$ be the smooth approximation to $L_r$ from
Lemma~\ref{lem:d4-soft-bottleneck}, applied to $R_r$.
Since $|R_r|\leq C(\vartheta) r^2$, it satisfies
\[
    |L_{r,\beta}(F)-L_r(F)|\leq C(\vartheta)\frac{(\log r)^2}{\beta}
\]
and, for $k=1,2,3$ and every $F$,
\[
    \sum_{z_1,\ldots,z_k\in R_r}
    |\partial_{z_1}\cdots\partial_{z_k}L_{r,\beta}(F)|
    \leq C(\vartheta)\beta^{k-1}(\log r)^{k-1}\, .
\]
	Take $\beta\coloneqq B\eta^{-1}\log r$, with $B$ large enough that
	$C(\vartheta)(\log r)^2/\beta\leq\eta\log r$. Let
$\chi\in C^\infty(\R)$ satisfy $0\leq\chi\leq1$, $\chi(s)=1$ for
$s\leq0$, $\chi(s)=0$ for $s\geq1$, with bounded derivatives through
	order three. Set
	\[
		\Phi(F)\coloneqq
		\chi\left(
		\frac{L_{r,\beta}(F)+(\varepsilon-3\eta)\log r}{\eta\log r}
		\right)\, .
\]
Then
\begin{equation}\label{eq:d4ball-phi-comparison}
	\one_{\{L_r(F)\leq-(\varepsilon-2\eta)\log r\}}
	\leq \Phi(F)\leq
	\one_{\{L_r(F)\leq-(\varepsilon-5\eta)\log r\}}\, ,
\end{equation}
and the chain rule gives
\begin{equation}\label{eq:d4ball-phi-third}
    \sum_{z_1,z_2,z_3\in R_r}
    |\partial_{z_1}\partial_{z_2}\partial_{z_3}\Phi(F)|
    \leq C\eta^{-3}(\log r)^3\, .
\end{equation}

Set
\[
	\Psi((x_y)_y)\coloneqq
	\Phi\left(\left(\sum_{y\in\Z^4} h(y-z)x_y\right)_{z\in R_r}\right)\, .
\]
This is a smooth function of finitely many coordinates. Replace the variables
$\zeta(y)$ in $\Psi$ one at a time by the Gaussians $\xi(y)$. The first two
Taylor terms cancel at each replacement, and \eqref{eq:d4ball-phi-third} gives
\[
\begin{aligned}
	\left|\E\Phi(H)-\E\Phi(\mathcal H)\right|
	&\leq
	C\eta^{-3}(\log r)^3
	\sup_{z_1,z_2,z_3\in R_r}
	\sum_{y\in\Z^4}
	|h(y-z_1)h(y-z_2)h(y-z_3)|  \\
	&\leq
	C\eta^{-3}(\log r)^3\sum_{u\in\Z^4}|h(u)|^3
	\leq
	C\eta^{-3}(\log r)^3\lfloor r^\alpha\rfloor^{-2}\, .
\end{aligned}
\]
Here the second inequality is H\"older's inequality, and the last one is
\eqref{eq:d4ball-far-cube}. Using this comparison and
\eqref{eq:d4ball-phi-comparison}, we get
\begin{equation}\label{eq:d4ball-lindeberg}
    \P\left(L_r(H)\leq-(\varepsilon-2\eta)\log r\right)
    \leq
    \P\left(L_r(\mathcal H)\leq-(\varepsilon-5\eta)\log r\right)
    +
    C\eta^{-3}(\log r)^3\lfloor r^\alpha\rfloor^{-2}\, .
\end{equation}
Combining \eqref{eq:d4ball-lindeberg} with
\eqref{eq:d4ball-gaussian-crossing} proves
\eqref{eq:d4ball-far-crossing}.

\smallskip
\noindent\emph{Step 6.} Combining \eqref{eq:d4ball-deterministic-reduction},
\eqref{eq:d4ball-local-cluster}, \eqref{eq:d4ball-osc}, and \eqref{eq:d4ball-far-crossing}, after fixing $\eta$ and $\alpha$, gives
\[
    \P\left(L_r(\mathcal B_r)\leq-\varepsilon\log r\right)
    \leq C(\log r)^3r^{-\gamma}
\]
for some $\gamma>0$. By \eqref{eq:d4ball-duality}, this proves the desired estimate in the theorem.  
\end{proof}

\subsection{Percolation of critical level sets in dimension four}

Unlike in dimensions two and three, the crossing estimate does not by itself
reach the critical level: Theorem~\ref{thm:d4-ball-green-crossing} lets the ball-killed Green field cross only slightly below zero, while the percolation theorem needs the odometer to exceed a positive multiple of $\log t$. The difference is made up by the reward the walk collects after leaving the ball. By the stopping representation, the odometer dominates the finite-time Green field killed on exiting the ball plus a localized value started at the exit point. Theorem~\ref{thm:critical-toppling-d4} and Lemma~\ref{lem:d4-exit-average-concentration} show that this localized value exceeds a positive multiple of $\log r$ throughout the block with high probability. This lifts the slightly negative crossings to crossings above a positive multiple of
$\log r$, and a finite-range block comparison then yields percolation.

Throughout this subsection, $r$ is a large integer scale.
We use the finite-time killed Green field
\[
    \mathcal B_{r,N}(z)\coloneqq
    \sum_{u\in\Z^4} g_N^{Q(0,r)}(0,u)\zeta(z+u)\, .
\]
Fix $A_{\rm loc}\geq1$ and an integer $A_{\rm ex}\geq1$, to be chosen below.
For $z\in\Z^4$, let
\[
    Y_r(z)\coloneqq
    \mathbf E_z\left[
        \one_{\{\tau_{Q(z,r)}\leq A_{\rm ex}r^2\}}
        u_{r^2}^{Q(X_{\tau_{Q(z,r)}},A_{\rm loc}r)}
        (X_{\tau_{Q(z,r)}}) \,
        \middle| \, \zeta
    \right]\, .
\]

\begin{lemma}\label{lem:d4-finite-range-lower-bound}
For every $z\in\Z^4$,
\[
    \mathcal B_{r,A_{\rm ex}r^2}(z)+Y_r(z)
    \leq u_{(A_{\rm ex}+1)r^2}(z)\, .
\]
Moreover, $\mathcal B_{r,A_{\rm ex}r^2}(z)+Y_r(z)$ is measurable with respect
to the scenery in $Q(z,(A_{\rm loc}+3)r)$, for all large $r$.
\end{lemma}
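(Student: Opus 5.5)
The plan is to build one explicit stopping rule for the walk started at $z$ whose expected reward, over the horizon $(A_{\rm ex}+1)r^2$, equals $\mathcal B_{r,A_{\rm ex}r^2}(z)+Y_r(z)$. Theorem~\ref{thm:RW} then gives the inequality immediately.

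Write $\sigma\coloneqq\tau_{Q(z,r)}$ and $N\coloneqq A_{\rm ex}r^2$. The rule has two phases.
\begin{itemize}
\item[(1)] Run the walk until $\sigma\wedge N$.
\item[(2)] If $\sigma\leq N$, restart from $w=X_\sigma$ and follow the optimal rule $\tau'$ for the localized value $u_{r^2}^{Q(w,A_{\rm loc}r)}(w)$. This rule is truncated at the exit from $Q(w,A_{\rm loc}r)$ and runs for at most $r^2$ further steps. If $\sigma>N$, stop at time $N$.
\end{itemize}
The composite time $\tau\coloneqq(\sigma\wedge N)+\one_{\{\sigma\leq N\}}\tau'\circ\theta_\sigma$ is a stopping time bounded by $N+r^2=(A_{\rm ex}+1)r^2$.

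There is one subtlety. The localized value \eqref{eq:localized-odometer} sums rewards only up to $\tau'\wedge\tau_D$, whereas the unlocalized sum in Theorem~\ref{thm:RW} uses a stopping time. I will therefore take the second phase to stop at $\tau'\wedge\tau_{Q(w,A_{\rm loc}r)}$, which is again a stopping time with the same payoff.

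Next I compute the payoff by the strong Markov property at $\sigma\wedge N$. The first phase contributes
\[
\mathbf E_z\sum_{k<\sigma\wedge N}\zeta(X_k)=\sum_u g_N^{Q(z,r)}(z,u)\zeta(u)=\mathcal B_{r,N}(z),
\]
using translation invariance, $g_N^{Q(z,r)}(z,z+u)=g_N^{Q(0,r)}(0,u)$. Conditionally on $\mathcal F_\sigma$ on $\{\sigma\leq N\}$, the second phase contributes exactly $u_{r^2}^{Q(X_\sigma,A_{\rm loc}r)}(X_\sigma)$, since that quantity is attained by its optimal rule. Taking expectations yields $Y_r(z)$. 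This gives
\[
\mathcal B_{r,N}(z)+Y_r(z)=\mathbf E_z\sum_{k<\tau}\zeta(X_k)\leq u_{(A_{\rm ex}+1)r^2}(z).
\]
Attainment holds because the problem is finite-horizon and discrete; otherwise I would use $\varepsilon$-optimal rules and let $\varepsilon\to0$.

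For measurability, $\mathcal B_{r,N}(z)$ uses only the scenery in $Q(z,r)$, since $g^{Q(0,r)}_N(0,u)=0$ outside $Q(0,r)$. For $Y_r(z)$, the exit point $X_\sigma$ lies in $\partial Q(z,r)\subset Q(z,r+1)$, and the walk law is deterministic. Each $u_{r^2}^{Q(w,A_{\rm loc}r)}(w)$ depends only on the scenery in $Q(w,A_{\rm loc}r)\subset Q(z,(A_{\rm loc}+1)r+1)$, which is contained in $Q(z,(A_{\rm loc}+3)r)$ for $r\geq1$.

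The main obstacle is the bookkeeping in the concatenation step: verifying that the composite rule is a genuine stopping time bounded by $(A_{\rm ex}+1)r^2$, and that the localized value is reproduced exactly rather than only bounded above. This is where I would be careful about the indicator $\{\sigma\leq N\}$ and about including the exit time from the larger box.
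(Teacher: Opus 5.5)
Your proposal is correct and takes the same approach as the paper: you concatenate the pre-exit rule run until $\tau_{Q(z,r)}\wedge A_{\rm ex}r^2$ with the localized optimal rule started from the exit point, and you invoke the stopping representation of Theorem~\ref{thm:RW} for the horizon $(A_{\rm ex}+1)r^2$. You then read off measurability from the supports of the killed Green kernel and of the localized values, exactly as the paper does, with your explicit strong-Markov bookkeeping filling in what the paper compresses into the phrase ``exit decomposition.''
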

\begin{proof}
Use the exit decomposition from Lemma~\ref{lem:localization-killing}, with
$D=Q(z,r)$ and the pre-exit rule fixed to run until
$\tau_{Q(z,r)}\wedge A_{\rm ex}r^2$. After exit, use the localized value
appearing in the definition of $Y_r$; this gives the displayed payoff and is
admissible for $u_{(A_{\rm ex}+1)r^2}(z)$.
The two terms in the payoff only use the scenery in $Q(z,r)$ and, after exit,
in $Q(X_{\tau_{Q(z,r)}},A_{\rm loc}r)\subseteq Q(z,(A_{\rm loc}+3)r)$ for all
large $r$.
\end{proof}

The exit averaging in $Y_r$ spreads the exit position over the boundary of
$Q(z,r)$, lowering the influence of each scenery value from order one to order $r^{-2}$
and sharpening the pointwise concentration.

\begin{lemma}\label{lem:d4-exit-average-concentration}
Fix $\theta_{0}>0$ and $K_{0}<\infty$. There are $c>0$ and $C<\infty$, depending
only on $\theta_{0}$ and $K_{0}$, such that, for every mean-zero i.i.d.\ field
$(\zeta(x))_{x\in\Z^4}$ with $\E e^{\theta_{0}|\zeta(0)|}\leq K_{0}$, every integer
$r\geq2$, and every $s\geq0$,
\[
    \sup_{z\in\Z^4}
    \P\bigl(|Y_r(z)-\E Y_r(z)|>s\bigr)
    \leq
    C\exp\{-c\min(s^2,sr^2)\}\, .
\]
\end{lemma}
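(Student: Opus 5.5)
The plan is to apply Lemma~\ref{lem:weighted-exp-conc}\textup{(b)} to $F=Y_r(z)$, viewed as a function of the scenery. By translation invariance it suffices to take $z=0$. The coordinate Lipschitz constants then need to satisfy $\|\ell\|_{\ell^2}\leq C$ and $\|\ell\|_{\ell^\infty}\leq Cr^{-2}$, since the tail bound $C\exp\{-c\min(s^2/\|\ell\|_2^2,\,s/\|\ell\|_\infty)\}$ becomes the claim with these values. The constants depend only on $\theta_0,K_0$, as Lemma~\ref{lem:weighted-exp-conc}\textup{(b)} requires.

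First, compute the resampling bound. Write $\mu_r(w)\coloneqq\mathbf P_0(X_{\tau_{Q(0,r)}}=w,\ \tau_{Q(0,r)}\leq A_{\rm ex}r^2)$, so that
\[
Y_r(0)=\sum_{w\in\partial Q(0,r)}\mu_r(w)\,u^{Q(w,A_{\rm loc}r)}_{r^2}(w)\, .
\]
This holds because the exit time and exit position of the walk do not depend on the scenery. By the remark after \eqref{eq:localized-odometer}, resampling $\zeta(y)$ changes $u^{Q(w,A_{\rm loc}r)}_{r^2}(w)$ by at most $g_{r^2}(w,y)|\zeta(y)-\zeta'(y)|$. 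We may therefore take
\[
\ell_y\coloneqq\sum_w\mu_r(w)\,g_{r^2}(w,y)\, .
\]

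Second, bound $\|\ell\|_\infty$. The harmonic measure on the boundary of a box of side $r$ in $\Z^4$ is spread out: $\mu_r(w)\leq Cr^{-3}$ for every $w$, by the standard exit-distribution estimate for boxes (comparison with the Poisson kernel; see \citealp[Chapter~6]{LawlerLimic}). Then
\[
\ell_y\leq Cr^{-3}\sum_{w\in\partial Q(0,r)}g_{r^2}(w,y)\, .
\]
The right side sums a four-dimensional Green kernel over a three-dimensional surface. Using $g_{r^2}(w,y)\leq C(1+|w-y|)^{-2}$ from \eqref{eq:d4ball-point}, this sum is at most $Cr$: near $y$ the contribution is $\int_0^r\rho^{2}\rho^{-2}\,d\rho\asymp r$, and a surface of area $r^3$ at distance $\gtrsim r$ also contributes at most $r$. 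Hence $\|\ell\|_\infty\leq Cr^{-2}$.

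Third, bound $\|\ell\|_2$. Jensen's inequality applied to the sub-probability $\mu_r$, together with translation invariance and \eqref{eq:d4-full-window-bounds}, gives only $\sum_y\ell_y^2\leq\sum_y g_{r^2}(0,y)^2\leq C\log r$. This is too weak, and sharpening it is the main obstacle. To get $O(1)$, expand
\[
\sum_y\ell_y^2=\sum_{w,w'}\mu_r(w)\mu_r(w')\sum_y g_{r^2}(w,y)g_{r^2}(w',y)\, .
\]
By Lemma~\ref{lem:d4-double-heat-kernel}, the inner sum is at most $C(1+\log(r^2/(1+|w-w'|^2)))$. So it remains to show that
\[
\sum_{w,w'}\mu_r(w)\mu_r(w')\log\frac{r^2}{1+|w-w'|^2}\leq C\, .
\]
The density bound $\mu_r\leq Cr^{-3}$ on a three-dimensional surface gives this. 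For fixed $w$, the $\mu_r$-mass of $\{w':|w-w'|\leq\rho\}$ is at most $C(\rho/r)^3$. Integrating the logarithm against this mass gives a bounded quantity, because $\int_0^1\log(1/x)\,d(x^3)<\infty$. This yields $\|\ell\|_2\leq C$, and Lemma~\ref{lem:weighted-exp-conc}\textup{(b)} finishes the proof. If the harmonic-measure density bound is delicate near the corners of the box, I would instead use any upper bound $\mu_r(w)\leq Cr^{-3}$ up to constants, which is the standard uniform estimate for boxes.
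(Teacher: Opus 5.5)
Your proof is correct, but it takes a different and heavier route than the paper's.

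\textbf{Your route.} For $\|\ell\|_\infty$ you use the exit-measure density $\mu_r(w)\le Cr^{-3}$ on $\partial Q(0,r)$ together with a surface sum of $G(w,y)$. You then treat $\|\ell\|_2$ as a separate problem: you expand over pairs $(w,w')$, apply Lemma~\ref{lem:d4-double-heat-kernel}, and integrate the logarithm against the surface measure.

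\textbf{The paper's route.} It needs neither the harmonic-measure density nor the double heat kernel. Dropping the indicator and the box killing gives $\ell(y)\le\mathbf E_zG(X_\tau,y)$. The first-passage decomposition and the symmetry of $G$ and $g^{Q(z,r)}$ turn this into $G(y,z)-g^{Q(z,r)}(y,z)=\mathbf E_yG(X_\tau,z)$. The exit point from $y$ lies at box distance more than $r$ from $z$, so \eqref{eq:d4ball-point} gives $\sup_y\ell(y)\le Cr^{-2}$ at once. The $\ell^2$ bound is then free. A walk spends at most $r^2$ steps before time $r^2$, so $\sum_y\ell(y)\le r^2$, and hence $\|\ell\|_2^2\le\|\ell\|_\infty\|\ell\|_1\le C$.

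\textbf{Your third step is unnecessary.} Your own $\ell_y=\sum_w\mu_r(w)g_{r^2}(w,y)$ also satisfies $\sum_y\ell_y\le r^2$. So the step you flagged as the main obstacle follows immediately from your second step, and the third step can be deleted.

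\textbf{If you keep it, fix two small points.}
\begin{itemize}
\item Lemma~\ref{lem:d4-double-heat-kernel} is stated only for $|x-y|^2\le t$. Points of $\partial Q(0,r)$ can be at Euclidean distance about $4r$. For such pairs the Gaussian bound gives $O(1)$, so the logarithm should be $\log_+$.
\item The bound $\mu_r\le Cr^{-3}$ is an input the paper does not record. You would justify it by the last-exit formula for the Poisson kernel combined with a gambler's-ruin estimate near the exit face.
\end{itemize}

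\textbf{What each approach buys.} Your route makes explicit the picture the paper states heuristically before the lemma: the exit measure is spread over the boundary. The paper's symmetry argument reaches the same $r^{-2}$ influence bound using only the decay of the Green function.
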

\begin{proof}
Write $\tau\coloneqq\tau_{Q(z,r)}$. The law of the walk does not depend on the
scenery, so the resampling bound for the localized value from
Subsection~\ref{ssec:localization} shows that resampling $\zeta(y)$ by an
independent copy $\zeta'(y)$ changes $Y_r(z)$ by at most
$\ell(y)|\zeta(y)-\zeta'(y)|$, where
\[
    \ell(y)\coloneqq
    \mathbf E_z\bigl[
    \one_{\{\tau\leq A_{\rm ex}r^2\}}\,
    g_{r^2}^{Q(X_\tau,A_{\rm loc}r)}(X_\tau,y)
    \bigr]
\]
is deterministic. We bound its two norms. Dropping the indicator and using
$g_{r^2}^{D}\leq G$, the first-passage decomposition of $G$ at $\tau$, and the
symmetry of $G$ and of $g^{Q(z,r)}$ give
\[
    \ell(y)
    \leq
    \mathbf E_z\,G(X_\tau,y)
    =
    G(y,z)-g^{Q(z,r)}(y,z)
    =
    \mathbf E_y\,G(X_\tau,z)\, .
\]
Under $\mathbf P_y$, the position $X_\tau$ differs from $z$ by more than $r$ in
some coordinate, so \eqref{eq:d4ball-point} gives
$\sup_{y\in\Z^4}\ell(y)\leq Cr^{-2}$. Since
$\sum_{y\in\Z^4}g_{r^2}^{D}(w,y)\leq r^2$ for every $w$ and every
$D\subseteq\Z^4$, Tonelli's theorem gives
$\sum_{y\in\Z^4}\ell(y)\leq r^2$, and hence
$\sum_{y\in\Z^4}\ell(y)^2\leq\sup_{y}\ell(y)\sum_{y}\ell(y)\leq C$.
Lemma~\ref{lem:weighted-exp-conc} now gives the displayed bound; the constants
depend only on $\theta_{0}$ and $K_{0}$, and the norm bounds are uniform in $z$ by
translation invariance.
\end{proof}

\begin{theorem}[Critical level-set percolation in dimension four]
\label{thm:d4-critical-level-percolation}
Fix $\nu_{0}>0$, $\theta_{0}>0$, and $K_{0}<\infty$. There are
$c=c(\nu_{0},\theta_{0},K_{0})>0$ and
$t_0=t_0(\nu_{0},\theta_{0},K_{0})<\infty$ such that, for every mean-zero i.i.d.\
field $(\zeta(x))_{x\in\Z^4}$ satisfying
\[
    \Var(\zeta(0))\geq\nu_{0}^2\, ,
    \qquad
    \E e^{\theta_{0}|\zeta(0)|}\leq K_{0}\, ,
\]
and every $t\geq t_0$, the planar set
\[
    \{x\in\Pi:u_t(x)>c\log t\}
\]
contains an infinite nearest-neighbor component almost surely. 
\end{theorem}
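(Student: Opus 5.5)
The plan is to follow the block scheme of Theorem~\ref{thm:d23-critical-level-percolation}, with the lattice field $W_r(z)\coloneqq\mathcal B_{r,A_{\rm ex}r^2}(z)+Y_r(z)$ in place of the rescaled Brownian value. By Lemma~\ref{lem:d4-finite-range-lower-bound}, $W_r\leq u_{(A_{\rm ex}+1)r^2}$, and $W_r(z)$ depends only on the scenery in $Q(z,(A_{\rm loc}+3)r)$. It therefore suffices to find $c_1>0$ such that, for all large $r$, the set $\{z\in\Pi:W_r(z)>c_1\log r\}$ percolates in $\Pi$. Given this, for $t\geq t_0$ we take $r$ with $(A_{\rm ex}+1)r^2\leq t<(A_{\rm ex}+1)(r+1)^2$. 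Monotonicity of $u_t$ in $t$ and $\log r\asymp\log t$ then give the claim with $c$ a small multiple of $c_1$.

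Fix the rectangles $\mathcal R_1,\dots,\mathcal R_N$ and the deterministic gluing implication from the proof of Theorem~\ref{thm:d23-critical-level-percolation}, scaled so that each block event is a finite family of left-right or top-bottom crossings of translates of $R_{\vartheta,r}$ in $\Pi$, for a fixed $\vartheta$. The block events at sites $2z$ are then $k$-dependent with $k$ independent of $r$, by the finite-range measurability above. By \citet[Corollary~1.4]{LSS}, it is enough to show that each fixed crossing of $\{W_r>c_1\log r\}$ fails with probability $o(1)$, uniformly over the law class.

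The failure estimate splits $W_r$ into its two pieces.

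\emph{Lift from $Y_r$.} First choose $A_{\rm loc}$ and $A_{\rm ex}$ large. Theorem~\ref{thm:critical-toppling-d4} and Corollary~\ref{cor:mean-localization} give $\E u^{Q(w,A_{\rm loc}r)}_{r^2}(w)\geq c_2\log r$ with $c_2=c_2(\nu_0,\theta_0,K_0)$. Next, $\mathbf P_z(\tau_{Q(z,r)}\leq A_{\rm ex}r^2)\geq 1/2$, so $\E Y_r(z)\geq (c_2/2)\log r$. Lemma~\ref{lem:d4-exit-average-concentration} with $s=(c_2/4)\log r$ gives failure probability $\exp\{-c(\log r)^2\}$ per site. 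A union bound over the $O(r^2)$ sites of the block then shows $Y_r\geq (c_2/4)\log r$ on the whole block with high probability.

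\emph{Crossing of the Green part.} Theorem~\ref{thm:d4-ball-green-crossing}, with $\varepsilon=c_2/16$ and planar duality \eqref{eq:d4ball-duality}, gives with probability $1-C(\log r)^3r^{-\gamma}$ a nearest-neighbour crossing on which $\mathcal B_r>-\varepsilon\log r$. The passage from $\ast$-crossings of the low set to nearest-neighbour crossings of its complement is exactly the duality used there. Because $\mathcal B_r$ uses the infinite-horizon killed Green function, we replace it by $\mathcal B_{r,A_{\rm ex}r^2}$. The difference is the linear field with kernel $q_{r,A_{\rm ex}}$, whose norms are controlled by \eqref{eq:d4ball-time-tail}. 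Then \eqref{eq:d4ball-linear-exp} with $s=\varepsilon\log r$, together with a union bound, makes this difference at most $\varepsilon\log r$ everywhere on the block, with failure probability $r^2\exp\{-c\,e^{cA_{\rm ex}}\min(\varepsilon^2(\log r)^2,\varepsilon r^2\log r)\}=o(1)$.

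On the intersection of these events the crossing satisfies $W_r\geq(c_2/4-2\varepsilon)\log r\geq(c_2/8)\log r$, so we may take $c_1=c_2/8$.

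The main obstacle I expect is uniformity and bookkeeping rather than a new estimate. The constants $A_{\rm loc}$, $A_{\rm ex}$ and $\varepsilon$ must be chosen in the right order. The lift $c_2$ must not degrade when the walk is restricted to exit before time $A_{\rm ex}r^2$; this requires the lower bound of Theorem~\ref{thm:critical-toppling-d4} at time $r^2$ for the localized value, uniformly in the exit point, which I would obtain from Corollary~\ref{cor:mean-localization} plus stationarity. Finally, the dependence range of the block process must stay bounded in units of $r$, which the measurability statement of Lemma~\ref{lem:d4-finite-range-lower-bound} ensures.
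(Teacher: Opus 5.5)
Your proposal is correct and follows the paper's proof essentially step for step. It uses the same lower-bound field $\mathcal B_{r,A_{\rm ex}r^2}+Y_r$ with finite dependence range from Lemma~\ref{lem:d4-finite-range-lower-bound}, the same uniform lift of $Y_r$ (Theorem~\ref{thm:critical-toppling-d4} with Corollary~\ref{cor:mean-localization}, then Lemma~\ref{lem:d4-exit-average-concentration} and a union bound), the same finite-time replacement via \eqref{eq:d4ball-time-tail}, Theorem~\ref{thm:d4-ball-green-crossing} at a level that is a small fraction of the lift, and the same block comparison through \citet[Corollary~1.4]{LSS} followed by monotonicity in $t$.
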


As in dimensions two and three, we prove high-probability crossings for a
finite-range dependent set and then apply a block argument. The
new point in dimension four is that the killed Green field gives crossings only
slightly below zero; the localized term $Y_r$ is controlled on the whole
block set so that it lifts those crossings above a positive multiple of
$\log r$.

\begin{proof}
We work with the coordinate plane $\Pi$ from \eqref{eq:d4-coordinate-plane}. 
For a planar set $A$, let $\mathcal E_r(A)$ be the event that
$A$ has all crossings in a side-$2r$ square and its half-shifted rectangles.
By the same argument as in the proof of Theorem~\ref{thm:d23-critical-level-percolation}, 
it is enough to choose $b_0>0$, $A_{\rm loc}<\infty$, and
$A_{\rm ex}\in\N$ so that, for every $\delta>0$ and all large integers $r$,
\begin{equation}\label{eq:d4-positive-block-estimate}
    \P\left(
    \mathcal E_r\left(
    \{z\in\Pi:\mathcal B_{r,A_{\rm ex}r^2}(z)+Y_r(z)>b_0\log r/2\}
    \right)
    \right)\geq1-\delta\, .
\end{equation}
Indeed, Lemma~\ref{lem:d4-finite-range-lower-bound} gives a finite dependence
range proportional to $r$, so \citet[Corollary~1.4]{LSS} turns
\eqref{eq:d4-positive-block-estimate} into percolation of
$\{\mathcal B_{r,A_{\rm ex}r^2}+Y_r>b_0\log r/2\}$ for all large $r$.

We prove \eqref{eq:d4-positive-block-estimate} in three steps. First, $Y_r$ is
positive on the whole block set; then the infinite-time killed Green field is
replaced by its finite-time version; finally the crossing estimate above gives
the required block crossings.

Let $K_r\subset\Pi$ be the union of the finitely many rectangles used in the
$\mathcal E_r$ block event. Then $|K_r|\leq Cr^2$.

\smallskip
\noindent\emph{Step 1.} We first prove that $Y_r$ is positive on the whole block set:
\begin{equation}\label{eq:d4-future-height-on-block}
    \P\left(\min_{z\in K_r}Y_r(z)<b_0\log r\right)
    \leq Cr^2\exp\{-c(\log r)^2\}\, .
\end{equation}
By Theorem~\ref{thm:critical-toppling-d4}, $\E u_{r^2}(0)\geq c_0\log r$ for
all large $r$, uniformly over the law class. By
Corollary~\ref{cor:mean-localization}, choose $A_{\rm loc}$ large enough that
\[
    \E u_{r^2}^{Q(w,A_{\rm loc}r)}(w)\geq \frac12 c_0\log r
\]
uniformly in $w$, and fix $b_0\coloneqq c_0/8$. Then choose $A_{\rm ex}$ large enough that
$\mathbf P_z(\tau_{Q(z,r)}>A_{\rm ex}r^2)$ is small enough to give
\[
    \E Y_r(z)\geq 2b_0\log r
\]
uniformly in $z$. Lemma~\ref{lem:d4-exit-average-concentration} gives
\[
    \P(Y_r(z)-\E Y_r(z)\leq-s)
    \leq
    C\exp\{-c\min(s^2,sr^2)\}\, .
\]
Taking $s=b_0\log r$ and summing over $z\in K_r$ proves
\eqref{eq:d4-future-height-on-block}, uniformly over the law class.

\smallskip
\noindent\emph{Step 2.} We next replace $\mathcal B_r$ by
$\mathcal B_{r,A_{\rm ex}r^2}$ on $K_r$:
\begin{equation}\label{eq:d4-ball-tail-in-prop}
    \P\left(
    \max_{z\in K_r}|\mathcal B_r(z)-\mathcal B_{r,A_{\rm ex}r^2}(z)|
    >b_0\log r/4
    \right)
    \leq Cr^2\exp\{-c(\log r)^2\}\, .
\end{equation}
By \eqref{eq:d4ball-time-tail}, the coefficient vector of
$\mathcal B_r(z)-\mathcal B_{r,A_{\rm ex}r^2}(z)$ has maximum at most
$Cr^{-2}e^{-cA_{\rm ex}}$, and the sum of its squares is at most
$Ce^{-cA_{\rm ex}}$.
Taking $s=b_0\log r/4$ in \eqref{eq:d4ball-linear-exp} gives
\[
    \P\left(
    |\mathcal B_r(z)-\mathcal B_{r,A_{\rm ex}r^2}(z)|>b_0\log r/4
    \right)
    \leq C\exp\{-c(\log r)^2\}\, .
\]
Since $|K_r|\leq Cr^2$, this proves \eqref{eq:d4-ball-tail-in-prop}, uniformly
over the law class.

\smallskip
\noindent\emph{Step 3.} We now prove the block-crossing estimate
\eqref{eq:d4-positive-block-estimate}. Set $\varepsilon_0\coloneqq b_0/4$. On the intersection of the events in
\eqref{eq:d4-future-height-on-block} and \eqref{eq:d4-ball-tail-in-prop},
\[
    \mathcal B_r(z)>-\varepsilon_0\log r
    \quad\Longrightarrow\quad
    \mathcal B_{r,A_{\rm ex}r^2}(z)+Y_r(z)>b_0\log r/2
\]
for every $z\in K_r$. The event $\mathcal E_r$ asks for crossings of only
finitely many rectangles, all with shorter side $r$ and with aspect ratios from
a fixed finite list. Applying Theorem~\ref{thm:d4-ball-green-crossing} to
those rectangles, with $\varepsilon=\varepsilon_0$, gives the corresponding
crossings of $\{\mathcal B_r>-\varepsilon_0\log r\}$ with probability tending
to one. The implication above turns those same crossings into crossings of
\[
    \{z\in\Pi:\mathcal B_{r,A_{\rm ex}r^2}(z)+Y_r(z)>b_0\log r/2\}
\]
and proves \eqref{eq:d4-positive-block-estimate}. For arbitrary large $t$, take
$r=\lfloor\sqrt{t/(A_{\rm ex}+1)}\rfloor$. Since
$u_t\geq u_{(A_{\rm ex}+1)r^2}$ and $\log r\geq\frac13\log t$ for all large
$t$, the theorem follows after decreasing $c$.
\end{proof}

	\section{Dimensions five and higher}\label{sec:dim5plus}

In this section $d\geq5$. The Green function is now square-summable, so
pointwise fluctuations of $u_t-\E u_t(0)$ stay uniformly concentrated. The
mean still grows because of reflection at zero, but the rate is determined by
the rare negative configurations that can make a reflected increment occur.

We first prove pointwise concentration under an exponential-moment assumption
and a mean lower bound for every integrable nondegenerate law; a fixed
lower-tail condition makes the latter uniform. We then refine the mean
bounds using assumptions on the lower tail of the scenery. Next, we
identify the diffusive limit from the probability that the optimal walk has
not stopped. This gives distinct limits as $R\to\infty$ for Gaussian
scenery and for regularly varying lower tails, as well as a smooth law with an exponential moment and many subsequential limits. In
the last subsection, we prove critical level-set percolation by showing that
low sites cannot form blocking surfaces once the mean is large.

\subsection{Fluctuation bounds and mean growth}\label{ssec:expl-d5}

	Unless explicitly stated otherwise, throughout this section we assume that
	\[
		(\zeta(x))_{x\in\Z^d}\quad\text{are i.i.d.,}\qquad
		\E\zeta(0)=0 ,\qquad 0<\Var(\zeta(0))<\infty\, ,
	\]
	and, for some $\theta_0>0$ and $K_0<\infty$,
	\begin{equation}\label{eq:dgt4-exp-moment}
		\E e^{\theta_0|\zeta(0)|}\leq K_0\, .
	\end{equation}
	Subsection~\ref{ssec:dgt4-diffusive-membrane} states its assumptions
	separately. Unless stated otherwise, constants may depend on $d$,
	$\theta_0$, $K_0$, and on the law of $\zeta(0)$.

By Lemma~\ref{lem:reflection-increment},
\begin{equation}\label{eq:dgt4-height-increment}
	\E u_{t+1}(0)-\E u_t(0)
	=
	\E\bigl(-\zeta(0)-Pu_t(0)\bigr)_+\, ,
\end{equation}
and these increments are nonincreasing in $t$. Therefore, for
$t\geq1$,
\begin{equation}\label{eq:dgt4-mean-increment-bound}
	0\leq
	\E u_{t+1}(0)-\E u_t(0)
	\leq\frac{\E u_t(0)}t\, .
\end{equation}

	\begin{theorem}[High-dimensional mean growth]\label{thm:dgt4-height-lower}
		Fix $\nu_0>0$, $\theta_0>0$, and $K_0<\infty$. There are
		$c,C>0$ and $t_0<\infty$, depending only on
		$d,\nu_0,\theta_0,K_0$, such that every mean-zero i.i.d.\ field satisfying
		\[
			\Var(\zeta(0))\geq\nu_0^2,
			\qquad
			\E e^{\theta_0|\zeta(0)|}\leq K_0
		\]
		satisfies, for every $t\geq t_0$,
		\begin{equation}\label{eq:dgt4-universal-mean-lower}
			\E u_t(0)\geq c(\log t)^{2/d}.
		\end{equation}
			Moreover, for every $x\in\Z^d$, $t\geq0$, and $s\geq0$,
			\begin{equation}\label{eq:dgt4-pointwise-concentration}
				\P\left(|u_t(x)-\E u_t(0)|\geq s\right)
				\leq C\exp\{-c\min(s^2,s)\}\, .
			\end{equation}
			Consequently, for every fixed $x\in\Z^d$,
			\[
				\frac{u_t(x)}{\E u_t(0)}\longrightarrow1
				\qquad\textup{in $L^2$ and almost surely}\, .
			\]
		\end{theorem}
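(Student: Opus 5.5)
Three parts: pointwise concentration, the mean lower bound, and the ratio limit. The concentration bound \eqref{eq:dgt4-pointwise-concentration} follows as in dimension four. Proposition~\ref{prop:finite-time-concentration-scale} gives coordinate Lipschitz constants $\ell_y=g_t(x,y)\leq G(x,y)$. By \eqref{eq:dgt4-green-l2}, $\sum_y G(x,y)^2\leq C$ and $\sup_y G\leq C$, uniformly in $t$. Lemma~\ref{lem:weighted-exp-conc}(b) then gives $C\exp\{-c\min(s^2,s)\}$. Stationarity replaces $\E u_t(x)$ by $\E u_t(0)$.

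Given the mean lower bound, the ratio limit is routine. The $L^2$ convergence is immediate, since $\Var u_t(x)\leq C$ while $\E u_t(0)\to\infty$. For almost-sure convergence I would copy the dimension-four argument:
\begin{itemize}
\item take a sparse sequence $t_k=\lfloor e^{k}\rfloor$;
\item apply Borel--Cantelli, using summable tails $C\exp\{-c\,\epsilon(\log t_k)^{2/d}\}$; if these are not summable for $d$ large, choose $t_k=\lfloor e^{k^{a}}\rfloor$ with $a>d/2$ instead;
\item interpolate between the $t_k$ by monotonicity of $u_t$ and concavity from Lemma~\ref{lem:reflection-increment}, which gives $\E u_{t_{k+1}}/\E u_{t_k}\leq t_{k+1}/t_k$.
\end{itemize}
This ratio need not tend to one for very sparse sequences. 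So I would use the sharper increment bound \eqref{eq:dgt4-mean-increment-bound} with a growth-rate argument, or pick $t_k$ with $\log t_{k+1}-\log t_k\to0$ while the tails remain summable. Since the tail exponent is a power of $\log t$, the choice $t_k=\lfloor\exp(k^{a})\rfloor$ with $a$ close to $1$ but $a(2/d)\cdot$ large enough fails, so I would instead use the bound $\exp(-c s)$ with $s=\epsilon\E u_{t_k}$. This works provided $\E u_t\geq c(\log t)^{2/d}$ and $\sum_k\exp(-c(\log t_k)^{2/d})<\infty$, which holds for $t_k=\exp(k^{d})$. Here $\log t_{k+1}/\log t_k\to1$. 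The ratio of means then tends to one by concavity in $\log t$, which is the main technical check.

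The main work is the mean lower bound, via the ``negative ball'' mechanism. By Lemma~\ref{lem:reflection-increment}, the increment equals $\E(-\zeta(0)-Pu_t(0))_+$. Suppose $m_t\coloneqq\E u_t(0)\leq\epsilon(\log t)^{2/d}$ for all $t\leq T$; we aim for a contradiction. Fix a ball $B$ of radius $\rho\asymp\sqrt{m_T}$ around $0$, and consider the event $A=\{\zeta\leq-a$ on $B\}$ for a fixed $a>0$. It has probability at least $e^{-C\rho^d}=e^{-Cm_T^{d/2}}\geq T^{-1/2}$ when $\epsilon$ is small. On $A$, $u_t(y)$ for $y$ in the inner half ball is bounded via the stopping representation: the walk's reward is at most its reward after exiting $B$, minus $a$ times the time spent in $B$. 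Heuristically $Pu_t(0)$ is then at most about $\max_{\partial B}u-a\rho^2$. This requires the sites outside $B$ to be typical, which uses FKG: $A$ is a decreasing event and $\{Pu_t(0)\leq\cdot\}$ is decreasing too, as in Step~3 of Theorem~\ref{thm:critical-toppling-d4}. A cleaner route is to compare with the localized value through Lemma~\ref{lem:localization-killing}. It gives
\[
u_t(y)\leq u_t^{B}(y)+\mathbf E_y[u_t(X_{\tau_B})]\leq -c\,a\,\mathbf E_y\tau_B+\mathbf E_y u_t(X_{\tau_B})+(\text{fluctuation}),
\]
and on $A$ the first term is at most $-c a\rho^2$ after subtracting optimal-stopping gains. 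By concentration and a union bound over $\partial B$, the boundary values are at most $2m_T+C\log\rho$ with high probability. Choosing $\rho^2=Cm_T/a$ makes $\zeta(0)+Pu_t(0)\leq-1$ with probability at least $\tfrac12 T^{-1/2}$. The increments then sum over $t\leq T$ to at least $cT^{1/2}$, contradicting $m_T\leq\epsilon(\log T)^{2/d}$. The delicate point, which I expect to be the main obstacle, is making the bound ``$u$ inside a negative ball is negative relative to the boundary'' rigorous and uniform over the law class. For this I would use the difference representation with the linear field $V$ restricted to $B$. Conditional on $A$, the bounded-below variance of $\zeta$ allows a small $a$ with $\P(\zeta\leq-a)\geq c(\nu_0,\theta_0,K_0)>0$.
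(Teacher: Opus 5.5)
Your concentration argument and your mechanism for the mean lower bound agree with the paper. That mechanism is a box where $\zeta\le-a$, a decreasing boundary event combined with it by FKG, the reflected increment of Lemma~\ref{lem:reflection-increment}, and summation over $t\le T$; arguing by contradiction at a fixed $T$ is a fine substitute for the paper's integration step.

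The gap is the step you flag as delicate, and the route you propose for it fails. On $A=\{\zeta\le-a\text{ on }B\}$ every pre-exit reward is at most $-a\,\mathbf E_y[\tau\wedge\tau_B]\le0$, so $u_t^B\equiv0$ on $A$ (take $\tau=0$). Lemma~\ref{lem:localization-killing} then yields only $u_t(y)\le\mathbf E_y[\one_{\{\tau_B\le t\}}u_t(X_{\tau_B})]$, which is of order $m_T$. That says nothing when $-\zeta(0)$ is only of order $a$. Your displayed bound $u_t^B(y)\le-ca\,\mathbf E_y\tau_B$ is false, since $u_t^B\ge0$ always. The lemma bounds the pre-exit reward by its own optimum, attained by stopping at once, while still crediting the post-exit reward, so the cost $a$ per step inside the ball is never charged.

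The missing idea is a quadratic barrier. Suppose $\frac a2 R^2\ge2\E u_t(0)+1$ and $u_t\le2\E u_t(0)+1$ on $\partial Q(0,R)$. Induct on $s\le t$ through $u_{s+1}=(\zeta+Pu_s)_+$, using $P|x|^2=|x|^2+1$, so that $\zeta(x)+Pu_s(x)\le-a+\frac a2(|x|^2+1)$. This gives $u_s(x)\le\frac a2|x|^2$ on $Q(0,R)$; the boundary case follows from monotonicity in time. Equivalently, keep a single stopping time for both phases and apply optional stopping to the martingale $|X_k|^2-k$. You then get $Pu_t(0)\le a/2$, hence $(-\zeta(0)-Pu_t(0))_+\ge a/2$ on the event. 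Your target $\zeta(0)+Pu_t(0)\le-1$ is not available when $a<1$, but it is not needed.

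The $L^2$ convergence is fine, but your almost-sure argument ends on a choice that does not work. For $t_k=\exp(k^d)$, concavity only gives $\E u_{t_{k+1}}(0)/\E u_{t_k}(0)\le t_{k+1}/t_k\to\infty$, and no concavity in $\log t$ is available. You discarded the right choice by mistake. For $t_k=\lfloor e^{k^b}\rfloor$, the lower bound on the mean makes the deviation probability at relative level $\epsilon$ at most $C\exp\{-c\,\epsilon\, k^{2b/d}\}$. This is summable for every $b>0$, not only for large exponents. So $t_k=\lfloor e^{\sqrt k}\rfloor$, as in the paper, gives both Borel--Cantelli and $t_{k+1}/t_k\to1$, and monotonicity of $u_t$ finishes the interpolation.
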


	\begin{proof}
		The concentration estimate is immediate from the general concentration
		lemma. Apply Lemma~\ref{lem:weighted-exp-conc} with the coordinate
		constants $\ell_y=g_t(x,y)$, as supplied by the stopping representation in
		Proposition~\ref{prop:finite-time-concentration-scale}, together with
		$g_t(x,y)\leq G(x,y)$. The $d\geq5$ Green bounds \eqref{eq:dgt4-green-tail}
		and \eqref{eq:dgt4-green-l2} give
		\[
			\sum_{y\in\Z^d}G(x,y)^2<\infty,
			\qquad
			\sup_{y\in\Z^d}G(x,y)<\infty\, ,
		\]
		so Lemma~\ref{lem:weighted-exp-conc} gives
		\eqref{eq:dgt4-pointwise-concentration}.

			Assuming the lower bound in the theorem, which we prove below,
			\eqref{eq:dgt4-pointwise-concentration} immediately gives convergence in $L^2$. For almost-sure convergence, define $t_k\coloneqq\lfloor e^{\sqrt{k}}\rfloor$. The lower bound and
			\eqref{eq:dgt4-pointwise-concentration} show that the corresponding deviation
			probabilities are summable, so the Borel--Cantelli lemma gives convergence along $(t_k)$. Since the mean increments decrease,
			\[
				1\leq
				\frac{\E u_{t_{k+1}}(0)}{\E u_{t_k}(0)}
				\leq\frac{t_{k+1}}{t_k}
				\longrightarrow1\, .
			\]
			Monotonicity of $u_t(x)$ now gives the almost-sure convergence for all
			$t$.

		It remains to prove the lower bound. We make the reflected increment in
		\eqref{eq:dgt4-height-increment} positive by requiring the scenery to be
		uniformly negative on a finite ball. The only input needed for this is a
		fixed negative level that the scenery falls below with uniformly positive probability. The exponential
		moment and variance assumptions give $a,q>0$, depending only on
		$\nu_0,\theta_0,K_0$, such that
		\[
			\P(\zeta(0)\leq-a)\geq q\, .
		\]

		\emph{Step 1.} We show that
		\begin{equation}\label{eq:dgt4-increment-finite-ball}
			\E u_{t+1}(0)-\E u_t(0)
			\geq
			c\exp\{-C(\E u_t(0)+1)^{d/2}\}\, .
		\end{equation}
		Let $R$ be the least integer such that
		\[
			\frac a2 R^2\geq2\E u_t(0)+1\, .
		\]
		Consider the event
		\[
			\mathcal E\coloneqq
			\{\zeta\leq-a\text{ on }Q(0,R)\}
			\cap
			\{u_t\leq2\E u_t(0)+1\text{ on }\partial Q(0,R)\}\, .
		\]
		Stationarity and Markov's inequality give
		$\P(u_t(x)\leq2\E u_t(0)+1)\geq1/2$ for every $x$. All the events
		defining $\mathcal E$ are decreasing, so the FKG inequality gives
		\[
			\P(\mathcal E)
			\geq q^{|Q(0,R)|}2^{-|\partial Q(0,R)|}
			\geq \exp\{-C(\E u_t(0)+1)^{d/2}\}\, .
		\]
		
		We claim that on~$\mathcal{E}$, for every $x\in Q(0,R)$ and every $0\leq s\leq t$, 
		\[
			u_s(x)\leq \frac a2 |x|^2.
		\]
		On $\partial Q(0,R)$, this bound follows from the boundary condition,
		monotonicity in time, and the choice of $R$. If the bound holds at time $s$, then, for $x\in Q(0,R)$,
		\[
			\zeta(x)+Pu_s(x)
			\leq
			-a+\frac a2(|x|^2+1)
			=
			\frac a2|x|^2-\frac a2\, ,
		\]
		and hence the bound also holds at time $s+1$. In particular,
		$Pu_t(0)\leq a/2$ and $\zeta(0)\leq-a$ on $\mathcal E$. Therefore
		$\bigl(-\zeta(0)-Pu_t(0)\bigr)_+\geq a/2$ on $\mathcal E$, and
		\eqref{eq:dgt4-height-increment} proves
		\eqref{eq:dgt4-increment-finite-ball}.

		\emph{Step 2.} We integrate~\eqref{eq:dgt4-increment-finite-ball} and conclude. Define
		\[
			F(s)\coloneqq\frac1c\int_0^s\exp\{C(r+1)^{d/2}\}\,dr\, ,
		\]
		with $c,C$ chosen from \eqref{eq:dgt4-increment-finite-ball}. Since
		$F'$ is increasing and $\E u_t(0)$ is increasing in $t$,
		\[
			F(\E u_{t+1}(0))-F(\E u_t(0))
			\geq
			F'(\E u_t(0))(\E u_{t+1}(0)-\E u_t(0))
			\geq1\, .
		\]
		Thus $F(\E u_t(0))\geq t$. Since
		\[
			F(s)\leq C\exp\{C(s+1)^{d/2}\}\, ,
		\]
		we get $\E u_t(0)\geq c(\log t)^{2/d}$ for all large $t$.
	\end{proof}

As in dimension four, convexity removes the exponential-moment assumption
from the lower bound for a fixed law. A fixed lower-tail bound makes the
constants uniform.

\begin{corollary}\phantomsection\label{cor:dgt4-mean-lower}
	\begin{enumerate}[label=\textup{(\roman*)}]
		\item Let $(\zeta(x))_{x\in\Z^d}$ be i.i.d., mean zero, integrable, and
		nondegenerate. There are $c>0$ and $t_0<\infty$, depending on $d$ and the
		one-site law, such that, for every $t\geq t_0$,
		\[
			\E u_t(0)\geq c(\log t)^{2/d} \, .
		\]
		\item If $a>0$ and $q\in(0,1]$ are fixed, then the constants in
		\textup{(i)} can be chosen depending only on $d,a,q$ over all such fields
		satisfying
		\[
			\P(\zeta(0)\leq-a)\geq q\, .
		\]
	\end{enumerate}
\end{corollary}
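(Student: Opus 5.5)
For (i), I will mimic the proof of Corollary~\ref{cor:d4-logarithmic-mean-lower}. Let $\mathcal A_x$ be the sigma-field generated by the indicator $\one_{\{\zeta(x)\leq -a\}}$ for a suitable $a>0$, or simply by the sign of $\zeta(x)$, and set $\xi(x)\coloneqq\E[\zeta(x)\mid\mathcal A_x]$. The $\xi(x)$ are i.i.d., mean zero, and two-valued, hence bounded. They are also nondegenerate, because $\zeta(0)$ is nondegenerate and has mean zero, so both $\P(\zeta(0)>0)$ and $\P(\zeta(0)<0)$ are positive with nonzero conditional means. Proposition~\ref{prop:finite-time-concentration-scale} shows that $\zeta\mapsto u_t(0;\zeta)$ is a convex function of finitely many coordinates, namely those within distance $t$. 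Conditional Jensen over the product sigma-field $\bigvee_x\mathcal A_x$, which is legitimate because the coordinates are independent, then gives $\E u_t(0;\zeta)\geq \E u_t(0;\xi)$. Since $\xi$ is bounded with nonzero variance, it satisfies the hypotheses of Theorem~\ref{thm:dgt4-height-lower} with some $\nu_0,\theta_0,K_0$ depending on the law, and this yields (i).

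For (ii), uniformity should be obtained by going back to the proof of Theorem~\ref{thm:dgt4-height-lower}. The lower bound there used only one input, $\P(\zeta(0)\leq -a)\geq q$, and did not use concentration. Step~1 needs the FKG inequality, stationarity, Markov's inequality $\P(u_t(x)\leq 2\E u_t(0)+1)\geq 1/2$, and the deterministic comparison $u_s\leq \frac a2|x|^2$ on $Q(0,R)$. Step~2 is a pure ODE integration. None of these uses exponential moments, so I will check that the argument goes through verbatim for any integrable i.i.d.\ mean-zero field. It needs $\E u_t(0)<\infty$, which holds because $u_t(0)\leq \sum_{k<t}\mathbf E_0|\zeta(X_k)|$. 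This produces constants depending only on $d$, $a$, $q$.

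The one subtlety is that the increment identity \eqref{eq:dgt4-height-increment} and monotonicity of the increments only need integrability, by Lemma~\ref{lem:reflection-increment}. The FKG step applies to decreasing events of independent coordinates, so it also needs no moments. The main obstacle I expect is making sure no hidden dependence on the law enters through $R$: $R$ depends only on $a$ and $\E u_t(0)$, and $|Q(0,R)|\leq C(\E u_t(0)+1)^{d/2}/a^{d/2}$, so the exponent carries constants depending on $a$, $q$, $d$ only. With that, Step~2 gives $\E u_t(0)\geq c(\log t)^{2/d}$ for $t\geq t_0(d,a,q)$. Part (i) then also follows directly from (ii), by choosing $a,q$ from nondegeneracy and the mean-zero property.
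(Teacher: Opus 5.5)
Your proposal is correct and matches the paper's proof: part (i) conditions on the sign $\sigma$-field, uses convexity and conditional Jensen to reduce to bounded scenery, and then applies Theorem~\ref{thm:dgt4-height-lower}; part (ii) notes that Steps~1--2 of that proof use only $\P(\zeta(0)\leq-a)\geq q$ and integrability. Your further remark that (i) follows directly from (ii), which makes the Jensen reduction unnecessary, is also valid. One small slip: $|Q(0,R)|$ should be bounded by $C(d,a)(\E u_t(0)+1)^{d/2}$, not by $C(\E u_t(0)+1)^{d/2}a^{-d/2}$, since the latter fails for large $a$ where $R=1$; this does not affect the conclusion.
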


\begin{proof}
	For part~\textup{(i)}, define the bounded field $\xi$ as in the proof of
	Corollary~\ref{cor:d4-logarithmic-mean-lower}. Conditional Jensen's inequality and
	\eqref{eq:dgt4-universal-mean-lower}, applied to $\xi$, give the result.
	For part~\textup{(ii)}, Steps~1--2 of the preceding proof, after the choice
	of $a$ and $q$, use only the displayed fixed-tail condition, and their
	constants depend only on $d,a,q$.
\end{proof}

	\subsection{Refined mean bounds}\label{ssec:d5-height-upper}

	The universal lower bound \eqref{eq:dgt4-universal-mean-lower} comes from
	a finite ball on which the scenery is uniformly negative. If the lower tail
	is heavier, a single very negative site may be cheaper and the mean can grow
	faster. We first prove this improved lower bound.

	The upper bounds use the reverse implication. If a reflected increment occurs
	when the mean is large, then either a finite Green average of the scenery is
	very negative, or a smoothed copy of $u_t-\E u_t(0)$ is very negative.
	The next two estimates isolate those two possibilities before we integrate
	the resulting increment bounds.

		\begin{proposition}[Refined lower bound under a stretched-exponential lower-tail bound]
		\label{prop:dgt4-height-lower-stretched}
			Suppose \eqref{eq:dgt4-exp-moment} holds. Suppose also that there are
			$\gamma,a,A,s_{0}>0$ such that for every $s\geq s_{0}$,
			\begin{equation}\label{eq:dgt4-stretched-left-tail}
				\P(\zeta(0)\leq -s)\geq ae^{-As^\gamma}\, .
			\end{equation}
			There is $c>0$ such that, for all large $t$,
			\[
				\E u_t(0)\geq c(\log t)^{1/\min\{\gamma,d/2\}}\, .
			\]
		\end{proposition}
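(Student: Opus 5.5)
The plan is to follow the two-step scheme of Theorem~\ref{thm:dgt4-height-lower}. First we prove a lower bound on the reflected increment \eqref{eq:dgt4-height-increment} in terms of the current mean $m_t\coloneqq\E u_t(0)$. Then we integrate it.

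If $\gamma\geq d/2$, the claim is exactly \eqref{eq:dgt4-universal-mean-lower}, so there is nothing to do. Assume therefore $\gamma<d/2$. The target is a one-site increment bound
\[
	\E u_{t+1}(0)-\E u_t(0)\geq c\exp\{-C(m_t+1)^\gamma\}\, ,
\]
and from it the same ODE comparison as in Step~2 of Theorem~\ref{thm:dgt4-height-lower} gives the result. Concretely, with $F(s)=c^{-1}\int_0^s e^{C(r+1)^\gamma}dr$, the increment bound gives $F(m_t)\geq t$. Since $F(s)\leq Ce^{C(s+1)^\gamma}$, this yields $m_t\geq c(\log t)^{1/\gamma}$.

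To get the increment bound, fix a level $s\coloneqq 2m_t+K$ with $K$ a large constant, and consider the event
\[
	\mathcal E\coloneqq\{\zeta(0)\leq -s\}\cap\{Pu_t(0)\leq 2m_t+1\}\, .
\]
On $\mathcal E$ we have $(-\zeta(0)-Pu_t(0))_+\geq K-1\geq1$. Both events are decreasing in the scenery: $u_t$ is coordinatewise increasing by the stopping representation, Theorem~\ref{thm:RW}. So the FKG inequality gives
\[
	\P(\mathcal E)\geq\P(\zeta(0)\leq-s)\,\P(Pu_t(0)\leq2m_t+1)\, .
\]
For the second factor, Markov's inequality and stationarity give $\P(Pu_t(0)\leq 2m_t+1)\geq 1/2$, because $\E Pu_t(0)=m_t$. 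For the first factor, once $s\geq s_0$ the tail hypothesis \eqref{eq:dgt4-stretched-left-tail} gives $\P(\zeta(0)\leq-s)\geq a e^{-A(2m_t+K)^\gamma}\geq c\,e^{-C(m_t+1)^\gamma}$. Combining, the increment is at least $\P(\mathcal E)\geq c\,e^{-C(m_t+1)^\gamma}$, as required.

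One subtlety is that $Pu_t(0)$ does depend on $\zeta(0)$, since neighbors' odometers see the origin. Making $\zeta(0)$ very negative only lowers $Pu_t(0)$, however, and this is exactly why the FKG step is legitimate. No independence between the two events is needed.

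The main obstacle is mild: this is essentially the universal argument with the negative ball replaced by a single site. The points to check are that FKG applies to the product of these two decreasing events, and that for small $t$ (where $s<s_0$) the increment bound holds trivially after adjusting constants. Both cases merge into the stated form with exponent $1/\min\{\gamma,d/2\}$.
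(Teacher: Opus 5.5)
Your proof is correct and follows the paper's argument: a single very negative site at the origin, combined through the FKG inequality with a decreasing event that keeps the neighbors low, followed by the same integration as in Theorem~\ref{thm:dgt4-height-lower}. The differences are minor. You control $Pu_t(0)$ by Markov's inequality, whereas the paper controls $\max_{e\sim0}u_t(e)$ through \eqref{eq:dgt4-pointwise-concentration}. You treat $\gamma\geq d/2$ as a separate case instead of taking the better of the two increment bounds. Finally, choosing $K\geq s_0$ makes your caveat about small $t$ unnecessary.
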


		\begin{proof}
			We first use the event that the origin is very negative:
			\[
				\E u_{t+1}(0)-\E u_t(0)
				\geq
				c\P(\zeta(0)\leq -C\E u_t(0)-C)\, .
			\]
			Indeed, by \eqref{eq:dgt4-pointwise-concentration}, after choosing $b$ large,
			\[
				\P\left(\max_{e\sim0}u_t(e)\leq2\E u_t(0)+b\right)
				\geq\frac12
			\]
			for every $t$. This event and
			$\{\zeta(0)\leq -2\E u_t(0)-b-1\}$ are decreasing in the scenery,
			so the FKG inequality and \eqref{eq:dgt4-height-increment} give the display.

			Combining this estimate with \eqref{eq:dgt4-stretched-left-tail} and
			\eqref{eq:dgt4-increment-finite-ball}, we get, for
			$\beta\coloneqq\min\{\gamma,d/2\}$,
			\[
				\E u_{t+1}(0)-\E u_t(0)
				\geq
				c\exp\{-C(\E u_t(0)+1)^\beta\}
			\]
			once $\E u_t(0)$ is large. Integrating this deterministic increment
			bound as in the proof of Theorem~\ref{thm:dgt4-height-lower} gives
			$\E u_t(0)\geq c(\log t)^{1/\beta}$ for all large $t$.
		\end{proof}

		We turn to the upper bounds. The next estimate controls lower tails of
		finite Green averages of the scenery under the tail hypothesis stated
		below.

		\begin{lemma}
		\label{lem:dgt4-stretched-green-scenery-tail}
		
			Suppose $\gamma\in[1,\infty)$, with $\gamma\ne d/2$. Assume
			\eqref{eq:dgt4-exp-moment} and suppose that, for some $c_1>0$,
			$C_1<\infty$, and $s_{0}>0$, for every $s\geq s_{0}$,
		    \begin{equation}\label{eq:dgt4-green-scenery-tail-assumption}
				\P(\zeta(0)\leq -s)\leq C_1e^{-c_1s^\gamma} \, .
			\end{equation}
			Then there are constants
			$c,C>0$ such that, for all $m\geq1$ and all $s\geq1$,
			\[
				\P\left(\sum_{y\in\Z^d}g_m(0,y)\zeta(y)\leq -s\right)
				\leq
				C\exp\{-c s^{\min\{\gamma,d/2\}}\} \, . 
			\]

		\end{lemma}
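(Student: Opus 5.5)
The plan is to bound the lower tail of $W\coloneqq\sum_y g_m(0,y)\zeta(y)$ by a truncation-plus-Chernoff argument, using the weights $w_y\coloneqq g_m(0,y)\leq G(0,y)\leq C(1+|y|)^{2-d}$. Write $\beta\coloneqq\min\{\gamma,d/2\}$. Two facts about the weights drive everything: $\sum_y w_y^2<\infty$ by \eqref{eq:dgt4-green-l2}, and the level-set count
\[
    \#\{y:w_y\geq\lambda\}\leq C\lambda^{-d/(d-2)}.
\]
Only $-\zeta$ matters for the lower tail, so I work with $\xi(y)\coloneqq(-\zeta(y))_+$. Since $\E\zeta=0$ and the positive part only helps, it suffices to bound $\P(\sum_y w_y\xi(y)\geq s+\E\sum_y w_y\xi(y))$.

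Rather than this crude reduction, the cleaner route is to split $\zeta=\zeta\one_{\{\zeta\geq -M\}}+\zeta\one_{\{\zeta<-M\}}$ with a level $M$ depending on $s$. For the truncated part, Lemma~\ref{lem:weighted-exp-conc}(c) extends to all $\lambda$ because the variables are bounded below by $-M$. The lower-tail log-Laplace transform $\log\E e^{-\lambda w\zeta\one_{\{\zeta\geq-M\}}}$ is at most $C\lambda^2w^2e^{C\lambda wM}$, so the Chernoff bound with $\lambda=cs$ and $M\approx 1/(\lambda\max w)\approx 1/s$ is useless. Instead I would choose the cut depending on the site, as follows.

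For each $y$, declare $y$ \emph{heavy} if $\zeta(y)\leq -\eta s/w_y^{\,\cdot}$, and handle two mechanisms separately.

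\textbf{One very negative site.} The event that some single $y$ has $w_y\zeta(y)\leq -\eta s$ has probability at most
\[
    \sum_y C_1\exp\{-c_1(\eta s/w_y)^\gamma\}\leq C e^{-cs^\gamma},
\]
because $w_y\leq C$ and the sum over $y$ converges, dominated by the near sites.

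\textbf{Many moderately negative sites.} On the complement, every contribution satisfies $w_y\zeta(y)\geq-\eta s$, and I apply a Chernoff bound to the truncated variables $w_y\zeta(y)\vee(-\eta s)$. The key estimate is
\[
    \log\E\exp\{-\lambda(w\zeta\vee(-\eta s))\}\leq C\lambda^2w^2+\E\bigl[e^{\lambda w|\zeta|}\one_{\{w|\zeta|\in[1/\lambda,\eta s]\}}\bigr].
\]
The tail hypothesis controls the second term: it is roughly $\sup_{v\leq\eta s}\exp\{\lambda v-c(v/w)^\gamma\}$, summed over $y$. For $\gamma\geq1$ and $\lambda\approx s^{\beta-1}$, the exponent is negative once $v\gtrsim w^{\gamma/(\gamma-1)}\lambda^{1/(\gamma-1)}$. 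Sites with $w_y$ small contribute little, so the sum over $y$ stays bounded by $C+C\lambda^2$, which gives $\P\leq e^{-\lambda s+C\lambda^2+C}$.

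Choosing $\lambda$ gives the two regimes.

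If $\gamma<d/2$, take $\lambda=cs^{\gamma-1}$ to get $e^{-cs^\gamma}$. This needs $\lambda$ bounded when $\gamma=1$, which is consistent with the exponential moment.

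If $\gamma>d/2$, the obstruction is a ball of radius $\rho$ where all $\zeta\leq -a$. In that ball $w\approx\rho^{2-d}\cdot$, and the Green mass of the ball is $\approx\rho^2$, so $\rho\approx s^{1/2}$ at cost $e^{-c\rho^d}=e^{-cs^{d/2}}$. I expect the matching upper bound to come from taking $\lambda=s^{d/2-1}$. This forces the admissible range $\lambda w_y\leq\theta_0$ to hold only for $|y|\gtrsim\lambda^{1/(d-2)}=s^{1/2}$. The at most $Cs^{d/2}$ sites inside that radius are handled by the crude bound $\E e^{\lambda w\zeta_-}\leq e^{C\lambda w\cdot\text{const}}$, using the superexponential tail when $\gamma>1$; each contributes $O(\lambda w)=O(1)$ to the log-Laplace, giving a total of $Cs^{d/2}$. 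The outer sites contribute $C\lambda^2\sum_y w_y^2=Cs^{d-2}$, which is too big. So outer sites must use the bound $\lambda^2w^2$ only where $\lambda w\lesssim1$, and then $\sum_{|y|>s^{1/2}}\lambda^2w_y^2\approx\lambda^2 s^{(4-d)/2}=Cs^{d/2-\cdot}$. With $\lambda=\epsilon s^{d/2-1}$, the Chernoff exponent $-\lambda s+C\epsilon s^{d/2}+C\epsilon^2s^{d/2}$ yields $e^{-cs^{d/2}}$.

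\textbf{Main obstacle.} The main obstacle is the bookkeeping of the log-Laplace bound for $\gamma>d/2$ near the origin. It must be shown that $\log\E e^{\lambda w\zeta_-}\leq C\lambda w$ holds uniformly for $\lambda w\gg1$ under the stretched tail with $\gamma>1$; this holds since $\E e^{\mu\zeta_-}\leq e^{C\mu+C\mu^{\gamma/(\gamma-1)}}$, and the extra term summed over inner sites must stay $O(s^{d/2})$. This is where $\gamma\neq d/2$ enters: the sum $\sum_y(\lambda w_y)^{\gamma/(\gamma-1)}$ converges or diverges at the appropriate end. The case $\gamma<d/2$ uses the same computation with the dominant contribution from the origin.
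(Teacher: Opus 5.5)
Your architecture is the paper's: an exponential Chebyshev bound for $\sum_y w_y\zeta(y)$ with per-site log-Laplace estimates, split according to whether $\lambda w_y$ is small or large. Your closing paragraph even names the decisive sum $\sum_y(\lambda w_y)^{\gamma/(\gamma-1)}$. The single-site truncation at $-\eta s$ is unnecessary. For $\gamma>1$ the tail hypothesis makes $\E e^{-\mu\zeta(0)}$ finite for every $\mu$, and for $\gamma=1$ a fixed small $\lambda$ suffices. Your displayed estimate for the truncation also omits the atom at $-\eta s$, which contributes $e^{\lambda\eta s}\P(w_y\zeta(y)<-\eta s)$.

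However, the estimates you actually write down do not close. For $\gamma<d/2$ you assert that the summed log-Laplace is at most $C+C\lambda^2$, hence $\P\leq e^{-\lambda s+C\lambda^2+C}$, and you take $\lambda\asymp s^{\gamma-1}$.
\begin{itemize}
\item For $2<\gamma<d/2$, a nonempty range since $d\geq5$, this is useless: $\lambda^2\asymp s^{2\gamma-2}\gg s^\gamma\asymp\lambda s$, so the exponent tends to $+\infty$.
\item For $1<\gamma<2$ the assertion is false. At sites with $\lambda w_y\geq1$, the second term of your key estimate is of order $\exp\{c(\lambda w_y)^{\gamma/(\gamma-1)}\}$; this is the supremum you compute. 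The logarithm must therefore be taken before summing, and the per-site contribution is then of order $(\lambda w_y)^{\gamma/(\gamma-1)}$, which exceeds $\lambda^2$.
\end{itemize}
The missing ingredient is the per-site bound $\log\E e^{-\mu\zeta(0)}\leq C\mu^2$ for $\mu\leq1$ and $\leq C\mu^{p}$, with $p\coloneqq\gamma/(\gamma-1)$, for $\mu\geq1$; the latter follows from Young's inequality against the tail. Combine it with $(\lambda w)^2\leq(\lambda w)^p$ on $\{\lambda w\leq1\}$ when $\gamma\geq2$. For $\lambda\geq1$ this bounds the total by $C\lambda^p$. That uses $\sum_yG(0,y)^2<\infty$ and $\sum_yG(0,y)^p<\infty$; the latter holds precisely because $p>d/(d-2)$, i.e.\ $\gamma<d/2$. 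Then $\lambda=bs^{\gamma-1}$ with $b$ small gives $-bs^\gamma+Cb^ps^\gamma\leq-cs^\gamma$.

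For $\gamma>d/2$, your final exponent $-\lambda s+C\epsilon s^{d/2}+C\epsilon^2s^{d/2}$ with $\lambda=\epsilon s^{d/2-1}$ equals $-(1-C-C\epsilon)\epsilon s^{d/2}$. This is negative only if $C<1$, and nothing gives you that. The term linear in $\epsilon$ has two sources. First, you keep the inner radius at $s^{1/2}$ after shrinking $\lambda$, so sites at radius between $\lambda^{1/(d-2)}$ and $s^{1/2}$ have $\lambda w_y\lesssim\epsilon$ but are charged linearly. Second, you use $\zeta_-$, whose Laplace transform carries the linear term $\mu\E\zeta(0)_-$. Two further claims are wrong:
\begin{itemize}
\item Inner sites do not each contribute $O(1)$: near the origin $\lambda w_y\asymp s^{d/2-1}$.
\item $\log\E e^{\mu\zeta_-}\leq C\mu$ fails for large $\mu$ when $\zeta_-$ is unbounded. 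The correct bound is the $C\mu+C\mu^{\gamma/(\gamma-1)}$ you write at the end.
\end{itemize}
The fix is to split at $|y|\asymp\lambda^{1/(d-2)}$ for the $\lambda$ actually used. Inner sites then have $\lambda w_y\gtrsim1$, so $\lambda w_y\leq C(\lambda w_y)^p$ absorbs any linear term. Because $p<d/(d-2)$, the inner sum $\sum(\lambda w_y)^p$ is dominated by its outer shell and is at most $C\lambda^{d/(d-2)}$. The outer sum $\lambda^2\sum_{|y|\gtrsim\lambda^{1/(d-2)}}w_y^2$ is also at most $C\lambda^{d/(d-2)}$. The total $C\lambda^{d/(d-2)}=C\epsilon^{d/(d-2)}s^{d/2}$ is superlinear in $\epsilon$, so a small $\epsilon$ closes the bound. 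This is exactly the paper's computation, with $\beta/(\beta-1)=d/(d-2)$ for $\beta=d/2$.
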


		\begin{proof}
			We estimate the Laplace transform of the finite Green average. If
			$\gamma=1$, choose a fixed small $\lambda>0$. Since $\E\zeta(0)=0$,
			\eqref{eq:dgt4-exp-moment} gives that for every $0\leq a\leq G(0,0)$,
				\[
					\log\E e^{-\lambda a\zeta(0)}\leq Ca^2\, .
				\]
			Since $g_m\leq G$ and
			$\sum_{y\in\Z^d}G(0,y)^2<\infty$, independence gives
			\[
					\P\left(\sum_{y\in\Z^d}g_m(0,y)\zeta(y)\leq -s\right)
					\leq
					\exp\{-\lambda s+C\}
					\leq Ce^{-cs}\, .
				\]

			Assume next that $1<\gamma<\infty$. The bound
			\eqref{eq:dgt4-green-scenery-tail-assumption} and Young's inequality imply that for every $0\leq \mu\leq 1$,
				\[
					\log\E e^{-\mu\zeta(0)}
					\leq C\mu^2\, ,
				\]
			and for every $\mu\geq 1$,
				\[
					\log\E e^{-\mu\zeta(0)}
					\leq C\mu^{\gamma/(\gamma-1)}\, .
				\]
			Let $\lambda\geq1$. If $\gamma<d/2$, then
			$\sum_{y\in\Z^d}G(0,y)^{\gamma/(\gamma-1)}<\infty$. Splitting the
			logarithmic moment generating function according to whether
			$\lambda g_m(0,y)$ is at most one gives
				\[
					\sum_{y\in\Z^d}\log\E e^{-\lambda g_m(0,y)\zeta(y)}
					\leq C\lambda^{\gamma/(\gamma-1)}\, .
				\]
			If $\gamma>d/2$, split space at distance
			$C(1+\lambda^{1/(d-2)})$. On the inner region use the large-parameter
			bound and $G(0,y)\leq C(1+|y|)^{2-d}$; on the outer region use
			$\lambda g_m(0,y)\leq1$ and the small-parameter bound. This gives
				\[
					\sum_{y\in\Z^d}\log\E e^{-\lambda g_m(0,y)\zeta(y)}
					\leq C\lambda^{d/(d-2)}\, .
				\]
			Therefore, with $\beta\coloneqq\min\{\gamma,d/2\}$,
			\[
					\P\left(\sum_{y\in\Z^d}g_m(0,y)\zeta(y)\leq -s\right)
					\leq
					\exp\left\{-\lambda s+C\lambda^{\beta/(\beta-1)}\right\}\, .
				\]
			Taking $\lambda=bs^{\beta-1}$ with $b>0$ small proves the case
			$1<\gamma<\infty$.
		\end{proof}

		The upper-bound proof below will average $u_t-\E u_t(0)$ by $P^m$.
		The next estimate says that this averaging improves concentration at the
		scale dictated by the tail of the Green function after time $m$.

		\begin{lemma}\label{lem:dgt4-smoothed-odometer-tail}
			Suppose $\E e^{\theta_0|\zeta(0)|}\leq K_0$ for some $\theta_0>0$ and
			$K_0<\infty$. There are constants $c,C>0$ such that, for all
			$m\geq1$, $n\geq0$, and $s\geq1$,
				\[
					\P\left(P^m(u_n-\E u_n(0))(0)\leq -s\right)
					\leq
					C\exp\left\{
					-c\min\left(s^2m^{(d-4)/2}, s m^{(d-2)/2}\right)
					\right\}\, .
			\]
		\end{lemma}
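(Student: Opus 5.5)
The plan is to treat $F\coloneqq P^m u_n(0)=\sum_y p_m(0,y)u_n(y)$ as a function of the i.i.d.\ scenery, and to apply the concentration Lemma~\ref{lem:weighted-exp-conc}\textup{(b)}. By stationarity $\E F=\E u_n(0)$, so the event in the statement is exactly $\{F-\E F\leq -s\}$. The work is therefore to compute coordinate Lipschitz constants for $F$ and to bound their $\ell^2$ and $\ell^\infty$ norms in terms of $m$.

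For the Lipschitz constants, recall the resampling bound \eqref{eq:odometer-resampling} from the proof of Proposition~\ref{prop:finite-time-concentration-scale}: changing $\zeta(z)$ to $\zeta'(z)$ changes $u_n(y)$ by at most $g_n(y,z)|\zeta(z)-\zeta'(z)|$. Averaging against $p_m(0,\cdot)$ and using Chapman--Kolmogorov gives
\[
	|F(\zeta)-F(\zeta^{(z)})|\leq \ell_z|\zeta(z)-\zeta'(z)|,
	\qquad
	\ell_z\coloneqq\sum_{k=m}^{m+n-1}p_k(0,z)\leq\sum_{j\geq m}p_j(0,z)\, .
\]
This is the same computation as Step~2 of the proof of Proposition~\ref{prop:d4-pointwise-linearization}, now carried out in $d\geq5$. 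By the tail-kernel bounds \eqref{eq:dgt4-tail-kernel},
\[
	\|\ell\|_{\ell^\infty}\leq Cm^{(2-d)/2},
	\qquad
	\|\ell\|_{\ell^2}^2\leq Cm^{(4-d)/2}\, .
\]
These bounds hold uniformly in $n$, which is why the estimate is uniform in $n\geq0$. For $n=0$ the statement is trivial.

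Lemma~\ref{lem:weighted-exp-conc}\textup{(b)} then gives
\[
	\P(F-\E F\leq -s)\leq C\exp\bigl\{-c\min\bigl(s^2m^{(d-4)/2},\,s\,m^{(d-2)/2}\bigr)\bigr\},
\]
which is the claim, with constants depending only on $\theta_0$, $K_0$ and $d$. The restriction $s\geq1$ is not needed for this argument, but it is harmless.

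The main point needing care is that Lemma~\ref{lem:weighted-exp-conc} is stated for finitely many coordinates. I would handle this by truncation: $u_n(y)$ depends only on the scenery in the ball of radius $n$ around $y$, and $p_m(0,\cdot)$ has finite support, so $F$ depends on finitely many coordinates. The bound is then applied directly to this finite set of coordinates, and the constants do not depend on how many coordinates there are. A minor check is the overlap of the summation window in $\ell_z$: since $g_n(y,z)$ sums $p_j$ over $j<n$, the shift gives $k\in[m,m+n)$, which is dominated by the full tail sum. No other obstacle is expected.
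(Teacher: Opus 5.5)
Your proposal is correct and is essentially the paper's proof. The paper likewise bounds the coordinate Lipschitz constants of $P^m u_n(0)$ by $\sum_{j\geq m}p_j(0,y)$ using the stopping-representation Lipschitz bound and Chapman--Kolmogorov, takes the $\ell^\infty$ and $\ell^2$ norms from \eqref{eq:dgt4-tail-kernel}, and concludes with Lemma~\ref{lem:weighted-exp-conc}.
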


		\begin{proof}
			As a function of the scenery,
				\[
					P^m u_n(0)=\sum_{z\in\Z^d}p_m(0,z)u_n(z)
				\]
			has Lipschitz constants bounded by
			\[
			\begin{aligned}
				\sum_{z\in\Z^d}p_m(0,z)g_n(z,y)
				&=
				\sum_{j=0}^{n-1}p_{m+j}(0,y)
				\leq
				\sum_{j\geq m}p_j(0,y)\, ,
			\end{aligned}
			\]
			where the first inequality is the optimal-stopping Lipschitz bound
			from Theorem~\ref{thm:RW}. The estimate
			\eqref{eq:dgt4-tail-kernel} gives the corresponding supremum and
			$\ell^2$ bounds.
			Lemma~\ref{lem:weighted-exp-conc}, applied to
			$P^m u_n(0)-\E P^m u_n(0)=P^m(u_n-\E u_n(0))(0)$, proves the
			display.  
		\end{proof}

		We now combine Lemma~\ref{lem:dgt4-stretched-green-scenery-tail} with
		the smoothed-odometer estimate to prove the refined upper bound.

		\begin{theorem}[Refined high-dimensional upper bound]
		\label{thm:dgt4-height-upper-tail}
			Assume \eqref{eq:dgt4-exp-moment}. Suppose that there are
			$\gamma\in[1,\infty)$, $\gamma\ne d/2$, and $c_0,C_0,s_{0}>0$ such that for every $s\geq s_{0}$
			\[
				\P(\zeta(0)\leq -s)\leq C_0e^{-c_0s^\gamma}\, .
			\]
			Then there is $C<\infty$ such that for every $t\geq 2$,
			\[
				\E u_t(0)\leq C(\log t)^{1/\min\{\gamma,d/2\}}\, .
			\]
		\end{theorem}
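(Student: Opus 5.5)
Write $\beta\coloneqq\min\{\gamma,d/2\}$ and $m_t\coloneqq\E u_t(0)$. The plan is the same bootstrap as in Step~1 of the proof of Theorem~\ref{thm:critical-toppling-d4}: bound the reflected increment \eqref{eq:dgt4-height-increment} by a quantity that is tiny once $m_t$ is large on the scale $(\log t)^{1/\beta}$. Then integrate, using the crude bound \eqref{eq:dgt4-mean-increment-bound} before that threshold is reached.

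\emph{Increment bound.} Fix $m\geq1$, to be tuned. Unrolling the recursion of Lemma~\ref{lem:recursion} $m$ steps with $a_+\geq a$ gives
\[
\zeta(0)+Pu_t(0)\geq \sum_{y\in\Z^d}g_m(0,y)\zeta(y)+P^{m}u_{t-m+1}(0)
\]
for $t\geq m$; this is the deterministic stopping rule at time $m$ in Theorem~\ref{thm:RW}, after absorbing the first site. Write $P^m u_{t-m+1}(0)=m_{t-m+1}+P^m(u_{t-m+1}-\E u_{t-m+1}(0))(0)$. By \eqref{eq:dgt4-mean-increment-bound} and concavity, $m_{t-m+1}\geq m_t-m\,m_t/(t-m)\geq m_t/2$ when $m\leq t/4$. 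Hence $-\zeta(0)-Pu_t(0)>s$ forces one of two events. Either the Green average $\sum_y g_m(0,y)\zeta(y)$ is at most $-(m_t+s)/4$, or the smoothed fluctuation $P^m(u_{t-m+1}-\E u_{t-m+1}(0))(0)$ is at most $-(m_t+s)/4$. Lemma~\ref{lem:dgt4-stretched-green-scenery-tail} bounds the first probability by $C\exp\{-c(m_t+s)^\beta\}$. Lemma~\ref{lem:dgt4-smoothed-odometer-tail} bounds the second by $C\exp\{-c\min((m_t+s)^2m^{(d-4)/2},(m_t+s)m^{(d-2)/2})\}$. Choose $m$ as a function of $m_t$, for instance $m\coloneqq\lceil m_t^{2\beta/(d-2)}\rceil$, large enough that the second exponent dominates $(m_t+s)^\beta$. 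With this choice $(m_t+s)m^{(d-2)/2}\geq(m_t+s)m_t^{\beta}$, and the quadratic term is also larger because $d\geq5$. Integrating in $s$ then gives
\[
m_{t+1}-m_t\leq C\exp\{-c\,m_t^{\beta}\},
\]
valid whenever $m_t\geq1$ and $m\leq t/4$. The latter holds as long as $m_t\leq C(\log t)^{1/\beta}$, since then $m$ is polylogarithmic in $t$.

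\emph{Integration.} Let $\ell\coloneqq\log(t+2)$ and let $A$ be large. Set $\tau\coloneqq\inf\{n:m_n\geq (A\ell)^{1/\beta}\}$. Before $\tau$ there is nothing to prove. At time $\tau\leq t$ we have $m_\tau\leq(A\ell)^{1/\beta}+C$, using the crude bound \eqref{eq:dgt4-mean-increment-bound} or $m_{n+1}-m_n\leq C$ from the exponential moment. For $n\geq\tau$ in the range where $m_n\leq 2(A\ell)^{1/\beta}$, every increment is at most $Ce^{-cA\ell}=C(t+2)^{-cA}$. Over at most $t$ steps this adds $o(1)$ once $A$ is large. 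So $m_n$ never reaches $2(A\ell)^{1/\beta}$ by time $t$, which gives $m_t\leq C(\log t)^{1/\beta}$. The range condition $m\leq t/4$ holds throughout, because $m_n$ stays polylogarithmic.

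\emph{Main obstacle.} The delicate step is the increment bound. The unrolled lower bound must couple to $u_{t-m+1}$ at the right time, and the bookkeeping that replaces $m_{t-m+1}$ by $m_t/2$ uses the concavity of Lemma~\ref{lem:reflection-increment}. The choice of $m$ must also make the smoothed fluctuation term subdominant in the linear (exponential) regime of Lemma~\ref{lem:dgt4-smoothed-odometer-tail}, and not only in the Gaussian regime. If the choice $m\asymp m_t^{2\beta/(d-2)}$ proves too tight, I would instead take $m$ a larger power of $m_t$, or a small power of $\log t$. Any $m$ growing polylogarithmically keeps $m\ll t$, so there is slack for this.
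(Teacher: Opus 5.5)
Your strategy matches the paper's: unroll $u_{k+1}\ge\zeta+Pu_k$, split the deficit into a Green average of the scenery and a smoothed fluctuation, bound these by Lemma~\ref{lem:dgt4-stretched-green-scenery-tail} and Lemma~\ref{lem:dgt4-smoothed-odometer-tail}, and integrate the increment bound. There is, however, a gap in the integration step. It comes from combining your choice $m=\lceil m_t^{2\beta/(d-2)}\rceil$ with the concavity comparison of means.

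\emph{The gap.} The increment bound at a time $n$ needs $m(n)=\lceil m_n^{2\beta/(d-2)}\rceil\le n/4$, because that is what gives $m_{n-m+1}\ge m_n/2$. In the integration you need this at every $n\in[\tau,t)$, and you justify it by saying that $m_n$ is polylogarithmic in the final time $t$. But $n$ can be as small as $\tau$, and your only a priori control is linear, $m_k\le k\,\E(-\zeta(0))_+$. This gives $\tau\gtrsim(A\ell)^{1/\beta}$, whereas you need $\tau\gtrsim(A\ell)^{2/(d-2)}$.

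\begin{itemize}
\item When $2\beta>d-2$ the required bound does not follow. This includes every $\gamma>d/2$, where $\beta=d/2$, for instance scenery bounded below.
\item On the up to order $(A\ell)^{2/(d-2)}$ steps not covered, the increments can only be bounded by $\E(-\zeta(0))_+$. Their sum can far exceed $(A\ell)^{1/\beta}$, so the argument does not close.
\item Your proposed fallbacks go the wrong way. A larger power of $m_t$, or a power of $\log t$ with $t$ the final time, makes the range condition harder to satisfy.
\end{itemize}

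\emph{The repair.} Take $m$ smaller; your choice is larger than needed. With $m=\lfloor\varepsilon m_t\rfloor$, both exponents in Lemma~\ref{lem:dgt4-smoothed-odometer-tail} at $s=0$ are already of order $m_t^{d/2}\ge m_t^\beta$. Then compare means with the absolute one-step bound $m_{k+1}-m_k\le\E(-\zeta(0))_+$ (from $Pu_k\ge0$) instead of concavity:
\begin{itemize}
\item $m_t-m_{t-m+1}\le\varepsilon\,\E(-\zeta(0))_+\,m_t\le m_t/2$;
\item $m\le t/2$ follows from $m_t\le t\,\E(-\zeta(0))_+$, for $\varepsilon$ small.
\end{itemize}
So the increment bound $m_{n+1}-m_n\le C\exp\{-c\,m_n^\beta\}$ holds at every time with $m_n$ large, and your integration closes.

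\emph{Comparison with the paper.} This is the paper's bookkeeping, with one difference. The paper ties the unrolling depth to the deviation level, $m_h=\lfloor\varepsilon h\rfloor$. It therefore first proves the crude bound $\E u_t(0)\le C\log(t+2)$ to dispose of $h>t^{1/2}$. Fixing $m$ in terms of $m_t$ for all $s$, as you do, avoids that split. Alternatively, proving the crude bound first gives $n\ge e^{c m_n}$, which repairs your range condition directly.

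\emph{A minor point.} Your claim that the second exponent dominates $(m_t+s)^\beta$ fails for large $s$ when $\beta>1$. This is harmless, since only the integral over $s$ matters.
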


		\begin{proof}
			Set $\beta\coloneqq\min\{\gamma,d/2\}$.
			By Lemma~\ref{lem:dgt4-stretched-green-scenery-tail}, for every $m\geq1$ and $s\geq1$,
			\begin{equation}\label{eq:dgt4-refined-green-tail}
				\P\left(\sum_{y\in\Z^d}g_m(0,y)\zeta(y)\leq -s\right)
				\leq
			    Ce^{-cs^\beta}\, .
			\end{equation}

			We first prove the crude upper bound
			\begin{equation}\label{eq:dgt4-crude-log-upper}
				\E u_t(0)\leq C\log(t+2)\, .
			\end{equation} 
			The map
			$\zeta\mapsto \zeta(0)+Pu_t(0)$ has Lipschitz constants bounded by
			$g_{t+1}(0,y)\leq G(0,y)$, so Lemma~\ref{lem:weighted-exp-conc} gives
			\begin{equation}\label{eq:dgt4-crude-tail}
				\P\left(\zeta(0)+Pu_t(0)-\E u_t(0)\leq-h\right)
				\leq
				Ce^{-c\min(h^2,h)}\, .
			\end{equation}
			By \eqref{eq:dgt4-height-increment},
			\[
			\E u_{t+1}(0)-\E u_t(0)
				\leq
				\int_{\E u_t(0)}^\infty e^{-c\min(h^2,h)}\,dh
				\leq
				Ce^{-c\E u_t(0)}\, .
			\]
			Integrating this deterministic increment bound gives~\eqref{eq:dgt4-crude-log-upper}.

			We next prove the sharper reflected-increment bound
			\begin{equation}\label{eq:dgt4-reduction-increment}
				\E u_{t+1}(0)-\E u_t(0)
				\leq
				C\exp\{-c(\E u_t(0))^\beta\}\, .
			\end{equation}
			For bounded values of $\E u_t(0)$ this follows after increasing $C$,
			because $Pu_n(0)\geq0$ implies
			\begin{equation}\label{eq:dgt4-one-step-mean-increment}
				\E u_{n+1}(0)-\E u_n(0)\leq \E(-\zeta(0))_+\, .
			\end{equation}
			It remains to consider large $\E u_t(0)$.

			Choose $\varepsilon>0$ so small that
			\[
				\varepsilon\E(-\zeta(0))_+\leq1/8\, .
			\]
			Fix $\E u_t(0)\leq h\leq t^{1/2}$, and let
			\[
				m_h\coloneqq\lfloor\varepsilon h\rfloor\, .
			\]
			For all large $h$, we have $1\leq m_h\leq t$. Summing
			\eqref{eq:dgt4-one-step-mean-increment} over the last $m_h$ steps gives
			\begin{equation}\label{eq:dgt4-short-backward-mean-change}
				\E u_t(0)-\E u_{t-m_h}(0)\leq h/8\, .
			\end{equation}

			Iterating $u_{n+1}\geq\zeta+Pu_n$ gives
			\[
				\zeta(0)+Pu_t(0)
				\geq
				\sum_{k=0}^{m_h}P^k\zeta(0)
				+
				P^{m_h+1}u_{t-m_h}(0)\, .
			\]
			Combining this with \eqref{eq:dgt4-short-backward-mean-change},
			\[
			\begin{aligned}
				\zeta(0)+Pu_t(0)-\E u_t(0)
				\geq
				\sum_{k=0}^{m_h}P^k\zeta(0)
				+
				P^{m_h+1}\left(u_{t-m_h}-\E u_{t-m_h}(0)\right)(0)
				-h/8\, .
			\end{aligned}
			\]
			Thus, on the event
			\[
				\zeta(0)+Pu_t(0)-\E u_t(0)\leq-h\, ,
			\]
			at least one of
			\[
				\sum_{k=0}^{m_h}P^k\zeta(0)
				\leq-\frac7{16}h,
				\qquad
				P^{m_h+1}
				\left(u_{t-m_h}
				-\E u_{t-m_h}(0)\right)(0)
				\leq-\frac7{16}h
			\]
			occurs. Since
			\[
				\sum_{k=0}^{m_h}P^k\zeta(0)
				=
				\sum_{y\in\Z^d}g_{m_h+1}(0,y)\zeta(y)\, ,
			\]
			\eqref{eq:dgt4-refined-green-tail} bounds the first event by
			$Ce^{-ch^\beta}$. Lemma~\ref{lem:dgt4-smoothed-odometer-tail},
			applied with $m=m_h+1$, $n=t-m_h$, and $s=7h/16$, bounds the
			second event by $Ce^{-ch^{d/2}}$. Since $\beta\leq d/2$,
			\[
				\P\left(\zeta(0)+Pu_t(0)-\E u_t(0)\leq-h\right)
				\leq
				Ce^{-ch^\beta}
			\]
			for $\E u_t(0)\leq h\leq t^{1/2}$. For $h>t^{1/2}$, \eqref{eq:dgt4-crude-tail} gives a total contribution at most $Ce^{-ct^{1/2}}$,
			which is bounded by $Ce^{-c(\E u_t(0))^\beta}$ because of
			\eqref{eq:dgt4-crude-log-upper}. Integrating over
			$h\geq\E u_t(0)$ in \eqref{eq:dgt4-height-increment} proves
			\eqref{eq:dgt4-reduction-increment}.

			It remains to integrate \eqref{eq:dgt4-reduction-increment}. If
			\[
				\tau_R\coloneqq\inf\{n\geq0:\E u_n(0)\geq R\}\, ,
			\]
			then \eqref{eq:dgt4-one-step-mean-increment} gives
			$\E u_{\tau_R}(0)\leq R+\E(-\zeta(0))_+$. During the interval
			$\tau_R\leq n<\tau_{2R}$, \eqref{eq:dgt4-reduction-increment} bounds
			each increment by $Ce^{-cR^\beta}$. Therefore, for all large $R$,
			\[
				\tau_{2R}-\tau_R\geq cR\exp\{cR^\beta\}\, .
			\]
			Taking
			$R=\E u_t(0)/2$ gives
			\[
				t\geq c\E u_t(0)\exp\{c(\E u_t(0))^\beta\}\, ,
			\]
			and hence $\E u_t(0)\leq C(\log t)^{1/\beta}$.
		\end{proof}

If $\zeta(0)$ is bounded below, \eqref{eq:dgt4-universal-mean-lower}
and Theorem~\ref{thm:dgt4-height-upper-tail} give
\[
	\E u_t(0)\asymp(\log t)^{2/d}\, .
\]

\subsection{Scaling limit in dimensions five and higher}\label{ssec:dgt4-diffusive-membrane}

	We show that the diffusively rescaled fluctuations of the odometer converge
	to the field $\mathcal H_{\kappa,T}$ of
	\eqref{eq:power-weighted-continuum-membrane}.
	At the end of the subsection, we construct a smooth law with an
	exponential moment for which the rescaled fluctuations do not converge.

\begin{theorem}[Diffusive odometer limits]\label{thm:dgt4-diffusive-membrane}
	Let $d\geq5$. Suppose that $(\zeta(x))_{x\in\Z^d}$ are i.i.d.\ and
	atomless, with mean
	zero and finite positive variance. Assume either that
	\begin{enumerate}[label=\textup{(\alph*)}]
		\item the $\zeta(x)$ are Gaussian; or
		\item the $\zeta(x)$ are bounded above and, for some $\alpha>2$,
		\[
			\frac{\P(\zeta(0)<-\lambda r)}{\P(\zeta(0)<-r)}
			\longrightarrow\lambda^{-\alpha}
			\qquad(r\to\infty)\, ,
		\]
		for every $\lambda>0$.
	\end{enumerate}
	Then, for every $T>0$ and every $s>(d-4)/2$, as $R\to\infty$,
	\[
		R^{(d-4)/2}\bigl(u_{\lfloor R^2T\rfloor}-\E u_{\lfloor R^2T\rfloor}(0)\bigr)^{(R)}\Longrightarrow\mathcal H_{\kappa,T}\qquad\text{in }H^{-s}_{\rm loc}(\R^d)\, ,
	\]
	where $\kappa=1$ in case \textup{(a)} and $\kappa=1-1/\alpha$ in case
	\textup{(b)}. Moreover, as $n\to\infty$,
	\[
		\P(u_n(0)=0)\sim\frac{G(0,0)\kappa}{n}\, .
	\]
\end{theorem}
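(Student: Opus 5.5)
The plan is to prove a linearization around the mean with weights given by the survival probability of the optimal walk, and then identify that survival probability. By Lemma~\ref{lem:odometer-derivative}, $\partial_{\zeta(z)}u_n(x)=\mathbf E_x\sum_{j<n}\one_{\{X_j=z\}}\one_{\{\tau_n^*>j\}}$, and each $u_n(x)$ is coordinatewise convex with derivative bounded by $g_n(x,z)\le G(x,z)$. Lemma~\ref{lem:convex-linear-bound} then compares $u_n(x)-\E u_n(x)$ with the linear field $\sum_z a_n(x,z)\zeta(z)$, where $a_n(x,z)=\E\partial_{\zeta(z)}u_n(x)$. To make this work I will show that the variance of the derivatives is small in $\ell^2$ after spatial averaging. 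Concretely, I will show that $\one_{\{\tau_n^*>j\}}$ is close to its annealed mean $s_n(j)\coloneqq\P\otimes\mathbf P_0(\tau_n^*>j)$. The heuristic is that the events $\{u_{n-i}(X_i)=0\}$ are rare and nearly independent of the remote scenery. This yields the approximation
\[
u_n-\E u_n(0)\approx\sum_{j<n}s_n(j)P^j\zeta,
\]
which is the content of Proposition~\ref{prop:dgt4-linearization}. The error should be controlled in $H^{-s}$ at scale $R^{(d-4)/2}$ through covariance bounds, using \eqref{eq:dgt4-intersection-first-moment}--\eqref{eq:dgt4-intersection-second-moment} for two walks.

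Next I compute the survival function. Write $q_m\coloneqq\P(u_m(0)=0)$. Heuristically the walk at step $i$ stops with probability close to $q_{n-i}$, so
\[
s_n(j)\approx\prod_{i\le j}(1-q_{n-i}).
\]
If $q_m\sim G(0,0)\kappa/m$, then this product behaves like $((n-j)/n)^{G(0,0)\kappa\cdot(\text{density correction})}$. The correct statement should be $s_n(\lfloor rR^2\rfloor)\to(1-r/T)^\kappa$. The factor $G(0,0)$ compensates for revisits: a walk passing through a zero site visits it about $G(0,0)$ times, so what matters is the capacity-type rate, namely the expected number of distinct zero sites hit. Given this, the Riemann sum $R^{(d-4)/2}\sum_j s_n(j)P^j\zeta$ converges to $\mathcal H_{\kappa,T}$. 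The argument follows Step~1 of Proposition~\ref{prop:d4-superdiffusive-limit}: Fourier multipliers, dominated convergence, tightness via Lemma~\ref{lem:sobolev-tightness} with $\beta=d-4$, and then Lindeberg plus Cram\'er--Wold.

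The main obstacle is the asymptotic $\P(u_n(0)=0)\sim G(0,0)\kappa/n$. I will derive it from the mean increment identity \eqref{eq:dgt4-height-increment}. Let $m_n\coloneqq\E u_n(0)$. Since $u_{n+1}(0)=0$ exactly when $\zeta(0)+Pu_n(0)\le0$, we have $m_{n+1}-m_n=\E[(-\zeta(0)-Pu_n(0))_+]$, and this equals the probability of a zero times the expected undershoot given a zero. By concentration \eqref{eq:dgt4-pointwise-concentration}, $Pu_n(0)$ sits near $m_n$, so a zero essentially requires an extreme negative deviation.

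In case (b) the deviation comes from a single site with $\zeta(0)\approx -m_n$. Regular variation gives an undershoot of size $m_n/(\alpha-1)$ and a zero probability of $\P(\zeta(0)<-m_n)$ up to the Green correction. Conditioning on $\zeta(0)=-s$ lowers $Pu_n(0)$ by roughly $s(G(0,0)-1)$, so the effective threshold becomes $\zeta(0)<-m_n/G(0,0)$. Solving the resulting ODE $m'\approx c\,m\,\bar F(m/G(0,0))$ together with $q_n\approx\bar F(m_n/G(0,0))$ should give $n q_n\to G(0,0)(1-1/\alpha)$.

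In case (a) the mechanism is the Gaussian membrane field, so no single value is decisive. There the undershoot is of order $\Var/m_n$, since it comes from a Gaussian tail at high level. The corresponding computation gives $\kappa=1$.

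Making these tail asymptotics rigorous is the hard step. It requires a conditional linearization near the extreme event, and I would first try to establish it in case (b), where the single-site mechanism is cleaner.
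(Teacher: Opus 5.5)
Your architecture matches the paper's. You feed mean derivatives from Lemma~\ref{lem:odometer-derivative} into Lemma~\ref{lem:convex-linear-bound}, count survival only over last visits (density $1/G(0,0)$, as in Lemma~\ref{lem:dgt4-weighted-last-visits}), and take a Lindeberg limit of the weighted linear field. But the step you defer is where the proof actually lives, and your sketch of it gives the wrong constant.

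In case~(b), linear response of $Pu_n(0)$ in $\zeta(0)$ does find the right threshold $-\zeta(0)>\E u_n(0)/G(0,0)$. It is valid only while the origin is positive, however. Once $u_m(0)=0$, walks from the neighbours stop on reaching the origin, and $Pu_n(0)$ no longer responds to $\zeta(0)$. The correct statements, proved by induction in Lemma~\ref{lem:dgt4-origin-frozen}, are $\{u_{n+1}(0)=0\}=\{-\zeta(0)>Pw_n(0)\}$ and $\E u_{n+1}(0)-\E u_n(0)=\E(-\zeta(0)-Pw_n(0))_+$, where $w_n$ is the odometer with the walk killed at the origin. The quantity $Pw_n(0)$ is independent of $\zeta(0)$ and has mean $\sim\E u_n(0)/G(0,0)$, by an exterior Dirichlet argument. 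Its lower deviations must still be shown negligible against the tail of $\zeta(0)$; in case~(b) there is no exponential moment, so \eqref{eq:dgt4-pointwise-concentration} is unavailable and a truncation plus Freedman argument is needed.

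Hence the undershoot is $\E u_n(0)/(G(0,0)(\alpha-1))$, not $\E u_n(0)/(\alpha-1)$ as you wrote. This is not cosmetic. Your ODE gives $n\P(u_n(0)=0)\to G(0,0)(1-1/\alpha)$ only if $c=1/(G(0,0)(\alpha-1))$. The undershoot you state corresponds to $c=1/(\alpha-1)$, and by Karamata to $n\P(u_n(0)=0)\to1-1/\alpha$, i.e.\ survival exponent $\kappa/G(0,0)$.

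In case~(a) you give no mechanism. The paper uses the threshold $\{-V_\infty(0)>\E u_n(0)\}$ with $V_\infty=\sum_zG(\cdot,z)\zeta(z)$ and shows $u_n(0)-\E u_n(0)$ is $L^2$-close to $V_\infty(0)$. It then conditions on $V_\infty(0)$, checks that the stopping payoff is negative only at the origin among nearby sites, and obtains the rescaled conditional increment $\max(y,0)/G(0,0)$. The factor $1/G(0,0)$ comes from the return probability of a walk started at a neighbour, and it is again exactly what makes $\kappa=1$.

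Second, even granting $\P(u_m(0)=0)\sim G(0,0)\kappa/m$, the weight $(1-j/n)^\kappa$ does not follow. The zero events at distinct sites of a path are functions of overlapping scenery, and in case~(a) they are strongly correlated at short range. So ``rare and nearly independent of the remote scenery'' does not justify your product formula.

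The paper needs a stronger input: $m\,\P\bigl(\{u_m(0)=0\}\mathbin{\triangle}\{J(0)>\E u_{m-1}(0)\}\bigr)\to0$ uniformly for $m$ comparable to $n$. Here $J=-G(0,0)\zeta$ in case~(b) and $J=-V_\infty$ in case~(a). After this replacement, factorization over last-visit sites is exact for i.i.d.\ $J$. For Gaussian $J$ it follows from the Li--Shao normal comparison inequality, using the correlation gap $\sup_{z\neq0}|\Cov(V_\infty(z),V_\infty(0))|/\Var(V_\infty(0))<1$ for nearby pairs and covariance decay for distant ones.

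Also, the annealed mean $\P\otimes\mathbf P_0(\tau_n^*>j)$ is the wrong target. Replacing $\E\partial_{\zeta(z)}u_n(0)$ by $\sum_j(1-j/n)^\kappa p_j(0,z)$ in $\ell^1$ requires $\mathbf E_0\bigl|\P(\tau_n^*>j\mid X)-(1-j/n)^\kappa\bigr|\to0$, quenched in the walk. Making $\sum_z\Var(\partial_{\zeta(z)}F)$ vanish, for a test-function average $F$ of the rescaled odometer, requires more. You must bound the covariance of the survival indicators of two fixed paths by $C(\delta R^2)^{-1}$ times their number of intersections plus $o(1)$, then apply \eqref{eq:dgt4-intersection-first-moment}--\eqref{eq:dgt4-intersection-second-moment}, handling the last $\delta R^2$ steps by a crude $\ell^2$ bound.

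Without these ingredients your plan is a correct heuristic outline rather than a proof.
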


	Throughout the remainder of this subsection, we assume the hypotheses of
	Theorem~\ref{thm:dgt4-diffusive-membrane}, and $\kappa$ denotes the exponent
	defined there.

	We approximate the centered odometer by a linear functional of the scenery.
	Roughly, after rescaling, $u_n-\E u_n(0)$ is close in second moment to
	\[
		\sum_{j=0}^{n-1}\left(1-\frac jn\right)^\kappa P^j\zeta\, ,
	\]
	whose rescaling converges to $\mathcal H_{\kappa,T}$; we call this approximation
	the linearization. The weights $(1-j/n)^\kappa$ arise from the mean derivatives
	of the odometer. For all $x,z\in\Z^d$, \eqref{eq:odometer-derivative} gives
	\[
	\E[\partial_{\zeta(z)}u_n(x)]
	=\mathbf E_x\sum_{j=0}^{n-1}
	\one_{\{X_j=z\}}
	\P\bigl(u_{n-r}(X_r)>0\text{ for every }0\leq r\leq j\mid X\bigr)\, .
	\]
	Two inputs identify this conditional probability. First, Proposition~\ref{prop:dgt4-contact-asymptotics} gives
	\[
		\P\bigl(u_{n-r}(x)=0\bigr)
		\sim\frac{G(0,0)\kappa}{n-r}
	\]
	for every $x\in\Z^d$. Second, along a fixed path only the last visit to each site before time $j$
	contributes, and Lemma~\ref{lem:dgt4-weighted-last-visits} shows that $G(0,0)$ times the
	resulting last-visit sum concentrates around $-\log(1-j/n)$.
	Treating the survival events at distinct last-visit sites as approximately
	independent, the conditional probability factorizes:
	\[
	\begin{aligned}
	&\P\bigl(u_{n-r}(X_r)>0\text{ for every }0\leq r\leq j
	\,\bigm|\,X\bigr)\\
	&\qquad\approx
	\exp\left\{
	-G(0,0)\kappa
	\sum_{\substack{0\leq r\leq j\\
	X_r\notin\{X_{r+1},\ldots,X_j\}}}
	\frac1{n-r}
	\right\}
	\approx
	\left(1-\frac jn\right)^\kappa .
	\end{aligned}
	\]
	This is the weight in the linear approximation above. Two propositions turn
	this heuristic into Theorem~\ref{thm:dgt4-diffusive-membrane}.
	Proposition~\ref{prop:dgt4-linearization} shows that replacing the conditional
	probability in the derivative formula by $(1-j/(R^2T))^\kappa$ changes the
	rescaled centered odometer by a term that converges to zero.
	Proposition~\ref{prop:weighted-membrane-limit}, applied with
	$q(r)=(1-r/T)^\kappa$, then identifies the limit as $\mathcal H_{\kappa,T}$.

\begin{proposition}[Scaling limit of time-weighted sums]\label{prop:weighted-membrane-limit}
	Let $d\geq5$ and $T>0$, let $q:[0,T]\to\R$ be continuous, and suppose that
	$(\zeta(x))_{x\in\Z^d}$ are i.i.d.\ with mean zero and finite positive variance.
	For every $s>(d-4)/2$, as $R\to\infty$,
	\[
		R^{(d-4)/2}
		\left(\sum_{j=0}^{\lfloor R^2T\rfloor-1}q(j/R^2)P^j\zeta\right)^{(R)}
		\Longrightarrow
		\sqrt{\Var(\zeta(0))}\int_0^Tq(r)e^{r\Delta/(2d)}\mathcal W\,dr
		\qquad\text{in }H^{-s}_{\rm loc}(\R^d)\, .
	\]
\end{proposition}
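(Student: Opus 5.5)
The field $F_R\coloneqq\sum_{j<\lfloor R^2T\rfloor}q(j/R^2)P^j\zeta$ is linear in the scenery, so the plan has three steps. First, tightness in $H^{-s}_{\rm loc}$ from a covariance bound. Second, convergence of the covariances of the test-function pairings via Fourier analysis. Third, Gaussianity of finite-dimensional marginals by Lindeberg--Feller and Cram\'er--Wold. This mirrors Step~1 of Proposition~\ref{prop:d4-superdiffusive-limit}.

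\emph{Tightness.} By Chapman--Kolmogorov,
\[
	\Cov(F_R(x),F_R(y))=\Var(\zeta(0))\sum_{j,k<\lfloor R^2T\rfloor}q(j/R^2)q(k/R^2)p_{j+k}(x,y).
\]
Since $q$ is bounded, this is at most $C\sum_{j,k\geq0}p_{j+k}(x,y)=C\sum_{z}G(x,z)G(y,z)\leq C(1+|x-y|)^{4-d}$ by \eqref{eq:dgt4-intersection-first-moment}. Lemma~\ref{lem:sobolev-tightness} with $\beta=d-4<d$ then gives tightness of $R^{(d-4)/2}F_R^{(R)}$ in $H^{-s}_{\rm loc}$ for every $s>(d-4)/2$.

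\emph{Covariances.} Fix $\varphi\in C_c^\infty(\R^d)$ and set $a_R(x)\coloneqq\int_{R^{-1}(x+[0,1)^d)}\varphi$. The pairing is then
\[
	R^{(d-4)/2}F_R^{(R)}(\varphi)=R^{(d-4)/2}\sum_x a_R(x)F_R(x),
\]
and by discrete Parseval its variance equals
\[
	\Var(\zeta(0))(2\pi)^{-d}R^{d-4}\int_{[-\pi,\pi]^d}|\widehat a_R(\theta)|^2\Bigl|\sum_{j<\lfloor R^2T\rfloor}q(j/R^2)\lambda(\theta)^j\Bigr|^2d\theta .
\]
Substitute $\theta=\xi/R$. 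Then $R^{d}\widehat a_R(\xi/R)\to\widehat\varphi(\xi)$ pointwise, with the rapid-decay bound $|R^d\widehat a_R(\xi/R)|\leq C_N(1+|\xi|)^{-N}$ coming from summation by parts. For $|\xi|$ bounded away from $\pi R$ we have $\lambda(\xi/R)^{j}=\exp\{-j|\xi|^2/(2dR^2)+O(j|\xi|^4/R^4)\}$. The multiplier $R^{-2}\sum_j q(j/R^2)\lambda(\xi/R)^j$ is therefore a Riemann sum converging to $\int_0^Tq(r)e^{-r|\xi|^2/(2d)}dr$. It is uniformly bounded by $C\min\{T,|\xi|^{-2}\}$, using $1-\lambda(\theta)\geq c|\theta|^2$.

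One point needs care: the region where $\lambda(\theta)$ is near $-1$, namely $\theta$ near $(\pi,\dots,\pi)$. There $\sum_j|\lambda|^j\leq C/(1-|\lambda|)$, while $\widehat a_R$ is super-polynomially small because the cell averages of a smooth $\varphi$ have oscillating sums. This contribution vanishes. Dominated convergence then gives
\[
	\Var\to\Var(\zeta(0))(2\pi)^{-d}\int|\widehat\varphi|^2\Bigl|\int_0^Tq(r)e^{-r|\xi|^2/(2d)}dr\Bigr|^2d\xi ,
\]
which is the variance of the claimed limit. Polarization handles the cross-covariances.

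\emph{Gaussianity and identification.} The pairing is $\sum_y c_R(y)\zeta(y)$ with $c_R(y)=R^{(d-4)/2}\sum_x a_R(x)\sum_j q(j/R^2)p_j(x,y)$. This gives $|c_R(y)|\leq CR^{(d-4)/2}\|a_R\|_1\sup_x\sum_j p_j(x,y)$. That bound is too crude. Instead, use $\|a_R\|_\infty\leq CR^{-d}$ together with $\sum_x$ over a box of side $CR$ to get $\sum_x g(x,y)\leq CR^2$, hence $|c_R(y)|\leq CR^{(d-4)/2}R^{-d}R^2=CR^{-d/2}\to0$. Since the variance converges, the Lindeberg condition holds for i.i.d.\ finite-variance variables, and Cram\'er--Wold yields joint Gaussian limits of finitely many pairings. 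Together with tightness, this identifies the limit in $H^{-s}_{\rm loc}$ as the Gaussian field $\sqrt{\Var(\zeta(0))}\int_0^Tq(r)e^{r\Delta/(2d)}\mathcal W\,dr$, whose covariance has exactly the Fourier form above.

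The main obstacle I expect is the uniform domination of the Fourier integrand. The low-frequency part is fine because $q$ is bounded and the time horizon is finite, but the parity/antipodal region and the local-CLT error uniform in $j\leq TR^2$ need explicit bounds. I would handle them by splitting $[-\pi,\pi]^d$ into $|\theta|\leq\delta$ and its complement, and using $|\lambda|\leq1-c$ away from $0$ and $(\pi,\dots,\pi)$.
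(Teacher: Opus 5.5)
Your proposal is correct and follows the paper's proof. The three steps are the same: tightness from $|\Cov(F_R(x),F_R(y))|\le C\sum_z G(x,z)G(y,z)\le C(1+|x-y|)^{4-d}$ and Lemma~\ref{lem:sobolev-tightness}; convergence of the variances of the pairings; and Lindeberg--Feller with Cram\'er--Wold via the coefficient bound $|c_R(y)|\le CR^{-d/2}$. The only variation is that you compute the variance limit by discrete Parseval, as the paper does in dimension four, rather than by the local CLT and a Riemann sum.

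One bookkeeping correction. Since $a_R(x)\approx R^{-d}\varphi(x/R)$, the correct statements are $\widehat a_R(\xi/R)\to\widehat\varphi(\xi)$ and $|\widehat a_R(\xi/R)|\le C_N(1+|\xi|)^{-N}$ on $[-\pi R,\pi R]^d$, with no factor $R^d$. Combined with the trivial bound $\bigl|R^{-2}\sum_{j<\lfloor R^2T\rfloor}q(j/R^2)\lambda(\xi/R)^j\bigr|\le T\sup_{[0,T]}|q|$, this already gives an integrable dominating function. So the region near $(\pi,\dots,\pi)$ needs no separate treatment.
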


\begin{proof}
	Write
	\[
		\mathcal F_R\coloneqq R^{(d-4)/2}\Bigl(\sum_{j=0}^{\lfloor R^2T\rfloor-1}q(j/R^2)P^j\zeta\Bigr)^{(R)}
		\qquad\text{and}\qquad
		\mathcal G\coloneqq\sqrt{\Var(\zeta(0))}\int_0^Tq(r)e^{r\Delta/(2d)}\mathcal W\,dr\, .
	\]
	The covariance bound from \eqref{eq:dgt4-intersection-first-moment} and
	Lemma~\ref{lem:sobolev-tightness} give tightness of $\mathcal F_R$ in
	$H^{-s}_{\rm loc}(\R^d)$.

	For $\varphi\in C_c^\infty(\R^d)$, write
	\[
	\mathcal F_R(\varphi)=\sum_{z\in\Z^d}a_R(z)\zeta(z),
	\qquad
	\sup_{z\in\Z^d}|a_R(z)|\leq C(\varphi)R^{-d/2},
	\]
	where the coefficient bound follows from the Gaussian upper bound
	\eqref{eq:rw-gaussian-upper}.
	The identity
	\[
	\Cov(P^i\zeta(x),P^j\zeta(y))=\Var(\zeta(0))p_{i+j}(x,y)
	\]
	and the local central limit theorem~\eqref{eq:lclt-parity}, followed by a
	Riemann-sum argument, give
	\[
	\Var(\mathcal F_R(\varphi))
	\longrightarrow\Var(\mathcal G(\varphi))\, .
	\]
	The coefficient bound gives Lindeberg's condition. The Lindeberg--Feller theorem and the Cram\'er--Wold device give convergence of the finite-dimensional
	distributions, and tightness completes the proof.
\end{proof}

\begin{proposition}[Asymptotic linearization]\label{prop:dgt4-linearization}
	Fix $T>0$ and let $n_R\coloneqq\lfloor R^2T\rfloor$. For every
	$\varphi\in C_c^\infty(\R^d)$,
	\begin{equation}\label{eq:dgt4-linear-approximation}
	\begin{aligned}
		\E\Biggl[\Biggl(
		R^{(d-4)/2}
		\Biggl(
		u_{n_R}-\E u_{n_R}(0)
		-\sum_{j=0}^{n_R-1}
		\left(1-\frac{j}{R^2T}\right)^\kappa P^j\zeta
		\Biggr)^{(R)}(\varphi)
		\Biggr)^2\Biggr]
		\longrightarrow0\, .
	\end{aligned}
	\end{equation}
\end{proposition}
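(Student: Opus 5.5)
We would combine Lemma~\ref{lem:convex-linear-bound} with the derivative formula of Lemma~\ref{lem:odometer-derivative}. The map $\zeta\mapsto u_n(x)$ is convex (Proposition~\ref{prop:finite-time-concentration-scale}). By \eqref{eq:odometer-derivative} its right derivatives satisfy $0\leq\partial_{\zeta(z)}u_n(x)\leq g_n(x,z)\leq G(x,z)$. Apply the lemma to $F\coloneqq\sum_x a_R(x)u_{n_R}(x)$, where $a_R(x)=R^{(d-4)/2}\int_{R^{-1}(x+[0,1)^d)}\varphi$. This reduces \eqref{eq:dgt4-linear-approximation} to two estimates. The first is a \emph{mean-derivative estimate}: the linear functional $\sum_z\E[\partial_zF]\zeta(z)$ is close in $L^2$ to the tested weighted sum $\sum_j(1-j/(R^2T))^\kappa P^j\zeta(\varphi)$. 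The second is a \emph{derivative-variance estimate}: $L^2\sum_z\Var(\partial_zF)\to0$ for a suitable $L=L_R\to\infty$. The $\eta(L)$ term is then harmless, because $\sum_z b_z^2\leq\sum_z(\sum_x|a_R(x)|G(x,z))^2$ is bounded by \eqref{eq:dgt4-intersection-first-moment}, and $\eta(L)\to0$.

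For the mean-derivative estimate we write $\E\partial_{\zeta(z)}u_n(x)=\mathbf E_x\sum_j\one_{\{X_j=z\}}\pi_j$. Here $\pi_j$ is the conditional survival probability displayed before Proposition~\ref{prop:weighted-membrane-limit}. The key input is $\P(u_m(y)=0)\sim G(0,0)\kappa/m$, which is Proposition~\ref{prop:dgt4-contact-asymptotics} (stated later, assumed here). It comes from atomlessness plus, in case (a), the Gaussian tail of the linear field $V$, and in case (b), a single very negative site against a mean of order $(\log m)^{2/d}$. Along a walk path, $u_{n-r}(X_r)>0$ fails first at a last-visit site. We approximate $\pi_j$ by $\prod(1-\P(u_{n-r}(X_r)=0))$ over last-visit times, arguing via inclusion--exclusion that zeros at well-separated sites are nearly independent. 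Two inputs control the error. Green-function localization (Lemma~\ref{lem:localization-killing}, with \eqref{eq:dgt4-green-tail}) lets us treat zeros at distance $\gg1$ as depending on disjoint sceneries. Joint zeros at nearby sites are bounded using the FKG inequality and \eqref{eq:dgt4-intersection-second-moment}. Lemma~\ref{lem:dgt4-weighted-last-visits} then gives $\pi_j\approx(1-j/n)^\kappa$ uniformly outside $j\geq(1-\delta)n$. The $L^2$ error in the tested functional is bounded by $\sum_z(\sum_x|a_R(x)|\,|\text{weight error}|\,p_j)^2$. This vanishes because the relative error is $o(1)$ except on a window of times of proportion $\delta$, whose contribution is $O(\delta)$ by the same covariance bound used for tightness in Proposition~\ref{prop:weighted-membrane-limit}.

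For the derivative-variance estimate, $\partial_zF=\sum_xa_R(x)\mathbf E_x\sum_j\one_{\{X_j=z\}}\one_{\{\tau^*>j\}}$ is a function of the random stopping region $\{(m,y):u_m(y)=0\}$. Its variance is controlled by covariances of the indicators $\one_{\{u_m(y)>0\}}$ at different space-time points. These are small because each zero has probability $O(1/m)$, and because resampling $\zeta(w)$ flips a zero only with probability of order $G(y,w)/m$ times a density bound. We then sum with the Green-function intersection estimates \eqref{eq:dgt4-intersection-first-moment}--\eqref{eq:dgt4-intersection-second-moment}, aiming for $\sum_z\Var(\partial_zF)=O(R^{-c})$ or at least $o(1)$, after which we choose $L_R$ growing slowly.

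We expect the main obstacle to be the factorization step: justifying the asymptotic independence of the events $\{u_{n-r}(X_r)=0\}$ along a path, uniformly enough in $j$. These events are strongly positively correlated for nearby sites, and the odometer at different times is built from the same scenery. We would first try localizing each odometer to a box of radius $K$, using Lemma~\ref{lem:localization-killing} together with concentration. Since the visited sites are mostly separated in $d\geq5$, this should make the events at distinct last-visit sites approximately independent, and we would let $K\to\infty$ at the end.
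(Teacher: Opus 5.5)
Your outer structure matches the paper's. You apply Lemma~\ref{lem:convex-linear-bound} to $F=\sum_x a_R(x)u_{n_R}(x)$. You derive a mean-derivative estimate from the conditional survival probability of a walk, reduced to last visits and evaluated with Lemma~\ref{lem:dgt4-weighted-last-visits}. You derive a derivative-variance estimate from covariances of survival indicators along two walks.

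\textbf{The gap.} The gap is the factorization step you yourself flag, and the mechanism you propose for it would not work. The marginal asymptotic $\P(u_m(0)=0)\sim G(0,0)\kappa/m$ gives no control of joint zeros. Localization at a fixed radius $K$ cannot supply that control at probability scale $1/m$:
\begin{itemize}
\item The killed value $u_m^{Q(x,K)}(x)$ of Lemma~\ref{lem:localization-killing} is bounded in law uniformly in $m$, because the walk leaves the box. So it does not even carry the divergent mean.
\item The conditional-expectation localization of Lemma~\ref{lem:dgt4-localization} has error $O(1)$, not $o(1/m)$, at fixed $K$.
\end{itemize}

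In case~(a) this failure is intrinsic. A zero is essentially the Gaussian tail event $\{-V_\infty(x)>\E u_m(0)\}$, at level about $\Sigma\sqrt{2\log m}$ with $\Sigma^2=\Var(V_\infty(0))$. Conditionally on this event, the far part $\sum_{z\notin Q(x,K)}G(x,z)\zeta(z)$ is shifted by order $K^{4-d}\sqrt{\log m}$, which is many times its standard deviation $K^{(4-d)/2}$. Hence no event measurable with respect to the scenery in $Q(x,K)$ approximates the zero event to relative accuracy $o(1)$ unless $K^{4-d}\log m\to0$.

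So you cannot send $R\to\infty$ before $K\to\infty$. ``Nearby'' must then mean polylogarithmic distances, where zero events are strongly dependent: the ratio of the joint probability to the product is of order $\exp\{c|x-y|^{4-d}\log R\}$.

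For those nearby pairs, FKG points the wrong way. Zero events are decreasing, so Harris--FKG only gives $\P(Z_x\cap Z_y)\geq\P(Z_x)\P(Z_y)$. Equivalently, it gives only the lower bound $\P(S_{n,j}(X)=1\mid X)\geq\prod(1-\P(u_{n-r}(X_r)=0))$. The matching upper bound needs joint zeros bounded from above. Moreover, \eqref{eq:dgt4-intersection-second-moment} concerns walk intersections, not joint zeros. Your flip-probability heuristic for the derivative variance hits the same obstacle, since it needs joint control of two rare events.

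\textbf{The missing idea: threshold replacement.} Proposition~\ref{prop:dgt4-contact-asymptotics} gives more than the marginal asymptotic. It gives the comparison $m\P(\{u_m(0)=0\}\mathbin{\triangle}\{J(0)>\E u_{m-1}(0)\})\to0$, with $J=-V_\infty$ in case~(a) and $J=-G(0,0)\zeta$ in case~(b). This yields \eqref{eq:dgt4-uniform-contact-thresholds}.

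A union bound along a path of at most $(1-\varepsilon)n_R$ steps then swaps all the odometer events simultaneously for threshold events, at total cost $\sum_m o(1/m)=o(1)$. The threshold events are explicit:
\begin{itemize}
\item They are nested in time at each site, so only last visits matter.
\item In case~(b) they are exactly independent across sites.
\item In case~(a) they factorize by the Li--Shao normal comparison inequality. Pairs within distance $(\log R)^{2/(d-4)}$ are handled by the correlation gap \eqref{eq:dgt4-gaussian-correlation-gap}, with total contribution $R^{2-4/(1+\rho_*)}$ up to logarithms, where $\rho_*<1$. Farther pairs are handled by the decay \eqref{eq:dgt4-intersection-first-moment}.
\end{itemize}

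The same replacement applied to two paths gives the covariance bound \eqref{eq:dgt4-positive-path-covariance} of Lemma~\ref{lem:dgt4-path-survival}. That bound, together with a crude $C\delta^2$ estimate for the last $\delta R^2$ times, is what makes $\sum_z\Var(\partial_{\zeta(z)}F)\to0$.

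(Separately, in case~(b) the mean grows like a regularly varying power of $m$, not like $(\log m)^{2/d}$.)
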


	The proof of Proposition~\ref{prop:dgt4-linearization} uses the last-visit
	estimate below to convert the probability that the odometer vanishes into a
	time weight.

\begin{lemma}[Last visits]\label{lem:dgt4-weighted-last-visits}
	Let $X$ be simple random walk started at the origin. For integers
	$0\leq i\leq j$, let
	\[
		I_{i,j}(X)
		\coloneqq
	\one_{\{X_i\ne X_r\text{ for every }r\text{ with }i<r\leq j\}}\, .
	\]
	Then, for every $\varepsilon\in(0,1)$, as $n\to\infty$,
	\[
		\max_{0\leq j\leq\lfloor(1-\varepsilon)n\rfloor}
		\mathbf E_0\left|
		G(0,0)\sum_{i=0}^j\frac{I_{i,j}(X)}{n-i}
		+\log\left(1-\frac jn\right)
		\right|\longrightarrow0\, .
	\]
\end{lemma}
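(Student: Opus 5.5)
We need to prove: for $d\geq5$, with $S_j\coloneqq G(0,0)\sum_{i\le j} I_{i,j}/(n-i)$, we have $\mathbf E_0|S_j+\log(1-j/n)|\to0$ uniformly over $j\le(1-\varepsilon)n$.

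The plan is to compute the mean and then show the variance vanishes; an $L^2$ bound then gives the $L^1$ claim.

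\emph{Mean.} By time reversal of the increment sequence on $[i,j]$, $\mathbf P_0(I_{i,j}=1)$ is the probability that a walk of $j-i$ steps never returns to its start. Write $q_m\coloneqq\mathbf P_0(X_r\ne0\text{ for }1\le r\le m)$. Then $q_m\downarrow q_\infty=1/G(0,0)$, and $q_m-q_\infty=\mathbf P_0(\text{return after time }m)\le\sum_{k>m}p_k(0,0)\le Cm^{(2-d)/2}$. Hence
\[
\mathbf E_0S_j=G(0,0)\sum_{i\le j}\frac{q_{j-i}}{n-i}=\sum_{i\le j}\frac1{n-i}+O\Bigl(\sum_{m\ge0}\frac{(1+m)^{(2-d)/2}}{\varepsilon n}\Bigr).
\]
Since $d\ge5$, the series in $m$ is summable, so the error is $O(1/(\varepsilon n))$. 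Also $n-i\ge\varepsilon n$, and a Riemann-sum comparison gives $\sum_{i\le j}(n-i)^{-1}=-\log(1-j/n)+O(1/(\varepsilon n))$, uniformly in $j$.

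\emph{Variance.} This is the main step. We bound
\[
\Var\Bigl(\sum_i I_{i,j}/(n-i)\Bigr)\le(\varepsilon n)^{-2}\sum_{i,i'\le j}\bigl|\Cov(I_{i,j},I_{i',j})\bigr|,
\]
and aim to show $\sum_{i,i'}|\Cov|\le Cn$, which makes the variance $O(1/n)$. Fix $i<i'$ with gap $k=i'-i$. Introduce a truncated indicator $\tilde I_i$ for the event that $X_i$ is not revisited during $(i,\min(i+k/2,j)]$. The indicator $I_{i',j}$ depends only on the increments after $i'$, and $\tilde I_i$ depends only on increments in $(i,i+k/2]$; these are disjoint blocks, so $\tilde I_i$ and $I_{i',j}$ are independent. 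Meanwhile $\mathbf E|I_{i,j}-\tilde I_i|\le\mathbf P(\text{return to }X_i\text{ after time }k/2)\le Ck^{(2-d)/2}$. Therefore $|\Cov(I_{i,j},I_{i',j})|\le Ck^{(2-d)/2}$. Summing over pairs gives $\sum_{i,i'}|\Cov|\le n\sum_k k^{(2-d)/2}\le Cn$, since $(d-2)/2>1$.

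\emph{Conclusion.} Cauchy--Schwarz combines the mean and variance bounds to give
\[
\mathbf E_0\bigl|S_j+\log(1-j/n)\bigr|\le C(\varepsilon)n^{-1/2},
\]
uniformly in $j\le(1-\varepsilon)n$, which proves the lemma.

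The main obstacle is the decoupling in the covariance step. Even there, the argument only needs transience with a summable return tail, which holds because $d\ge5$ (indeed $d\ge3$ would give some decay, but summability of $k^{(2-d)/2}$ is what gives the clean $O(n)$ bound).
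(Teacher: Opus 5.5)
Your proof is correct, and it takes a genuinely different route from the paper's.

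The paper compares $I_{i,j}$ with the stationary ergodic sequence $I_i^\infty$, the indicator that $X_i$ is never revisited. It applies the ergodic theorem together with summation by parts to handle the weights $1/(n-i)$. It then bounds the truncation error $\mathbf E_0\sum_i (I_{i,j}-I_i^\infty)/(n-i)$ by the Ces\`aro average $\frac{1}{\varepsilon n}\sum_{k\le n}\mathbf P_0(k<\tau_0^+<\infty)$. That argument is soft: it uses only transience and a vanishing return tail, and it gives no rate.

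You instead compute $\mathbf E_0 I_{i,j}=q_{j-i}$ exactly. The Markov property at time $i$ already gives this identity, so the appeal to time reversal is unnecessary, though harmless. You then obtain concentration from a second-moment bound, decoupling $I_{i,j}$ from $I_{i',j}$ through independence of the increments on disjoint time blocks. The error from truncating at $i+\lfloor k/2\rfloor$ is at most $\mathbf P_0(\lfloor k/2\rfloor<\tau_0^+<\infty)\le Ck^{(2-d)/2}$. Since $0\le I_{i',j}\le 1$, this gives $|\Cov(I_{i,j},I_{i',j})|\le 2Ck^{(2-d)/2}$. All the steps check out, including the uniformity in $j\le(1-\varepsilon)n$, which follows from $n-i\geq\varepsilon n$.

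Your route avoids the ergodic theorem entirely and yields the quantitative bound $C(\varepsilon)n^{-1/2}$. The price is the heat-kernel input $p_k(0,0)\le Ck^{-d/2}$. Summability of $k^{(2-d)/2}$ is where you use $d\ge5$. As you remark, the same computation still gives vanishing mean error and variance in $d=3,4$, only with worse rates, so neither approach is really restricted by dimension here.
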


\begin{proof}
	For $i\geq0$, let
	\[
	I_i^\infty\coloneqq
	\one_{\{X_i\ne X_r\text{ for every }r>i\}}\, .
	\]
	The process $(I_i^\infty)_{i\geq0}$ is stationary and ergodic, and its
	mean is $\P_0(\tau_0^+=\infty)=G(0,0)^{-1}$, where
	$\tau_0^+\coloneqq\inf\{k\geq1:X_k=0\}$. The ergodic theorem and summation by parts
	therefore give
	\[
	\max_{0\leq j\leq\lfloor(1-\varepsilon)n\rfloor}
	\mathbf E_0\left|
	\sum_{i=0}^j\frac{I_i^\infty}{n-i}
	-\frac1{G(0,0)}\sum_{i=0}^j\frac1{n-i}
	\right|\longrightarrow0\, .
	\]
	For $i\leq j$, the difference $I_{i,j}-I_i^\infty$ occurs precisely when
	the first return to $X_i$ after time $i$ is finite and occurs after time $j$.
	Consequently,
	\[
	\max_{0\leq j\leq\lfloor(1-\varepsilon)n\rfloor}
	\E_0\left[
	\sum_{i=0}^j\frac{I_{i,j}-I_i^\infty}{n-i}
	\right]
	\leq
	\frac1{\varepsilon n}\sum_{k=0}^n
	\P_0(k<\tau_0^+<\infty)
	\longrightarrow0\, .
	\]
	Combining the estimates for $I_i^\infty$ and $I_{i,j}-I_i^\infty$
	with
	\[
	\sup_{0\leq j\leq\lfloor(1-\varepsilon)n\rfloor}
	\left|\sum_{i=0}^j\frac1{n-i}
	+\log\left(1-\frac jn\right)\right|
	\leq C(\varepsilon)n^{-1}
	\]
	proves the lemma.
\end{proof}

\subsubsection{Probability that the odometer vanishes at the origin}
	\begin{proposition}\label{prop:dgt4-contact-asymptotics}
		As $n \to \infty$, we have $\P(u_n(0)=0)\sim\frac{G(0,0)\kappa}{n}$.
	\end{proposition}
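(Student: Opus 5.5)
The plan is to tie $\P(u_n(0)=0)$ to the reflected mean increment of Lemma~\ref{lem:reflection-increment}, and then solve the resulting differential relation for $m_n\coloneqq\E u_n(0)$.

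\emph{Exact starting identities.} By Lemma~\ref{lem:recursion},
\[
\P(u_{n+1}(0)=0)=\P\bigl(\zeta(0)+Pu_n(0)\leq0\bigr),
\qquad
\Delta_n\coloneqq m_{n+1}-m_n=\E\bigl(-\zeta(0)-Pu_n(0)\bigr)_+ .
\]
The increments $\Delta_n$ are nonincreasing and $m_n\to\infty$ (Corollary~\ref{cor:dgt4-mean-lower}). By \eqref{eq:dgt4-pointwise-concentration}, $u_n-m_n$ is tight away from a deliberately planted low region. Both quantities are therefore governed by the rare configuration near the origin that drives $\zeta(0)+Pu_n(0)$ down by order $m_n$. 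The aim is a quasi-closed relation of the form
\[
\P(u_{n+1}(0)=0)\sim K\,\frac{\Delta_n}{m_n},
\]
with an explicit constant $K$ that depends on the case.

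\emph{Identifying the decisive event (the main obstacle).} The step I expect to be hardest is determining the conditional law of $Pu_n(0)$ given the low event. The naive guess $Pu_n(0)\approx m_n$ is wrong, and this is where $G(0,0)$ enters.

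In case~(b), I would condition on $\zeta(0)=-s$ with $s\asymp m_n$, using that the other sites are bounded above and typical. On this event $u_k(0)$ vanishes for the relevant times, so by Theorem~\ref{thm:RW} a walk started at a neighbour stops when it hits $0$ and otherwise collects about $m_n$. This gives
\[
Pu_n(0)\approx m_n\,\mathbf P_e(\text{never hit }0)=\frac{m_n}{G(0,0)}.
\]
To make this rigorous I would use the localization Lemma~\ref{lem:localization-killing}, FKG (the relevant events are decreasing), and concentration (Lemma~\ref{lem:weighted-exp-conc}) with the killed Green function. The conclusion would be
\[
\P(u_{n+1}(0)=0)\sim F\bigl(m_n/G(0,0)\bigr),
\qquad
\Delta_n\sim\int_{m_n/G(0,0)}^\infty F(h)\,dh\sim \frac{m_n F(m_n/G(0,0))}{G(0,0)(\alpha-1)},
\]
where $F(x)\coloneqq\P(\zeta(0)<-x)$, using Karamata for the regularly varying tail.

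In case~(a) no single site is decisive. The decisive quantity is the Gaussian linear field $\sum_y g(0,y)\zeta(y)$ near the origin, with the same stopping-at-the-zero-set correction. Gaussian tail asymptotics, in the spirit of Lemma~\ref{lem:dgt4-stretched-green-scenery-tail} but sharp, give
\[
\Delta_n\sim \frac{\sigma_{\rm eff}^2}{m_n}\,G(0,0)\,\P(u_{n+1}(0)=0)
\qquad\text{and}\qquad
\P(u_{n+1}(0)=0)=\exp\bigl\{-m_n^2/(2\sigma_{\rm eff}^2)(1+o(1))\bigr\}.
\]

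\emph{Solving the recursion.} In case~(b), $\Delta_n\sim c\,m_n^{1-\alpha}L(m_n)$ gives $m_n^\alpha/L(m_n)\sim\alpha c\,n$. Hence $m_n$ is regularly varying with
\[
\frac{\Delta_n}{m_n}\sim\frac{1}{\alpha n},
\qquad
\P(u_{n+1}(0)=0)\sim(\alpha-1)G(0,0)\,\frac{\Delta_n}{m_n}\sim\frac{G(0,0)(1-1/\alpha)}{n}.
\]
In case~(a), integrating $m_n\Delta_n\approx\sigma_{\rm eff}^2G(0,0)\,e^{-m_n^2/(2\sigma_{\rm eff}^2)}$ gives $e^{m_n^2/(2\sigma_{\rm eff}^2)}\sim n\,G(0,0)$ up to $1+o(1)$. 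Consequently $\P(u_n(0)=0)\sim G(0,0)/n$, which is $\kappa=1$.

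Since the statement is asymptotic equivalence, $o(1)$ relative errors are needed throughout the conditional analysis. This is exactly why it is the delicate step: the fluctuation of $Pu_n(0)-m_n/G(0,0)$ on the low event must be $o(m_n)$ in case~(b), and must be absorbed into the Gaussian tail with correct prefactor in case~(a).
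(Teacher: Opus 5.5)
Your skeleton matches the paper's: compare $\P(u_{n+1}(0)=0)$ and the reflected increment $\Delta_n$ at a common deterministic threshold, then integrate, by Karamata in case~(b) and by the Mills ratio in case~(a). Case~(a), however, has a genuine gap.

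\textbf{The constant is wrong.} Whatever $\sigma_{\rm eff}$ is, your two displays integrate (as you do) to $e^{m_n^2/(2\sigma_{\rm eff}^2)}\sim G(0,0)\,n$. Read with your second display, this gives $\P(u_{n+1}(0)=0)\sim 1/(G(0,0)\,n)$, not $G(0,0)/n$. The factor $G(0,0)$ in your increment relation belongs in the denominator. The correct pair is $\P(u_{n+1}(0)=0)\sim\P(-V_\infty(0)>m_n)$ and $\Delta_n\sim\Sigma^2\,\P(u_{n+1}(0)=0)/(G(0,0)\,m_n)$. Here $V_\infty(0)=\sum_zG(0,z)\zeta(z)$ is the full infinite-horizon Green field, not a field ``near the origin'', and $\Sigma^2=\Var(V_\infty(0))$. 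The Mills ratio then gives $1/\P(-V_\infty(0)>m_{n+1})-1/\P(-V_\infty(0)>m_n)\to1/G(0,0)$.

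\textbf{The tail form is too crude.} An estimate $\exp\{-m_n^2/(2\sigma^2)(1+o(1))\}$ fixes the contact probability only up to a factor $e^{o(m_n^2)}$, so it cannot yield an asymptotic equivalence.

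\textbf{The mechanism producing $1/G(0,0)$ is missing.} The paper conditions on $-V_\infty(0)=m_n+\Sigma^2y/m_n$. It uses the exact identity $V_\infty(x)-u_n(x)=\inf_{\tau\leq n}\mathbf E_x[V_\infty(X_\tau)]$, which follows from $V_\infty-PV_\infty=\zeta$. The correlation gap $\sup_{z\neq0}|\Cov(V_\infty(z),V_\infty(0))|/\Sigma^2<1$ then shows that, with high conditional probability, $V_\infty(z)+m_n>0$ for every $z\neq0$ within distance $k_n+1$ of the origin. So for $y>0$ the optimal walk from a neighbour of the origin stops, up to a terminal error, exactly when it returns to the origin, which happens with probability $1-1/G(0,0)$. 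Hence the scaled conditional increment tends to $\max(y,0)/G(0,0)$, and integrating against the limiting conditional density $e^{-y}$ gives the factor $1/G(0,0)$. Making this rigorous needs errors of order $o(1/m_n)$ under the conditioning. The paper gets these from the bound $\bigl(\E[(V_\infty(0)-u_n(0)+m_n)^2]\bigr)^{1/2}\leq Cn^{-(d-4)/(4d)}$, a bound on how the terminal term after $k_n$ steps depends on the conditioned level, and dominated convergence. Your sketch supplies none of this.

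In case~(b) your constants agree with the paper's: threshold $m_n/G(0,0)$, $\Delta_n\sim m_nF(m_n/G(0,0))/(G(0,0)(\alpha-1))$, so $\kappa=1-1/\alpha$. Two repairs are still needed.
\begin{enumerate}
\item Conditioning on $\zeta(0)=-s$ and asserting that $u_k(0)$ vanishes is only heuristic, since whether $u_k(0)$ vanishes depends on $Pu_{k-1}(0)$ itself. The paper instead uses the origin-killed odometer $w_n=u_n^{\Z^d\setminus\{0\}}$. For it, $\{u_{n+1}(0)=0\}=\{-\zeta(0)>Pw_n(0)\}$ and $(-\zeta(0)-Pu_n(0))_+=(-\zeta(0)-Pw_n(0))_+$ hold exactly, with $Pw_n(0)$ independent of $\zeta(0)$. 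The paper proves $\E Pw_n(0)\sim m_n/G(0,0)$ by an approximate-harmonicity (exterior Dirichlet) argument.
\item The scenery in case~(b) has no exponential moment, so \eqref{eq:dgt4-pointwise-concentration} and Lemma~\ref{lem:weighted-exp-conc}\textup{(b)} do not apply. The other sites are also not typical at the relevant scale: a single $z\neq0$ with $-\zeta(z)$ of order $m_n$ has probability comparable to the target tail. The input actually required is $\P(Pw_n(0)\leq\E Pw_n(0)/6)=o(\P(-\zeta(0)>\E Pw_n(0)))$. The paper proves it by allowing one large negative contributor (killing the walk also at that site keeps a third of the mean), ruling out two of them, and applying Freedman's inequality to the truncated remainder.
\end{enumerate}
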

	The origin remains at zero after one step of toppling precisely when the mass it receives from its neighbors
	is less than the deficit $-\zeta(0)$ at the origin. To capture this while
	keeping $\zeta(0)$ free of the neighbor dynamics, we introduce
	$w_n\coloneqq u_n^{\Z^d\setminus\{0\}}$, the localized odometer
	\eqref{eq:localized-odometer} with the walk killed on hitting the origin. The two cases treat the threshold $Pw_n(0)$ differently. In case~\textup{(b)} the
	heavy lower tail of $\zeta(0)$ dominates, so $Pw_n(0)$ may be replaced by its
	mean. In case~\textup{(a)} the Gaussian tail is too light for such a
	replacement, and we condition on the Gaussian field
	\begin{equation}\label{eq:dgt4-infinite-green-field}
		V_\infty(x)\coloneqq\sum_{z\in\Z^d}G(x,z)\zeta(z)\, .
	\end{equation}
	In the next lemma, we record a general comparison between the odometer and $w_n$.
\begin{lemma}[Comparison with the odometer killed at the origin]\label{lem:dgt4-origin-frozen}
	Assume that $d\geq5$ and that the scenery is i.i.d., atomless, centered,
	and of finite positive variance. For every $n\geq0$:
	\begin{equation}\label{eq:dgt4-origin-fixed-identities}
	\begin{aligned}
		\{u_{n+1}(0)=0\}
		&=\{-\zeta(0)>Pw_n(0)\},\\
		(-\zeta(0)-Pu_n(0))_+
		&=(-\zeta(0)-Pw_n(0))_+,\\
		\E u_{n+1}(0)-\E u_n(0)
		&=\E(-\zeta(0)-Pw_n(0))_+\, .
	\end{aligned}
	\end{equation}
	If $\E|\zeta(0)|^p<\infty$ for some $p\geq2$, then
	\begin{equation}\label{eq:dgt4-origin-fixed-mean}
	\begin{aligned}
		\frac{G(0,0)\E Pw_n(0)}{\E u_n(0)}
		&\longrightarrow1,\\
		\sup_{n\geq0}\E|Pw_n(0)-\E Pw_n(0)|^p
		&<\infty\, .
	\end{aligned}
	\end{equation}
	If, in addition, $\E e^{\theta_0|\zeta(0)|}<\infty$ for some
	$\theta_0>0$, then, for every
	\[
		0<\lambda<
		\frac{\theta_0}{
		\displaystyle\sup_{z\ne0}G(0,z)/G(0,0)},
	\]
	there is $C(\lambda)<\infty$ such that for every $n,r\geq0$,
	\begin{equation}\label{eq:dgt4-origin-frozen-lower-tail}
		\P\left(
		Pw_n(0)-\E Pw_n(0)\leq-r
		\right)
		\leq C(\lambda)e^{-\lambda r}\, .
	\end{equation}
\end{lemma}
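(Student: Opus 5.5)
The plan is to prove the three identities in \eqref{eq:dgt4-origin-fixed-identities} by a deterministic comparison between $u$ and $w$, and then to derive \eqref{eq:dgt4-origin-fixed-mean} and \eqref{eq:dgt4-origin-frozen-lower-tail} from the concentration tools of Section~\ref{ssec:concentration}.

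\emph{Deterministic identities.} The basic observation is that if the odometer vanishes at the origin, the optimal walk may as well stop upon hitting the origin. I will prove by induction on $k\leq n$ the implication
\[
\{-\zeta(0)>Pw_n(0)\}\subseteq\{u_j(0)=0\text{ and }u_j=w_j\text{ on the neighbors of }0,\ \text{for all }j\leq k+1\}.
\]
The base case $k=0$ is immediate since $u_0=w_0=0$. For the inductive step, split the stopping payoff at the first hitting time of $0$, as in Lemma~\ref{lem:localization-killing}. If the walk hits $0$ with $j$ steps remaining, its continuation value is $u_j(0)=0$, so stopping there is optimal. Hence $u_{k}=w_{k}$ on the neighbors of $0$. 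Monotonicity of $w_j$ in $j$ then gives $\zeta(0)+Pu_k(0)=\zeta(0)+Pw_k(0)\leq\zeta(0)+Pw_n(0)<0$, so $u_{k+1}(0)=0$ by Lemma~\ref{lem:recursion}.

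Conversely, on $\{-\zeta(0)\leq Pw_n(0)\}$ the bound $u_n\geq w_n$ gives $\zeta(0)+Pu_n(0)\geq\zeta(0)+Pw_n(0)\geq0$. Therefore both $(-\zeta(0)-Pu_n(0))_+$ and $(-\zeta(0)-Pw_n(0))_+$ vanish and $u_{n+1}(0)>0$ up to a null set, using atomlessness. On the complementary event we have already shown $Pu_n(0)=Pw_n(0)$. Together these give the first two identities, and the third follows from Lemma~\ref{lem:reflection-increment}.

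\emph{Mean comparison.} Applying the same first-hitting decomposition at $0$ for a walk started at a neighbor $e$ gives
\[
u_n(e)=w_n(e)+\mathbf E_e\bigl[\one_{\{H_0<n\}}\,\Delta_n\bigr],
\]
where $H_0$ is the hitting time of $0$ and $\Delta_n$ is a reward increment at $0$ with $n-H_0$ steps left. The increment $\Delta_n$ is bounded by $u_{n-H_0}(0)$ and is at least a quantity of comparable mean. Taking expectations and using $\E Pu_n(0)=\E u_n(0)$, I aim for
\[
\E Pw_n(0)=\E u_n(0)\bigl(1-\mathbf P_e(H_0<\infty)\bigr)+o(\E u_n(0)),
\]
where $1-\mathbf P_e(H_0<\infty)=\mathbf P_0(\tau_0^+=\infty)=1/G(0,0)$.

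The error terms are of three kinds. First, $\E u_n(0)-\E u_{n-k}(0)\leq k\,\E u_n(0)/(n-k)$ by concavity (Lemma~\ref{lem:reflection-increment}). Second, $\mathbf P_e(k<H_0<\infty)\to0$ as $k\to\infty$. Third, the deviation of $u_j(0)$ from its mean is controlled by the concentration estimate \eqref{eq:dgt4-pointwise-concentration}, or by Proposition~\ref{prop:finite-time-concentration-scale} under the $p$-th moment assumption. Since $\E u_n(0)\to\infty$ by Corollary~\ref{cor:dgt4-mean-lower}, all three errors are $o(\E u_n(0))$. I expect this step to be the main obstacle. The difficulty is that the continuation at $0$ is $\sup_\tau$ of a walk restarted there, not simply $u_{n-H_0}(0)$ evaluated with the independent scenery. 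I would handle this by a sandwich: below, use the admissible rule ``follow the $w$-optimal rule, then on hitting $0$ play optimally''; above, use Lemma~\ref{lem:localization-killing} with $D=\Z^d\setminus\{0\}$.

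\emph{Fluctuations and lower tail.} As in Subsection~\ref{ssec:localization}, $Pw_n(0)$ is a supremum of linear functionals of the scenery. Its coordinate Lipschitz constants are
\[
\ell_y=Pg^{\Z^d\setminus\{0\}}(\cdot,y)(0)\leq PG(\cdot,y)(0),
\]
which is square summable for $d\geq5$ by \eqref{eq:dgt4-green-l2}. Lemma~\ref{lem:weighted-exp-conc}(a) then gives the uniform $p$-th moment bound in \eqref{eq:dgt4-origin-fixed-mean}. For the lower tail, I use the identity
\[
g^{\Z^d\setminus\{0\}}(x,y)=G(x,y)-\frac{G(x,0)\,G(0,y)}{G(0,0)}
\]
to bound $\sup_y\ell_y$ by $\sup_{z\neq0}G(0,z)/G(0,0)$; this is where the precise range of admissible $\lambda$ in \eqref{eq:dgt4-origin-frozen-lower-tail} comes from, and I would need to check this bound carefully. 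Then Lemma~\ref{lem:weighted-exp-conc}(d), applied to $-Pw_n(0)$ with this $\lambda$, gives a bound on the exponential moment that is uniform in $n$. Chebyshev's exponential inequality then yields \eqref{eq:dgt4-origin-frozen-lower-tail}.
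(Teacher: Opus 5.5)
Your proofs of the identities \eqref{eq:dgt4-origin-fixed-identities} and of the moment and lower-tail bounds follow the paper. The coefficient bound you wanted to check is in fact exact: $Pg^{\Z^d\setminus\{0\}}(\cdot,z)(0)=G(0,z)/G(0,0)$ for $z\neq0$, and the coefficient of $\zeta(0)$ is zero.

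\textbf{The gap is in the mean asymptotic,} in the direction $\limsup_n G(0,0)\,\E Pw_n(0)/\E u_n(0)\leq1$. The other direction is fine: Lemma~\ref{lem:localization-killing} with $D=\Z^d\setminus\{0\}$ gives $\E w_n(e)\geq\bigl(1-\mathbf P_e(H_0<\infty)\bigr)\E u_n(0)=\E u_n(0)/G(0,0)$ for $e\sim0$. But the rule ``follow the $w$-optimal rule $\sigma$, then play optimally on hitting $0$'' only gives
\[
	u_n(e)\geq w_n(e)+\mathbf E_e\bigl[\one_{\{\sigma=H_0\leq n\}}\,u_{n-H_0}(0)\bigr].
\]
The reason is that $\sigma$ stops at the first time $i$ with $X_i\neq0$ and $w_{n-i}(X_i)=0$, and on $\{\sigma<H_0\}$ the walk never collects the continuation value at the origin. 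None of your three error terms (time shift, hitting-time tail, concentration of $u_j(0)$) controls $\P(\sigma<H_0)$. So your claim that the increment is ``at least a quantity of comparable mean'' is unsupported.

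Along your route you would need $\P(w_m(x)=0)\to0$ for each fixed $x\neq0$. This does hold: Lemma~\ref{lem:localization-killing} gives $\E w_m(x)\geq\bigl(1-G(x,0)/G(0,0)\bigr)\E u_m(0)\to\infty$, while Efron--Stein gives $\Var(w_m(x))\leq\Var(\zeta(0))\sum_yG(x,y)^2<\infty$. Only after this does your concentration term absorb the correlation between $\{\sigma=H_0\}$ and $u_{n-H_0}(0)$.

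A shorter repair avoids lower-bounding $u_n-w_n$ altogether. Split the killed problem at a fixed time $k$ and use $w\geq0$ to get $w_n(e)\leq w_k(e)+\mathbf E_e\bigl[\one_{\{H_0>k\}}w_{n-k}(X_k)\bigr]$. Hence $\E w_n(e)\leq\E u_k(0)+\mathbf P_e(H_0>k)\,\E u_n(0)$. Dividing by $\E u_n(0)\to\infty$ and letting $k\to\infty$ gives $\limsup\leq\mathbf P_e(H_0=\infty)=1/G(0,0)$.

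The paper takes a third route, through the profile $f_n(x)=\E w_n(x)/\E u_n(0)$:
\begin{itemize}
\item it is squeezed between $1-G(x,0)/G(0,0)$ and $1$;
\item the one-step recursion bounds give $|(I-P)f_n(x)|\leq C/\E u_n(0)$ for $x\neq0$;
\item so every subsequential limit is the unique bounded solution $1-G(x,0)/G(0,0)$ of the exterior Dirichlet problem, and applying $P$ at the origin gives $1/G(0,0)$.
\end{itemize}
Your sandwich, once repaired, and the time-split bound use only stopping rules and $\E u_n(0)\to\infty$. The paper's argument identifies the whole limiting profile of $\E w_n$ at no extra cost.
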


\begin{proof}

	\noindent\emph{Step 1.} We prove
	\eqref{eq:dgt4-origin-fixed-identities}.
	Induction in $n$, using monotonicity and atomlessness, gives the first two identities. Taking expectations in the
	second identity and applying \eqref{eq:reflection-increment} gives the third.

	\noindent\emph{Step 2.} Set $f_n(x)\coloneqq\E w_n(x)/\E u_n(0)$; we bound $f_n$ and its
	one-step recursion, then deduce that $f_n(x)\to1-G(x,0)/G(0,0)$. Since the scenery is
	nondegenerate,
	Corollary~\ref{cor:dgt4-mean-lower} gives $\E u_n(0)\to\infty$.
	Let $\tau_0\coloneqq\inf\{j\geq0:X_j=0\}$. For $x\ne0$,
	\[
	\mathbf P_x(\tau_0<\infty)=\frac{G(x,0)}{G(0,0)}\, .
	\]
	Lemma~\ref{lem:localization-killing} and stationarity therefore give, for every $n\geq 1$ and $x\in \Z^d$,
	\begin{equation}\label{eq:dgt4-origin-frozen-profile-bounds}
	\begin{aligned}
	1-\frac{G(x,0)}{G(0,0)}
	\leq \frac{\E w_n(x)}{\E u_n(0)}
	\leq1 \, .
	\end{aligned}
	\end{equation}
	Moreover, for $n\geq1$ and $x\ne0$, the odometer recursion and the
	one-Lipschitz property of the positive part give
	\begin{equation}\label{eq:dgt4-origin-frozen-recursion-bounds}
	\begin{aligned}
	0\leq\E[w_{n+1}(x)-w_n(x)]
	&\leq\E\zeta(0)_+,\\
	0\leq\E w_{n+1}(x)-P(\E w_n)(x)
	&=\E(-\zeta(x)-Pw_n(x))_+
	\leq\E(-\zeta(0))_+\, .
	\end{aligned}
	\end{equation}
	Equation~\eqref{eq:dgt4-origin-frozen-profile-bounds} gives
	$1-G(x,0)/G(0,0)\leq f_n(x)\leq1$. Subtracting the two bounds in
	\eqref{eq:dgt4-origin-frozen-recursion-bounds} gives
	\[
	\bigl|(I-P)f_n(x)\bigr|\leq\frac{C}{\E u_n(0)}
	\]
    for every $x\ne0$.
	Since $\E u_n(0)\to\infty$, every subsequential pointwise limit of $f_n$ is
	harmonic on $\mathbb Z^d\setminus\{0\}$, vanishes at the origin, and tends
	to one at infinity. The exterior Dirichlet problem has the unique solution
	$1-G(x,0)/G(0,0)$, so for every $x\in \Z^d$, as $n\to\infty$,
	\begin{equation}\label{eq:dgt4-origin-frozen-profile-limit}
		\frac{\E w_n(x)}{\E u_n(0)}
		\longrightarrow
		1-\frac{G(x,0)}{G(0,0)}\, .
	\end{equation}

	\noindent\emph{Step 3.} We prove the moment bound in
	\eqref{eq:dgt4-origin-fixed-mean} and the lower-tail bound in
	\eqref{eq:dgt4-origin-frozen-lower-tail}.
	For $z\ne0$, the optimal-stopping representation and the Green function of
	the walk killed on hitting the origin bound the coordinatewise Lipschitz
	coefficient of $Pw_n(0)$ by
	\[
	 P\left(
	 G(\mathord\cdot,z)
	 -\frac{G(\mathord\cdot,0)G(0,z)}{G(0,0)}
	 \right)(0)
	 =\frac{G(0,z)}{G(0,0)}\, .
	\]
	By \eqref{eq:dgt4-green-tail}, $G(0,z)/G(0,0)$ belongs to
	$\ell^2(\Z^d)\cap\ell^p(\Z^d)$, so Lemma~\ref{lem:weighted-exp-conc}\textup{(a)}
	gives the moment bound in \eqref{eq:dgt4-origin-fixed-mean}. The Green
	function equation at the origin, symmetry, and the maximum principle give
	\[
		\sup_{z\ne0}\frac{G(0,z)}{G(0,0)}
		=1-\frac1{G(0,0)}<1\, .
	\]
	For $0<\lambda<\displaystyle\frac{\theta_0}{\sup_{z\ne0}\frac{G(0,z)}{G(0,0)}}$,
	Lemma~\ref{lem:weighted-exp-conc}\textup{(d)} gives
	\[
		\E\exp\left\{
		-\lambda(Pw_n(0)-\E Pw_n(0))
		\right\}
		\leq C(\lambda)
	\, .
	\]
	Markov's inequality proves \eqref{eq:dgt4-origin-frozen-lower-tail}.
\end{proof}
	We now prove the proposition separately in cases~\textup{(a)} and
	\textup{(b)}. Equations~\eqref{eq:dgt4-origin-fixed-identities}--
	\eqref{eq:dgt4-origin-frozen-lower-tail} reduce both proofs to controlling
	$Pw_n(0)$. The mean of $Pw_n(0)$ satisfies
	\[
		\E Pw_n(0)\sim\frac{\E u_n(0)}{G(0,0)}\, ,
	\]
	and the lower deviations of $Pw_n(0)$ are controlled by
	\eqref{eq:dgt4-origin-frozen-lower-tail}.

\begin{proof}[Proof of Proposition~\ref{prop:dgt4-contact-asymptotics} in case~\textup{(a)}]
	In case~\textup{(a)} the scenery is Gaussian, so $V_\infty(0)$ of
	\eqref{eq:dgt4-infinite-green-field} is mean-zero Gaussian; write
	$\Sigma^2\coloneqq\Var(V_\infty(0))>0$. We prove the proposition with
	$\kappa=1$. The decomposition $V_\infty(0)=\zeta(0)+PV_\infty(0)$ lets us replace
	the random threshold in $\{-\zeta(0)>Pw_n(0)\}$ by the deterministic level
	$\E u_n(0)$, at the cost of tracking how that level grows with $n$.

	We prove the following two estimates.
	\begin{align}
	\frac{
	\P\left(
	\{u_{n+1}(0)=0\}
	\mathbin{\triangle}
	\{-V_\infty(0)>\E u_n(0)\}
	\right)}{\P(-V_\infty(0)>\E u_n(0))}
	&\longrightarrow0,
	\label{eq:dgt4-contact-threshold-relative-error}\\
	\frac{G(0,0)(\E u_{n+1}(0)-\E u_n(0))}{\E(-V_\infty(0)-\E u_n(0))_+}
	&\longrightarrow1 .
	\label{eq:dgt4-contact-mean-increment}
	\end{align}
	As we now show, these two estimates imply the proposition. The Mills-ratio tail
	asymptotics for the mean-zero Gaussian $V_\infty(0)$ give, as $t\to\infty$,
	\[
		\E(-V_\infty(0)-t)_+\sim\frac{\Sigma^2}{t}\P(-V_\infty(0)>t),
		\qquad
		-\frac{d}{dt}\P(-V_\infty(0)>t)\sim\frac{t}{\Sigma^2}\P(-V_\infty(0)>t)\, .
	\]
	By~\eqref{eq:dgt4-contact-mean-increment},
	\[
		\E u_{n+1}(0)-\E u_n(0)\sim\frac{\Sigma^2}{G(0,0)\,\E u_n(0)}\P(-V_\infty(0)>\E u_n(0))\, ,
	\]
	so $\E u_n(0)\,(\E u_{n+1}(0)-\E u_n(0))\to0$. A first-order expansion of the
	tail then gives
	\[
		\frac1{\P(-V_\infty(0)>\E u_{n+1}(0))}-\frac1{\P(-V_\infty(0)>\E u_n(0))}
		\longrightarrow\frac1{G(0,0)}\, ,
	\]
	so $\P(-V_\infty(0)>\E u_n(0))^{-1}\sim n/G(0,0)$; that is,
	\begin{equation}\label{eq:dgt4-threshold-probability}
		\P(-V_\infty(0)>\E u_n(0))\sim\frac{G(0,0)}{n}
	\end{equation}
	when $\kappa=1$. The estimate~\eqref{eq:dgt4-contact-threshold-relative-error} then gives
	\begin{equation}\label{eq:dgt4-contact-threshold-comparison}
		\P\left(
		\{u_{n+1}(0)=0\}
		\mathbin{\triangle}
		\{-V_\infty(0)>\E u_n(0)\}
		\right)=o(n^{-1})\, ,
	\end{equation}
	and hence $\P(u_{n+1}(0)=0)\sim G(0,0)/n$.

	In the remaining proof, we verify~\eqref{eq:dgt4-contact-threshold-relative-error} and \eqref{eq:dgt4-contact-mean-increment}
	in four steps. Steps~1 and~2 replace the odometer near the origin by the field $V_\infty$: Step~1 bounds the second moment of the difference between $u_n(0)-\E u_n(0)$ and $V_\infty(0)$, while Step~2 shows that only the final updates near the
	origin matter, the earlier history contributing negligibly. Step~3 is the main step. Under the conditioning
	$-V_\infty(0)=\E u_n(0)+\Sigma^2y/\E u_n(0)$, the variable $y$ measures the deviation from $\E u_n(0)$ in units of $\Sigma^2/\E u_n(0)$. For $y>0$, the origin vanishes in the limit and the scaled conditional mean increment converges to $y/G(0,0)$; for $y<0$, the origin survives, while the increment vanishes for $y\leq0$. Step~4 integrates these two limits over $y$ to
	recover the estimates.

	\noindent\emph{Step 1.} We show that for every $n\geq 1$, 
	\begin{equation}\label{eq:dgt4-centered-value-decay}
		\left(
		\E\left[\left(V_\infty(0)-u_n(0)+\E u_n(0)\right)^2\right]
		\right)^{1/2}
		\leq Cn^{-(d-4)/(4d)}\, .
	\end{equation}
	Proposition~\ref{prop:dgt4-height-lower-stretched} and
	Theorem~\ref{thm:dgt4-height-upper-tail} give that for every $n\geq 1$,
	\begin{equation}\label{eq:dgt4-gaussian-height-order}
		c\sqrt{\log(n+2)}\leq \E u_n(0)\leq C\sqrt{\log(n+2)}\, .
	\end{equation}
		Iterating the recursion~\eqref{eq:odometer-recursion} and using
		$V_\infty-PV_\infty=\zeta$ gives
		\begin{equation}\label{eq:dgt4-infinite-field-stopping}
			V_\infty(x)-u_n(x)
			=
			\inf_{\tau\leq n}\mathbf E_x[V_\infty(X_\tau)]\, .
		\end{equation}
		Set
		\[
			D_n\coloneqq
			(V_\infty-u_n)(0)-P(V_\infty-u_n)(0)\, .
		\]
		The recursion
		$V_\infty-u_{n+1}=\min\{V_\infty,P(V_\infty-u_n)\}$ and
		\eqref{eq:dgt4-mean-increment-bound} give
		$\E|D_n|\leq2\E u_n(0)/n$. By stationarity, $D_n$ has mean zero, and its coordinatewise Lipschitz
		coefficient at $z$, the change in $D_n$ per unit change in $\zeta(z)$, is at most
		$G(0,z)+PG(\mathord\cdot,z)(0)$. Thus
		\eqref{eq:dgt4-green-l2} and Gaussian concentration give
		$\P(|D_n|>t)\leq2e^{-ct^2}$ and
		$\left(\E[D_n^2]\right)^{1/2}\leq Cn^{-1/2}\sqrt{\log(n+2)}$.
		By \eqref{eq:dgt4-infinite-field-stopping}, changing $\zeta(z)$ by $h>0$
		changes $P^j(V_\infty-u_n)(0)$ by at most
		$h\sum_{r\geq j}p_r(0,z)$. Therefore
		\eqref{eq:dgt4-tail-kernel} and Gaussian concentration give that for every $j\geq 1$,
		\[
			\left(\E\left[
			\left(P^j(V_\infty-u_n+\E u_n(0))(0)\right)^2
			\right]\right)^{1/2}
			\leq Cj^{-(d-4)/4}\, .
		\]
		Telescoping and conditional Jensen's inequality now yield
		\[
			\left(\E\left[
			\left(V_\infty(0)-u_n(0)+\E u_n(0)\right)^2
			\right]\right)^{1/2}
			\leq Cj n^{-1/2}\sqrt{\log(n+2)}
			+Cj^{-(d-4)/4}\, .
		\]
		Taking $j=\lfloor n^{1/d}\rfloor$ proves
		\eqref{eq:dgt4-centered-value-decay}.

	\noindent\emph{Step 2.} Fix $K>0$ and set
	$k_n\coloneqq\lceil(\log(n+2))^{6/(d-4)}\rceil$. We show that
	\begin{equation}\label{eq:dgt4-gaussian-conditional-terminal}
		\E u_n(0)\sup_{|y|\leq K}
		\E\left[
		P^{k_n+1}|V_\infty-u_{n-k_n}+\E u_n(0)|(0)
		\,\middle|\,
		-V_\infty(0)=\E u_n(0)+\frac{\Sigma^2y}{\E u_n(0)}
		\right]\longrightarrow0\, .
	\end{equation}
	By \eqref{eq:dgt4-centered-value-decay}, stationarity, Jensen's inequality, and
	\eqref{eq:dgt4-mean-increment-bound},
	\[
		\E u_n(0)\,
		\E\left[P^{k_n+1}|V_\infty-u_{n-k_n}+\E u_n(0)|(0)\right]
		\longrightarrow0\, .
	\]
	Condition on $-V_\infty(0)=b$ for a level $b\in\R$; the conditional expectation in
	\eqref{eq:dgt4-gaussian-conditional-terminal} is then a function of $b$, whose Lipschitz
	constant we bound. Raising $b$ to $b+s$ shifts $\zeta(z)$ by
	$-s\Var(\zeta(0))G(0,z)/\Sigma^2$. Since
	$(I-P)\Cov(V_\infty(\mathord\cdot),V_\infty(0))(x)
	=\Var(\zeta(0))G(x,0)\geq0$, optional stopping gives
	\begin{equation}\label{eq:dgt4-gaussian-covariance-sampling}
		0\leq
		\E_x\Cov(V_\infty(X_\tau),V_\infty(0))
		\leq\Cov(V_\infty(x),V_\infty(0))
	\end{equation}
	for every stopping time $\tau$. By \eqref{eq:dgt4-infinite-field-stopping}, the
	scenery shift then changes $V_\infty(x)-u_r(x)$ by at most
	$s\,\Cov(V_\infty(x),V_\infty(0))/\Sigma^2$; propagating this by $P^{k_n+1}$ bounds
	this Lipschitz constant by
	\[
		\frac{P^{k_n+1}\Cov(V_\infty(\mathord\cdot),V_\infty(0))(0)}
		{\Sigma^2}
		=
		\frac{\Var(\zeta(0))}{\Sigma^2}
		\sum_{\ell\geq k_n+1}(\ell-k_n)p_\ell(0,0)
		\leq Ck_n^{-(d-4)/2}\, .
	\]
	Comparing the conditional expectation at
	$b=\E u_n(0)+\Sigma^2y/\E u_n(0)$ with its unconditional average, and bounding their
	difference by the Lipschitz constant times $\E|b+V_\infty(0)|$, gives, uniformly for
	$|y|\leq K$,
	\[
	\begin{aligned}
		&\E\left[P^{k_n+1}|V_\infty-u_{n-k_n}+\E u_n(0)|(0)\,\middle|\,-V_\infty(0)=b\right]\\
		&\qquad\leq o\left(\frac1{\E u_n(0)}\right)+Ck_n^{-(d-4)/2}\,\E|b+V_\infty(0)|=o\left(\frac1{\E u_n(0)}\right),
	\end{aligned}
	\]
	because $\E|b+V_\infty(0)|=O(\E u_n(0))$ and $(\E u_n(0))^2k_n^{-(d-4)/2}\to0$.

	\noindent\emph{Step 3.} Fix $K>0$.
	For every $0<\varepsilon<K$, we prove that
	\begin{equation}\label{eq:dgt4-gaussian-conditional-contact}
		\sup_{\varepsilon\leq|y|\leq K}\left|
		\P\left(u_{n+1}(0)=0\,\middle|\,-V_\infty(0)=\E u_n(0)+\frac{\Sigma^2y}{\E u_n(0)}\right)-\one_{\{y>0\}}
		\right|\longrightarrow0\, .
	\end{equation}
	and
	\begin{equation}\label{eq:dgt4-gaussian-reflected-limit}
		\sup_{|y|\leq K}\left|
		\frac{\E u_n(0)}{\Sigma^2}
		\E\left[
			(-\zeta(0)-Pu_n(0))_+
			\,\middle|\,
			-V_\infty(0)=\E u_n(0)+\frac{\Sigma^2y}{\E u_n(0)}
		\right]
		-\frac{\max(y,0)}{G(0,0)}
		\right|\longrightarrow0\, .
	\end{equation}
	To reduce the stopping problem to the origin, we first show that all
	conditioned values away from the origin are positive up to time $k_n$.
	For each $z\ne0$, the Gaussians $V_\infty(z)$ and $V_\infty(0)$ are not proportional, so the
	Cauchy--Schwarz inequality is strict: $|\Cov(V_\infty(z),V_\infty(0))|<\Sigma^2$. Moreover
	\eqref{eq:dgt4-intersection-first-moment} sends $|\Cov(V_\infty(z),V_\infty(0))|/\Sigma^2$ to
	zero as $|z|\to\infty$. Hence
	\begin{equation}\label{eq:dgt4-gaussian-correlation-gap}
		\sup_{z\ne0}\frac{|\Cov(V_\infty(z),V_\infty(0))|}{\Sigma^2}<1\, .
	\end{equation}
	The conditional mean of the mean-zero Gaussian field is its linear regression
	on the conditioned value $-V_\infty(0)$; for every $z\ne0$ this gives
	\[
	\begin{aligned}
		&\E\left[
		V_\infty(z)+\E u_n(0)
		\,\middle|\,
		-V_\infty(0)=\E u_n(0)+\frac{\Sigma^2y}{\E u_n(0)}
		\right]\\
		&=
		\E u_n(0)
		-
		\left(\E u_n(0)+\frac{\Sigma^2y}{\E u_n(0)}\right)
		\frac{\Cov(V_\infty(z),V_\infty(0))}{\Sigma^2}\, .
	\end{aligned}
	\]
	By \eqref{eq:dgt4-gaussian-correlation-gap}, the conditional means in the previous
	display are at least $c\E u_n(0)$ for $z\ne0$ and $|y|\leq K$, while the
	conditional variances are uniformly bounded. A union bound and the Gaussian
	tail estimate
	\eqref{eq:dgt4-gaussian-height-order} therefore give
	\begin{equation}\label{eq:dgt4-gaussian-positive-off-origin}
	\begin{aligned}
		&\sup_{|y|\leq K}
		\P\left(
		\min_{0<|z|\leq k_n+1}(V_\infty(z)+\E u_n(0))\leq0
		\,\middle|\,-V_\infty(0)=\E u_n(0)+\Sigma^2y/\E u_n(0)
		\right)\\
		&\hspace{35mm}\leq
		Ck_n^d\exp\{-c(\E u_n(0))^2\}
		\longrightarrow0\, .
	\end{aligned}
	\end{equation}

	Let $\tau_0^+\coloneqq\inf\{j\geq1:X_j=0\}$. In the stopping representation
	\eqref{eq:dgt4-infinite-field-stopping}, stopping the walk at a site $z$ yields the payoff $V_\infty(z)+\E u_n(0)$; at the origin this is
	$V_\infty(0)+\E u_n(0)=-\Sigma^2y/\E u_n(0)$. On the event that
	$V_\infty(z)+\E u_n(0)>0$ for every $0<|z|\leq k_n+1$, the payoff is thus negative only at the origin among the sites reached before time $k_n$. Stopping at $\tau_0^+$ gives one of the two bounds for the optimal stopping value, while allowing an arbitrary stopping time gives the other; both bounds differ only by the terminal value at time $k_n+1$.
	\begin{equation}\label{eq:dgt4-gaussian-stopping-comparison}
		\left|
		P(V_\infty-u_n+\E u_n(0))(0)
		+\frac{\Sigma^2\max(y,0)}{\E u_n(0)}
		\P_0(\tau_0^+\leq k_n)
		\right|
		\leq
		P^{k_n+1}|V_\infty-u_{n-k_n}+\E u_n(0)|(0)\, .
	\end{equation}
	Since $\P_0(\tau_0^+\leq k_n)\to1-G(0,0)^{-1}$, \eqref{eq:dgt4-gaussian-conditional-terminal} and
	\eqref{eq:dgt4-gaussian-positive-off-origin} give, under the conditioning
	$-V_\infty(0)=\E u_n(0)+\Sigma^2y/\E u_n(0)$,
	\begin{equation}\label{eq:dgt4-gaussian-boundary-profile}
		\frac{\E u_n(0)}{\Sigma^2}P(V_\infty-u_n+\E u_n(0))(0)\longrightarrow-\left(1-\frac1{G(0,0)}\right)\max(y,0)
	\end{equation}
	in probability, uniformly for $|y|\leq K$.

	Using $V_\infty-PV_\infty=\zeta$, conditioning on $-V_\infty(0)=\E u_n(0)+\Sigma^2y/\E u_n(0)$ gives
	\begin{equation}\label{eq:dgt4-gaussian-reflected-expression}
		-\zeta(0)-Pu_n(0)=\frac{\Sigma^2y}{\E u_n(0)}+P(V_\infty-u_n+\E u_n(0))(0)\, .
	\end{equation}
	Combining \eqref{eq:dgt4-gaussian-reflected-expression} and
	\eqref{eq:dgt4-gaussian-boundary-profile} gives, under $-V_\infty(0)=\E u_n(0)+\Sigma^2y/\E u_n(0)$,
	\[
		\frac{\E u_n(0)}{\Sigma^2}
		(-\zeta(0)-Pu_n(0))
		\longrightarrow
		\begin{cases}
			y,&y\leq0,\\
			y/G(0,0),&y>0,
		\end{cases}
	\]
		in probability, uniformly for $|y|\leq K$. Since
		$\{u_{n+1}(0)=0\}=\{-\zeta(0)>Pu_n(0)\}$, this proves
		\eqref{eq:dgt4-gaussian-conditional-contact} uniformly on $\{\varepsilon\leq|y|\leq K\}$.
	For \eqref{eq:dgt4-gaussian-reflected-limit}, iterate
	$u_{r+1}\geq\zeta+Pu_r$ over the final $k_n$ updates. Conditionally on
	$-V_\infty(0)=b$ for a level $b\in\R$,
	\begin{equation}\label{eq:dgt4-gaussian-terminal-domination}
		-\zeta(0)-Pu_n(0)\leq b-\E u_n(0)+P^{k_n+1}(V_\infty-u_{n-k_n}+\E u_n(0))(0)\, .
	\end{equation}
	Under $-V_\infty(0)=\E u_n(0)+\Sigma^2y/\E u_n(0)$, the scaled positive part of
	$-\zeta(0)-Pu_n(0)$ is at most
	\[
		|y|+\frac{\E u_n(0)}{\Sigma^2}P^{k_n+1}|V_\infty-u_{n-k_n}+\E u_n(0)|(0)\, .
	\]
	By \eqref{eq:dgt4-gaussian-conditional-terminal}, the second term of the previous display tends to zero in conditional mean, while \eqref{eq:dgt4-gaussian-boundary-profile} and \eqref{eq:dgt4-gaussian-reflected-expression} give conditional convergence in probability, uniformly for $|y|\leq K$. Therefore the positive parts are uniformly integrable, and \eqref{eq:dgt4-gaussian-reflected-limit} follows.

	\noindent\emph{Step 4.} We complete the proof of \eqref{eq:dgt4-contact-threshold-relative-error}
	and \eqref{eq:dgt4-contact-mean-increment} by the dominated convergence theorem. Write
	$-V_\infty(0)=\E u_n(0)+yh_n$ with $h_n\coloneqq\Sigma^2/\E u_n(0)$, so that $y$ measures
	the deviation of $-V_\infty(0)$ from the threshold $\E u_n(0)$ in units of $h_n$. Let
	$\rho_n(y)$ be the density of $-V_\infty(0)$ in the variable $y$, normalized by the
	exceedance probability $\P(-V_\infty(0)>\E u_n(0))$; explicitly,
	\begin{equation}\label{eq:dgt4-gaussian-density-ratio}
	\rho_n(y)=\frac{\Sigma^2}{\E u_n(0)}
	\frac{(2\pi\Sigma^2)^{-1/2}\exp\{-(\E u_n(0)+\Sigma^2y/\E u_n(0))^2/(2\Sigma^2)\}}{\P(-V_\infty(0)>\E u_n(0))}
	\longrightarrow e^{-y}
	\end{equation}
	locally uniformly in $y$, and $\rho_n(y)\leq Ce^{-y}$ for all $y$ and large $n$. Let
	\[
		m_n(y)\coloneqq\frac{\E u_n(0)}{\Sigma^2}\E\bigl[(-\zeta(0)-Pu_n(0))_+\bigm|-V_\infty(0)=\E u_n(0)+yh_n\bigr] .
	\]
	Conditioning on $-V_\infty(0)$ and changing variables to $y$,
	\begin{equation}\label{eq:dgt4-gaussian-integral-representation}
		\frac{\E u_n(0)}{\Sigma^2\,\P(-V_\infty(0)>\E u_n(0))}\,\E(-\zeta(0)-Pu_n(0))_+=\int_\R m_n(y)\,\rho_n(y)\,dy .
	\end{equation}
	By \eqref{eq:dgt4-gaussian-reflected-limit}, $m_n(y)\to\max(y,0)/G(0,0)$, and by
	\eqref{eq:dgt4-gaussian-density-ratio}, $\rho_n(y)\to e^{-y}$. It remains to dominate the
	integrand. For $y\geq0$, \eqref{eq:dgt4-gaussian-covariance-sampling} shows that $m_n$ is one-Lipschitz in $y$; since $m_n(0)\to0$, this gives $m_n(y)\leq C(1+y)$ and
	$m_n(y)\rho_n(y)\leq C(1+y)e^{-y}$.

		For $y<0$ we use concentration. By \eqref{eq:dgt4-gaussian-terminal-domination},
		$-\zeta(0)-Pu_n(0)\leq yh_n+\Theta_n$ with $\Theta_n\coloneqq P^{k_n+1}(V_\infty-u_{n-k_n}+\E u_n(0))(0)$.
		The coordinatewise Lipschitz constants of $\Theta_n$ are bounded by
		$\sum_{j\geq k_n+1}p_j(0,z)$, so \eqref{eq:dgt4-tail-kernel} bounds its Gaussian concentration proxy by
		$Ck_n^{-(d-4)/2}$. Conditioning the Gaussian scenery on the linear functional
		$-V_\infty(0)$ replaces its covariance by a rank-one reduction, so the same bound holds conditionally. The comparison in Step~2, now retaining the dependence on $y$, gives, uniformly for $y\leq-1$,
		\[
			\E\left[|\Theta_n|\,\middle|\,-V_\infty(0)=\E u_n(0)+yh_n\right]
			\leq o(h_n)+Ck_n^{-(d-4)/2}\bigl(\E u_n(0)+|y|h_n\bigr)
			\leq\frac{|y|h_n}{2}
		\]
		for all large $n$. On $\{u_{n+1}(0)=0\}$ we have $\Theta_n\geq|y|h_n$. Integrating the conditional concentration tail therefore gives
		\begin{equation}\label{eq:dgt4-gaussian-conditional-concentration}
			m_n(y)\leq C\exp\{-cy^2k_n^{(d-4)/2}/(\E u_n(0))^2\},
			\qquad y\leq-1.
		\end{equation}
		The same bound holds with $m_n(y)$ replaced by the conditional probability in
		\eqref{eq:dgt4-gaussian-conditional-contact}. For $-1<y<0$, terminal domination and Step~2 give $m_n(y)\leq C$, while the conditional probability is at most one. By \eqref{eq:dgt4-gaussian-height-order}, the coefficient
		$k_n^{(d-4)/2}/(\E u_n(0))^2$ tends to infinity. Thus
		$m_n(y)\rho_n(y)\leq Ce^{-y^2-y}$ for $y\leq-1$, while the integrand is bounded on $(-1,0)$.

	The two bounds give an integrable dominating function, so dominated convergence in
	\eqref{eq:dgt4-gaussian-integral-representation} yields
	\[
		\int_\R m_n(y)\,\rho_n(y)\,dy\longrightarrow\int_0^\infty\frac{y}{G(0,0)}\,e^{-y}\,dy=\frac1{G(0,0)}\, .
	\]
	By \eqref{eq:reflection-increment} and
	$\E(-V_\infty(0)-\E u_n(0))_+\sim\frac{\Sigma^2}{\E u_n(0)}\P(-V_\infty(0)>\E u_n(0))$,
	this is \eqref{eq:dgt4-contact-mean-increment};
	\eqref{eq:dgt4-gaussian-conditional-contact} likewise gives
	\eqref{eq:dgt4-contact-threshold-relative-error} in case~\textup{(a)}. This completes the proof.
\end{proof}

\begin{proof}[Proof of Proposition~\ref{prop:dgt4-contact-asymptotics} in case~\textup{(b)}]
	We prove the proposition with $\kappa=1-1/\alpha$; the odometer $w_n$ killed at the
	origin is as in case~\textup{(a)}. By Part~\textup{(i)} of
	Corollary~\ref{cor:dgt4-mean-lower}, $\E u_n(0)\to\infty$. The heavy lower tail of
	$\zeta(0)$ lets us replace the random threshold in $\{-\zeta(0)>Pw_n(0)\}$ by the
	deterministic level $\E u_n(0)/G(0,0)$. Since the lower tail is regularly varying of
	index $\alpha$, as $t\to\infty$,
	\begin{equation}\label{eq:dgt4-frechet-integrated-tail}
		\E(-\zeta(0)-t)_+\sim\frac{t\,\P(-\zeta(0)>t)}{\alpha-1}\, .
	\end{equation}

	We prove the following two estimates.
	\begin{align}
	\frac{\P\left(
	\{u_{n+1}(0)=0\}\mathbin{\triangle}\{-G(0,0)\zeta(0)>\E u_n(0)\}
	\right)}{\P(-G(0,0)\zeta(0)>\E u_n(0))}
	&\longrightarrow0,
	\label{eq:dgt4-b-relative-error}\\
	\frac{\E u_{n+1}(0)-\E u_n(0)}{\E(-\zeta(0)-\E u_n(0)/G(0,0))_+}
	&\longrightarrow1.
	\label{eq:dgt4-b-mean-increment}
	\end{align}
	As in case~\textup{(a)}, these two estimates imply the proposition. By \eqref{eq:dgt4-frechet-integrated-tail}, $\P(-\zeta(0)>t)\int_0^t dr/\E(-\zeta(0)-r)_+\to1-1/\alpha$.
	Since $\E u_{n+1}(0)-\E u_n(0)$ is asymptotic to
	$\E(-\zeta(0)-\E u_n(0)/G(0,0))_+\to0$ by \eqref{eq:dgt4-b-mean-increment}, we have $\E u_{n+1}(0)/\E u_n(0)\to1$, so by regular
	variation $\E(-\zeta(0)-r)_+\sim\E(-\zeta(0)-\E u_n(0)/G(0,0))_+$ uniformly for $r$ between
	$\E u_n(0)/G(0,0)$ and $\E u_{n+1}(0)/G(0,0)$. Hence, again by
	\eqref{eq:dgt4-b-mean-increment},
	\[
		\int_{\E u_n(0)/G(0,0)}^{\E u_{n+1}(0)/G(0,0)}\frac{dr}{\E(-\zeta(0)-r)_+}\longrightarrow\frac1{G(0,0)}\, .
	\]
	Summing over $n$ gives $\int_0^{\E u_n(0)/G(0,0)}dr/\E(-\zeta(0)-r)_+\sim n/G(0,0)$, hence
	$\P(-G(0,0)\zeta(0)>\E u_n(0))\sim G(0,0)(1-1/\alpha)/n$. With
	\eqref{eq:dgt4-b-relative-error} and $\kappa=1-1/\alpha$,
	\[
		\P(u_{n+1}(0)=0)\sim\frac{G(0,0)\kappa}{n}\, .
	\]

	It remains to verify \eqref{eq:dgt4-b-relative-error} and
	\eqref{eq:dgt4-b-mean-increment}. Step~1 proves that $Pw_n(0)$ rarely falls far
	below its mean, and Step~2 uses this and the identities
	\eqref{eq:dgt4-origin-fixed-identities} to justify the replacement, giving the two
	estimates.

	\noindent\emph{Step 1.} We prove
	\begin{equation}\label{eq:dgt4-small-origin-neighbor-average}
		\frac{\P(Pw_n(0)\leq\E Pw_n(0)/6)}{\P(-\zeta(0)>\E Pw_n(0))}\longrightarrow0\, .
	\end{equation}
	Fix $p\in(\max\{2,\alpha/2\},\alpha)$; regular variation of the lower tail and the
	upper bound on the scenery give $\E|\zeta(0)|^p<\infty$, so
	\eqref{eq:dgt4-origin-fixed-mean} applies and $\E Pw_n(0)\to\infty$. For $K\geq1$ and
	large $n$, set $\eta_n\coloneqq1/(K\log\E Pw_n(0))$ and let
	\[
		A_n\coloneqq\Bigl\{z\in Q(0,n+1)\setminus\{0\}:-\tfrac{G(0,z)}{G(0,0)}\zeta(z)>\eta_n\E Pw_n(0)\Bigr\}
	\]
	be the sites making an unusually large contribution. Independence, Markov's
	inequality, and \eqref{eq:dgt4-green-tail} give
	\[
		\P(|A_n|\geq2)\leq\tfrac12\Bigl(\sum_{z\ne0}\P(z\in A_n)\Bigr)^2\leq C(p)(\eta_n\E Pw_n(0))^{-2p}=o(\P(-\zeta(0)>\E Pw_n(0)))\, .
	\]

	It remains to bound $\P(Pw_n(0)\leq\E Pw_n(0)/6)$ on $\{|A_n|\leq1\}$. On
	$\{A_n=\varnothing\}$ the scenery values $\zeta(z)$ for $z\ne0$ are conditioned to lie
	above a level, so they stay independent, stochastically dominate $\zeta(0)$, and have
	uniformly bounded second moment. On $\{A_n=\{z\}\}$ monotonicity gives
	$Pw_n(0)\geq Pu_n^{\Z^d\setminus\{0,z\}}(0)$, the localized odometer with the walk also
	killed at $z$, and a walk from $x$ hits $\{0,z\}$ with probability
	$(G(x,0)+G(x,z))/(G(0,0)+G(0,z))$. Since $Pw_n(0)$ is nondecreasing in the scenery,
	Lemma~\ref{lem:localization-killing} averaged over the neighbors of the origin and
	\eqref{eq:dgt4-origin-fixed-mean} give
	\[
		\E[Pw_n(0)\mid A_n=\varnothing]\geq\E Pw_n(0),\qquad
		\E\bigl[Pu_n^{\Z^d\setminus\{0,z\}}(0)\mid A_n=\{z\}\bigr]\geq\E Pw_n(0)/3
	\]
    for $z\neq 0$.
	Centered at its conditional mean, each of these is a martingale $M$ with increments
	bounded by $2\eta_n\E Pw_n(0)$ and quadratic variation at most
	$C\sum_{v\ne0}(G(0,v)/G(0,0))^2\leq C$, so Freedman's inequality gives
	\[
		\P(M\leq-\E Pw_n(0)/6)\leq e^{-c/\eta_n}\, .
	\]
	Hence $\P(Pw_n(0)\leq\E Pw_n(0)/2\mid A_n=\varnothing)$ and, uniformly in $z$,
	$\P(Pu_n^{\Z^d\setminus\{0,z\}}(0)\leq\E Pw_n(0)/6\mid A_n=\{z\})$ are at most
	$e^{-c/\eta_n}$. Choosing $K$ with $cK>\alpha+1$ makes
	$e^{-c/\eta_n}=(\E Pw_n(0))^{-cK}=o(\P(-\zeta(0)>\E Pw_n(0)))$, and splitting
	$\P(Pw_n(0)\leq\E Pw_n(0)/6)$ according to whether $|A_n|$ is $0$, $1$, or at least
	$2$ proves \eqref{eq:dgt4-small-origin-neighbor-average}.

	\noindent\emph{Step 2.} By \eqref{eq:dgt4-origin-fixed-mean} and Markov's inequality,
	$\E Pw_n(0)\sim\E u_n(0)/G(0,0)$ and $Pw_n(0)/\E Pw_n(0)\to1$ in probability. We show
	\begin{equation}\label{eq:dgt4-rv-mean-tail-replacement}
		\E(-\zeta(0)-Pw_n(0))_+\sim\E\bigl(-\zeta(0)-\tfrac{\E u_n(0)}{G(0,0)}\bigr)_+
	\end{equation}
	and
	\begin{equation}\label{eq:dgt4-rv-contact-tail-replacement}
		\E\bigl|\P(-\zeta(0)>Pw_n(0)\mid Pw_n(0))-\P(-G(0,0)\zeta(0)>\E u_n(0))\bigr|
		=o\bigl(\P(-G(0,0)\zeta(0)>\E u_n(0))\bigr)\, .
	\end{equation}
	Both $t\mapsto\E(-\zeta(0)-t)_+$ and $t\mapsto\P(-\zeta(0)>t)$ are regularly varying of
	negative index. On $\{Pw_n(0)\geq\E Pw_n(0)/6\}$, where $Pw_n(0)/\E Pw_n(0)\geq1/6$,
	Potter's bounds \citep[Proposition~B.1.9(5)]{deHaanFerreira}, which compare a regularly varying function at two comparable arguments, therefore imply
	\[
		\frac{\E\bigl[(-\zeta(0)-Pw_n(0))_+\bigm|Pw_n(0)\bigr]}{\E(-\zeta(0)-\E Pw_n(0))_+}
		+
		\frac{\P(-\zeta(0)>Pw_n(0)\mid Pw_n(0))}{\P(-\zeta(0)>\E Pw_n(0))}
		\leq C\, .
	\]
	Since $Pw_n(0)/\E Pw_n(0)\to1$ in probability, bounded convergence shows that both ratios converge to $1$ in mean. On the complementary event $\{Pw_n(0)<\E Pw_n(0)/6\}$, \eqref{eq:dgt4-small-origin-neighbor-average} gives
	$\P(Pw_n(0)<\E Pw_n(0)/6)=o(\P(-\zeta(0)>\E Pw_n(0)))$, so this event contributes
	$o(\P(-\zeta(0)>\E Pw_n(0)))$ to $\P(-\zeta(0)>Pw_n(0))$ and, by
	\eqref{eq:dgt4-frechet-integrated-tail},
	$o(\E(-\zeta(0)-\E Pw_n(0))_+)$ to $\E(-\zeta(0)-Pw_n(0))_+$. Finally,
	$\E Pw_n(0)\sim\E u_n(0)/G(0,0)$, and, both functions being regularly varying, their values
	at $\E Pw_n(0)$ are asymptotic to their values at $\E u_n(0)/G(0,0)$; this gives
	\eqref{eq:dgt4-rv-mean-tail-replacement} and \eqref{eq:dgt4-rv-contact-tail-replacement}.

	Since $Pw_n(0)$ is independent of the atomless $\zeta(0)$, the identities
	\eqref{eq:dgt4-origin-fixed-identities} give
	\[
		\E u_{n+1}(0)-\E u_n(0)=\E(-\zeta(0)-Pw_n(0))_+
	\]
	and
	\[
	\begin{aligned}
		&\P\bigl(\{u_{n+1}(0)=0\}\mathbin{\triangle}\{-G(0,0)\zeta(0)>\E u_n(0)\}\bigr)\\
		&\qquad=\E\bigl|\P(-\zeta(0)>Pw_n(0)\mid Pw_n(0))-\P(-G(0,0)\zeta(0)>\E u_n(0))\bigr|\, .
	\end{aligned}
	\]
	With \eqref{eq:dgt4-rv-mean-tail-replacement} and
	\eqref{eq:dgt4-rv-contact-tail-replacement}, this proves
	\eqref{eq:dgt4-b-mean-increment} and \eqref{eq:dgt4-b-relative-error}.
	\end{proof}
\subsubsection{Linearization}
\label{ssec:dgt4-linearization-conclusion}
	In this subsection we prove the linearization, Proposition~\ref{prop:dgt4-linearization}.
	Following the overview above, the mean derivatives of the odometer supply the weight, and
	the fluctuations of these derivatives must vanish; we compute both through the survival of a
	walk in the region where the odometer is positive.

	Let $X$ and
	$Y$ be independent simple random walks, independent also of the scenery, with laws
	$\mathbf P_x$ and $\mathbf P_y$ started from $x$ and $y$, and for $n\geq1$ and
	$0\leq j<n$ let
	\[
		S_{n,j}(X)\coloneqq\one_{\{u_{n-r}(X_r)>0\text{ for every }0\leq r\leq j\}},
	\]
	the indicator that the odometer stays positive along the first $j$ steps of $X$.
	Set $J=-V_\infty$ in case~\textup{(a)} and $J=-G(0,0)\zeta$ in
	case~\textup{(b)}. In either case, the pair $(J,(u_m)_{m\geq0})$ has a translation-invariant law. In case~\textup{(a)}, $J$ is a mean-zero Gaussian field whose covariance
    $\Cov(J(x),J(y))=\Var(\zeta(0))\sum_{z\in\Z^d}G(x,z)G(y,z)$ satisfies the correlation gap
	\eqref{eq:dgt4-gaussian-correlation-gap} and the decay \eqref{eq:dgt4-intersection-first-moment}; in
	case~\textup{(b)}, $J$ is i.i.d.
	Proposition~\ref{prop:dgt4-contact-asymptotics} lets us replace the survival factor
	$\one_{\{u_{n-r}(X_r)>0\}}$ at time $r$, whose product over $0\leq r\leq j$ is $S_{n,j}(X)$,
	by $\one_{\{J(X_r)\leq\E u_{n-r-1}(0)\}}$.
	These threshold events are nested, so a path visiting $x$ at times $r_1<\cdots<r_k$
	satisfies
	\[
		\bigcap_{\ell=1}^k\{J(x)\leq\E u_{n-r_\ell-1}(0)\}=\{J(x)\leq\E u_{n-r_k-1}(0)\};
	\]
	only the last visit to each site matters, and for two paths the dependence enters only
	through the shared sites. The first lemma below establishes the survival-probability limit
	and the covariance bound, and the second uses them to linearize the odometer.
\begin{lemma}[Survival probability limit and covariance bound]
\label{lem:dgt4-path-survival}
	Let $T>0$, $\kappa>0$, and $n_R\coloneqq\lfloor R^2T\rfloor$, and suppose that for every
	$\varepsilon\in(0,1)$, as $R\to\infty$,
	\begin{equation}\label{eq:dgt4-uniform-contact-thresholds}
		\max_{\lceil\varepsilon n_R\rceil\leq m\leq n_R}\left\{\left|\frac{m\P(J(0)>\E u_{m-1}(0))}{G(0,0)\kappa}-1\right|+m\P\bigl(\{u_m(0)=0\}\mathbin{\triangle}\{J(0)>\E u_{m-1}(0)\}\bigr)\right\}\longrightarrow0\, .
	\end{equation}
	Then
	\begin{equation}\label{eq:dgt4-averaged-positive-path-limit}
		R^{-2}\sum_{j=0}^{n_R-1}\mathbf E_0\left|\P(S_{n_R,j}(X)=1\mid X)-\left(1-\frac{j}{R^2T}\right)^\kappa\right|\longrightarrow0\, .
	\end{equation}
	Moreover, there is $C<\infty$ so that for each $\delta\in(0,T)$ there are $\varepsilon_R(\delta)\geq0$ with
	$\varepsilon_R(\delta)\to0$ as $R\to\infty$ and, uniformly over $0\leq i,j\leq n_R-\delta R^2$ and all
	deterministic nearest-neighbor paths $X=(X_0,\ldots,X_i)$ and $Y=(Y_0,\ldots,Y_j)$,
	\begin{equation}\label{eq:dgt4-positive-path-covariance}
		\bigl|\Cov(S_{n_R,i}(X),S_{n_R,j}(Y)\mid X,Y)\bigr|\leq\frac{C}{\delta R^2}\sum_{r=0}^i\sum_{h=0}^j\one_{\{X_r=Y_h\}}+\varepsilon_R(\delta)\, .
	\end{equation}
\end{lemma}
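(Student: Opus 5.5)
We first reduce to threshold events, then use the last-visit structure, and treat cases~\textup{(a)} and~\textup{(b)} together through a comparison with an independent field. Fix $\varepsilon\in(0,1)$ and write $n=n_R$.

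\emph{Splitting off the late range.} The summands in \eqref{eq:dgt4-averaged-positive-path-limit} with $j>(1-\varepsilon)n$ are bounded by one. There are at most $\varepsilon n$ of them, so after the factor $R^{-2}$ they contribute at most $\varepsilon T$. It therefore suffices to treat $j\leq(1-\varepsilon)n$, uniformly in $j$.

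\emph{Replacement by thresholds.} For such $j$ and any fixed path, a union bound over $r\leq j$ with \eqref{eq:dgt4-uniform-contact-thresholds} gives
\[
\Bigl|\P(S_{n,j}(X)=1\mid X)-\P\bigl(J(X_r)\leq a_{n-r}\ \forall r\leq j\bigr)\Bigr|\leq\sum_{r\leq j}\frac{o(1)}{n-r}=o(1),
\]
where $a_m\coloneqq\E u_{m-1}(0)$ and the bound is uniform in the path. The same bound holds for $j\leq n-\delta R^2$, with $o(1)$ depending on $\delta$. Since the thresholds are nested in time, the event on the right equals
\[
\bigl\{J(x)\leq t_x\ \text{for every } x\in\operatorname{range}(X_{[0,j]})\bigr\},\qquad t_x\coloneqq a_{n-r_x},
\]
where $r_x$ is the last visit to $x$ before time $j$. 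Hence only the indicators $I_{r,j}(X)$ of Lemma~\ref{lem:dgt4-weighted-last-visits} matter. The hypothesis gives
\[
p_x\coloneqq\P(J(0)>t_x)=\frac{G(0,0)\kappa\,(1+o(1))}{n-r_x}\leq\frac{C}{\varepsilon n}.
\]

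\emph{Product form.} In case~\textup{(b)} the field $J$ is i.i.d., so the probability is exactly $\prod_x(1-p_x)$. This equals
\[
\exp\Bigl\{-G(0,0)\kappa\sum_{r\leq j}\frac{I_{r,j}(X)}{n-r}+O\bigl(\textstyle\sum_xp_x^2\bigr)\Bigr\},
\]
and $\sum_x p_x^2\leq C/n$. Lemma~\ref{lem:dgt4-weighted-last-visits} turns the exponent into $\kappa\log(1-j/n)$ in $\mathbf E_0$-mean, uniformly in $j\leq(1-\varepsilon)n$. Since $e^{-y}$ is $1$-Lipschitz on $[0,\infty)$, this yields \eqref{eq:dgt4-averaged-positive-path-limit}; replacing $n$ by $R^2T$ inside the weight costs $o(1)$.

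In case~\textup{(a)} the field $J$ is Gaussian with correlation $\rho(x,y)$, which satisfies the gap $\rho\leq\rho_*<1$ from \eqref{eq:dgt4-gaussian-correlation-gap} and the decay $\rho\leq C(1+|x-y|)^{4-d}$. Here I would apply the Berman/Piterbarg normal comparison lemma to compare with the independent field. The error is bounded by
\[
C\sum_{x\neq y\in\operatorname{range}}|\rho(x,y)|\exp\Bigl\{-\frac{t_x^2+t_y^2}{2\Sigma^2(1+|\rho(x,y)|)}\Bigr\}.
\]
Note that $e^{-t^2/(2\Sigma^2)}\leq n^{-1+o(1)}$, because $p_x\asymp 1/n$ and $t\asymp\sqrt{\log n}$. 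Split the pairs at a distance $L=n^{\eta}$.
\begin{itemize}
\item Close pairs: there are at most $nL^d$ of them, each contributing at most $n^{-2/(1+\rho_*)+o(1)}$. The total is $o(1)$ once $\eta$ is small.
\item Far pairs: here $\rho\leq L^{4-d}$, so the exponent is $n^{-2+o(1)}$. For a fixed $x$, the sum of $|x-y|^{4-d}$ over at most $n$ distinct lattice sites $y$ is at most $Cn^{4/d}$ (the worst case is a ball). The total is therefore at most $n^{1+4/d-2+o(1)}\to0$, since $d\geq5$.
\end{itemize}
The comparison error is thus $o(1)$ uniformly over deterministic paths, and we are back in the i.i.d.\ computation. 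I expect this Gaussian comparison, done uniformly over paths, to be the main obstacle.

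\emph{Covariance bound.} For deterministic paths $X$ and $Y$ with $i,j\leq n-\delta R^2$, every threshold satisfies $p_x\leq C/(\delta R^2)$. The replacement step costs at most $\varepsilon_R(\delta)$. It then remains to bound the covariance of the two threshold events, with site sets $A=\operatorname{range}(X)$ and $B=\operatorname{range}(Y)$.
\begin{itemize}
\item Case~\textup{(b)}: by independence, the covariance comes only from $A\cap B$. It is bounded by
\[
\sum_{x\in A\cap B}p_x\leq\frac{C}{\delta R^2}\sum_{r,h}\one_{\{X_r=Y_h\}}.
\]
\item Case~\textup{(a)}: apply the same normal comparison to the joint vector on $A\cup B$, compared with the field in which the cross-correlations between $A\setminus B$ and $B$ are set to zero. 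The shared sites again give the intersection term, and the cross pairs give an error bounded exactly as above by $\varepsilon_R(\delta)$.
\end{itemize}
This gives \eqref{eq:dgt4-positive-path-covariance}.
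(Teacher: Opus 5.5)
Your proposal is correct and follows essentially the same route as the paper. The common steps are: truncating the late range, replacing survival by threshold events through a union bound, reducing to last visits via the nested thresholds, applying a Li--Shao/Berman normal comparison with near pairs controlled by the correlation gap and far pairs by covariance decay, evaluating the resulting product with Lemma~\ref{lem:dgt4-weighted-last-visits}, and bounding the covariance by shared sites. The differences are only technical: you split at the polynomial scale $n^{\eta}$ and use the ball rearrangement bound $\sum_{y}|x-y|^{4-d}\le Cn^{4/d}$, which makes the far-pair error polynomially small, whereas the paper splits at $(\log R)^{2/(d-4)}$ and gets $O(1/\log R)$ by crude counting; and in the Gaussian covariance step you decorrelate only the block between $A\setminus B$ and $B$, whereas the paper fully factorizes all three threshold events.
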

\begin{proof}
	The proof has three steps. Step~1 shows that the threshold events at distinct visited
	sites are asymptotically independent, so the probability that they all hold factorizes into
	the product of the individual probabilities. Step~2 combines this with the reduction to
	last visits to give the survival-probability limit: for a fixed walk, the probability over the
	scenery that the odometer stays positive along its first $j$ steps tends to $(1-j/(R^2T))^\kappa$. Step~3 shows that
	for two walks the same factorization bounds the covariance of their survivals by the number
	of sites they share.
	\par\smallskip\noindent\emph{Step 1.} We prove that the threshold events factorize,
	uniformly for paths of at most $(1-\varepsilon)n_R$ steps.
	Fix $\varepsilon\in(0,1)$ and deterministic
	nearest-neighbor paths, each of at most
	$(1-\varepsilon)n_R$ steps, and let $\Lambda$ be the union of their ranges. For $x\in\Lambda$,
	let $r_x$ be the largest time index at which either selected path visits $x$, and
	let $b_x\coloneqq\E u_{n_R-r_x-1}(0)$. Uniformly over all such paths, as $R\to\infty$,
	\begin{equation}\label{eq:dgt4-path-threshold-factorization}
		\left|
		\P\left(\bigcap_{x\in\Lambda}\{J(x)\leq b_x\}\right)
		-\prod_{x\in\Lambda}\P(J(0)\leq b_x)
		\right|
		\longrightarrow0\, .
	\end{equation}
	When the $J(x)$ are independent, the left-hand side of
	\eqref{eq:dgt4-path-threshold-factorization} is zero. When $J$ is Gaussian, write
	$\rho_{xy}\coloneqq\Cov(J(x),J(y))/\Var(J(0))$ for the correlation; the comparison estimate
	\citep[Corollary~2.1, p.~496]{LiShao} bounds the left-hand side by
	\[
		C\sum_{\{x,y\}\subset\Lambda}|\rho_{xy}|
		\exp\left\{-\frac{b_x^2+b_y^2}{2\Var(J(0))(1+|\rho_{xy}|)}\right\}\, .
	\]
	By \eqref{eq:dgt4-uniform-contact-thresholds} and $\varepsilon n_R\leq n_R-r_x\leq n_R$, we have
	$\P(J(0)>b_x)\asymp R^{-2}$ uniformly over $x\in\Lambda$, so inverting the Gaussian tail
	$\log\P(J(0)>b)=-b^2/(2\Var(J(0)))-\log b+O(1)$ gives
	$b_x^2/\Var(J(0))=4\log R-\log\log R+O_{\varepsilon,T}(1)$. The
	$O(n_R(\log R)^{2d/(d-4)})$ pairs with $|x-y|\leq(\log R)^{2/(d-4)}$ have $|\rho_{xy}|\leq\rho_*<1$ by the
	correlation gap \eqref{eq:dgt4-gaussian-correlation-gap}, so they contribute
	$O(R^{2-4/(1+\rho_*)}(\log R)^{O(1)})\to0$; every other pair has $|\rho_{xy}|\leq C/(\log R)^2$ by the
	covariance decay \eqref{eq:dgt4-intersection-first-moment}, so, as $|\Lambda|\leq2(n_R+1)$, they
	contribute $O(1/\log R)$. Both tend to zero, proving \eqref{eq:dgt4-path-threshold-factorization}.
	\par\smallskip\noindent\emph{Step 2.} We compute the probability that the
	odometer remains positive along $X$ and obtain
	\eqref{eq:dgt4-averaged-positive-path-limit}.
	Let
	$\pi_{R,r}\coloneqq\P(J(0)>\E u_{n_R-r-1}(0))$.
	For $j\leq(1-\varepsilon)n_R$, translation invariance and
	\eqref{eq:dgt4-uniform-contact-thresholds} bound the probability that at least
	one of the events $\{u_{n_R-r}(X_r)=0\}$ differs from its threshold event:
	\begin{equation}\label{eq:dgt4-path-contact-replacement}
	\begin{aligned}
	&\P\Biggl(
	\bigcup_{r=0}^j
	\Bigl(
	\{u_{n_R-r}(X_r)=0\}
	\mathbin{\triangle}
	\{J(X_r)>\E u_{n_R-r-1}(0)\}
	\Bigr)
	\,\Bigm|\,X\Biggr)\\
	&\qquad\leq
	\left[
	\max_{\lceil\varepsilon n_R\rceil\leq m\leq n_R}
	m\P\left(
	\{u_m(0)=0\}\mathbin{\triangle}
		\{J(0)>\E u_{m-1}(0)\}
		\right)
		\right]
	\sum_{m=\lceil\varepsilon n_R\rceil}^{n_R}\frac1m
	\longrightarrow0\, .
	\end{aligned}
	\end{equation}
		The union bound, monotonicity of $\E u_m(0)$, and
		\eqref{eq:dgt4-path-threshold-factorization} give, uniformly over
		$j\leq(1-\varepsilon)n_R$,
		\begin{equation}\label{eq:dgt4-path-product-limit}
			\P(S_{n_R,j}(X)=1\mid X)
			=\prod_{r=0}^j(1-\pi_{R,r})^{I_{r,j}(X)}+o_R(1),
		\end{equation}
		where $I_{r,j}$ is the last-visit indicator from
		Lemma~\ref{lem:dgt4-weighted-last-visits} and the error is the threshold
		replacement \eqref{eq:dgt4-path-contact-replacement}. Since
		$\log(1-\pi_{R,r})=-\pi_{R,r}+O(\pi_{R,r}^2)$ and $\sum_{r=0}^j\pi_{R,r}^2\leq C(\varepsilon)/n_R$,
		the product equals $\exp\{-\sum_{r=0}^j I_{r,j}(X)\pi_{R,r}\}+o_R(1)$. Substituting
		$\pi_{R,r}=\frac{G(0,0)\kappa}{n_R-r}(1+o_R(1))$ and evaluating the last-visit-weighted sum by
		Lemma~\ref{lem:dgt4-weighted-last-visits} gives
		\[
			\mathbf E_0\left|
			\P(S_{n_R,j}(X)=1\mid X)-\left(1-\frac{j}{n_R}\right)^\kappa
			\right|\longrightarrow0
			\qquad\text{uniformly for }j\leq(1-\varepsilon)n_R\, .
		\]
		Since $n_R/(R^2T)\to1$, replacing $n_R$ by $R^2T$ changes the weight by
		$o(1)$ uniformly for $j\leq(1-\varepsilon)n_R$. Thus each summand in
		\eqref{eq:dgt4-averaged-positive-path-limit} tends to zero uniformly for
		$j\leq(1-\varepsilon)n_R$. The remaining indices contribute at most
	$\varepsilon T+o(1)$ after multiplication by $R^{-2}$. Letting first
	$R\to\infty$ and then $\varepsilon\downarrow0$ proves
	\eqref{eq:dgt4-averaged-positive-path-limit}.
	\par\smallskip\noindent\emph{Step 3.} We prove
	\eqref{eq:dgt4-positive-path-covariance}.
	Fix $\delta\in(0,T)$. For all large $R$, $n_R-\delta R^2\leq(1-\delta/(2T))n_R$, so
	\eqref{eq:dgt4-path-contact-replacement} and \eqref{eq:dgt4-path-threshold-factorization}
	with $\varepsilon=\delta/(2T)$ apply to $X$, to $Y$, and to the sites visited by at least
	one of them. After the threshold replacement, the factorized approximations to
	$\P(S_{n_R,i}(X)=S_{n_R,j}(Y)=1\mid X,Y)$ and to
	$\P(S_{n_R,i}(X)=1\mid X)\P(S_{n_R,j}(Y)=1\mid Y)$ agree except at the sites shared by $X$
	and $Y$. At a shared site whose last visits along $X$ and $Y$ fall at times $r$ and $h$,
	joint survival requires $J$ below both thresholds, hence below the smaller, so the first
	approximation carries the factor $1-\max\{\pi_{R,r},\pi_{R,h}\}$ there and the second carries
	$(1-\pi_{R,r})(1-\pi_{R,h})$. Since for every $0\leq a,b\leq1$,
	\[
		0\leq1-\max\{a,b\}-(1-a)(1-b)\leq a+b,
	\]
	these differ by at most $\pi_{R,r}+\pi_{R,h}\leq C/(\delta R^2)$. The two products run over
	the visited sites and agree off the shared ones, so
	$\bigl|\prod_x s_x-\prod_x t_x\bigr|\leq\sum_x|s_x-t_x|$ for factors $s_x,t_x\in[0,1]$ bounds
	the difference of the two approximations by
	\[
		\frac{C}{\delta R^2}
		\sum_{r=0}^i\sum_{h=0}^j\one_{\{X_r=Y_h\}}\, .
	\]
	Collecting the uniform errors of \eqref{eq:dgt4-path-contact-replacement} and
	\eqref{eq:dgt4-path-threshold-factorization} into $\varepsilon_R(\delta)\to0$ gives
	\eqref{eq:dgt4-positive-path-covariance}.
\end{proof}

	Given the two estimates just established, the next lemma linearizes the odometer: when the
	survival probabilities converge to a profile $q_{R,j}$ and the covariance bound holds, the
	centered odometer field agrees with the linear field $\sum_{j=0}^{n_R-1}q_{R,j}P^j\zeta$ up to a
	remainder of vanishing second moment.
\begin{lemma}[Linearization of the odometer]
\label{lem:dgt4-linearization-from-survival}
	Suppose that the scenery variables are independent and identically
	distributed, atomless, centered, and have finite positive variance.
	Fix $T>0$, let $n_R\coloneqq\lfloor R^2T\rfloor$, and let
	$q_{R,j}\in[0,1]$ be deterministic for $0\leq j<n_R$. Suppose that
	as $R\to\infty$,
	\[
		R^{-2}\sum_{j=0}^{n_R-1}
		\mathbf E_0\left|
		\P(S_{n_R,j}(X)=1\mid X)-q_{R,j}
		\right|
		\longrightarrow0,
	\]
	and that there is $C<\infty$ such that, for every $\delta\in(0,T)$, there are
	deterministic numbers $\varepsilon_R(\delta)\geq0$ with
	$\varepsilon_R(\delta)\to0$ as $R\to\infty$ and, uniformly over
	$0\leq i,j\leq n_R-\delta R^2$ and all deterministic nearest-neighbor paths
	$X=(X_0,\ldots,X_i)$ and $Y=(Y_0,\ldots,Y_j)$,
	\[
		\left|
		\Cov(S_{n_R,i}(X),S_{n_R,j}(Y)\mid X,Y)
		\right|
		\leq
		\frac{C}{\delta R^2}
		\sum_{r=0}^i\sum_{h=0}^j\one_{\{X_r=Y_h\}}
		+\varepsilon_R(\delta)\, .
	\]
	Then, for every
	$\varphi\in C_c^\infty(\R^d)$,
	\begin{equation}\label{eq:dgt4-linearization-from-paths}
		\E\Biggl[\Biggl(R^{(d-4)/2}\Biggl(
		u_{n_R}-\E u_{n_R}(0)
		-\sum_{j=0}^{n_R-1}q_{R,j}P^j\zeta
		\Biggr)^{(R)}(\varphi)\Biggr)^2\Biggr]
		\longrightarrow0\, .
	\end{equation}
	Consequently, for every $s>(d-4)/2$,
	\[
		R^{(d-4)/2}
		\left(u_{n_R}-\E u_{n_R}(0)
		-\sum_{j=0}^{n_R-1}q_{R,j}P^j\zeta\right)^{(R)}
		\xrightarrow{\P}0
		\quad\text{in }H^{-s}_{\rm loc}(\R^d)\, .
	\]
\end{lemma}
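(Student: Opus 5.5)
The plan is to use Lemma~\ref{lem:convex-linear-bound} with the functional $F\coloneqq R^{(d-4)/2}\bigl(u_{n_R}\bigr)^{(R)}(\varphi)$, which is a function of the scenery $\zeta(z)$ for $z$ in a finite box.

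First, $u_{n_R}(x)$ is convex in $\zeta$ by Proposition~\ref{prop:finite-time-concentration-scale}, and its derivative is nonnegative. Hence $F$ is convex in each coordinate, and Lemma~\ref{lem:odometer-derivative} gives
\[
	\partial_{\zeta(z)}F
	=R^{(d-4)/2}\int\varphi(w)\,\mathbf E_{\lfloor Rw\rfloor}\sum_{j<n_R}\one_{\{X_j=z\}}S_{n_R,j}(X)\,dw
	\leq b_z,
\]
where
\[
	b_z\coloneqq R^{(d-4)/2}\int|\varphi(w)|\,g_{n_R}(\lfloor Rw\rfloor,z)\,dw .
\]
The mean derivative is $a_z=R^{(d-4)/2}\int\varphi\,\mathbf E\sum_j\one_{\{X_j=z\}}\P(S_{n_R,j}=1\mid X)$. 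Replacing $\P(S=1\mid X)$ by $q_{R,j}$ turns $\sum_z a_z\zeta(z)$ into the linear field, up to an error whose second moment is $\Var(\zeta(0))\sum_z(\text{coefficient difference})^2$. I will bound this by Cauchy--Schwarz, writing $\sum_z(\cdot)^2$ as a double sum over two independent walks of products of $|P(S\mid X)-q|$ and intersection indicators. Using $|P(S)-q|\leq1$ on one factor and the Gaussian bound \eqref{eq:rw-gaussian-upper}, the $\sup_z$ of the $z$-coefficient is $O(R^{-d/2}\cdot R^{(d-4)/2}\cdot R^2)\cdot R^{-d}\ldots$. Concretely, I will estimate $\sum_z c_z^2\leq\sup_z c_z\sum_z c_z$. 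The sum $\sum_z c_z$ is $R^{(d-4)/2}R^d\cdot R^{-d}\sum_j\mathbf E|P(S)-q|$, which is $o(R^{(d-4)/2}R^2)$ by the first hypothesis. The supremum is $O(R^{(d-4)/2}R^{2-d})$, since $\sup_z\int g_{n_R}(\lfloor Rw\rfloor,z)\,dw\leq CR^{2-d}$. The product is $o(1)$. It then remains to control the right side of Lemma~\ref{lem:convex-linear-bound}. The term $\eta(L)\sum_z b_z^2$ is $\eta(L)\cdot O(1)$, because $\sum_z b_z^2\leq CR^{d-4}\sum_z(\int|\varphi| g)^2\leq C$ by the Green-kernel scale; it vanishes as $L\to\infty$ by finite variance.

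The main step, and the obstacle, is showing $\sum_z\Var(\partial_zF)\to0$ for fixed $L$. I expand $\Var(\partial_zF)$ as a double integral over $w,w'$ and a double walk expectation:
\[
	R^{d-4}\iint\varphi(w)\varphi(w')\,\mathbf E\mathbf E\sum_{i,j}\one_{\{X_i=z=Y_j\}}\Cov(S_{n_R,i}(X),S_{n_R,j}(Y)\mid X,Y).
\]
I first split off indices $i$ or $j$ exceeding $n_R-\delta R^2$. For these I bound the covariance by $1$, which contributes at most $C\delta$ after summing over $z$: the number of visits in the last $\delta R^2$ steps gives a factor $\delta$, with the same scale computation as $\sum b_z^2$. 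For the remaining indices I apply the covariance hypothesis. The $\varepsilon_R(\delta)$ part contributes $\varepsilon_R(\delta)\sum_z b_z^2\to0$. The other part is
\[
	\frac{C}{\delta R^2}\,R^{d-4}\,\mathbf E\mathbf E\Bigl[\Bigl(\sum_{i,j}\one_{\{X_i=Y_j\}}\Bigr)^2\Bigr],
\]
integrated against $|\varphi\otimes\varphi|$ and summed over $z$; summing over $z$ collapses the indicator of $z$ into an intersection count. By the intersection second-moment bound \eqref{eq:dgt4-intersection-second-moment}, this is at most $C(1+R|w-w'|)^{4-d}$, and integrating over $w,w'$ gives $R^{4-d}$ up to constants, since $d\geq5$ makes $\int(1+R|v|)^{4-d}dv\lesssim R^{-d}\cdot R^{\max(0,\cdot)}$; more precisely it is $O(R^{4-d})$ when $4-d>-d$. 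Altogether this part is $O(R^{-2}/\delta)\to0$. Letting $R\to\infty$ and then $\delta\to0$ and $L\to\infty$ gives \eqref{eq:dgt4-linearization-from-paths}. The care needed is in the bookkeeping of the powers of $R$ from the cell embedding and in checking that the truncated-time covariance error is uniform in the path pairs.

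For the Sobolev statement, I combine convergence of every fixed test pairing to zero in $L^2$ with tightness in $H^{-s}_{\rm loc}$. Tightness of the odometer part follows from the covariance bound \eqref{eq:odometer-covariance-bound} together with Lemma~\ref{lem:sobolev-tightness} and \eqref{eq:dgt4-intersection-first-moment} ($\beta=d-4$). Tightness of the linear part follows similarly, since $q\in[0,1]$. A tight sequence whose pairings all converge to zero in probability converges to zero in probability in $H^{-s}_{\rm loc}$.
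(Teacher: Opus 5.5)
Your proposal is essentially the paper's proof. It applies Lemma~\ref{lem:convex-linear-bound} to the tested odometer, splits at $n_R-\delta R^2$, controls the early indices with the covariance hypothesis and the second intersection moment \eqref{eq:dgt4-intersection-second-moment}, and upgrades by the same tightness argument. Your two variations are harmless: you use an $\ell^\infty\cdot\ell^1$ bound instead of Young's inequality for the coefficient replacement, and you bound late-index covariances by one, which gives $C\delta$ in place of the paper's $C\delta^2$.

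One step needs fixing. For signed $\varphi$ the functional $F$ is not coordinatewise convex, and $\partial_{\zeta(z)}F\geq0$ can fail, yet Lemma~\ref{lem:convex-linear-bound} requires both. First reduce to $\varphi\geq0$, as the paper does: write $\varphi$ as a difference of two nonnegative functions in $C_c^\infty(\R^d)$ and use that the error in \eqref{eq:dgt4-linearization-from-paths} is linear in $\varphi$.
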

\begin{proof}
	Fix $\varphi\in C_c^\infty(\R^d)$; without loss of generality, suppose $\varphi\geq0$. Let
	\[
		\varphi_R(x)\coloneqq\int_{R^{-1}(x+[0,1)^d)}\varphi(w)\,dw,
		\qquad a_R(x)\coloneqq R^{(d-4)/2}\varphi_R(x),
		\qquad F_R\coloneqq\sum_{x\in\Z^d}a_R(x)u_{n_R}(x),
	\]
	and let $\mathcal I(X,Y)\coloneqq\sum_{r,h\geq0}\one_{\{X_r=Y_h\}}$ count the intersections of
	two independent walks.

	\par\smallskip\noindent\emph{Step 1.} We prove
	\begin{equation}\label{eq:dgt4-mean-gradient-approximation}
		R^{-2}\sum_{z\in\Z^d}
		\left|
		\E[\partial_{\zeta(z)}u_{n_R}(0)]
		-\sum_{j=0}^{n_R-1}q_{R,j}p_j(0,z)
		\right|
		\longrightarrow0
	\end{equation}
	and
	\begin{equation}\label{eq:dgt4-derivative-variance-limit}
		\sum_{z\in\Z^d}\Var(\partial_{\zeta(z)}F_R)\longrightarrow0\, .
	\end{equation}
	By \eqref{eq:odometer-derivative},
	\[
		\E[\partial_{\zeta(z)}u_{n_R}(0)]
		=
		\sum_{j=0}^{n_R-1}\mathbf E_0\left[
		\one_{\{X_j=z\}}\P(S_{n_R,j}(X)=1\mid X)
		\right],
	\]
	so the triangle inequality bounds the left side of \eqref{eq:dgt4-mean-gradient-approximation}
	by $R^{-2}\sum_{j=0}^{n_R-1}\mathbf E_0|\P(S_{n_R,j}(X)=1\mid X)-q_{R,j}|$, which tends to zero
	by hypothesis.
	It remains to prove \eqref{eq:dgt4-derivative-variance-limit}. Each $\varphi_R(x)$ is the
	integral of $\varphi$ over a cell of volume $R^{-d}$, so the Cauchy--Schwarz inequality gives
	$\varphi_R(x)^2\leq R^{-d}\int_{R^{-1}(x+[0,1)^d)}\varphi(w)^2\,dw$; summing over the cells,
	which tile $\R^d$, gives
	\begin{equation}\label{eq:dgt4-tested-cell-l2}
		\sum_{x\in\Z^d}a_R(x)^2\leq C(\varphi)R^{-4}\, .
	\end{equation}
	Compact support, annular summation, and
	\eqref{eq:dgt4-intersection-first-moment}--
	\eqref{eq:dgt4-intersection-second-moment} give that for $k=1,2$,
	\begin{equation}\label{eq:dgt4-tested-intersection-moments}
		\sum_{x,y\in\Z^d}a_R(x)a_R(y)
		\mathbf E_x\mathbf E_y\left[\mathcal I(X,Y)^k\right]
		\leq C(\varphi)\, .
	\end{equation}
	By \eqref{eq:odometer-derivative},
	\[
		\partial_{\zeta(z)}F_R
		=
		\sum_{x\in\Z^d}a_R(x)\mathbf E_x
		\sum_{i=0}^{n_R-1}\one_{\{X_i=z\}}S_{n_R,i}(X)\, .
	\]
	Fix $\delta\in(0,T)$ and define
	\[
	\begin{aligned}
		D^{\leq}_{R,z}
		&\coloneqq
		\sum_{x\in\Z^d}a_R(x)\mathbf E_x
		\sum_{\substack{0\leq i<n_R:\\i\leq n_R-\delta R^2}}
		\one_{\{X_i=z\}}S_{n_R,i}(X),\\
		D^{>}_{R,z}
		&\coloneqq
		\partial_{\zeta(z)}F_R-D^{\leq}_{R,z}\, .
	\end{aligned}
	\]
	Expanding $\sum_{z\in\Z^d}\Var(D^{\leq}_{R,z})$ and using the independent walk
	$Y$ give
	\[
	\begin{aligned}
		\sum_{z\in\Z^d}\Var(D^{\leq}_{R,z})
		={}&
		\sum_{x,y\in\Z^d}a_R(x)a_R(y)\mathbf E_x\mathbf E_y
		\sum_{\substack{i,j<n_R:\\i,j\leq n_R-\delta R^2}}
		\one_{\{X_i=Y_j\}}\\
		&\hspace{24mm}\times
		\Cov(S_{n_R,i}(X),S_{n_R,j}(Y)\mid X,Y)\, .
	\end{aligned}
	\]
	The displayed intersection sum $\sum_{i,j\leq n_R-\delta R^2}\one_{\{X_i=Y_j\}}$ and the intersection sum
	$\sum_{r\leq i,\,h\leq j}\one_{\{X_r=Y_h\}}$ of \eqref{eq:dgt4-positive-path-covariance} are each at most
	$\mathcal I(X,Y)$, so their product is at most $\mathcal I(X,Y)^2$.
	Combining the bounds by $\mathcal I(X,Y)$ and $\mathcal I(X,Y)^2$ with
	the covariance hypothesis and
	\eqref{eq:dgt4-tested-intersection-moments} gives
	\begin{equation}\label{eq:dgt4-early-derivative-variance}
		\sum_{z\in\Z^d}\Var(D^{\leq}_{R,z})
		\leq C(\varphi)\varepsilon_R(\delta)
		+\frac{C(\varphi)}{\delta R^2}\, .
	\end{equation}
	For every $z\in\Z^d$, $S_{n_R,i}\leq1$ and symmetry of $P$ give
	\[
		0\leq D^{>}_{R,z}
		\leq\sum_{n_R-\delta R^2<i<n_R}(P^ia_R)(z)\, .
	\]
	Since the sum has at most $\delta R^2+2$ terms, the Cauchy--Schwarz inequality, the contraction bound
	$\sum_{z\in\Z^d}(P^ia_R)(z)^2\leq\sum_{z\in\Z^d}a_R(z)^2$, and \eqref{eq:dgt4-tested-cell-l2}
	give
	\begin{equation}\label{eq:dgt4-late-derivative-variance}
	\begin{aligned}
		\sum_{z\in\Z^d}\E[(D^{>}_{R,z})^2]
		&\leq(\delta R^2+2)\sum_{n_R-\delta R^2<i<n_R}\sum_{z\in\Z^d}(P^ia_R)(z)^2\\
		&\leq(\delta R^2+2)^2\sum_{z\in\Z^d}a_R(z)^2
		\leq C(\varphi)\delta^2+o(1)\, .
	\end{aligned}
	\end{equation}
	Since
	\[
		\Var(D^{\leq}_{R,z}+D^{>}_{R,z})
		\leq
		2\Var(D^{\leq}_{R,z})+2\E[(D^{>}_{R,z})^2],
	\]
	combining \eqref{eq:dgt4-early-derivative-variance} and
	\eqref{eq:dgt4-late-derivative-variance} gives
	\[
		\limsup_{R\to\infty}
		\sum_{z\in\Z^d}\Var(\partial_{\zeta(z)}F_R)
		\leq C(\varphi)\delta^2\, .
	\]
	Letting $\delta\downarrow0$ proves
	\eqref{eq:dgt4-derivative-variance-limit}.

	\par\smallskip\noindent\emph{Step 2.} We deduce \eqref{eq:dgt4-linearization-from-paths}.
	Since $\varphi\geq0$, \eqref{eq:odometer-derivative} gives
	\[
		0\leq\partial_{\zeta(z)}F_R
		\leq\sum_{x\in\Z^d}a_R(x)G(x,z)\, .
	\]
	The identity
	\[
		\sum_{z\in\Z^d}G(x,z)G(y,z)
		=
		\mathbf E_x\mathbf E_y\mathcal I(X,Y)
	\]
	and \eqref{eq:dgt4-tested-intersection-moments} imply
	\begin{equation}\label{eq:dgt4-tested-green-bound}
		\sup_R\sum_{z\in\Z^d}\left(\sum_{x\in\Z^d}a_R(x)G(x,z)\right)^2
		\leq C(\varphi)\, .
	\end{equation}
		By Proposition~\ref{prop:finite-time-concentration-scale}, $F_R$ is
		coordinatewise convex and depends on finitely many scenery coordinates.
		Equations~\eqref{eq:dgt4-derivative-variance-limit} and
		\eqref{eq:dgt4-tested-green-bound}, together with
		Lemma~\ref{lem:convex-linear-bound}, give
		\begin{equation}\label{eq:dgt4-convex-linear-remainder}
		\E\left[
		\left(
		F_R-\E F_R-\sum_{z\in\Z^d}\E[\partial_{\zeta(z)}F_R]\zeta(z)
		\right)^2
		\right]
		\longrightarrow0\, .
	\end{equation}
	Set
	\[
		c_R(w)\coloneqq\E[\partial_{\zeta(w)}u_{n_R}(0)],
		\qquad
		b_R(w)\coloneqq\sum_{j=0}^{n_R-1}q_{R,j}p_j(0,w)\, .
	\]
	By stationarity, $\E[\partial_{\zeta(z)}F_R]=\sum_{x\in\Z^d}a_R(x)c_R(z-x)$, so the linear field
	$\sum_{z\in\Z^d}\E[\partial_{\zeta(z)}F_R]\zeta(z)$ of \eqref{eq:dgt4-convex-linear-remainder} and the
	field $\sum_{z\in\Z^d}\bigl(\sum_{x\in\Z^d}a_R(x)b_R(z-x)\bigr)\zeta(z)$ differ in the coefficient of $\zeta(z)$ by
	\[
	\sum_{x\in\Z^d}a_R(x)\bigl(c_R(z-x)-b_R(z-x)\bigr)\, .
	\]
	Independence, Young's inequality,
	\eqref{eq:dgt4-mean-gradient-approximation}, and
	\eqref{eq:dgt4-tested-cell-l2} therefore give
	\begin{equation}\label{eq:dgt4-linear-coefficient-replacement}
		\E\left[
		\left(\sum_{z\in\Z^d}\sum_{x\in\Z^d}a_R(x)\bigl(c_R(z-x)-b_R(z-x)\bigr)\zeta(z)\right)^2
		\right]
		\leq C(\varphi)R^{-4}\Bigl(\sum_{w\in\Z^d}|c_R(w)-b_R(w)|\Bigr)^2
		\longrightarrow0\, .
	\end{equation}
	Equations~\eqref{eq:dgt4-convex-linear-remainder} and
	\eqref{eq:dgt4-linear-coefficient-replacement} give \eqref{eq:dgt4-linearization-from-paths}.

	It remains to upgrade the test-function convergence to convergence in
	$H^{-s}_{\rm loc}(\R^d)$. Equation~\eqref{eq:odometer-covariance-bound}
	and Lemma~\ref{lem:sobolev-tightness} give tightness of the rescaled centered
	odometers. Applying the covariance estimates
	\eqref{eq:odometer-covariance-bound} and
	\eqref{eq:dgt4-intersection-first-moment} to the weighted sum, using
	$0\leq q_{R,j}\leq1$, yields tightness of the weighted linear fields.
	Convergence in \eqref{eq:dgt4-linearization-from-paths} on a countable dense
	family of test functions identifies every subsequential limit of the
	difference as the zero distribution. Hence the difference converges to zero
	in probability in $H^{-s}_{\rm loc}(\R^d)$.
\end{proof}
	\begin{proof}[Proof of Proposition~\ref{prop:dgt4-linearization}]
	Fix $T>0$, let $n_R\coloneqq\lfloor R^2T\rfloor$, and let
	$q_{R,j}\coloneqq(1-j/(R^2T))^\kappa$ for $0\leq j<n_R$.
	The two case-specific parts of Proposition~\ref{prop:dgt4-contact-asymptotics}
	give the threshold asymptotic and threshold comparison, hence
	\eqref{eq:dgt4-uniform-contact-thresholds}. Lemma~\ref{lem:dgt4-path-survival}
	then gives \eqref{eq:dgt4-averaged-positive-path-limit} and
	\eqref{eq:dgt4-positive-path-covariance}, and
	Lemma~\ref{lem:dgt4-linearization-from-survival} proves
	\eqref{eq:dgt4-linear-approximation} and its convergence in
	$H^{-s}_{\rm loc}(\R^d)$.
\end{proof}

\begin{proof}[Proof of Theorem~\ref{thm:dgt4-diffusive-membrane}]
	Proposition~\ref{prop:dgt4-contact-asymptotics} gives
	\[
		\P(u_n(0)=0)\sim\frac{G(0,0)\kappa}{n}\, .
	\]
	With $n_R=\lfloor R^2T\rfloor$, Proposition~\ref{prop:dgt4-linearization}
	gives
	\[
	R^{(d-4)/2}\left(
	u_{n_R}-\E u_{n_R}(0)
	-\sum_{j=0}^{n_R-1}
	\left(1-\frac{j}{R^2T}\right)^\kappa P^j\zeta
	\right)^{(R)}
	\xrightarrow{\P}0
	\qquad\text{in }H^{-s}_{\rm loc}(\R^d)\, .
	\]
	Proposition~\ref{prop:weighted-membrane-limit}, applied with
	$q(r)=(1-r/T)^\kappa$, gives
	\[
	R^{(d-4)/2}
	\left(\sum_{j=0}^{n_R-1}
	\left(1-\frac{j}{R^2T}\right)^\kappa P^j\zeta\right)^{(R)}
	\Longrightarrow\mathcal H_{\kappa,T}\, .
	\]
	The two displays prove the convergence in the theorem.
\end{proof}
\subsubsection{Nonconvergence}
	Theorem~\ref{thm:dgt4-diffusive-membrane} produces a single scaling limit. We now show
	that for a different i.i.d.\ scenery the diffusive limit need not be unique: along
	suitable subsequences, every field $\mathcal H_{\kappa,T}$ with $\kappa\in[3/2,2]$ arises
	as a limit. The one-site law assigns exponentially small probabilities to
	neighborhoods of a rapidly growing sequence of levels; the tail profile at each level fixes a time-weight exponent, and
	these exponents can be made to fill $[3/2,2]$.

\begin{theorem}[A continuum of diffusive subsequential limits]
\label{thm:dgt4-many-limits}
	Let $d\geq5$. There exists an i.i.d.\ scenery
	$(\zeta(x))_{x\in\Z^d}$ whose one-site law has mean zero, variance one,
	a strictly positive
	$C^\infty$ density, and
	\[
		\E e^{\theta|\zeta(0)|}<\infty,
		\qquad
		cr\leq-\log\P(\zeta(0)\leq-r)\leq Cr
	\]
	for some $c,C,\theta>0$ and every sufficiently large $r$.
	There is a sequence $R_k\uparrow\infty$ such that, for every
	$\kappa\in[3/2,2]$, there are indices
	$k_\ell\uparrow\infty$ such that, for all $T>0$ and
	$s>(d-4)/2$,
	\[
		R_{k_\ell}^{(d-4)/2}
		\left(
		u_{\lfloor TR_{k_\ell}^2\rfloor}
		-\E u_{\lfloor TR_{k_\ell}^2\rfloor}(0)
		\right)^{(R_{k_\ell})}
		\Longrightarrow
		\mathcal H_{\kappa,T}
		\qquad\text{in }H^{-s}_{\rm loc}(\R^d)\, .
	\]
	For $T>0$, the fields $\mathcal H_{\kappa,T}$ with
	$\kappa\in[3/2,2]$ have pairwise distinct laws.
\end{theorem}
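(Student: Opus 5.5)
The plan is to build the one-site law by hand so that the threshold mechanism behind Proposition~\ref{prop:dgt4-contact-asymptotics} produces a prescribed exponent $\kappa$ along chosen time windows. After that, the limit is identified by the same three tools used for Theorem~\ref{thm:dgt4-diffusive-membrane}: Lemma~\ref{lem:dgt4-path-survival}, Lemma~\ref{lem:dgt4-linearization-from-survival} with $q_{R,j}=(1-j/(R^2T))^\kappa$, and Proposition~\ref{prop:weighted-membrane-limit} with $q(r)=(1-r/T)^\kappa$. Distinctness of the limiting laws is already recorded after \eqref{eq:power-weighted-continuum-membrane}, since $\Var(\mathcal H_{\kappa,T}(\varphi))$ is strictly decreasing in $\kappa$.

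\emph{Which tail profile gives a given $\kappa$.} Write $\Psi(t)\coloneqq\E(-\zeta(0)-t)_+$, so that $p(t)\coloneqq\P(-\zeta(0)>t)=-\Psi'(t)$. The computation in case~(b) shows that if the threshold $Pw_n(0)$ can be replaced by its mean $\E u_n(0)/G(0,0)$, then
\[
	\frac{n\,\P(u_n(0)=0)}{G(0,0)}\approx p(t)\int_0^t\frac{dr}{\Psi(r)}\, ,\qquad t=\frac{\E u_n(0)}{G(0,0)}\, .
\]
Setting $H(t)\coloneqq\int_0^t dr/\Psi(r)$, this quantity is constant equal to $\kappa$ exactly when $H''H=\kappa (H')^2$. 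For $\kappa>1$ this is solved by $H\propto(b-t)^{-1/(\kappa-1)}$, which gives
\[
	\Psi(t)\propto(b-t)^{\kappa/(\kappa-1)}\, ,\qquad p(t)\propto(b-t)^{1/(\kappa-1)}\, .
\]
In words, the law should look locally like a law with a soft endpoint at $-b$, whose density vanishes at the endpoint like a power $\gamma=1/(\kappa-1)-1$; here $1/(\kappa-1)\in[1,2]$ for $\kappa\in[3/2,2]$. This suggests the construction below.

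\emph{Construction.} Choose a rapidly growing sequence of levels $b_k$, widths $D_k\to\infty$ with $D_k\ll b_k$, and exponents $\kappa_k$ running through a dense sequence in $[3/2,2]$ in which every value recurs infinitely often. On $[-b_k,-b_k+D_k]$, make the density equal to $e^{-c b_k}D_k^{-1}$ times a smoothed version of $((\zeta+b_k)/D_k)^{1/(\kappa_k-1)-1}$. Just below $-b_k$, keep only an extra remnant of the density that is far smaller, of size roughly $e^{-Cb_{k+1}}$ with a smooth, strictly positive interpolation. Then renormalize so that the law has mean $0$ and variance $1$. The choice $\log$-mass $\asymp b_k$ yields $cr\leq-\log\P(\zeta(0)\leq-r)\leq Cr$ and an exponential moment. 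Since $\E u_n(0)$ grows only logarithmically and the increments in \eqref{eq:dgt4-height-increment} are controlled by $\Psi$, we can choose times $n^{(k)}$ at which $\E u_m(0)/G(0,0)$ lies in $[-b_k+D_k/3,\,-b_k+2D_k/3]$ for \emph{all} $m\in[\varepsilon n^{(k)},n^{(k)}]$. This uses that over such a window the mean moves by a bounded amount while $D_k\to\infty$. Set $R_k\coloneqq\sqrt{n^{(k)}/T}$, or more precisely use a family of times for each $T$ by a diagonal choice.

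\emph{Main step: verifying \eqref{eq:dgt4-uniform-contact-thresholds} with $\kappa=\kappa_k+o(1)$.} Here $J=-G(0,0)\zeta$, and I would rerun the proof of case~(b). Lemma~\ref{lem:dgt4-origin-frozen} gives the exact identities, the mean asymptotic $\E Pw_n(0)\sim\E u_n(0)/G(0,0)$, and $O(1)$ fluctuations together with exponential lower tails for $Pw_n(0)$. The replacement of $Pw_n(0)$ by its mean, which is Step~2 of case~(b), now uses that $p$ and $\Psi$ vary only on the scale $D_k\gg1$ inside the window, in place of regular variation and Potter's bounds. Step~1 of case~(b), that $Pw_n(0)$ rarely falls far below its mean, follows from \eqref{eq:dgt4-origin-frozen-lower-tail}, because a deviation of order $D_k$ costs $e^{-\lambda D_k}$, which is $o(p)$ relative to the local scale.

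The delicate part, which I expect to be the main obstacle, is the integral $\int_0^t dr/\Psi(r)$. I must show it is dominated by the current stretch, that is, by $r$ close to $t$, where the ODE solution gives the ratio $\kappa_k$. Earlier stretches must contribute negligibly, which holds because $\Psi$ there is larger by factors $e^{c(b_k-b_{k-1})}$. I also have to integrate the increment identity \eqref{eq:dgt4-b-mean-increment} across the window to obtain $m\,\P(J(0)>\E u_{m-1}(0))\to G(0,0)\kappa_k$ uniformly for $m\in[\varepsilon n^{(k)},n^{(k)}]$.

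\emph{Conclusion.} Given this, Lemma~\ref{lem:dgt4-path-survival} applies along the subsequence $R_k$; its proof uses only \eqref{eq:dgt4-uniform-contact-thresholds} and, since $J$ is i.i.d., needs no Gaussian factorization. Lemma~\ref{lem:dgt4-linearization-from-survival} then linearizes with profile $(1-j/(R_k^2T))^{\kappa_k}$. For a target $\kappa\in[3/2,2]$, pick $k_\ell$ with $\kappa_{k_\ell}\to\kappa$. Continuity of the weight in $\kappa$ and the $L^2$ bounds from \eqref{eq:dgt4-intersection-first-moment} allow replacing $\kappa_{k_\ell}$ by $\kappa$, and Proposition~\ref{prop:weighted-membrane-limit} finishes the proof.
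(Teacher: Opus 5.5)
Your overall architecture matches the paper's: power-law bands with $\kappa_k=1+1/\vartheta_k$, a case-(b)-style verification of \eqref{eq:dgt4-uniform-contact-thresholds}, then Lemmas~\ref{lem:dgt4-path-survival} and~\ref{lem:dgt4-linearization-from-survival} and Proposition~\ref{prop:weighted-membrane-limit}. Your ODE heuristic also reproduces the paper's profile \eqref{eq:dgt4-band-profile}. The step that fails is the placement of the threshold.

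Consider band $k$, with mass $\omega_k\approx e^{-cb_k}$ and width $D_k$. At relative distance $x$ below the soft endpoint $b_k$ of $-\zeta(0)$, its tail is $\approx\omega_kx^{\vartheta_k}$. The increment there is $\Psi(b_m)\asymp\omega_kD_kx^{\vartheta_k+1}$, and integrating gives $x(m)^{-\vartheta_k}\approx\omega_km/(G(0,0)\kappa_k)$. So the mean does not move by a bounded amount over $[\varepsilon n,n]$: while $x\asymp1$ it moves by order $D_k$. It must do so, because \eqref{eq:dgt4-uniform-contact-thresholds} requires $m\,\P(J(0)>\E u_{m-1}(0))\approx G(0,0)\kappa$ uniformly on $[\varepsilon n,n]$. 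The tail at the threshold therefore has to change by a factor $1/\varepsilon$, which is impossible with $x\in[1/3,2/3]$ once $\varepsilon<2^{-\vartheta_k}$.

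Since this is needed for every fixed $\varepsilon$, you are forced to take $\omega_kn_R\to\infty$. Then the threshold sits at distance $xD_k$ from $b_k$ with $x\to0$. This is the paper's choice $R_k^2=G(0,0)L_k/\omega_k$ with $L_k\uparrow\infty$ slowly, which also gives a single sequence $R_k$ for all $T$. Your $R_k=\sqrt{n^{(k)}/T}$ does not.

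In that regime, replacing $Pw_n(0)$ by $\E u_n(0)/G(0,0)$ needs $\E|Pw_n(0)-\E u_n(0)/G(0,0)|$ to be small compared with $xD_k$. Lemma~\ref{lem:dgt4-origin-frozen} only gives $o(\E u_n(0))=o(b_k)$, which is useless when $D_k\ll b_k$. You would need a new quantitative mean comparison, for example an $O(1)$ error, which you do not supply. The paper avoids this by making the band width proportional to the level, using $(\ell_1a_k,a_k]$. Then \eqref{eq:dgt4-band-origin-fixed-concentration} suffices once $L_k$ grows slowly enough.

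Your treatment of lower deviations is also incorrect. The quantity $e^{-\lambda D_k}$ is much larger than $p\asymp e^{-cb_k}$, so it is not $o(p)$. Local deviations are harmless only because they are multiplied by the local tail increment. The dangerous event is that $-\zeta(0)$ lies below the band (in the bulk or an earlier band) while $Pw_n(0)$ drops to that height. By \eqref{eq:dgt4-origin-frozen-lower-tail} this costs about $e^{-\lambda b_k}$, and it must be $o(e^{-cb_k})$. The exponential moment forces $\theta_0<c$, so this works only because $\lambda$ may go up to $\theta_0/\ell_0$ with $\ell_0=\sup_{z\ne0}G(0,z)/G(0,0)<1$. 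The paper's constraints $1/\ell_1<\lambda_0<1/\ell_0$, $\theta\in(\lambda_0\ell_0,1)$, and \eqref{eq:dgt4-band-parameters} exist precisely to prove \eqref{eq:dgt4-band-lower-isolation}.

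Finally, suppose ``rapidly growing'' means $b_{k+1}/b_k\to\infty$, with a flat remnant $e^{-Cb_{k+1}}$ in the gaps. Then $-\log\P(\zeta(0)\leq-r)\approx cb_{k+1}$ for $r$ just above $b_k$, which violates the required bound $\leq Cr$. You need $b_{k+1}/b_k$ bounded, as with the paper's $a_k=A^k$.
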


\begin{proof}
	The proof has three steps. Step~1 constructs the law and establishes the tail profile,
	density bound, and upper- and lower-isolation estimates. Step~2 proves the estimates
	for $\P(u_n(0)=0)$ and the comparison with the threshold event. Step~3 obtains the field limits from the Step~2
	estimates. Every limit below is taken as $k\to\infty$.

	\par\smallskip\noindent\emph{Step 1.} We construct the one-site law. With the parameter
	$\ell_{1}$ and the levels $a_k$ fixed below, we compute the tail of $-\zeta(0)$ on the $k$th
	band $(\ell_{1} a_k,a_k]$ and bound the contributions from $\{-\zeta(0)\leq\ell_{1} a_k\}$ and
	$\{-\zeta(0)>a_k\}$.
	Let
	$\ell_{0}\coloneqq\sup_{x\ne0}G(0,x)/G(0,0)<1$.
	Choose $\ell_{0}<\ell_{1}<1$ and
	$1/\ell_{1}<\lambda_{0}<1/\ell_{0}$. Then choose $A>1$ sufficiently large that
	$A^{-1}<\ell_{1}$ and
	\begin{equation}\label{eq:dgt4-band-parameters}
		1-\lambda_{0}\ell_{1}+\frac{\lambda_{0}-1}{A}<0\, .
	\end{equation}
	Let $(\vartheta_k)_{k\geq1}$ be a sequence in $[1,2]$ whose set of
	subsequential limits is $[1,2]$, and let
	\[
		\kappa_k\coloneqq1+\frac1{\vartheta_k},
		\qquad
		a_k\coloneqq A^k,
		\qquad
		\omega_k\coloneqq c_{0}e^{-a_k},
	\]
	where $c_{0}>0$ is chosen so that $\sum_{k\geq1}\omega_k<1$.
	For each $k\geq1$, let $B_k$ take values in $[\ell_{1},1]$ and satisfy, for every $\ell_{1}\leq y\leq1$,
	\begin{equation}\label{eq:dgt4-band-tail}
		\P(B_k>y)
		=
		\left(\frac{1-y}{1-\ell_{1}}\right)^{\vartheta_k}\, .
	\end{equation}
	Let $\eta$ have law
	\[
		\left(1-\sum_{k\geq1}\omega_k\right)\delta_0
		+\sum_{k\geq1}\omega_k\,\mathcal L(-a_kB_k)\, .
	\]
	Let $\Gamma$ be independent of $\eta$, with density proportional to
	$e^{-x^4}$, and choose $\mu\in\R$ so that
	\begin{equation}\label{eq:dgt4-band-law}
		\zeta(0)\coloneqq\mu+\eta+\Gamma
	\end{equation}
	has mean zero. Let $(\zeta(x))_{x\in\Z^d}$ be i.i.d.\ with this one-site
	law. Convolution with the density of $\Gamma$ gives this law a strictly
	positive $C^\infty$ density.

	Choose $\theta\in(\lambda_{0}\ell_{0},1)$. Since
	$\omega_k=c_{0}e^{-a_k}$, we have
	$\sum_{k\geq1}\omega_ke^{\theta a_k}<\infty$ and
	$\E e^{\theta|\zeta(0)|}<\infty$. Markov's inequality gives
	$\P(\zeta(0)\leq-r)\leq Ce^{-\theta r}$ and hence the lower bound in
	\begin{equation}\label{eq:dgt4-band-tail-order}
		cr\leq-\log\P(\zeta(0)\leq-r)\leq Cr
	\end{equation}
	for all sufficiently large $r$. For the upper bound, take the least
	$j$ such that $\ell_{1} a_j-\mu-1>r$. Selecting the $j$th component and
	requiring $\Gamma\in[-1,1]$ has probability at least $c\omega_j$ and
	implies $\zeta(0)<-r$. Minimality gives $a_j\leq C(r+1)$, and hence
	$-\log\P(\zeta(0)\leq-r)\leq Cr$.

	Conditional on the $k$th component and on $\Gamma$, the event
	$\{-\zeta(0)>a_k-(1-\ell_{1})a_kr\}$ equals
	$\{B_k>1-(1-\ell_{1})r+(\mu+\Gamma)/a_k\}$. Thus the conditional probability
	is $\bigl(r-(\mu+\Gamma)/((1-\ell_{1})a_k)\bigr)^{\vartheta_k}$, with the base truncated to $[0,1]$. Since
	$x\mapsto x^{\vartheta_k}$ is
	$2$-Lipschitz on $[0,1]$, averaging over $\Gamma$ gives an error
	$O(a_k^{-1})$. The atom and the components with $j<k$ require
	$-\Gamma\geq(\ell_{1}-A^{-1})a_k-|\mu|$; the total contribution of the atom
	and the components indexed by $j<k$ is at most
	$Ce^{-ca_k^4}$, while the components with $j>k$ have total probability
	$o(\omega_k)$. Hence
	\begin{equation}\label{eq:dgt4-band-profile}
		\sup_{0\leq r\leq1}
		\left|
		\frac{\P(-\zeta(0)>a_k-(1-\ell_{1})a_kr)}{\omega_k}
		-r^{\vartheta_k}
		\right|
		\longrightarrow0\, .
	\end{equation}

	The densities of $B_k$ are uniformly bounded, so the $k$th contribution
	to the density of $-\zeta(0)$ is at most $C\omega_k/a_k$. The atom and
	the components indexed by $j<k$ contribute $Ce^{-ca_k^4}$, and the
	components indexed by $j>k$ contribute
	$C\sum_{j>k}\omega_j=o(\omega_k/a_k)$. Hence
	\begin{equation}\label{eq:dgt4-band-density}
		\sup_{\ell_{1} a_k<t\leq a_k}
		\frac{d}{dt}\P(-\zeta(0)\leq t)
		\leq C\frac{\omega_k}{a_k}\, .
	\end{equation}

	Decomposing the atom and the components indexed by $j<k$, $j=k$, and
	$j>k$ gives
	\begin{equation}\label{eq:dgt4-band-upper-isolation}
		\frac{\P(-\zeta(0)>a_k)}{\omega_k}
		+
		\frac{\E(-\zeta(0)-a_k)_+}
		{\omega_k(1-\ell_{1})a_k}
		\longrightarrow0\, .
	\end{equation}
	Indeed, the atom and the components indexed by $j<k$ contribute
	$O(e^{-ca_k^4})$, the $k$th component contributes at most
	$C\omega_k/a_k$, and the components indexed by $j>k$ contribute
	$o(\omega_k)$ and $o(\omega_ka_k)$ to the two terms, respectively.

	To control $\{-\zeta(0)\leq\ell_{1} a_k\}$, we use
	\begin{equation}\label{eq:dgt4-band-lower-isolation}
		\frac{
		e^{-\lambda_{0}\ell_{1} a_k}
		\E\left[
		e^{-\lambda_{0}\zeta(0)}
		\one_{\{-\zeta(0)\leq\ell_{1} a_k\}}
		\right]
		}{\omega_k}
		\longrightarrow0\, .
	\end{equation}
	After division by $\omega_k$, the atom and the components indexed by $j<k$
	contribute at most
	$Ce^{(1-\lambda_{0}\ell_{1})a_k}$ and
	$Ck\exp\{(1-\lambda_{0}\ell_{1})a_k+(\lambda_{0}-1)a_{k-1}\}$, respectively;
	both are $o(1)$ by \eqref{eq:dgt4-band-parameters}. The $k$th component
	contributes $O(a_k^{-1})$, and the components indexed by $j>k$ contribute
	$o(1)$. These bounds prove
	\eqref{eq:dgt4-band-lower-isolation}.

	\par\smallskip\noindent\emph{Step 2.} Fix $0<\delta<T$. We prove the estimates for
	$\P(u_n(0)=0)$ and the comparison with the threshold event uniformly for
	$\delta R_k^2\leq n\leq TR_k^2$.
	The identities \eqref{eq:dgt4-origin-fixed-identities} reduce both
	conclusions to replacing $Pw_n(0)$ by $\E u_n(0)/G(0,0)$. In particular,
	\[
		\E u_{n+1}(0)-\E u_n(0)=\E(-\zeta(0)-Pw_n(0))_+\, .
	\]
	Corollary~\ref{cor:dgt4-mean-lower} and
	\eqref{eq:dgt4-origin-fixed-mean} in
	Lemma~\ref{lem:dgt4-origin-frozen} imply that, for every $\gamma>0$,
	there is $C(\gamma)<\infty$ such that
	$\E|Pw_n(0)-\E u_n(0)/G(0,0)|
	\leq\gamma\E u_n(0)/G(0,0)+C(\gamma)$ for every $n\geq0$.
	Dividing by $a_k$, first letting $k\to\infty$ and then
	$\gamma\downarrow0$, gives
	\begin{equation}\label{eq:dgt4-band-origin-fixed-concentration}
		\sup_{\substack{n\in\Z_{\geq0}:\\
		\E u_n(0)/G(0,0)\leq a_k}}
		\frac{\E|Pw_n(0)-\E u_n(0)/G(0,0)|+1}{a_k}
		\longrightarrow0\, .
	\end{equation}
	Since $\theta>\lambda_{0}\ell_{0}$,
	\eqref{eq:dgt4-origin-frozen-lower-tail} also gives, for $n,r\geq 0$,
	\begin{equation}\label{eq:dgt4-band-origin-fixed-lower-tail}
		\P(Pw_n(0)-\E Pw_n(0)\leq-r)
		\leq Ce^{-\lambda_{0}r}\, .
	\end{equation}

	Choose $L_k\uparrow\infty$ so slowly that each of the four errors in
	\eqref{eq:dgt4-band-profile},
	\eqref{eq:dgt4-band-upper-isolation},
	\eqref{eq:dgt4-band-lower-isolation}, and
	\eqref{eq:dgt4-band-origin-fixed-concentration}, after multiplication by
	$L_k^2$, still tends to zero. Define
	\[
	\begin{aligned}
		R_k^2&\coloneqq\frac{G(0,0)L_k}{\omega_k}, \qquad
		z_{k,n}\coloneqq
		\frac{a_k-\E u_n(0)/G(0,0)}{(1-\ell_{1})a_k},\\
		\tau_k&\coloneqq
		\inf\left\{
		n\geq0:
		\frac{\E u_n(0)}{G(0,0)}
		\geq a_k-\frac{(1-\ell_{1})a_k}{2}
		\right\}.
	\end{aligned}
	\]

	We first prove that $\tau_k=o(R_k^2)$ and determine
	$z_{k,\tau_k}$.
	For $n<\tau_k$, \eqref{eq:dgt4-band-origin-fixed-concentration} gives
	$\E Pw_n(0)\leq a_k-3(1-\ell_{1})a_k/8$ for all sufficiently large $k$.
	On $\{-\zeta(0)>a_k-(1-\ell_{1})a_k/4\}$, the difference
	$-\zeta(0)-\E Pw_n(0)$ is at least $(1-\ell_{1})a_k/8$, and
	\eqref{eq:dgt4-band-profile} gives
	\[
		\P\left(
		-\zeta(0)>a_k-\frac{(1-\ell_{1})a_k}{4}
		\right)\geq c\omega_k\, .
	\]
	Since $Pw_n(0)$ is independent of $\zeta(0)$, Jensen's inequality and
	\eqref{eq:dgt4-origin-fixed-identities} give
	\[
		\E u_{n+1}(0)-\E u_n(0)
		\geq
		\E(-\zeta(0)-\E Pw_n(0))_+
		\geq c\omega_ka_k\, .
	\]
	The definition of $\tau_k$ and
	\eqref{eq:dgt4-one-step-mean-increment} now give
	\[
		\tau_k\leq\frac{C}{\omega_k},
		\qquad
		\frac{\tau_k}{R_k^2}\longrightarrow0,
		\qquad
		\frac12-\frac{C}{a_k}
		\leq z_{k,\tau_k}\leq\frac12\, .
	\]

	We compute the increments of $z_{k,n}$ for integers $n\geq0$ satisfying
	\begin{equation}\label{eq:dgt4-band-index-range}
		\frac1{2TL_k}
		\leq z_{k,n}^{\vartheta_k}
		\leq2^{-\vartheta_k}\, .
	\end{equation}
	Integration of \eqref{eq:dgt4-band-profile}, followed by
	\eqref{eq:dgt4-band-upper-isolation}, yields
	\begin{equation}\label{eq:dgt4-band-integrated-profile}
		\sup_{\substack{0\leq z\leq1:\\
		1/(2TL_k)\leq z^{\vartheta_k}\leq2^{-\vartheta_k}}}
		\left|
		\frac{
		\E(-\zeta(0)-a_k+(1-\ell_{1})a_kz)_+
		}{
		\omega_k(1-\ell_{1})a_kz^{\vartheta_k+1}/(\vartheta_k+1)
		}
		-1
		\right|
		\longrightarrow0\, .
	\end{equation}
	The denominator is at least
	$c(T)\omega_ka_k/L_k^2$ whenever
	$1/(2TL_k)\leq z^{\vartheta_k}\leq2^{-\vartheta_k}$.

		Write
		\[
			\xi\coloneqq-\zeta(0),\qquad
			W_n\coloneqq Pw_n(0),\qquad
			b_n\coloneqq\frac{\E u_n(0)}{G(0,0)}\, .
		\]
	Under \eqref{eq:dgt4-band-index-range}, $b_n$ and $\E W_n$ are at least
	$\ell_{1} a_k+(1-\ell_{1})a_k/8$ for all large $k$. Splitting according to
	$\{\xi\leq\ell_{1} a_k\}$, $\{\ell_{1} a_k<\xi\leq a_k\}$, and
	$\{\xi>a_k\}$ gives simultaneously
		\begin{align}
		\left|\E(\xi-W_n)_+-\E(\xi-b_n)_+\right|
		&\leq
		Ca_ke^{-\lambda_{0}\ell_{1} a_k}
		\E\left[e^{-\lambda_{0}\zeta(0)}
		\one_{\{\xi\leq\ell_{1} a_k\}}\right]\notag\\
		&\quad+
		C\bigl(\omega_k+\P(\xi>a_k)\bigr)\E|W_n-b_n|,
		\label{eq:dgt4-band-increment-replacement}\\
		\P\bigl(\{\xi>W_n\}\mathbin{\triangle}\{\xi>b_n\}\bigr)
		&\leq
		Ce^{-\lambda_{0}\ell_{1} a_k}
		\E\left[e^{-\lambda_{0}\zeta(0)}
		\one_{\{\xi\leq\ell_{1} a_k\}}\right]\notag\\
		&\quad+
		\frac{C\omega_k}{a_k}\E|W_n-b_n|
		+2\P(\xi>a_k).
		\label{eq:dgt4-band-contact-error}
		\end{align}
	The terms carrying $\E[e^{-\lambda_{0}\zeta(0)}\one_{\{\xi\leq\ell_{1} a_k\}}]$ use
	\eqref{eq:dgt4-band-origin-fixed-lower-tail}; the terms carrying $\E|W_n-b_n|$ use
	\eqref{eq:dgt4-band-density} and the $1$-Lipschitz dependence of $(\xi-w)_+$ on $w$; and the
	terms carrying $\P(\xi>a_k)$ use \eqref{eq:dgt4-band-upper-isolation}. By the choice of
	$L_k$, the right-hand sides of
	\eqref{eq:dgt4-band-increment-replacement} and
	\eqref{eq:dgt4-band-contact-error} are, respectively,
	$o(\omega_ka_k/L_k^2)$ and $o(\omega_k/L_k)$ uniformly under
	\eqref{eq:dgt4-band-index-range}. Hence
	\eqref{eq:dgt4-origin-fixed-identities},
	\eqref{eq:dgt4-band-integrated-profile}, and
	\eqref{eq:dgt4-band-increment-replacement} give
	\begin{align}
		\E u_{n+1}(0)-\E u_n(0)
		&=
		\frac{\omega_k(1-\ell_{1})a_k}{\vartheta_k+1}
		z_{k,n}^{\vartheta_k+1}(1+o(1)),\notag\\
		z_{k,n+1}^{-\vartheta_k}-z_{k,n}^{-\vartheta_k}
		&=
		\frac{\omega_k}{G(0,0)\kappa_k}(1+o(1)),
		\label{eq:dgt4-band-one-step-profile}
	\end{align}
	uniformly for $n$ satisfying \eqref{eq:dgt4-band-index-range}. Taylor's
	formula gives the second line because the relative change in
	$z_{k,n}$ is $O(\omega_kz_{k,n}^{\vartheta_k})=o(1)$.

	The definition of $\tau_k$ gives
	$z_{k,\tau_k}^{-\vartheta_k}=O(1)$. Set
	$y_{k,n}\coloneqq z_{k,n}^{-\vartheta_k}$. Summing
	\eqref{eq:dgt4-band-one-step-profile} gives
		\begin{equation}\label{eq:dgt4-band-summed-profile}
			y_{k,n}-y_{k,\tau_k}=\frac{\omega_k(n-\tau_k)}{G(0,0)\kappa_k}(1+o(1))
		\end{equation}
	while \eqref{eq:dgt4-band-index-range} holds. Suppose its lower bound first fails at some
	$n\leq TR_k^2$. Then \eqref{eq:dgt4-band-index-range} holds through $n-1$, so
	\eqref{eq:dgt4-band-summed-profile} applies at $n$; with $y_{k,\tau_k}=O(1)$,
	$n\leq TR_k^2$, and $\kappa_k\geq3/2$ it gives $y_{k,n}\leq\tfrac{2T}{3}L_k(1+o(1))$, so
	$z_{k,n}^{\vartheta_k}=1/y_{k,n}>1/(2TL_k)$ for large $k$, contradicting the failure; hence
	the lower bound holds throughout $\delta R_k^2\leq n\leq TR_k^2$. The upper bound persists because $z_{k,n}$ is
	nonincreasing. Since
		$R_k^2=G(0,0)L_k/\omega_k$ and $\tau_k=o(R_k^2)$,
		\[
			\sup_{\delta\leq t\leq T}\left|\frac{y_{k,\lfloor tR_k^2\rfloor}}{L_k}-\frac{t}{\kappa_k}\right|\longrightarrow0\, .
		\]
		Inverting this relation gives
		\begin{align}
		\sup_{\delta\leq t\leq T}
		\left|
		L_kz_{k,\lfloor tR_k^2\rfloor}^{\vartheta_k}
		-\frac{\kappa_k}{t}
		\right|
		&\longrightarrow0,
		\label{eq:dgt4-band-scaled-profile}\\
		\sup_{\substack{n\in\Z:\\
		\delta R_k^2\leq n\leq TR_k^2}}
		\left|
		\frac{z_{k,n-1}^{\vartheta_k}}
		{z_{k,n}^{\vartheta_k}}-1
		\right|
		&\longrightarrow0\, .
		\label{eq:dgt4-band-index-shift}
	\end{align}

	We now obtain the asymptotics of $\P(u_n(0)=0)$. Equations
	\eqref{eq:dgt4-band-profile} and
	\eqref{eq:dgt4-band-scaled-profile}, together with
	\eqref{eq:dgt4-band-index-shift} and the choice of $L_k$, give
	\begin{equation}\label{eq:dgt4-band-contact-rate}
		\sup_{\substack{n\in\Z:\\
		\delta R_k^2\leq n\leq TR_k^2}}
		\left|
		\frac{n}{G(0,0)}
		\P\left(
		-\zeta(0)>
		\frac{\E u_{n-1}(0)}{G(0,0)}
		\right)
		-\kappa_k
		\right|
		\longrightarrow0\, .
	\end{equation}
		The identity
		$\{u_n(0)=0\}=\{-\zeta(0)>Pw_{n-1}(0)\}$ and
		\eqref{eq:dgt4-band-contact-error}, applied with $n-1$, give the
		required comparison with the deterministic threshold; the lower bounds
		$b_{n-1},\E W_{n-1}\geq\ell_{1}a_k+(1-\ell_{1})a_k/8$ needed there follow from
		\eqref{eq:dgt4-band-scaled-profile} and \eqref{eq:dgt4-band-origin-fixed-concentration}.
	Multiplying by
	$R_k^2=G(0,0)L_k/\omega_k$ and using the choice of $L_k$ proves
	\begin{equation}\label{eq:dgt4-band-contact-comparison}
		R_k^2
		\sup_{\substack{n\in\Z:\\
		\delta R_k^2\leq n\leq TR_k^2}}
		\P\left(
		\{u_n(0)=0\}
		\mathbin{\triangle}
		\left\{
		-\zeta(0)>
		\frac{\E u_{n-1}(0)}{G(0,0)}
		\right\}
		\right)
		\longrightarrow0\, .
	\end{equation}

	\par\smallskip\noindent\emph{Step 3.} We normalize the variance and apply
	Lemmas~\ref{lem:dgt4-path-survival} and
	\ref{lem:dgt4-linearization-from-survival} to obtain the subsequential
	limits.
	Positive homogeneity allows us to assume $\Var(\zeta(0))=1$: dividing the
	scenery and odometer by $\sqrt{\Var(\zeta(0))}$ leaves the events
	$\{u_n(0)=0\}$ unchanged and preserves \eqref{eq:dgt4-band-contact-rate} and
	\eqref{eq:dgt4-band-contact-comparison}.

	Fix $\kappa\in[3/2,2]$ and choose $k_\ell\uparrow\infty$ with
	$\kappa_{k_\ell}\to\kappa$, independently of $T$. Fix $T>0$. For
	$\varepsilon\in(0,1)$, equations
	\eqref{eq:dgt4-band-contact-rate} and
	\eqref{eq:dgt4-band-contact-comparison}, with
	$\delta=\varepsilon T/2$, imply
	\eqref{eq:dgt4-uniform-contact-thresholds} along
	$R=R_{k_\ell}$ for $J(x)=-G(0,0)\zeta(x)$.
	Since $J$ is i.i.d.\ and the dynamics is translation covariant,
	Lemmas~\ref{lem:dgt4-path-survival} and
	\ref{lem:dgt4-linearization-from-survival} give
	\[
	R_{k_\ell}^{(d-4)/2}
	\left(
	u_{\lfloor R_{k_\ell}^2T\rfloor}-
	\E u_{\lfloor R_{k_\ell}^2T\rfloor}(0)
	-
	\sum_{j=0}^{\lfloor R_{k_\ell}^2T\rfloor-1}
	\left(1-\frac{j}{R_{k_\ell}^2T}\right)^\kappa P^j\zeta
	\right)^{(R_{k_\ell})}
	\xrightarrow{\P}0
	\quad\text{in }H^{-s}_{\rm loc}(\R^d)\, .
	\]
	Proposition~\ref{prop:weighted-membrane-limit}, applied with
	$q(r)=(1-r/T)^\kappa$, gives
	\[
		R_{k_\ell}^{(d-4)/2}
		\left(
		\sum_{j=0}^{\lfloor R_{k_\ell}^2T\rfloor-1}
		\left(1-\frac{j}{R_{k_\ell}^2T}\right)^\kappa P^j\zeta
		\right)^{(R_{k_\ell})}
		\Longrightarrow\mathcal H_{\kappa,T}\, .
	\]
	The linearization error converges to zero in probability, so adding it does
	not change the limit. This proves the stated convergence in
	$H^{-s}_{\rm loc}(\R^d)$ for every $s>(d-4)/2$.
	The fields $\mathcal H_{\kappa,T}$ have pairwise distinct laws by the strict decrease of
	$\Var(\mathcal H_{\kappa,T}(\varphi))$ in $\kappa$ for every nonzero nonnegative test
	function $\varphi$. Hence the rescaled odometer
	cannot converge in distribution as $R\to\infty$.
\end{proof}
	\subsection{Percolation of critical level sets in dimensions five and higher}\label{sec:high-d-nontriviality}
		We prove that 
		$\{u_t>\E u_t(0)/2\}$ percolates once $\E u_t(0)$ is large. It is
		enough to rule out $\ast$-connected barriers in
		\[
			\{x:u_t(x)-\E u_t(0)\leq-\E u_t(0)/2\}\, .
		\]
		The pointwise estimate \eqref{eq:dgt4-pointwise-concentration} makes a low value unlikely at each site. To turn this into a crossing estimate,
		we localize the odometer in thickened boxes, decouple well-separated
		annuli with a small level shift, and then sum over the possible annuli
		that can separate $Q(0,1)$ from infinity.

		A direct Peierls argument with the odometer killed in a ball of radius
		order $\sqrt t$ would still not be enough, even under tail assumptions
		with $\gamma>d/2$. The killed field has dependence range of order
		$\sqrt t$, so a coarse block at that scale contains order $t^{d/2}$
		sites. Even if the pointwise lower-tail estimate has the strongest
		available exponent
		\[
		\P\left(u_t(x)-\E u_t(0)\leq-\E u_t(0)/2\right) \leq \exp\{-c(\E u_t(0))^{d/2}\}\, ,
		\]
		the universal lower bound only gives
		$(\E u_t(0))^{d/2}\gtrsim\log t$, with an unspecified constant. The
		union bound over $t^{d/2}$ sites in one dependence block would require
		that constant to beat $d/2$. The proof below avoids this loss by
		estimating annular crossings directly.

		All distances in this subsection are measured in the box metric used to
		define $Q(x,L)$; in particular, $|x-y|$ below denotes this distance. A
		set is $\ast$-connected if it is connected by steps that change each
		coordinate by at most one.

		The first lemma says that, after revealing the scenery in a slight enlargement of a set, the remaining fluctuation of $u_t$ on that set is controlled by the Green tail in the complement. 
	\begin{lemma}\label{lem:dgt4-localization}
		For a finite set $K\subset\Z^d$ and $r\geq1$, write
		\[
			K_r\coloneqq \{y\in\Z^d: |y-z|\leq r\text{ for some }z\in K\}\, .
		\]
		There is $c>0$ such that, for every finite $K\subset\Z^d$, every
		$r\geq1$, every $a>0$, and every $t\in\N$,
		\[
			\P\Bigl(\max_{x\in K}\left|u_t(x)-\E\bigl[u_t(x)\mid \zeta(y), y\in K_r\bigr]\right|>a\Bigr)
			\leq 2|K|\exp\{-c\min(a^2r^{d-4}, ar^{d-2})\}\, .
		\]
	\end{lemma}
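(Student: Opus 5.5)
The plan is to fix $x\in K$ and apply the concentration Lemma~\ref{lem:weighted-exp-conc}(b) \emph{conditionally} on the scenery inside $K_r$. Then a union bound over the $|K|$ sites finishes the argument.

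Conditionally on $\mathcal F_r\coloneqq\sigma(\zeta(y):y\in K_r)$, the map $(\zeta(y))_{y\notin K_r}\mapsto u_t(x)$ is a function of independent i.i.d.\ coordinates. The first step is to obtain coordinatewise Lipschitz constants from the stopping representation. By Proposition~\ref{prop:finite-time-concentration-scale} and the resampling bound \eqref{eq:odometer-resampling}, changing $\zeta(y)$ alone changes $u_t(x)$ by at most $g_t(x,y)|\zeta(y)-\zeta'(y)|\leq G(x,y)|\zeta(y)-\zeta'(y)|$. This bound is deterministic and holds whatever the values of the other coordinates, in particular for any fixed realization of the scenery on $K_r$. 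Next, apply the martingale argument of Lemma~\ref{lem:weighted-exp-conc}(b) to the conditional law, with weights $\ell_y=G(x,y)\one_{\{y\notin K_r\}}$. Those constants depend only on $\theta_0,K_0$, so the bound is uniform in the conditioning.

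The key input is the Green tail. Since $y\notin K_r$ forces $|y-x|>r$ in the box metric, and the box and Euclidean norms are comparable, \eqref{eq:dgt4-green-tail} gives
\[
\sum_{y\notin K_r}G(x,y)^2\leq Cr^{4-d},\qquad \sup_{y\notin K_r}G(x,y)\leq Cr^{2-d}.
\]
This yields
\[
\P\Bigl(|u_t(x)-\E[u_t(x)\mid\mathcal F_r]|>a\Bigm|\mathcal F_r\Bigr)\leq 2\exp\{-c\min(a^2r^{d-4},ar^{d-2})\}.
\]
Taking expectations and then a union bound over $x\in K$ gives the claim. The constant $2$ in front comes from Bernstein's inequality for the upper and lower tails separately (Boucheron--Lugosi--Massart). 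If Lemma~\ref{lem:weighted-exp-conc}(b) only supplies a generic constant $C$, one can absorb it by decreasing $c$ when the exponent is large. When the exponent is small, the claimed bound exceeds one as soon as $2|K|e^{-c\cdot(\text{small})}\geq1$, which holds after shrinking $c$, so nothing needs proving there.

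The main obstacle is the technicality of applying the lemma conditionally when infinitely many coordinates lie outside $K_r$. The plan is to handle it by truncation. Since $u_t(x)$ depends only on the scenery in $Q(x,t)$, only finitely many outer coordinates matter, so $N$ is finite. The martingale filtration runs over the outer coordinates in a fixed order, starting from $\mathcal F_r$. The conditional Bernstein moment bound on each increment uses only the Lipschitz bound in that coordinate and the exponential moment of the fresh independent variable, exactly as in the proof of part~(b). This is why conditioning on $\mathcal F_r$ costs nothing.
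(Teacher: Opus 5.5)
Your proposal is correct and is essentially the paper's proof: condition on the scenery in $K_r$, use the resampling bound with $g_t(x,y)\leq G(x,y)$ for the outer coordinates, insert the Green tails \eqref{eq:dgt4-green-tail}, apply Lemma~\ref{lem:weighted-exp-conc}\textup{(b)} under the conditional law, and take a union bound over $x\in K$. Your extra remarks fill in details the paper leaves implicit: the reduction to the finitely many relevant coordinates in $Q(x,t)$, and trading the generic prefactor $C$ for $2$ by shrinking $c$, which works because $2e^{-cm}\geq1$ once $cm\leq\log 2$.
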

	\begin{proof}
		Fix $x\in K$ and condition on the coordinates
		$\{\zeta(y):y\in K_r\}$. Under this conditional law, resampling a
			coordinate $\zeta(y)$ with $y\notin K_r$ by an independent copy
			$\zeta'(y)$ changes $u_t(x)$ by at most
			$g_t(x,y)|\zeta(y)-\zeta'(y)|\leq G(x,y)|\zeta(y)-\zeta'(y)|$.
		Since $x\in K$, every $y\notin K_r$ satisfies
		$|y-x|\geq r$. Hence \eqref{eq:dgt4-green-tail} gives
		\[
		\sum_{y\notin K_r}G(x,y)^2\leq Cr^{4-d}\, , \qquad \sup_{y\notin K_r}G(x,y)\leq Cr^{2-d}\, .
		\]
		Lemma~\ref{lem:weighted-exp-conc}, applied after conditioning on
		$\{\zeta(y):y\in K_r\}$, therefore gives
		\[
			\P\left(
			\left.
			\left|u_t(x)-\E\bigl[u_t(x)\mid \zeta(y), y\in K_r\bigr]\right|>a
			\,\right|\,\zeta(y), y\in K_r
			\right)
			\leq
			2\exp\{-c\min(a^2r^{d-4}, ar^{d-2})\}\, .
		\]
		Taking a union bound over $x\in K$ gives the displayed estimate.
	\end{proof}

		For a finite $B\subset\Z^d$, write
		$\partial_{\rm in}B\coloneqq\{y\in B:\text{there is }z\notin B\text{ with }z\sim y\}$
		for its inner vertex boundary. For $L\geq1$ and $s>0$, let $A(x,L,s)$
		be the event that there is a
		$\ast$-connected subset of
		\[
			\{y\in Q(x,2L):u_t(y)-\E u_t(0)\leq -s\}
		\]
		meeting both $Q(x,L)$ and $\partial_{\rm in}Q(x,2L)$. Localization lets us
		compare two distant annuli with independent copies, at the cost of
		lowering the level slightly.
	\begin{lemma}\label{lem:dgt4-level-shift-decoupling}
		There are $c>0$ and $C<\infty$ such that the following holds. Let
		$x_1,x_2\in\Z^d$, let $L,r\geq1$, and suppose
		\[
			\min_{\substack{z_1\in Q(x_1,2L)\\ z_2\in Q(x_2,2L)}} |z_1-z_2|
			\geq4r\, .
		\]
		Then, for every $s>2a>0$,
		\begin{equation}
			\P\bigl(A(x_1,L,s)\cap A(x_2,L,s)\bigr)\leq
			\left[\sup_{z\in\Z^d}\P\bigl(A(z,L,s-2a)\bigr)\right]^2
			+CL^d\exp\{-c\min(a^2r^{d-4}, ar^{d-2})\}\, .
			\label{eq:dgt4-decouple}
		\end{equation}
		\end{lemma}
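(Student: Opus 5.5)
The plan is to replace $u_t$ on each annulus by its conditional expectation given the scenery in an $r$-neighborhood. By Lemma~\ref{lem:dgt4-localization} this substitute is close to $u_t$, and the two substitutes are independent once the neighborhoods are disjoint. For $i=1,2$ set $K^{(i)}\coloneqq Q(x_i,2L)$ and
\[
	\widetilde u^{(i)}(y)\coloneqq\E\bigl[u_t(y)\mid\zeta(w),\,w\in K^{(i)}_r\bigr],\qquad y\in K^{(i)}\, .
\]
The separation hypothesis gives that the box distance between $K^{(1)}_r$ and $K^{(2)}_r$ is at least $4r-2r>0$. Hence $K^{(1)}_r\cap K^{(2)}_r=\varnothing$, so $\widetilde u^{(1)}$ and $\widetilde u^{(2)}$ are functions of disjoint sets of i.i.d.\ coordinates and are independent.

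Next, define the good events
\[
	G_i\coloneqq\Bigl\{\max_{y\in K^{(i)}}\bigl|u_t(y)-\widetilde u^{(i)}(y)\bigr|\leq a\Bigr\}\, .
\]
By Lemma~\ref{lem:dgt4-localization}, with $|K^{(i)}|\leq CL^d$, each satisfies
\[
	\P(G_i^c)\leq CL^d\exp\{-c\min(a^2r^{d-4},ar^{d-2})\}\, .
\]
Let $\widetilde A_i$ be the event that some $\ast$-connected subset of $\{y\in K^{(i)}:\widetilde u^{(i)}(y)-\E u_t(0)\leq-(s-a)\}$ meets both $Q(x_i,L)$ and $\partial_{\rm in}Q(x_i,2L)$. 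On $G_i$, every site with $u_t-\E u_t(0)\leq -s$ has $\widetilde u^{(i)}-\E u_t(0)\leq -s+a$. So $A(x_i,L,s)\cap G_i\subseteq\widetilde A_i$, and therefore
\[
	\P\bigl(A(x_1,L,s)\cap A(x_2,L,s)\bigr)\leq\P(\widetilde A_1)\P(\widetilde A_2)+\P(G_1^c)+\P(G_2^c)\, ,
\]
using independence of $\widetilde A_1$ and $\widetilde A_2$.

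It remains to return from $\widetilde A_i$ to an event for $u_t$ itself. This costs a second shift of $a$, which is why the level in \eqref{eq:dgt4-decouple} is $s-2a$. On $\widetilde A_i\cap G_i$, the same crossing set has $u_t-\E u_t(0)\leq-(s-2a)$, so
\[
	\P(\widetilde A_i)\leq\P\bigl(A(x_i,L,s-2a)\bigr)+\P(G_i^c)\leq\sup_z\P\bigl(A(z,L,s-2a)\bigr)+\P(G_i^c)\, .
\]
Multiplying the two bounds and using that the probabilities are at most one, the cross terms are absorbed into $CL^d\exp\{\cdots\}$ after enlarging $C$. Translation invariance then gives the supremum in \eqref{eq:dgt4-decouple}.

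The main point needing care is the independence step. The conditional expectation must be measurable only with respect to coordinates in $K^{(i)}_r$, which holds by definition. The neighborhoods must be disjoint, and here the factor $4r$ leaves room. The hypothesis $s>2a$ only ensures that the shifted level $s-2a$ is still positive, so that $A(\cdot,L,s-2a)$ is of the same type. Beyond this the argument is a routine sandwich between the two levels $s$ and $s-2a$.
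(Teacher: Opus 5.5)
Your proposal is correct and follows the paper's proof essentially step for step. Both arguments use the conditional-expectation substitutes measurable with respect to the disjoint neighbourhoods $(Q(x_i,2L))_r$, the inclusions $A(x_i,L,s)\cap G_i\subseteq\widetilde A_i$ and $\widetilde A_i\cap G_i\subseteq A(x_i,L,s-2a)$, independence of $\widetilde A_1$ and $\widetilde A_2$, and Lemma~\ref{lem:dgt4-localization} to bound $\P(G_i^c)$, with the cross terms absorbed into $CL^d\exp\{-c\min(a^2r^{d-4},ar^{d-2})\}$.
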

		\begin{proof}
			For~$i = 1,2$, let
			\[
				E_i\coloneqq
				\left\{
				\max_{y\in Q(x_i,2L)}
				\left|
				u_t(y)
				-
				\E\bigl[u_t(y)\mid \zeta(z), z\in (Q(x_i,2L))_r\bigr]
				\right|
				\leq a
				\right\}\, .
			\]
			Let
			$\widetilde A_i$ be the event that the random vector
			\[
				\left(
				\E\bigl[u_t(y)\mid \zeta(z), z\in (Q(x_i,2L))_r\bigr]
				-\E u_t(0)
				\right)_{y\in Q(x_i,2L)}
			\]
			has a $\ast$-connected crossing from $Q(x_i,L)$ to
			$\partial_{\rm in}Q(x_i,2L)$ at level $-(s-a)$. On $E_i$, every coordinate
			of this vector differs from $u_t-\E u_t(0)$ by at most $a$, so
			\[
				A(x_i,L,s)\cap E_i\subseteq \widetilde A_i
				\quad\text{and}\quad
				\widetilde A_i\cap E_i\subseteq A(x_i,L,s-2a)\, .
			\]
			The event $\widetilde A_i$ is determined by
			$\{\zeta(z):z\in (Q(x_i,2L))_r\}$. These coordinate sets are
			disjoint for $i=1,2$ by the separation assumption, so
			$\widetilde A_1$ and $\widetilde A_2$ are independent. Therefore
			\[
				A(x_1,L,s)\cap A(x_2,L,s)
				\subseteq
				(\widetilde A_1\cap\widetilde A_2)\cup E_1^c\cup E_2^c\, .
			\]
			Moreover,
			\[
				\P(\widetilde A_i)
				\leq
				\sup_{z\in\Z^d}\P\bigl(A(z,L,s-2a)\bigr)
				+\P(E_i^c)\, .
			\]
			Multiplying these two bounds and adding
			$\P(E_1^c)+\P(E_2^c)$, then using
			Lemma~\ref{lem:dgt4-localization}, gives \eqref{eq:dgt4-decouple}
			after increasing $C$.
		\end{proof}

	The decoupling estimate can now be iterated across annuli whose radii grow
	by a fixed large factor. A low crossing at one scale contains two separated
	low crossings at the previous scale; the level shift in
	Lemma~\ref{lem:dgt4-level-shift-decoupling} absorbs the localization error.
	\begin{lemma}\label{lem:dgt4-cascade}
		There are $b,C>0$ and $M<\infty$ such that, whenever
		$\E u_t(0)\geq M$, for every $n\geq0$,
		\begin{equation}
			\sup_{x\in\Z^d}\P\bigl(A(x,64^n,\E u_t(0)/2)\bigr)
			\leq
			C\exp\{-b\E u_t(0)2^n\}\, .
			\label{eq:dgt4-cascade-bound}
		\end{equation}
		\end{lemma}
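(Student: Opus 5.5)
We prove \eqref{eq:dgt4-cascade-bound} by induction on $n$. The level is lowered slightly at each scale, and the losses are chosen to be summable. Write $m\coloneqq\E u_t(0)$. For $n\geq0$ set
\[
    s_n\coloneqq \frac m2\Bigl(1-\sum_{k> n}2^{-k-2}\Bigr)\in[3m/8,\,m/2],
\]
so that $s_n$ increases to $m/2$, and put $a_n\coloneqq(s_n-s_{n-1})/2=m2^{-n-4}$. We will prove the stronger bound $p_n\coloneqq\sup_x\P(A(x,64^n,s_n))\leq \exp\{-b m2^n\}$ for all $n\geq0$ once $m\geq M$. Since $A(x,L,s)$ is decreasing in $s$, this gives the lemma.

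\emph{Base case.} For $n=0$ the box $Q(x,2)$ has boundedly many sites. A union bound with the pointwise concentration \eqref{eq:dgt4-pointwise-concentration} at level $s_0\geq3m/8$ gives $p_0\leq C\exp\{-c\min(m^2,m)\}=Ce^{-cm}$. Taking $b$ small and then $M$ large yields $p_0\leq e^{-bm}$.

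\emph{Geometric step.} Let $L=64^{n}$ and $L'=64L$. A $\ast$-connected low crossing from $Q(x,L')$ to $\partial_{\rm in}Q(x,2L')$ must traverse the annulus, whose width is $L'$. Cover the annulus by the boxes $Q(y,L)$ with $y\in L\Z^d$ and $|y-x|\leq2L'$; there are at most $C64^d$ of them. The crossing meets one such box $Q(y_1,L)$ near $Q(x,L')$ and another $Q(y_2,L)$ near $\partial_{\rm in}Q(x,2L')$, at mutual distance at least $L'-4L\geq 60L$. Each subpath leaves its box $Q(y_i,L)$ and reaches $\partial_{\rm in}Q(y_i,2L)$, so $A(y_1,L,s)\cap A(y_2,L,s)$ occurs. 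The boxes $Q(y_i,2L)$ are then separated by at least $56L$, so we may take $r=14L$ in Lemma~\ref{lem:dgt4-level-shift-decoupling}.

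\emph{Induction step.} Apply Lemma~\ref{lem:dgt4-level-shift-decoupling} with $s=s_{n+1}$, $a=a_{n+1}$ and $r=14\cdot64^n$, and take a union bound over the at most $C$ pairs $(y_1,y_2)$. This gives
\[
    p_{n+1}\leq C\Bigl(p_n^2+64^{dn}\exp\{-c\min(a_{n+1}^2r^{d-4},\,a_{n+1}r^{d-2})\}\Bigr).
\]
For the first term, $Cp_n^2\leq C\exp\{-2bm2^n\}\leq\frac12\exp\{-bm2^{n+1}\}$ fails as written, because the constant $C$ is not absorbed. The fix is to prove instead $p_n\leq \exp\{-bm2^n\}/(2C)$, or equivalently to run the induction on $\log(Cp_n)$; this works since $2bm2^n$ exactly doubles the exponent and $m\geq M$ absorbs the initial constant in the base case. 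The main obstacle is the error term. We need
\[
    a_{n+1}^2r^{d-4}\gtrsim m^2\,2^{-2n}\,64^{n(d-4)},
\]
and since $d\geq5$ we have $64^{n(d-4)}\geq64^n$, so this quantity is at least $m^2\,16^{n}$. Similarly $a_{n+1}r^{d-2}\gtrsim m\,2^{-n}\,64^{3n}$. Both dominate $bm2^{n+1}+Cdn+C$ once $m\geq M$. Hence the error term is at most $\frac12\exp\{-bm2^{n+1}\}/(2C)$, which closes the induction.

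The delicate point is therefore the bookkeeping: the level losses $a_n$ must be summable, so that the levels stay above $3m/8$, while still being large enough that the localization error beats the doubling of the exponent. The geometric growth $64^{n(d-4)}$ available in $d\geq5$ makes this possible.
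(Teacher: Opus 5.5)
Your proposal is correct and is essentially the paper's proof. It uses the same extraction of two well-separated sub-crossings at the previous scale, and the same summable level-shift schedule fed into Lemma~\ref{lem:dgt4-level-shift-decoupling} (the paper starts at level $\E u_t(0)/4$ with $r=64^n$, you start at $3\E u_t(0)/8$ with $r=14\cdot64^n$). It also uses the same device of proving a bound $\eta e^{-b\E u_t(0)2^n}$ with $C\eta^2\leq\eta/2$ to absorb the constant in the quadratic recursion.
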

		\begin{proof}
			We first extract two separated crossings at the previous scale from
			one crossing at the next scale. We then use the decoupling lemma to
			get a recursive bound and close it by induction.

			\emph{Step 1.} If $A(x,64^{n+1},s)$ occurs, then there are
			$y_1,y_2\in 64^n\Z^d\cap Q(x,3\cdot64^{n+1})$ such that, for
			$i=1,2$, there is an occurrence of $A(y_i,64^n,s)$, and
			\begin{equation}\label{eq:dgt4-separated-subcrossings}
				\min_{\substack{z_1\in Q(y_1,2\cdot64^n)\\ z_2\in Q(y_2,2\cdot64^n)}} |z_1-z_2|
				\geq4\cdot64^n\, .
			\end{equation}
			This follows by taking two well-separated subcrossings along the
			realizing path; the number of possible pairs $(y_1,y_2)$ is bounded
			by a constant depending only on $d$.

			We turn this extraction into a recurrence. Let
			\[
				\delta_n\coloneqq2^{-n-4}\E u_t(0),
				\qquad
				s_n\coloneqq\frac{\E u_t(0)}{4}
				+\sum_{j=0}^{n-1}2\delta_j,
				\qquad
				q_n\coloneqq\sup_{x\in\Z^d}
				\P\bigl(A(x,64^n,s_n)\bigr)\, .
			\]
			At scale $1$, the event $A(x,1,s_0)$ forces at least one site
			in $Q(x,2)$ to satisfy
			$u_t(y)-\E u_t(0)\leq-\E u_t(0)/4$. Hence
			\eqref{eq:dgt4-pointwise-concentration} gives, for $\E u_t(0)\geq1$,
			\[
				q_0\leq Ce^{-c\E u_t(0)}\, .
			\]
			By \eqref{eq:dgt4-separated-subcrossings} and
			Lemma~\ref{lem:dgt4-level-shift-decoupling}, applied with
			$r=64^n$, $a=\delta_n$, and $s=s_{n+1}$,
			\[
				q_{n+1}
				\leq
				Cq_n^2
				+
				C\,64^{dn}
				\exp\{-c\min(\delta_n^2 64^{n(d-4)}, \delta_n64^{n(d-2)})\}\, .
			\]
			For $d\geq5$ and $\E u_t(0)\geq1$,
			\[
				\delta_n^2 64^{n(d-4)}\geq c\E u_t(0)2^n\, ,
				\qquad
				\delta_n64^{n(d-2)}\geq c\E u_t(0)2^n\, .
			\]
			Thus
			\begin{equation}\label{eq:dgt4-cascade-recursion}
				q_{n+1}
				\leq
				Cq_n^2+C\,64^{dn}e^{-c\E u_t(0)2^n}\, .
			\end{equation}

			\emph{Step 2.} We solve the recurrence. Choose $b>0$ so small that
			$2b<c$, and choose $\eta>0$ with $C\eta^2\leq\eta/2$. Since
			$\log(64^{dn})=O(n)$ and $2^n$ dominates $n$, there is $M<\infty$
			such that, whenever $\E u_t(0)\geq M$, for every $n\geq 0$,
			\[
				C\,64^{dn} e^{-c\E u_t(0)2^n}
				\leq
				\frac{\eta}{2}e^{-b\E u_t(0)2^{n+1}}\, .
			\]
			After increasing $M$, the scale-zero estimate gives
			\[
				q_0\leq\eta e^{-b\E u_t(0)}\, .
			\]
			Using \eqref{eq:dgt4-cascade-recursion}, induction yields
			\[
				q_n\leq\eta e^{-b\E u_t(0)2^n}\, .
			\]
			Since $s_n\leq\E u_t(0)/2$, a crossing at level $-\E u_t(0)/2$ is
			also a crossing at level $-s_n$. This proves
			\eqref{eq:dgt4-cascade-bound}.
		\end{proof}

	The final lemma is deterministic. If the origin is blocked from infinity,
	then the exterior boundary of its finite open cluster is $\ast$-connected
	and contains a crossing of one of the annuli used above.
	\begin{lemma}\label{lem:dgt4-blocking-to-crossing}
		Let $\mathcal O\subseteq\Z^d$, and let $S=Q(0,1)$. Suppose
		$S\subseteq\mathcal O$, but $S$ is not connected to infinity by a
		nearest-neighbor path in $\mathcal O$.
		Then there are $n\geq0$ and $x\in Q(0,4\cdot64^{n+1})$ such that
		$\mathcal O^c\cap Q(x,2\cdot64^n)$ contains a $\ast$-connected set
		meeting both $Q(x,64^n)$ and $\partial_{\rm in}Q(x,2\cdot64^n)$.
	\end{lemma}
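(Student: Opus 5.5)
Let $\mathcal C$ be the nearest-neighbor component of $\mathcal O$ containing $S$. By hypothesis $\mathcal C$ is finite, and I will work with its exterior outer boundary. Let $\mathcal U$ be the unbounded nearest-neighbor component of $\Z^d\setminus\mathcal C$, and set
\[
	\Sigma\coloneqq\{y\in\mathcal U:\ y\sim z\text{ for some }z\in\mathcal C\}\, .
\]
Every $y\in\Sigma$ lies outside $\mathcal O$: it is adjacent to $\mathcal C$ but not in $\mathcal C$, and if it were in $\mathcal O$ it would belong to $\mathcal C$. Hence $\Sigma\subseteq\mathcal O^c$. The standard lattice fact I will invoke is that the outer vertex boundary of a finite connected set, taken inside the unbounded complementary component, is $\ast$-connected in $\Z^d$ (Deuschel--Pisztora, or Tim\'ar's boundary connectivity). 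I will cite it rather than reprove it, and I expect this citation to be the only nontrivial input.

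Next I choose the scale. Let $D\coloneqq\max_{y\in\Sigma}|y|$ in the box metric. Since $S=Q(0,1)\subseteq\mathcal C$ and $\Sigma$ surrounds $S$, every $y\in\Sigma$ satisfies $|y|\geq2$, so $D\geq2$. Pick $n\geq0$ minimal with $2\cdot64^n\geq D/8$, or take $n=0$ if $D/8\leq2$. Minimality gives $2\cdot64^{n-1}<D/8$ when $n\geq1$, so $D< 16\cdot64^{n}\cdot$ a constant $\leq 4\cdot64^{n+1}$. In all cases $D\leq 4\cdot64^{n+1}$, since for $n=0$ we have $D\leq16$.

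The choice of the center $x$ and the extraction of the crossing are the crux. First take $x=0$ and ask whether $\Sigma$ meets both $Q(0,64^n)$ and the complement of $Q(0,2\cdot64^n)$. If it does, walk along a $\ast$-path in $\Sigma$ from a point in $Q(0,64^n)$ to a point outside $Q(0,2\cdot64^n)$, and truncate at the first exit. The segment up to the last point before leaving $Q(0,2\cdot64^n)$ is a $\ast$-connected subset of $\mathcal O^c\cap Q(0,2\cdot64^n)$ that meets $Q(0,64^n)$ and ends on $\partial_{\rm in}Q(0,2\cdot64^n)$, because a $\ast$-step changes the box norm by at most one.

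If instead $\Sigma$ is disjoint from $Q(0,64^n)$, I use a point $y^*\in\Sigma$ with $|y^*|=D$ and center at $x\coloneqq y^*$, which lies in $Q(0,4\cdot64^{n+1})$. I then need $\Sigma$ to exit $Q(y^*,2\cdot64^n)$. It does, because $\Sigma$ surrounds $S$: it must contain points near the origin, within distance $2$ of $S$, so at distance at least $D-3\geq 2\cdot64^n+1$ from $y^*$, provided $n$ is taken so that $2\cdot64^n\leq D-4$. To guarantee this I will redo the choice of $n$ with $2\cdot64^n\leq D/8$ (maximal), treating the small-$D$ case separately with $n=0$ and $x$ near the origin, where $\Sigma\cap Q(0,2)\neq\varnothing$ already. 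The truncation step is then identical.

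The bookkeeping on constants in this last case is the main thing to check, but it is routine. The main obstacle is the $\ast$-connectivity of the outer boundary, which I will cite.
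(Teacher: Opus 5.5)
Your overall strategy matches the paper's: take the exterior boundary of the finite cluster, cite Tim\'ar for $\ast$-connectivity, pick a scale comparable to the size of the boundary, center on the boundary, and truncate a $\ast$-path at its first exit. The problem is in the part you call routine bookkeeping. It contains two false steps, and together they leave the lemma unproved outside your Case~1.

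First, you justify the exit from $Q(y^*,2\cdot64^n)$ by claiming that $\Sigma$ ``must contain points within distance $2$ of $S$.'' Surrounding $S$ does not force this. Take $\mathcal O=Q(0,999)$, which satisfies the hypotheses: then $\mathcal C=\mathcal O$ and every point of $\Sigma$ has $|y|=1000$. The claim even contradicts your own Case~2 hypothesis $\Sigma\cap Q(0,64^n)=\varnothing$ once $n\geq1$. Your small-$D$ assertion $\Sigma\cap Q(0,2)\neq\varnothing$ fails too, for $\mathcal O=Q(0,10)$, where $D=11$.

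Second, your final scale (maximal $n$ with $2\cdot64^n\leq D/8$) only gives $D<16\cdot64^{n+1}$, not $D\leq4\cdot64^{n+1}$. So $x=y^*$ need not lie in $Q(0,4\cdot64^{n+1})$. For $\mathcal O=Q(0,999)$ this choice gives $n=0$, and then no admissible center exists at all, since $Q(x,2)\subseteq\mathcal O$ for every $x\in Q(0,256)$. Your first choice (minimal $n$ with $2\cdot64^n\geq D/8$) fails the other way. There $2\cdot64^n$ can be close to $8D$, while $\operatorname{diam}\Sigma\leq2D$, so $\Sigma$ need not cross the annulus around any center, and Cases~1 and~2 are not exhaustive.

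The repair is short and removes the case split.
\begin{itemize}
\item Choose $n$ with $2\cdot64^n\leq D<2\cdot64^{n+1}$, which is possible since $D\geq2$.
\item Take $x=y^*$, so $x\in Q(0,D)\subseteq Q(0,4\cdot64^{n+1})$.
\item Suppose, say, $y^*_i=D$. Let $k\geq1$ be maximal with $-ke_i\in\mathcal C$. The ray $\{-je_i:j>k\}$ avoids $\mathcal C$, so $-(k+1)e_i\in\Sigma$.
\item This point is at distance at least $D+2>2\cdot64^n$ from $y^*$. Hence the $\ast$-path in $\Sigma$ from $y^*$ to $-(k+1)e_i$ passes through $\partial_{\rm in}Q(y^*,2\cdot64^n)$, and your truncation finishes the proof.
\end{itemize}

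The paper does the same with $\operatorname{diam}\Gamma$ in place of $D$. It takes $2\cdot64^n\leq\operatorname{diam}\Gamma<2\cdot64^{n+1}$ and centers at an endpoint of a diameter-realizing pair. It places that endpoint in $Q(0,4\cdot64^{n+1})$ using the fact that $\Gamma$ has a point within $\operatorname{diam}\Gamma$ of the origin.
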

	\begin{proof}
		Let $\mathcal D$ be the nearest-neighbor open cluster of $S$ in
		$\mathcal O$. By assumption, $\mathcal D$ is finite. Let $\Gamma$ be
		its exterior vertex boundary. Then $\Gamma\subseteq\mathcal O^c$, and
		$\Gamma$ is $\ast$-connected by \citet[Theorem~3]{Timar}. Since
		$\Gamma$ blocks $Q(0,1)$ from infinity, $\operatorname{diam}\Gamma\geq4$
		and $\Gamma$ has a point within distance $\operatorname{diam}\Gamma$ of
		the origin.

		Choose $n\geq0$ such that
		\[
			2\cdot64^n
			\leq
			\operatorname{diam}\Gamma
			<2\cdot64^{n+1}\, .
		\]
		A $\ast$-path in $\Gamma$ between two points realizing this diameter
		contains an initial segment crossing
		$Q(x,64^n)\subset Q(x,2\cdot64^n)$ for some
		$x\in Q(0,4\cdot64^{n+1})$. This segment lies in $\mathcal O^c$, as
		required.
	\end{proof}

	The annular crossing bound now gives the percolation theorem. We prove a
	stronger estimate: with probability tending to one exponentially in
	$\E u_t(0)$, the origin is connected to infinity inside the level set
	$\{u_t>\E u_t(0)/2\}$. The logarithmic lower bound on $\E u_t(0)$ then
	converts this mean-dependent level into the stated critical level.

	\begin{theorem}[High-dimensional critical percolation]\label{thm:dgt4-nontriviality}
		Fix $\nu_0>0$, $\theta_0>0$, and $K_0<\infty$. There are
		$b=b(d,\theta_0,K_0)>0$, $C=C(d,\theta_0,K_0)<\infty$,
		$c=c(d,\nu_0,\theta_0,K_0)>0$, and
		$t_0=t_0(d,\nu_0,\theta_0,K_0)<\infty$ such that, for every mean-zero
		i.i.d.\ field $(\zeta(x))_{x\in\Z^d}$ satisfying
		\[
			\Var(\zeta(0))\geq\nu_0^2\, ,
			\qquad
			\E e^{\theta_0|\zeta(0)|}\leq K_0\, ,
		\]
		and every $t\geq t_0$, the level set
		\[
			\{x:u_t(x)>c(\log t)^{2/d}\}
		\]
		contains an infinite nearest-neighbor component almost surely.
		Moreover, for every $t\geq t_0$,
		\begin{equation}\label{eq:dgt4-origin-connects}
			\P\left(Q(0,1)\leftrightarrow\infty
			\textup{ in }\{x:u_t(x)>\E u_t(0)/2\}\right)
			\geq 1-Ce^{-b\E u_t(0)}\, .
		\end{equation}
		\end{theorem}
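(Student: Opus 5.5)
The proof assembles the three preceding lemmas, applied with $\mathcal O\coloneqq\{x:u_t(x)>\E u_t(0)/2\}$. The complement is $\mathcal O^c=\{x:u_t(x)-\E u_t(0)\leq-\E u_t(0)/2\}$, which is exactly the set whose $\ast$-crossings define $A(x,L,\E u_t(0)/2)$.

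First we prove \eqref{eq:dgt4-origin-connects}. The event that $Q(0,1)$ is not connected to infinity in $\mathcal O$ splits into two cases: either $Q(0,1)\not\subseteq\mathcal O$, or $Q(0,1)\subseteq\mathcal O$ but is blocked. The first case has probability at most $3^d\,C e^{-c\E u_t(0)}$ by the pointwise concentration \eqref{eq:dgt4-pointwise-concentration} with $s=\E u_t(0)/2$. In the second case, Lemma~\ref{lem:dgt4-blocking-to-crossing} produces $n\geq0$ and $x\in Q(0,4\cdot64^{n+1})$ such that $A(x,64^n,\E u_t(0)/2)$ occurs. A union bound over $x$, together with Lemma~\ref{lem:dgt4-cascade}, bounds the probability by
\[
	\sum_{n\geq0}C\,64^{d(n+1)}\exp\{-b\E u_t(0)2^n\}\, ,
\]
valid once $\E u_t(0)\geq M$. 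Since $2^n$ dominates the polynomial factor, this sum is at most $Ce^{-b'\E u_t(0)}$ for $\E u_t(0)$ large. Shrinking $b$ to absorb both cases gives \eqref{eq:dgt4-origin-connects}.

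The constants need checking. Those in Lemmas~\ref{lem:dgt4-localization}--\ref{lem:dgt4-cascade} come only from \eqref{eq:dgt4-pointwise-concentration} and the concentration lemma, so they depend only on $d,\theta_0,K_0$. Dependence on $\nu_0$ enters only through the requirement $\E u_t(0)\geq M$. Theorem~\ref{thm:dgt4-height-lower} gives $\E u_t(0)\geq c_1(\log t)^{2/d}$ for $t\geq t_0(d,\nu_0,\theta_0,K_0)$, so we enlarge $t_0$ until $c_1(\log t_0)^{2/d}\geq M$ and $Ce^{-b\E u_t(0)}<1$.

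Next we deduce percolation. With $c\coloneqq c_1/2$ we have $\{u_t>\E u_t(0)/2\}\subseteq\{u_t>c(\log t)^{2/d}\}$. By \eqref{eq:dgt4-origin-connects} the larger set has an infinite component with positive probability. The event that a set contains an infinite component is translation invariant in the i.i.d.\ scenery, because $u_t$ is a translation-covariant function of it. Ergodicity then upgrades positive probability to probability one.

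The main obstacle is already handled by Lemma~\ref{lem:dgt4-cascade}. The only remaining delicate point is the union bound in the first step. The number of centers $x$ grows like $64^{dn}$, and this must be beaten by $e^{-b\E u_t(0)2^n}$ uniformly in $n$. This works because the cascade bound decays doubly exponentially in $n$, and because the condition $\E u_t(0)\geq M$ lets us choose a single constant that covers the case $n=0$.
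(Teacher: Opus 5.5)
Your proposal is correct and follows the paper's proof essentially step for step. You split off $\{Q(0,1)\not\subseteq\mathcal O\}$ via pointwise concentration, convert blocking into an annular crossing with Lemma~\ref{lem:dgt4-blocking-to-crossing}, sum the cascade bound of Lemma~\ref{lem:dgt4-cascade} over scales and centers, and finish with ergodicity; the only cosmetic difference is that you take the uniform lower bound $\E u_t(0)\geq c_1(\log t)^{2/d}$ directly from Theorem~\ref{thm:dgt4-height-lower} rather than via Corollary~\ref{cor:dgt4-mean-lower}\textup{(ii)}, which is equivalent since that theorem is already uniform over the law class.
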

		\begin{proof}
			The assumptions give fixed $a,q>0$, uniformly over the law class, such
			that $\P(\zeta(0)\leq-a)\geq q$. Part~\textup{(ii)} of
			Corollary~\ref{cor:dgt4-mean-lower} therefore gives a uniform constant
			$c_{0}>0$ such that
			\[
				\E u_t(0)\geq c_{0}(\log t)^{2/d}
			\]
			for all large $t$. We choose $t_0$ so that this bound holds and
			$\E u_t(0)$ is in the range of Lemma~\ref{lem:dgt4-cascade} for
			every $t\geq t_0$. Taking $c<c_{0}/2$, percolation of
			$\{u_t>\E u_t(0)/2\}$ implies the claimed percolation of
			$\{u_t>c(\log t)^{2/d}\}$.

			Define
			\[
				\mathcal O\coloneqq\{x:u_t(x)>\E u_t(0)/2\}\, .
			\]
			The proof of \eqref{eq:dgt4-pointwise-concentration} gives, with
			constants depending only on $d,\theta_0,K_0$,
			\[
				\P(Q(0,1)\not\subseteq\mathcal O)
				\leq
				|Q(0,1)| \P\bigl(u_t(0)-\E u_t(0)\leq-\E u_t(0)/2\bigr)
				\leq
				Ce^{-b\E u_t(0)}\, .
			\]

			On $\{Q(0,1)\subseteq\mathcal O\}$, if $Q(0,1)$ is not connected to infinity in $\mathcal O$, Lemma~\ref{lem:dgt4-blocking-to-crossing}
			produces $n\geq0$ and $x\in Q(0,4\cdot64^{n+1})$ such that
			$A(x,64^n,\E u_t(0)/2)$ occurs. Therefore Lemma~\ref{lem:dgt4-cascade} and a union bound give
			\[
				\P\bigl(Q(0,1)\subseteq\mathcal O\text{ but }
				Q(0,1)\not\leftrightarrow\infty\text{ in }\mathcal O\bigr)
				\leq
				C\sum_{n\geq0}64^{d(n+1)} e^{-b\E u_t(0)2^n}
				\leq
				Ce^{-b\E u_t(0)}\, .
			\]
			Together with the previous display, this proves
			\eqref{eq:dgt4-origin-connects}. In particular, the event that $\mathcal O$ contains an infinite component has positive
			probability; since it is a translation invariant event, ergodicity of the i.i.d.\ scenery implies that it has probability $1$, and the proof is complete.
		\end{proof}

\bibliographystyle{plainnat}
\bibliography{refs}

\begin{thebibliography}{OSSS05}

\bibitem[Aizenman and Grimmett(1991)]{AizenmanGrimmett}
M.~Aizenman and G.~R. Grimmett.
\newblock Strict monotonicity for critical points in percolation and
ferromagnetic models.
\newblock \emph{J. Stat. Phys.}, 63(5--6):817--835, 1991.
\newblock DOI: 10.1007/BF01029985.

\bibitem[Bak et al.(1987)Bak, Tang, and Wiesenfeld]{BakTangWiesenfeld}
P.~Bak, C.~Tang, and K.~Wiesenfeld.
\newblock Self-organized criticality: an explanation of 1/f noise.
\newblock \emph{Phys. Rev. Lett.}, 59(4):381--384, 1987.
\newblock DOI: 10.1103/PhysRevLett.59.381.

\bibitem[Balister et al.(2014)Balister, Bollob\'as, and Riordan]{BalisterBollobasRiordan}
P.~Balister, B.~Bollob\'as, and O.~Riordan.
\newblock Essential enhancements revisited.
\newblock Preprint, 2014.
\newblock arXiv:1402.0834. DOI: 10.48550/arXiv.1402.0834.

\bibitem[Beffara and Gayet(2017)]{BeffaraGayet}
V.~Beffara and D.~Gayet.
\newblock Percolation of random nodal lines.
\newblock \emph{Publ. Math. Inst. Hautes \'Etudes Sci.}, 126:131--176, 2017.
\newblock arXiv:1605.08605. DOI: 10.1007/s10240-017-0093-0.

\bibitem[Bolthausen(1989)]{Bolt}
E.~Bolthausen.
\newblock A central limit theorem for two-dimensional random walks in random sceneries.
\newblock \emph{Ann. Probab.}, 17(1):108--115, 1989.
\newblock DOI: 10.1214/aop/1176991497.

\bibitem[Bond and Levine(2016a)]{BondLevineI}
B.~Bond and L.~Levine.
\newblock Abelian networks I. Foundations and examples.
\newblock \emph{SIAM J. Discrete Math.}, 30(2):856--874, 2016.
\newblock arXiv:1309.3445. DOI: 10.1137/15M1030984.

\bibitem[Bou-Rabee(2021)]{BouRabeeRandomASM}
A.~Bou-Rabee.
\newblock Convergence of the random Abelian sandpile.
\newblock \emph{Ann. Probab.}, 49(6):3168--3196, 2021.
\newblock arXiv:1909.07849. DOI: 10.1214/21-AOP1528.

\bibitem[Bou-Rabee(2022)]{BouRabeeDimReduction}
A.~Bou-Rabee.
\newblock Dynamic dimensional reduction in the Abelian sandpile.
\newblock \emph{Comm. Math. Phys.}, 390(2):933--958, 2022.
\newblock arXiv:2009.05968. DOI: 10.1007/s00220-022-04322-z.

\bibitem[Bou-Rabee(2024a)]{BouRabeeExploding}
A.~Bou-Rabee.
\newblock A shape theorem for exploding sandpiles.
\newblock \emph{Ann. Appl. Probab.}, 34(1A):714--742, 2024.
\newblock arXiv:2102.04422. DOI: 10.1214/23-AAP1976.

\bibitem[Bou-Rabee(2024b)]{BouRabeeFLattice}
A.~Bou-Rabee.
\newblock Integer superharmonic matrices on the {$F$}-lattice.
\newblock \emph{Adv. Math.}, 436:109400, 2024.
\newblock arXiv:2110.07556. DOI: 10.1016/j.aim.2023.109400.

\bibitem[Bou-Rabee et~al.(2026)Bou-Rabee, Peres, and Sava-Huss]{BPSH}
A.~Bou-Rabee, Y.~Peres, and E.~Sava-Huss.
\newblock Divisible sandpiles via random walks in random scenery.
\newblock Preprint, 2026.
\newblock arXiv:2604.13968. DOI: 10.48550/arXiv.2604.13968.

\bibitem[Boucheron et al.(2013)Boucheron, Lugosi, and Massart]{BoucheronLugosiMassart}
S.~Boucheron, G.~Lugosi, and P.~Massart.
\newblock \emph{Concentration Inequalities: A Nonasymptotic Theory of
Independence}.
\newblock Oxford University Press, Oxford, 2013.
\newblock DOI: 10.1093/acprof:oso/9780199535255.001.0001.

\bibitem[Bricmont et al.(1987)Bricmont, Lebowitz, and Maes]{BLM}
J.~Bricmont, J.~L. Lebowitz, and C.~Maes.
\newblock Percolation in strongly correlated systems: the massless Gaussian field.
\newblock \emph{J. Stat. Phys.}, 48(5--6):1249--1268, 1987.
\newblock DOI: 10.1007/BF01009544.

\bibitem[Caravenna and Deuschel(2009)]{CaravennaDeuschel}
F.~Caravenna and J.-D.~Deuschel.
\newblock Scaling limits of $(1+1)$-dimensional pinning models with Laplacian
interaction.
\newblock \emph{Ann. Probab.}, 37(3):903--945, 2009.
\newblock arXiv:0802.3154. DOI: 10.1214/08-AOP424.

\bibitem[Chan and Levine(2022)]{ChanLevineIV}
S.~H. Chan and L.~Levine.
\newblock Abelian networks IV. Dynamics of nonhalting networks.
\newblock \emph{Mem. Amer. Math. Soc.}, 276(1358):vii+89, 2022.
\newblock arXiv:1804.03322. DOI: 10.1090/memo/1358.

\bibitem[Chiarini and Nitzschner(2023)]{ChiariniNitzschner}
A.~Chiarini and M.~Nitzschner.
\newblock Phase transition for level-set percolation of the membrane model in dimensions $d\geq5$.
\newblock \emph{J. Stat. Phys.}, 190:59, 2023.
\newblock arXiv:2112.09116. DOI: 10.1007/s10955-023-03072-z.

\bibitem[Chiarini et al.(2021)Chiarini, Jara, and Ruszel]{CJR}
L.~Chiarini, M.~Jara, and W.~M. Ruszel.
\newblock Constructing fractional Gaussian fields from long-range divisible sandpiles on the torus.
\newblock \emph{Stochastic Process. Appl.}, 140:147--182, 2021.
\newblock arXiv:1808.06078. DOI: 10.1016/j.spa.2021.06.006.

\bibitem[Cipriani et al.(2018a)Cipriani, Hazra, and Ruszel]{CHR}
A.~Cipriani, R.~S. Hazra, and W.~M. Ruszel.
\newblock Scaling limit of the odometer in divisible sandpiles.
\newblock \emph{Probab. Theory Related Fields}, 172(3--4):829--868, 2018.
\newblock arXiv:1604.03754.
\newblock DOI: 10.1007/s00440-017-0821-x.

\bibitem[Cipriani et al.(2018b)Cipriani, Hazra, and Ruszel]{CHRheavy}
A.~Cipriani, R.~S. Hazra, and W.~M. Ruszel.
\newblock The divisible sandpile with heavy-tailed variables.
\newblock \emph{Stochastic Process. Appl.}, 128(9):3054--3081, 2018.
\newblock arXiv:1610.09863. DOI: 10.1016/j.spa.2017.10.013.

\bibitem[Cipriani et al.(2019)Cipriani, Dan, and Hazra]{CDH}
A.~Cipriani, B.~Dan, and R.~S. Hazra.
\newblock The scaling limit of the membrane model.
\newblock \emph{Ann. Probab.}, 47(6):3963--4001, 2019.
\newblock arXiv:1801.05663. DOI: 10.1214/19-AOP1351.

\bibitem[Coquet and Toldo(2007)]{CoquetToldo}
F.~Coquet and S.~Toldo.
\newblock Convergence of values in optimal stopping and convergence of optimal stopping times.
\newblock \emph{Electron. J. Probab.}, 12:207--228, 2007.
\newblock DOI: 10.1214/EJP.v12-288.

\bibitem[de Haan and Ferreira(2006)]{deHaanFerreira}
L.~de Haan and A.~Ferreira.
\newblock \emph{Extreme Value Theory: An Introduction}.
\newblock Springer Series in Operations Research and Financial Engineering. Springer, New York, 2006.
\newblock DOI: 10.1007/0-387-34471-3.

\bibitem[Dhar(1990)]{Dhar90}
D.~Dhar.
\newblock Self-organized critical state of sandpile automaton models.
\newblock \emph{Phys. Rev. Lett.}, 64(14):1613--1616, 1990.
\newblock DOI: 10.1103/PhysRevLett.64.1613.

\bibitem[Dhar et al.(2009)Dhar, Sadhu, and Chandra]{DharSadhuChandra}
D.~Dhar, T.~Sadhu, and S.~Chandra.
\newblock Pattern formation in growing sandpiles.
\newblock \emph{Europhys. Lett.}, 85(4):48002, 2009.
\newblock arXiv:0808.1732. DOI: 10.1209/0295-5075/85/48002.

\bibitem[Drewitz et al.(2018)Drewitz, Pr\'evost, and Rodriguez]{DPR}
A.~Drewitz, A.~Pr\'evost, and P.-F.~Rodriguez.
\newblock The sign clusters of the massless Gaussian free field percolate on $\mathbb{Z}^d$, $d\geq3$ (and more).
\newblock \emph{Comm. Math. Phys.}, 362(2):513--546, 2018.
\newblock arXiv:1708.03285. DOI: 10.1007/s00220-018-3209-6.

\bibitem[Duminil-Copin et al.(2023)Duminil-Copin, Goswami, Rodr\'iguez, and Severo]{DGRS}
H.~Duminil-Copin, S.~Goswami, P.-F.~Rodr\'iguez, and F.~Severo.
\newblock Equality of critical parameters for percolation of Gaussian free field level sets.
\newblock \emph{Duke Math.\ J.}, 172(5):839--913, 2023.
\newblock arXiv:2002.07735. DOI: 10.1215/00127094-2022-0017.

\bibitem[Ferrari and Niederhauser(2006)]{FerrariNiederhauserHarness}
P.~A. Ferrari and B.~M. Niederhauser.
\newblock Harness processes and harmonic crystals.
\newblock \emph{Stochastic Process. Appl.}, 116(6):939--956, 2006.
\newblock arXiv:math/0312402. DOI: 10.1016/j.spa.2005.12.004.

\bibitem[Ferrari et al.(2004)Ferrari, Fontes, Niederhauser, and Vachkovskaia]{FerrariFontesNiederhauserVachkovskaiaWall}
P.~A. Ferrari, L.~R.~G. Fontes, B.~M. Niederhauser, and M.~Vachkovskaia.
\newblock The serial harness interacting with a wall.
\newblock \emph{Stochastic Process. Appl.}, 114(1):175--190, 2004.
\newblock arXiv:math/0210218. DOI: 10.1016/j.spa.2004.05.003.

\bibitem[Fey et al.(2009)Fey, Meester, and Redig]{FMR}
A.~Fey, R.~Meester, and F.~Redig.
\newblock Stabilizability and percolation in the infinite volume sandpile model.
\newblock \emph{Ann. Probab.}, 37(2):654--675, 2009.
\newblock arXiv:0710.0939. DOI: 10.1214/08-AOP415.

\bibitem[Fey et al.(2010)Fey, Levine, and Peres]{FeyLevinePeres}
A.~Fey, L.~Levine, and Y.~Peres.
\newblock Growth rates and explosions in sandpiles.
\newblock \emph{J. Stat. Phys.}, 138(1--3):143--159, 2010.
\newblock arXiv:0901.3805. DOI: 10.1007/s10955-009-9899-6.

\bibitem[Fey-den Boer and Redig(2008)]{FeyRedigShapes}
A.~Fey-den Boer and F.~Redig.
\newblock Limiting shapes for deterministic centrally seeded growth models.
\newblock \emph{J. Stat. Phys.}, 130(3):579--597, 2008.
\newblock arXiv:math/0702450. DOI: 10.1007/s10955-007-9450-6.

\bibitem[Franke and Saigo(2009)]{FrankeSaigoExtremes}
B.~Franke and T.~Saigo.
\newblock The extremes of random walks in random sceneries.
\newblock \emph{Adv. in Appl. Probab.}, 41(2):452--468, 2009.
\newblock DOI: 10.1239/aap/1246886619.

\bibitem[Fr\'ometa and Jara(2018)]{FroJar}
S.~Fr\'ometa and M.~Jara.
\newblock Scaling limit for a long-range divisible sandpile.
\newblock \emph{SIAM J. Math. Anal.}, 50(3):2317--2342, 2018.
\newblock arXiv:1507.03624. DOI: 10.1137/16M1068062.

\bibitem[Furlan and Mourrat(2017)]{FurlanMourrat}
M.~Furlan and J.-C.~Mourrat.
\newblock A tightness criterion for random fields, with application to the
Ising model.
\newblock \emph{Electron. J. Probab.}, 22:Paper No.~97, 2017.
\newblock arXiv:1502.07335. DOI: 10.1214/17-EJP121.

\bibitem[Gantert et al.(2006)Gantert, van der Hofstad, and K\"onig]{GantertHofstadKonig}
N.~Gantert, R.~van der Hofstad, and W.~K\"onig.
\newblock Deviations of a random walk in a random scenery with stretched exponential tails.
\newblock \emph{Stochastic Process. Appl.}, 116(3):480--492, 2006.
\newblock arXiv:math/0411361. DOI: 10.1016/j.spa.2005.10.006.

\bibitem[Gantert et al.(2007)Gantert, K\"onig, and Shi]{GKS}
N.~Gantert, W.~K\"onig, and Z.~Shi.
\newblock Annealed deviations of random walk in random scenery.
\newblock \emph{Ann. Inst. Henri Poincar\'e Probab. Stat.}, 43(1):47--76, 2007.
\newblock arXiv:math/0408327. DOI: 10.1016/j.anihpb.2005.12.002.

\bibitem[Gwynne and Miller(2021)]{GwynneMillerLQGMetric}
E.~Gwynne and J.~Miller.
\newblock Existence and uniqueness of the Liouville quantum gravity metric for
$\gamma\in(0,2)$.
\newblock \emph{Invent. Math.}, 223(1):213--333, 2021.
\newblock arXiv:1905.00383. DOI: 10.1007/s00222-020-00991-6.

\bibitem[Hammersley(1967)]{HammersleyHarnesses}
J.~M. Hammersley.
\newblock Harnesses.
\newblock In \emph{Proceedings of the Fifth Berkeley Symposium on Mathematical
Statistics and Probability, Volume III: Physical Sciences}, pages 89--117.
University of California Press, Berkeley, 1967.
\newblock Available at \url{https://projecteuclid.org/euclid.bsmsp/1200513623}.

\bibitem[Holroyd et al.(2008)Holroyd, Levine, M\'esz\'aros, Peres, Propp, and Wilson]{HolroydLevineMeszarosPeresProppWilson}
A.~E. Holroyd, L.~Levine, K.~M\'esz\'aros, Y.~Peres, J.~Propp, and
D.~B. Wilson.
\newblock Chip-firing and rotor-routing on directed graphs.
\newblock In \emph{In and Out of Equilibrium 2}, Progress in Probability,
vol.~60, pages 331--364. Birkh\"auser, Basel, 2008.
\newblock arXiv:0801.3306. DOI: 10.1007/978-3-7643-8786-0\_17.

\bibitem[J\'arai(2018)]{Jarai}
A.~A. J\'arai.
\newblock Sandpile models.
\newblock \emph{Probab. Surv.}, 15:243--306, 2018.
\newblock arXiv:1401.0354. DOI: 10.1214/14-PS228.

\bibitem[Kesten and Spitzer(1979)]{KesSpi}
H.~Kesten and F.~Spitzer.
\newblock A limit theorem related to a new class of self-similar processes.
\newblock \emph{Z. Wahrsch. Verw. Gebiete}, 50(1):5--25, 1979.
\newblock DOI: 10.1007/BF00535672.

\bibitem[K\"ohler-Schindler and Tassion(2023)]{KohlerSchindlerTassion}
L.~K\"ohler-Schindler and V.~Tassion.
\newblock Crossing probabilities for planar percolation.
\newblock \emph{Duke Math.\ J.}, 172(4):809--838, 2023.
\newblock arXiv:2011.04618. DOI: 10.1215/00127094-2022-0015.

\bibitem[Kurt(2007)]{Kurt}
N.~Kurt.
\newblock Entropic repulsion for a class of Gaussian interface models in high dimensions.
\newblock \emph{Stochastic Process. Appl.}, 117(1):23--34, 2007.
\newblock arXiv:math/0510143. DOI: 10.1016/j.spa.2006.05.011.

\bibitem[Kurt(2009)]{Kurt09}
N.~Kurt.
\newblock Maximum and entropic repulsion for a Gaussian membrane model in the critical dimension.
\newblock \emph{Ann. Probab.}, 37(2):687--725, 2009.
\newblock arXiv:0801.0551. DOI: 10.1214/08-AOP417.

\bibitem[Lawler(1991)]{LawlerInt}
G.~F. Lawler.
\newblock \emph{Intersections of Random Walks}.
\newblock Probability and Its Applications. Birkh\"auser, Boston, 1991; reprinted in Modern Birkh\"auser Classics, 2013.
\newblock DOI: 10.1007/978-1-4614-5972-9.

\bibitem[Lawler and Limic(2010)]{LawlerLimic}
G.~F. Lawler and V.~Limic.
\newblock \emph{Random Walk: A Modern Introduction}.
\newblock Cambridge Studies in Advanced Mathematics, vol.~123. Cambridge University Press, 2010.
\newblock DOI: 10.1017/CBO9780511750854.

\bibitem[Levine and Peres(2009)]{LevinePeres09}
L.~Levine and Y.~Peres.
\newblock Strong spherical asymptotics for rotor-router aggregation and the divisible sandpile.
\newblock \emph{Potential Anal.}, 30(1):1--27, 2009.
\newblock arXiv:0704.0688. DOI: 10.1007/s11118-008-9104-6.

\bibitem[Levine and Peres(2010)]{LevinePeres}
L.~Levine and Y.~Peres.
\newblock Scaling limits for internal aggregation models with multiple sources.
\newblock \emph{J. Anal. Math.}, 111(1):151--219, 2010.
\newblock arXiv:0712.3378. DOI: 10.1007/s11854-010-0015-2.

\bibitem[Levine and Propp(2010)]{LevinePropp}
L.~Levine and J.~Propp.
\newblock What is \ldots{} a sandpile?
\newblock \emph{Notices Amer. Math. Soc.}, 57(8):976--979, 2010.
\newblock Available at \url{https://pi.math.cornell.edu/~levine/what-is-a-sandpile.pdf}.

\bibitem[Levine et al.(2016)Levine, Pegden, and Smart]{LevinePegdenSmartAbelian}
L.~Levine, W.~Pegden, and C.~K. Smart.
\newblock Apollonian structure in the Abelian sandpile.
\newblock \emph{Geom. Funct. Anal.}, 26(1):306--336, 2016.
\newblock arXiv:1208.4839. DOI: 10.1007/s00039-016-0358-7.

\bibitem[Levine et~al.(2016)Levine, Murugan, Peres, and Ugurcan]{LMPU}
L.~Levine, M.~Murugan, Y.~Peres, and B.~E. Ugurcan.
\newblock The divisible sandpile at critical density.
\newblock \emph{Ann. Henri Poincar\'e}, 17(7):1677--1711, 2016.
\newblock arXiv:1501.07258. DOI: 10.1007/s00023-015-0433-x.

\bibitem[Levine et al.(2017)Levine, Pegden, and Smart]{LevinePegdenSmart}
L.~Levine, W.~Pegden, and C.~K. Smart.
\newblock The Apollonian structure of integer superharmonic matrices.
\newblock \emph{Ann. of Math. (2)}, 186(1):1--67, 2017.
\newblock arXiv:1309.3267. DOI: 10.4007/annals.2017.186.1.1.

\bibitem[Li and Liu(2026)]{LiLiuIntermediate}
X.~Li and R.~Liu.
\newblock The intermediate level-sets of the four-dimensional membrane model.
\newblock \emph{Acta Math. Sin. (Engl. Ser.)}, 42(6):1432--1456, 2026.
\newblock DOI: 10.1007/s10114-026-4341-4.

\bibitem[Li and Shao(2002)]{LiShao}
W.~V. Li and Q.-M. Shao.
\newblock A normal comparison inequality and its applications.
\newblock \emph{Probab. Theory Related Fields}, 122(4):494--508, 2002.
\newblock DOI: 10.1007/s004400100176.

\bibitem[Liggett et al.(1997)Liggett, Schonmann, and Stacey]{LSS}
T.~M. Liggett, R.~H. Schonmann, and A.~M. Stacey.
\newblock Domination by product measures.
\newblock \emph{Ann. Probab.}, 25(1):71--95, 1997.
\newblock DOI: 10.1214/aop/1024404279.

\bibitem[Liu et al.(1990)Liu, Kaplan, and Gray]{LiuKaplanGray}
S.~H. Liu, T.~Kaplan, and L.~J. Gray.
\newblock Geometry and dynamics of deterministic sand piles.
\newblock \emph{Phys. Rev. A}, 42(6):3207--3212, 1990.
\newblock DOI: 10.1103/PhysRevA.42.3207.

\bibitem[Lodhia et al.(2016)Lodhia, Sheffield, Sun, and Watson]{LSSW}
A.~Lodhia, S.~Sheffield, X.~Sun, and S.~S. Watson.
\newblock Fractional Gaussian fields: a survey.
\newblock \emph{Probab. Surv.}, 13:1--56, 2016.
\newblock arXiv:1407.5598. DOI: 10.1214/14-PS243.

\bibitem[Lupu(2016)]{Lupu}
T.~Lupu.
\newblock From loop clusters and random interlacements to the free field.
\newblock \emph{Ann. Probab.}, 44(3):2117--2146, 2016.
\newblock arXiv:1402.0298. DOI: 10.1214/15-AOP1019.

\bibitem[Meester et al.(2001)Meester, Redig, and Znamenski]{MeesterRedigZnamenski}
R.~Meester, F.~Redig, and D.~Znamenski.
\newblock The abelian sandpile: a mathematical introduction.
\newblock \emph{Markov Process. Related Fields}, 7(4):509--523, 2001.
\newblock arXiv:cond-mat/0301481. DOI: 10.48550/arXiv.cond-mat/0301481.

\bibitem[Molchanov and Stepanov(1983a)]{MolStepI}
S.~A. Molchanov and A.~K. Stepanov.
\newblock Percolation in random fields. I.
\newblock \emph{Theoret. and Math. Phys.}, 55(2):478--484, 1983.
\newblock DOI: 10.1007/BF01015808.

\bibitem[Muirhead(2024)]{Mui2}
S.~Muirhead.
\newblock Percolation of strongly correlated Gaussian fields~II. Sharpness of the phase transition.
\newblock \emph{Ann. Probab.}, 52(3):838--881, 2024.
\newblock arXiv:2206.10724. DOI: 10.1214/23-AOP1673.

\bibitem[Muirhead and Severo(2024)]{MuiSev}
S.~Muirhead and F.~Severo.
\newblock Percolation of strongly correlated Gaussian fields~I. Decay of subcritical connection probabilities.
\newblock \emph{Probab. Math. Phys.}, 5(2):357--412, 2024.
\newblock arXiv:2206.10723. DOI: 10.2140/pmp.2024.5.357.

\bibitem[Muirhead and Vanneuville(2020)]{MuiVan}
S.~Muirhead and H.~Vanneuville.
\newblock The sharp phase transition for level set percolation of smooth planar Gaussian fields.
\newblock \emph{Ann. Inst. Henri Poincar\'e Probab. Stat.}, 56(2):1358--1390, 2020.
\newblock arXiv:1806.11545. DOI: 10.1214/19-AIHP1006.

\bibitem[Ostojic(2003)]{Ostojic}
S.~Ostojic.
\newblock Patterns formed by addition of grains to only one site of an abelian
sandpile.
\newblock \emph{Physica A}, 318(1--2):187--199, 2003.
\newblock DOI: 10.1016/S0378-4371(02)01426-7.

\bibitem[Panagiotis and Stauffer(2026)]{PanStauffer}
C.~Panagiotis and A.~Stauffer.
\newblock Uniqueness of the infinite cluster for monotone percolation models without insertion tolerance.
\newblock Preprint, 2026.
\newblock arXiv:2603.29420. DOI: 10.48550/arXiv.2603.29420.

\bibitem[Pegden and Smart(2013)]{PegdenSmart}
W.~Pegden and C.~K. Smart.
\newblock Convergence of the Abelian sandpile.
\newblock \emph{Duke Math. J.}, 162(4):627--642, 2013.
\newblock arXiv:1105.0111. DOI: 10.1215/00127094-2079677.

\bibitem[Peskir and Shiryaev(2006)]{PeskirShiryaev}
G.~Peskir and A.~N. Shiryaev.
\newblock \emph{Optimal Stopping and Free-Boundary Problems}.
\newblock Lectures in Mathematics ETH Z\"urich. Birkh\"auser, Basel, 2006.
\newblock DOI: 10.1007/978-3-7643-7390-0.

\bibitem[Pinelis(2013)]{PinelisRecentering}
I.~Pinelis.
\newblock Optimal re-centering bounds, with applications to Rosenthal-type
concentration of measure inequalities.
\newblock In \emph{High Dimensional Probability VI}, volume~66 of
\emph{Progress in Probability}, pages 81--93. Birkh\"auser, Basel, 2013.
\newblock arXiv:1111.2622. DOI: 10.1007/978-3-0348-0490-5\_6.

\bibitem[Pitt(1982)]{Pitt}
L.~D. Pitt.
\newblock Positively correlated normal variables are associated.
\newblock \emph{Ann. Probab.}, 10(2):496--499, 1982.
\newblock DOI: 10.1214/aop/1176993872.

\bibitem[Rai\v{c}(2019)]{Raic}
M.~Rai\v{c}.
\newblock A multivariate Berry--Esseen theorem with explicit constants.
\newblock \emph{Bernoulli}, 25(4A):2824--2853, 2019.
\newblock arXiv:1802.06475. DOI: 10.3150/18-BEJ1072.

\bibitem[Redig(2006)]{Redig}
F.~Redig.
\newblock Mathematical aspects of the abelian sandpile model.
\newblock In \emph{Mathematical Statistical Physics}, Les Houches Summer School
Session LXXXIII, pages 657--729. Elsevier, Amsterdam, 2006.
\newblock DOI: 10.1016/S0924-8099(06)80051-X.

\bibitem[Rodriguez and Sznitman(2013)]{RodSzn}
P.-F.~Rodriguez and A.-S.~Sznitman.
\newblock Phase transition and level-set percolation for the Gaussian free field.
\newblock \emph{Comm. Math. Phys.}, 320(2):571--601, 2013.
\newblock arXiv:1202.5172. DOI: 10.1007/s00220-012-1649-y.

\bibitem[Sadhu and Dhar(2010)]{SadhuDharSinks}
T.~Sadhu and D.~Dhar.
\newblock Pattern formation in growing sandpiles with multiple sources or sinks.
\newblock \emph{J. Stat. Phys.}, 138(4--5):815--837, 2010.
\newblock arXiv:0909.3192. DOI: 10.1007/s10955-009-9901-3.

\bibitem[Schweiger(2020)]{Schweiger20}
F.~Schweiger.
\newblock The maximum of the four-dimensional membrane model.
\newblock \emph{Ann. Probab.}, 48(2):714--741, 2020.
\newblock arXiv:1903.02522. DOI: 10.1214/19-AOP1372.

\bibitem[Severo(2022)]{Severo}
F.~Severo.
\newblock Sharp phase transition for Gaussian percolation in all dimensions.
\newblock \emph{Ann. Henri Lebesgue}, 5:987--1008, 2022.
\newblock arXiv:2105.05219. DOI: 10.5802/ahl.141.

\bibitem[Tim\'ar(2013)]{Timar}
\'A.~Tim\'ar.
\newblock Boundary-connectivity via graph theory.
\newblock \emph{Proc. Amer. Math. Soc.}, 141(2):475--480, 2013.
\newblock arXiv:0711.1713. DOI: 10.1090/S0002-9939-2012-11333-4.

\bibitem[Toom(1997)]{ToomHarness}
A.~Toom.
\newblock Tails in harnesses.
\newblock \emph{J. Stat. Phys.}, 88(1--2):347--364, 1997.
\newblock DOI: 10.1007/BF02508475.

\end{thebibliography}

\end{document}